\newtheorem{thm}{Theorem}[section]
\newtheorem{prop}[thm]{Proposition}
\newtheorem{coro}[thm]{Corollary}
\newtheorem{lemma}[thm]{Lemma}
\theoremstyle{definition}
\newtheorem{exmp}[thm]{Example}
\newtheorem{defi}[thm]{Definition}
\theoremstyle{remark}
\newtheorem{remark}[thm]{Remark}
\numberwithin{equation}{section}
\newcommand*\dif{\mathop{}\!\mathrm{d}}
\newcommand{\second}{\prime \prime}
\newcommand{\R}{\mathbb R}
\newcommand{\mc}[1]{\mathcal{#1}}
\newcommand{\mr}[1]{\mathrm{#1}}
\newcommand{\bs}[1]{\boldsymbol{#1}}
\newcommand{\ms}[1]{\mathsf{#1}}
\newcommand{\msc}[1]{\mathscr{#1}}
\newcommand{\sabp}{\mc S^{\, [A B]+}}
\newcommand{\sabpt}{\mc S_t^{\, [A B]+}}
\newcommand{\sabpT}{{\mc S}_{\it T}^{\, [{\it A B}\,]+}}\newcommand{\sabm}{\mc S^{\, [A B]-}}
\newcommand{\sabmt}{\mc S_t^{\, [A B]-}}
\newcommand{\sabmT}{\mc S_T^{\, [A B]-}}
\newcommand{\sobap}{\overline{\mc S}^{\, [\overline B\, \overline A\,]+}}
\newcommand{\sobapt}{\overline{\mc S}_t^{\, [\overline B\, \overline A\,]+}}
\newcommand{\sobapT}{\overline{\mc S}_T^{\, [\overline B\, \overline A\,]+}}
\newcommand\blu[1]{{\color{black}#1}}
\title[Backward-forward characterization of attainable set for conservation laws]{Backward-forward characterization of attainable set\\ for conservation laws with spatially discontinuous flux}
\author{
Fabio Ancona \and Luca Talamini
}
\thanks{Dipartimento di Matematica ``Tullio Levi-Civita", Universit\`a di Padova, Italy, ~~E-mail: ancona@math.unipd.it, \\ luca.talamini@math.unipd.it}
\begin{document}

\maketitle

\begin{abstract}
Consider a scalar conservation law  
with a spatially discontinuous flux at a single point $x=0$, and assume that the flux is uniformly convex when $x\neq 0$.
Given an interface connection $(A,B)$, 
we define a backward solution operator consistent with the concept of $AB$-entropy solution~\cite{Adimurthi2005,MR2807133,BKTengquist}. We then analyze the
family $\mc A^{[AB]}(T)$ of profiles that
can be attained at time $T>0$ by $AB$-entropy solutions
with ${\bf L^\infty}$-initial data. 
We provide a  characterization of 
$\mc A^{[AB]}(T)$ as fixed points of
the backward-forward solution operator.
As an intermediate step we establish 
for the first time a full characterization of $\mc A^{[AB]}(T)$
in terms of 
unilateral constraints and Ole\v{\i}nik-type estimates, valid for all connections.
Building on such a characterization we derive uniform $BV$
bounds on the flux of $AB$-entropy solutions, which in turn yield the
$\bf L^1_{\mr{loc}}$-Lipschitz continuity 
in time of these solutions.

\end{abstract}

\tableofcontents

\section{Introduction}
Consider the initial value problem for the scalar conservation law in one space dimension
\begin{align}
    \label{conslaw}
&u_t+f(x,u)_x=0,
\quad x \in \mathbb{R}, \quad t \geq 0,
\\
\noalign{\smallskip}
    \label{initdat}
    &u(x,0) = u_0(x), \qquad x \in \mathbb{R},
\end{align}
where $u = u(x,t)$ is the state variable and the flux $f$ is a space discontinuous function given by 
\begin{equation}\label{discflux}
f(x,u) =\begin{cases}
f_l(u), \qquad x < 0, \\
f_r(u), \qquad x > 0\,.
\end{cases}
\end{equation}
We assume that $f_l, f_r: \R\to \R$
are twice continuously differentiable, uniformly  convex maps that satisfy
\begin{equation}
\label{eq:flux-assumption-1}
f_l''(u),\ f_r''(u) \geq a >0\,.
\end{equation}
Conservation
laws with discontinuous flux 
serve as mathematical models 
for: 
oil reservoir simulation~\cite{GimseRisebro1992,GR93};
 traffic flow dynamics
 with roads of varying amplitudes or surface conditions~\cite{Mochon1987};
 radar shape-from-shading problems~\cite{MR1912069};
blood flow in endovascular treatments~\cite{MR1901663,Ca};
and for many other different applications
(see~\cite{anconachiri} and references therein).

We recall that problems of this type
do not  posses classical solutions globally defined in time (even in the continuous flux case when $f_l=f_r$), since, regardless of how smooth the initial data are, they can develop discontinuities (shocks) in finite time
because of the nonlinearity of the equation. To achieve existence results, one has to look for 
weak distributional
solution
that, for sake of uniqueness, satisfy 
the classical Kru\v{z}kov entropy inequalities
away from the point of flux discontinuity, and a further interface entropy condition
at the flux-discontinuity interface $x=0$. 

Various type
of interface-entropy conditions have
been introduced in the literature
according with the different physical phenomena modelled by~\eqref{conslaw} (see~\cite{MR3416038,MR3369104}).
Here, 
as in~\cite{anconachiri}, 
for modellization and control treatment reasons
we employ an admissibility criterion involving the so-called {\it interface connection} $(A,B)$,
which yields the Definition~\ref{defiAB} of {\it $AB$-entropy solution}
(cfr.\cite{Adimurthi2005,BKTengquist}).
A connection $(A, B)$ is a pair of states connected by a stationary weak solution
of~\eqref{conslaw},
taking values $A$ for $x<0$, and $B$ for $x>0$, which has characteristics diverging
from (or parallel to) the flux-discontinuity interface $x=0$
(see Definition~\ref{def:connect}).
The admissibility criterion 
for an {\it $AB$-entropy solution} 
can be equivalently formulated in terms of an interface entropy condition
or of Kru\v{z}kov-type entropy inequalities adapted to the particular connection $(A, B)$ taken into account
(cfr.~\cite{Adimurthi2005,MR2807133,BKTengquist}). Relying on \blu{these} extended entropy inequalities
and using  an adapted version of the Kru\v{z}kov doubling of variables argument, one can establish ${\bf L^1}$-stability and uniqueness of $AB$-entropy solutions to the Cauchy problem~\eqref{conslaw}-\eqref{initdat}
 (see \cite{BKTengquist, Garavellodiscflux}). We shall adopt the semigroup notation 
$u(x,t)\doteq \sabpt u_0 (x)$ for the unique solution of~\eqref{conslaw}-\eqref{initdat}.
%
%

In this paper we are concerned as in~\cite{adimurthi2020exact,anconachiri}
with a controllability problem
for~\eqref{conslaw}
where one regards the initial data as controls and study 
the corresponding {\it attainable set} 
at a fixed time $T>0$:
\begin{equation}
\label{eq:attset}
    \mc A^{[AB]}(T) \doteq  \big\{\mc \sabpT u_0 \; : \; u _0 \in {\bf L}^{\infty}(\mathbb R)\big\}\,.
\end{equation}
In the same spirit of~\cite{ZuazuainverseproblemHJ,ZZreachableset,MR3643881,zuazua2020} 
we  introduce a {\it backward solution operator} 
(see Definition~\ref{def:backop})
\begin{equation}
    \sabmT: {\bf L^\infty}(\R) \to
   {\bf L^\infty}(\R),
   \qquad\quad \omega\mapsto 
   \sabmT \omega\,,
   \end{equation}
and we 
characterize the attainable targets for~\eqref{conslaw} at a time horizon $T>0$
as fixed-points of the composition {\it backward-forward operator} $\sabpT \circ \sabmT$, as stated in  our first main result: 
\begin{thm}\label{thm:backfordiscflux}
Let $f$ be a flux as in~\eqref{discflux}
satisfying the assumption~\eqref{eq:flux-assumption-1},
and let 
 $(A,B)$
 be a connection. Then,
 for every $T>0$,
 and for any $\omega\in  {\bf L^\infty}(\R)$,
 the following conditions are equivalent. 
\begin{enumerate}
    \item $\omega \in \mc A^{[AB]}(T)$, 
    \smallskip
        \item 
        $\omega=
        \mc \sabpT \circ \sabmT \omega\,.$\
\end{enumerate}
Moreover, if $(A,B)$ is a non critical connection, i.e. if $A\neq \theta_l, B\neq \theta_r$, then the condition (2) is equivalent to
\begin{enumerate}
    \item[(1)']
    $\omega \in \mc A_{bv}^{[AB]}(T)$, where
    \begin{equation}
    \mc A_{bv}^{[AB]}(T) \doteq  \big\{\mc \sabpT u_0 \; : \; u _0 \in BV_{loc}(\mathbb R)\big\}\,,
\end{equation}
\end{enumerate}
and it holds true
\begin{equation}
    \mc A^{[AB]}(T)=
    \mc A_{bv}^{[AB]}(T)\,.
\end{equation}
\end{thm}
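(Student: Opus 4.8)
The plan is to establish the chain of equivalences by first proving a separate, self-contained characterization of $\mc A^{[AB]}(T)$ in terms of unilateral pointwise constraints and one-sided Oleĭnik-type inequalities (the ``intermediate step'' advertised in the abstract), and then to show that the fixed-point condition (2) is exactly a reformulation of that characterization via the backward operator $\sabmT$. For the equivalence (1)$\Leftrightarrow$(2), the ``only if'' direction $\omega\in\mc A^{[AB]}(T)\Rightarrow\omega=\sabpT\circ\sabmT\omega$ should follow from a semigroup-type identity: if $\omega=\sabpT u_0$, one shows that $\sabmT\omega$ is itself an admissible initial datum whose forward evolution at time $T$ reproduces $\omega$; the natural candidate is that $\sabmT$ is built precisely so that $\sabmT\circ\sabpT$ acts as a ``projection'' onto a canonical representative of the fibre of initial data producing $\omega$. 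The ``if'' direction is immediate: if $\omega=\sabpT(\sabmT\omega)$ then $\omega$ is the forward evolution of the ${\bf L^\infty}$ datum $\sabmT\omega$, hence lies in $\mc A^{[AB]}(T)$ by definition~\eqref{eq:attset}. The crux is therefore the forward implication, and it rests on knowing the precise structure of profiles in $\mc A^{[AB]}(T)$.

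For that structural characterization I would argue as follows. Away from $x=0$ the flux is uniformly convex, so by the classical Oleĭnik one-sided estimate any profile attainable at time $T>0$ by a Kružkov-entropy solution on $(-\infty,0)$ or $(0,+\infty)$ must satisfy a decay bound of the form $\partial_x\big(f'_{l/r}(\omega)\big)\le 1/(aT)$ in the sense of distributions on each half-line; conversely every such profile is attained on the half-line. The discontinuous-flux feature contributes the interface condition: the trace values $\omega(0^-),\omega(0^+)$ of an $AB$-entropy solution at $x=0$ must be compatible with the connection $(A,B)$ — concretely, they must lie in the admissible trace set determined by $(A,B)$ (the ``$AB$-admissible'' germ at the interface), which translates into unilateral constraints $\omega(0^-)\in \mathcal{G}_l^{A}$, $\omega(0^+)\in\mathcal{G}_r^{B}$ together with a Rankine–Hugoniot flux matching $f_l(\omega(0^-))=f_r(\omega(0^+))$. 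One then shows these two families of conditions (half-line Oleĭnik estimates plus interface admissibility) are not only necessary but sufficient: given $\omega$ satisfying them, one constructs a backward-in-time $AB$-entropy solution on $[0,T]$ — running the convex conservation law backward is possible precisely because the Oleĭnik bound provides the needed one-sided regularity — and reads off its trace at $t=0$ as the desired ${\bf L^\infty}$ initial datum. This is where I expect the main technical obstacle: handling the interface at $x=0$ when running the equation backward, in particular controlling the interplay between the backward characteristics on the two sides and the connection $(A,B)$, and doing so uniformly for \emph{all} connections including the critical cases $A=\theta_l$ or $B=\theta_r$ where the stationary connecting profile degenerates and some of the unilateral constraints become vacuous or one-sided.

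For the final two assertions (valid only in the non-critical case), the point is that when $A\neq\theta_l$ and $B\neq\theta_r$ the interface admissibility conditions force a genuine (non-removable) unilateral constraint near $x=0$, and one can arrange the backward construction to produce an initial datum that is not merely ${\bf L^\infty}$ but lies in $BV_{loc}$ — essentially because the Oleĭnik bound is one-sided $BV$ control and the complementary variation can be taken to vanish by choosing the ``maximal'' or ``minimal'' backward solution. This gives $\mc A^{[AB]}(T)\subseteq\mc A_{bv}^{[AB]}(T)$; the reverse inclusion $\mc A_{bv}^{[AB]}(T)\subseteq\mc A^{[AB]}(T)$ is trivial since $BV_{loc}\subseteq{\bf L^\infty}$ locally, but one must be slightly careful that $BV_{loc}$ data need not be in ${\bf L^\infty}$ globally — so in fact one should note that the relevant backward-constructed datum is automatically bounded, reconciling the two sets. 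Combining, $\mc A^{[AB]}(T)=\mc A_{bv}^{[AB]}(T)$ and, since membership in either set is equivalent to (2) by the already-established chain, the proof is complete. Throughout, the uniform-convexity hypothesis~\eqref{eq:flux-assumption-1} is used only to guarantee the Oleĭnik estimate and the solvability of the backward problem; the connection structure enters only through the interface trace conditions, which is what makes the argument uniform in $(A,B)$.
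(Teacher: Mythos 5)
Your overall strategy --- first characterize $\mc A^{[AB]}(T)$ by Ole\v{\i}nik-type inequalities plus unilateral constraints, then recast that characterization as the fixed-point condition (2), with (2)$\Rightarrow$(1) being immediate --- is indeed the architecture of the paper's proof (the paper proves $(1)\Rightarrow(3)\Rightarrow(2)\Rightarrow(1)$ where (3) is exactly such a characterization). However, two of your key steps are substantively wrong or missing, and a third case is not addressed.

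First, your proposed characterization (classical half-line Ole\v{\i}nik estimates plus an interface trace condition at $x=0$) is not the correct one, and the implication ``these conditions are sufficient for attainability'' would fail with it. The actual necessary and sufficient conditions involve: (a) modified Ole\v{\i}nik inequalities $D^+\omega\le g[\omega,f_l,f_r]$ and $D^+\omega\le h[\omega,f_l,f_r]$ in the intervals $]\ms L,\widetilde{\ms L}[$ and $]\widetilde{\ms R},\ms R[$ adjacent to the interface, where characteristics are refracted by $x=0$ (these are genuinely different from $1/(Tf'')$); (b) pointwise constraints on whole intervals, e.g.\ $\omega\equiv B$ on $]0,\ms R[$ or $\omega\ge B$ on $]\widetilde{\ms R},\ms R[$, not just at the traces $\omega(0\pm)$; and (c) the jump bounds $\omega(\ms R+)\le \bs u[\ms R,B,f_r]$ and $\omega(\ms L-)\ge \bs v[\ms L,A,f_l]$ at the endpoints $\ms L,\ms R$ of those intervals, where the constants $\bs u,\bs v$ are defined through a dual forward/backward shock construction (Lemma~\ref{lemma:dualshocks}). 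A profile satisfying your conditions but violating $\omega(\ms R+)\le\bs u[\ms R,B,f_r]$ is not attainable, so your sufficiency argument cannot go through; conversely, attainable profiles exist that violate the ``pure trace'' picture you describe. These interval and jump constraints are the main new content of Theorems~\ref{thm:attprofiles}--\ref{thm:attprofiles3}.

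Second, your mechanism for the forward implication --- ``running the convex conservation law backward is possible because the Ole\v{\i}nik bound provides the needed one-sided regularity'' --- breaks down at the interface. Even for $\omega\in\mc A^{[AB]}(T)$, the backward evolution $\sobapt(\omega(-\cdot))$ may develop a shock emanating from $x=0$ at a time $\tau<T$, so $x\mapsto\sabmt\omega(x)$ is in general not locally Lipschitz away from $x=0$, and $\sabpt(\sabmT\omega)$ does not coincide with the time-reversed backward solution on $\R\times[0,T[$. The paper circumvents this by proving a duality between the backward shock $\bs y[\ms R,B,f]$ and the forward shock $\bs x[\ms L,\overline B,f]$ (a conservation/Legendre-transform computation), and by gluing explicit shock--rarefaction wave patterns in two polygonal regions around the interface; only outside those regions does the classical Lipschitz backward--forward argument apply.

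Third, the critical case $A=\theta_l$ or $B=\theta_r$ cannot be handled ``uniformly'' as you suggest: the uniform $BV_{\mathrm{loc}}$ bounds and the preclusion of rarefactions from the interface, both needed for the direct argument, are only available for non-critical connections. The paper instead proves the critical case by approximating $(A,B)$ with non-critical connections $(A_n,B_n)$, constructing admissible envelope perturbations $\omega_n\to\omega$ with $\omega_n\in\mc A^{[A_nB_n]}(T)$, and passing to the limit using the $\mathbf L^1$-stability of the semigroup with respect to the connection (Theorem~\ref{theoremsemigroup}-(iv)). Your plan contains no mechanism for this step. Finally, your worry that ``$BV_{loc}$ data need not be in $\mathbf L^\infty$ globally'' is a non-issue here since $\mc A_{bv}^{[AB]}(T)$ is defined via the semigroup acting on such data, but the identity $\mc A^{[AB]}(T)=\mc A_{bv}^{[AB]}(T)$ is obtained from the uniform BV bound on $\sabmT\omega$ (Proposition~\ref{BVbound}), not by ``choosing the maximal or minimal backward solution''.
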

Clearly the main content of Theorem~\ref{thm:backfordiscflux} are the implication (1) $\Longrightarrow$ (2) 
and (1)' $\Longrightarrow$ (2),
since the reverse implications
are straightforward once we define the backward operator 
$\sabmT$ and verify that, in the case
of a non critical connection, one has $ \sabmT u_0\in BV_{\mr{loc}}(\R)$
for all $u_0\in {\bf L^\infty}(\R)$. \blu{This last property is an immediate consequence of 
the uniform BV bounds
on $AB$-entropy solutions established in Proposition \ref{BVbound}, since the backward operator $\sabmT$ is defined in terms of the forward operator $\sobapt$ (see Definition \ref{def:backop}).}

\noindent
The proof of (1) $\Longrightarrow$ (2) 
and (1)' $\Longrightarrow$ (2)
are obtained in two steps:
\begin{itemize}
\item[(I)] 
First, we show that any attainable profile $\omega \in \mc A^{[AB]}(T)$
belongs
to a class of functions
$\msc A\subset BV_{\mr{loc}}(\mathbb R \setminus \{0\})$
which satisfy suitable Ole\v{\i}nik-type inequalities and pointwise constraints
related to the $(A,B)$-connection
in intervals containing the origin
(see Theorem~\ref{thm:attprofiles}, 
\ref{thm:attprofilescrit}, \ref{thm:attprofiles2},  \ref{thm:attprofiles3}).
\blu{
We classify the different type of
pointwise constraints satisfied by the attainable profiles in $\msc A$ highlighting the ones that can be recovered as limiting
cases (see Remarks~\ref{rem:cases-thm-4.1}, \ref{rem:cases-thm-4.8}, \ref{rem:threecases1}, \ref{rem:constr-LR=0},
\ref{rem:denseconds}).
}
\smallskip
\item[(II)] 
Next, we prove that any element of
$\msc A$ is a fixed point of
the composition  backward-forward operator $\sabpT \circ \sabmT$.
Namely, for any given $\omega\in \msc A$
we construct an initial datum $u_0\in{\bf L^\infty(\R)}$ such that $\omega=\sabpT u_0$, and then we show that indeed $u_0=\sabmT\omega$.
\end{itemize}
These two steps are firstly carried out 
in the case of a non critical 
connection $(A,B)$
and of attainable profiles
$\omega\in \mc A^{[AB]}(T)\cap BV_{\mr{loc}}(\mathbb R)$.
The proofs are obtained
exploiting as in~\cite{anconachiri}  the theory of generalized characteristics
 by Dafermos~\cite{dafermosgenchar},
 applied to the setting of discontinuous flux, and relying on the duality property of the backward and forward solution operators. Next, we address the case of a critical connection and
 of attainable profiles $\omega\in \mc A^{[AB]}(T)$ relying on the $\bf L^1_{\blu{\mr{ loc}}}$-stability of the map
 $(A,B,u_0)\mapsto \sabpt u_0$
 (see Theorem~\ref{theoremsemigroup}).
\medskip

Some remarks are here in order.
\begin{itemize}
[leftmargin=14pt]
    \item
    The results of Theorem~\ref{thm:backfordiscflux} extend to the present setting of space discontinuous fluxes the similar {\it characterization of attainable profiles in terms of the backward solution operator} obtained in~\cite[Theorem 3.1, Corollary 3.2]{COLOMBO2020} and~\cite[Corollary 1]{MR3643881}
for conservation laws with strictly convex flux independent on the space variable.
\smallskip
\item
The characterization of $\mc A^{[AB]}(T)$
obtained in this paper  unveils the presence of 
{\it \blu{two classes} of attainable states
for \blu{critical and non critical} connections}
that were {\it not detected in~\cite{adimurthi2020exact, anconachiri}},  see Remarks~\ref{rem:a-g}, ~\ref{rem:otherresults}.
\item The characterization of
attainable profiles for~\eqref{conslaw},
\eqref{discflux} in terms of
unilateral constraints and Ole\v{\i}nik-type estimates provides a powerful tool to investigate regularity properties 
of the solutions to ~\eqref{conslaw},
\eqref{discflux}. In particular, 
we build on such a characterization to 
derive {\it uniform BV bounds} 
on $AB$-entropy solutions
with initial datum in ${\bf L^\infty}$ (in the case of
non critical connections), and {\it on the flux
of $AB$-entropy solutions} (for general connections).
This is a fairly non-trivial result since it is well known~\cite{MR2743877} that the total variation of $AB$-entropy solutions
may well blow up in a neighborhood of the
flux-discontinuity interface $x=0$.
Thanks to these uniform BV bounds,
we can then establish the 
{\it $\bf L^1_{\mr{loc}}$-Lipschitz continuity 
in time of $AB$-entropy solutions}.
\item The proof that Theorem~\ref{thm:backfordiscflux}
holds for critical connections once we know that Theorem~\ref{thm:backfordiscflux}
is verified by non critical connections
relies on a perturbation argument 
for attainable profiles.
This construction yields an {\it approximate controllability} result since it 
provides a general explicit procedure to  approximate an attainable profile for a critical connection by attainable profiles for non critical connections.
    \end{itemize}

Note furthermore that, by the \blu{backward non-uniqueness} of~\eqref{conslaw}
(due to the possible presence of shocks
in its solutions),
there may exist in general multiple initial data $u_0$
that are \blu{steered} by~\eqref{conslaw} to
$\omega \in \mc A^{[AB]}(T)$.
In fact, an important control problem related to the one considered in this paper is the inverse design,
which has the goal to reconstruct
the set of initial data $u_0$ evolving to a given
attainable target $\omega$
(see~\cite{COLOMBO2020,MR3643881,zuazua2020, LZ23} 
for conservation laws with  convex flux independent on the space variable, and~\cite{ZZreachableset}
for Hamilton-Jacobi equations with convex Hamiltonian).
On the other hand, when
a target state $\omega$ is not attainable at time $T>0$, the image of $\omega$
through the backward-forward operator $\sabpT \circ \sabmT$ represents a natural candidate to construct a reachable function which is ``as
close as possible" (in an appropriate sense) to the observed state $\omega$ (see~\cite{ZuazuainverseproblemHJ}
in the case of Hamilton-Jacobi 
and Burgers equations).

The results of the present paper
provide a key building block 
to address both of these problems, namely
the characterization of the aforementioned set of initial data leading to a given
attainable target $\omega$
for~\eqref{conslaw},
and  the analysis of the properties
of the backward-forward operator $\sabpT \circ \sabmT$
related to optimization problems for unattainable target profiles, which are
pursued in the
forthcoming paper~\cite{talamini_ancona_initialdata}.

In the  case
of non-convex flux, an explicit characterizations of the attainable set in terms of
Ole\v{\i}nik-type estimates seems difficult to obtain and only partial results are present in the literature, see for example \cite{BDM17}. 
For systems of conservation laws, the problem has been considered in~\cite{BDGSU15} (triangular systems) and in \cite{CDN23} (chromatography system).
For a characterization of the attainable set in terms of fixed points of a backward-forward operator, 
a key point would be to 
provide a proper 
definition of backward operator
in these more general contexts,
which is lacking at the moment, making also the analysis of the inverse design problem nontrivial.

The paper is organized as follows. In \S~\ref{sec:basic-def}
we recall the definitions of interface connection $(A,B)$, of $AB$-entropy solution and of $AB$-backward solution operator.
We also collect the stability properties 
of the ${\bf L^1}$-contractive semigroup of $AB$-entropy solutions.
In \S~\ref{sec:building-blocks}
we establish the duality property
of the backward and forward solution operators, which constitutes a fundamental ingredient 
of the proof of Theorem~\ref{thm:backfordiscflux}.
\S~\ref{sec:statement-main}
collects the precise statements of the
results on the characterization
of the attainable set $\mc A^{[AB]}(T)$
 via
 Ole\v{\i}nik-type inequalities and state constraints. We also include 
  the statement of Theorem~\ref{thm:backfordiscfluxcycle}
 which contains the equivalence of 
 conditions (1), (2) of
 Theorem~\ref{thm:backfordiscflux}
 with the characterization of 
 $\mc A^{[AB]}(T)$ in terms of
 Ole\v{\i}nik-type inequalities and unilateral constraints.
In \S~\ref{sec:proof-main-thm}
we carry out the rather technical and involved proof of Theorem~\ref{thm:backfordiscfluxcycle}.
At the beginning of the section, 
for reader's convenience, 
we  provide a roadmap of the proof of {Theorem~\ref{thm:backfordiscfluxcycle}},
where we also highlight the key innovative parts of the paper.
In \S~\ref{sec:BVboundsABsol}
we  derive uniform BV bounds
on $AB$-entropy solutions 
in the case of non critical connections,
and on the flux of $AB$-entropy solutions
for general connections.
In Appendix~\ref{app:stabconn}
we establish the ${\bf L^1}$-stability
properties of the semigroup of $AB$-entropy solutions with respect to time and with respect to the connections.
In Appendix~\ref{app:no-rarefaction}
we provide, for sake of completeness, a simple proof 
of the non existence of rarefactions
emanating from the interface $x=0$, 
which is a distinctive feature of 
$AB$-entropy solutions.
Finally, in Appendix~\ref{app:uplwsmicsolns}
we derive some lower/upper ${\bf L^1}$-semicontinuity property for solutions to conservation laws, used to recover the
proof of { Theorem~\ref{thm:backfordiscfluxcycle}}
in the case of critical connections
once we know the validity of { Theorem~\ref{thm:backfordiscfluxcycle}}
for non critical connections.

\section{Basic definitions and general setting}
\label{sec:basic-def}

\subsection{Connections and $AB$-entropy solutions}

We recall here the definitions and properties of interface connection and of entropy admissible solution introduced in~\cite{Adimurthi2005}.

\begin{defi}[{\bf Interface Connection}]\label{ABsol}
\label{def:connect}
Let $f$ be a flux as in~\eqref{discflux}
satisfying the assumption~\eqref{eq:flux-assumption-1},
{ and let $\theta_l$, $\theta_r$ denote the unique critical points 
 of $f_l, f_r$, respectively.}
A pair of values $(A,B)\in \mathbb R^2$ is called a \textit{connection} if
\begin{enumerate}
\item $f_l(A) = f_r(B)$,
\item $A\leq \theta_l$ and $B \geq \theta_r$.
\end{enumerate} 
We will say that connection $(A,B)$ is  \textit{critical} if $A=\theta_l$
or $B=\theta_r$.
\end{defi}

\begin{figure}
\centering
\begin{tikzpicture}[scale = 0.7]
\draw[->] (-6,0)--(4,0)node[right]{$u$};
\draw[scale = 0.5, domain=-1:5, smooth, variable=\x] plot ({\x}, {(\x-2)*(\x-2)+1});
\draw[scale = 0.5, domain=-9:3, smooth, variable=\x] plot ({\x}, {(0.5*\x+1)*(0.5*\x+2)+1});

\draw (-5,5) node{$f_l$};
\draw (3,5) node{$f_r$};

\draw[very thick, blue] (-3.8,3)--(2.13,3);
\draw[dotted] (-3.8,3)--(-3.8,0)node[below]{$A$};
\draw[dotted] (2.13,3)--(2.13,0)node[below]{$B$};

\draw[dotted] (-1.5,0.3)--(-1.5,0)node[below]{$\theta_l$};
\draw[dotted] (1,0.5)--(1,0)node[below]{$\theta_r$};
\end{tikzpicture}
\caption{An example of connection $(A,B)$ with $f_l, f_r$ strictly convex fluxes}
\label{Connectionpic}
\end{figure}

\begin{figure}[ht]
\centering

\tikzset{every picture/.style={line width=0.75pt}} 

\begin{tikzpicture}[x=0.75pt,y=0.75pt,yscale=-1,xscale=1, scale =0.9]

\draw [line width=1.5]    (171.42,211.04) -- (425.58,210.63) ;
\draw [shift={(429.58,210.63)}, rotate = 179.91] [fill={rgb, 255:red, 0; green, 0; blue, 0 }  ][line width=0.08]  [draw opacity=0] (6.97,-3.35) -- (0,0) -- (6.97,3.35) -- cycle    ;
\draw [line width=1.5]    (300.02,86.25) -- (300.5,210.83) ;
\draw [shift={(300,82.25)}, rotate = 89.78] [fill={rgb, 255:red, 0; green, 0; blue, 0 }  ][line width=0.08]  [draw opacity=0] (6.97,-3.35) -- (0,0) -- (6.97,3.35) -- cycle    ;
\draw  [dash pattern={on 3pt off 3pt}]  (180,90.58) -- (300.5,210.83) ;
\draw  [dash pattern={on 3pt off 3pt}]  (180,110.75) -- (280.33,210.83) ;
\draw  [dash pattern={on 3pt off 3pt}]  (180,130.25) -- (260,210.5) ;
\draw  [dash pattern={on 3pt off 3pt}]   (180.83,149.92) -- (240.67,211.17) ;
\draw [dash pattern={on 3pt off 3pt}]    (180.5,170.75) -- (222,212) ;
\draw [dash pattern={on 3pt off 3pt}]    (180.33,190.75) -- (199.83,208.5) ;
\draw [line width=0.75]  [dash pattern={on 3pt off 3pt}]  (200,90.5) -- (300.33,190.83) ;
\draw [color={rgb, 255:red, 0; green, 0; blue, 0 }  ,draw opacity=1 ][line width=0.75]  [dash pattern={on 3pt off 3pt}]  (220.67,90.75) -- (300.33,170.83) ;
\draw  [dash pattern={on 3pt off 3pt}]  (239.83,90.58) -- (299.67,150.17) ;
\draw  [dash pattern={on 3pt off 3pt}]  (260,90.08) -- (300,130.5) ;
\draw  [dash pattern={on 3pt off 3pt}]  (279.83,90.08) -- (299.67,110.17) ;
\draw   [dash pattern={on 3pt off 3pt}]  (419.33,115.83) -- (300.33,170.83) ;
\draw [dash pattern={on 3pt off 3pt}]    (419.67,176.17) -- (341.33,210.17) ;
\draw  [dash pattern={on 3pt off 3pt}]   (350,90.17) -- (299.67,110.17) ;
\draw [dash pattern={on 3pt off 3pt}]    (419.67,136.5) -- (301.67,189.17) ;
\draw [dash pattern={on 3pt off 3pt}]    (390.67,90.83) -- (300,130.5) ;
\draw [dash pattern={on 3pt off 3pt}]    (420,95.5) -- (299.67,150.17) ;
\draw [dash pattern={on 3pt off 3pt}]    (419.67,157.5) -- (300.5,210.83) ;
\draw [dash pattern={on 3pt off 3pt}]    (418.67,194.5) -- (379.67,210.5) ;

\draw (232.67,216.73) node [anchor=north west][inner sep=0.75pt]  [font=\scriptsize]  {$A$};
\draw (355,215.9) node [anchor=north west][inner sep=0.75pt]  [font=\scriptsize]  {$B$};
\draw (419,216.4) node [anchor=north west][inner sep=0.75pt]  [font=\scriptsize]  {$x$};
\draw (305.5,74.9) node [anchor=north west][inner sep=0.75pt]  [font=\scriptsize]  {$t$};

\end{tikzpicture}

\caption{The stationary undercompressive solution $c^{AB}$.}
\label{statsolutionpic}
\end{figure}

Observe that condition (2) is equivalent 
to: $f'_l(A)\leq 0$, \ $f'_r(B)\geq 0$.
Therefore, if $(A,B)$ is a connection,
then
the function 
\begin{equation}\label{eq:cABdef}
c^{AB}(x) \stackrel{\cdot}{=}
\begin{cases}
A, & x \leq 0, \\
B, & x \geq 0
\end{cases}
\end{equation}
is a weak stationary undercompressive (or marginally undercompressive) solution of \eqref{conslaw}, since the characteristics diverge from, or are parallel to, the flux-discontinuity interface (see Figure \ref{statsolutionpic}).
In relation to the function $c^{AB}$  the \, {\it adapted entropy} \,
$\eta_{AB}(x,u)=\big|u-c^{AB}(x)\big|$ is introduced  in~\cite{BKTengquist}. Then, in the spirit of~\cite{audusse2005uniqueness},  the entropy $\eta_{AB}$  is employed in~\cite{BKTengquist} to select a unique
solution of the Cauchy problem~\eqref{conslaw}-\eqref{initdat} that satisfies the interface entropy inequality
\begin{equation}\label{adaptedABentropy}
\left|u-c^{AB}\right|_t + \left[\mathrm{sgn}(u-c^{AB})(f(x,u)-f(x,c^{AB}))\right]_x \leq 0, \quad \textrm{in}\ \ \mathcal{D}^{\prime}\,,
\end{equation}
in the sense of distributions, which leads to the following definition.

\begin{defi}[{\bf $AB$-entropy solution}]\label{defiAB}
Let $(A,B)$ be a connection
and let $c^{AB}$ be the function defined in~\eqref{eq:cABdef}. A function $u \in \mathbf L^{\infty}(\mathbb R \times [0,+\infty[\,){ ~\cap~ \mc C^0([0,+\infty), \; \mathbf L^1_{loc}(\mathbb R)) }$ is said to be an \textit{$AB$-entropy solution} of the problem \eqref{conslaw},\eqref{initdat} if the following holds:
\begin{enumerate}
[leftmargin=25pt]
\item $u$ is a distributional solution of \eqref{conslaw} on $\R\times \,]0,+\infty[$, that is, for all test functions $\phi \in \mathcal{C}^1_c$
with compact support 
contained in $\mathbb{R}\times \,]0,+\infty[$,
it holds true
\begin{equation}\label{AB1}
\int_{-\infty}^{\infty}\int_0^{\infty}  \big\{u \phi_t+f(x,u)\phi_x \big\}\dif x \dif t = 0\,.
\end{equation}
\item $u$ is a Kru\v{z}kov entropy weak solution of \eqref{conslaw},\eqref{initdat} on $(\mathbb R \setminus \{0\}) \times \,]0,+\infty[$, that is
 the initial condition~\eqref{initdat} is satisfied almost everywhere,
and:
\begin{itemize}
[leftmargin=27pt]
\item[(2.a)]
for any non-negative test function $\phi\in\mathcal{C}^1_c$ with compact support 
contained in $]-\infty,0[\,\times $ $\,]0,+\infty[$, it holds true
\begin{equation}\label{kruz-}
\int_0^{\infty} \int_{-\infty}^0  \big\{|u-k| \phi_t + \mathrm{sgn}(u-k)\left(f_l(u) - f_l(k)\right) \phi_x \big\}
\dif x \dif t \geq 0, \quad \forall k \in \mathbb R\,;
\end{equation}
\item[(2.b)]
for any non-negative test function $\phi\in\mathcal{C}^1_c$ with compact support 
contained in $]0,+\infty[\,\times$ $]0,+\infty[$, it holds true
\begin{equation}\label{kruz+}
\int_0^{\infty}\int_0^{\infty} \big\{
|u-k| \phi_t + \mathrm{sgn}(u-k)\left(f_r(u) - f_r(k)\right) \phi_x
\big\}\dif x \dif t \geq 0, \quad \forall k \in \mathbb R\,.
\end{equation}
\end{itemize}

\item $u$ satisfies a Kru\v{z}kov-type entropy inequality relative to the connection $(A,B)$, that is, 
for any non-negative test function $\phi\in\mathcal{C}^1_c$ with compact support 
contained in $\R\times ]0,+\infty[$, it holds true
\begin{equation}\label{adaptedentropyinequality}
\int_{-\infty}^{\infty}\int_0^{\infty} \big\{\left|u-c^{AB}\right| \phi_t + \mathrm{sgn}(u-c^{AB})\left(f(x,u) - f(x,c^{AB})\right)\phi_x \big\}\dif x \dif t \geq 0\,.
\end{equation}
\end{enumerate}
\end{defi}

%

\begin{remark}
\label{rem:abentr-sol-prop1}
Since
an $AB$-entropy solution $u$ is in particular an entropy weak solution 
of a scalar conservation law
with uniformly convex flux,
on $]-\infty, 0[\,\times\,]0,+\infty[$,
and on $]0,+\infty[\,\times\,]0,+\infty[$
(by property (2) of Definition~\ref{defiAB}
and assumption~\eqref{eq:flux-assumption-1}),
it follows that $u(\cdot\,,t)\in {BV}_{\mathrm{loc}}(\mathbb R \setminus \{0\})$ for any $t>0.$
Here $BV_{\mathrm{loc}}(\mathbb R\setminus\{0\})$
denotes the set of functions that have finite total variation on compact subsets of $\mathbb R \setminus \{0\}$.
On the other hand, 
relying on  a result in \cite{panov}
(see also~\cite{MR1869441}), 
we deduce that
$u$ admits left and right strong traces at $x = 0$ for a.e. $t>0$, i.e. that 
there exist the one-sided limits \begin{equation}\label{traces}
u_l(t)\doteq u(0-,t),
\qquad 
u_r(t)\doteq  u(0+,t),
\qquad \text{for a.e. $t > 0$}\,.
\end{equation}
We point out that a consequence of 
the characterizatin of attainable profiles
provided by our results (Theorems \ref{thm:attprofiles}, \ref{thm:attprofilescrit}, \ref{thm:attprofiles2}, \ref{thm:attprofiles3})
will be that these limits are actually defined at every time $t>0$ (not only at almost every time).
Moreover, 
since $u$ is also a distributional solution of \eqref{conslaw} on $\R\times \,]0,+\infty[$
(by property (1) of Definition~\ref{defiAB}), we deduce that
$u$ must satisfy the Rankine-Hugoniot condition at the interface $x=0:$
\begin{equation}\label{RHtraces}
f_l(u_l(t)) = f_r(u_r(t)), \qquad \text{for a.e. $t>0$}.
\end{equation}
\end{remark}

\noindent
In~\eqref{traces} and throughout the paper, for the one-sided limits of a function $u(x)$ we use the notation
\begin{equation}
    u(x\,\pm)\doteq \lim_{y\to x\,\pm} u(y).
\end{equation}
\medskip

In relation to a connection $(A,B)$
consider the function
\begin{equation}
\label{IABdef}
I^{AB}(u_l,u_r)  \stackrel{\cdot}{=} \mathrm{sgn}(u_r-B)\left(f_r(u_r)-f_r(B)\right) \\- \mathrm{sgn}(u_l-A)\left(f_l(u_l)-f_l(A)\right),
\qquad u_l,u_r\in\R\,,
\end{equation}
which is useful to characterize 
the interface entropy admissibility criterion. In fact, by the analysis in ~\cite[Lemma 3.2]{BKTengquist} and ~\cite[Section~4.8]{MR2807133},
it follows that, because of
condition (1) of Definition~\ref{defiAB}
and assumption~\eqref{eq:flux-assumption-1}, 
the following holds.

\begin{lemma}
\label{lem:traceseq}
Let $u \in \mathbf L^{\infty}(\mathbb R \times [0,+\infty))$
be a function satisfying conditions (1)-(2) of Definition~\ref{defiAB}.
Then, condition (3) is equivalent to the
$AB$ interface entropy condition
\begin{itemize}
    \item[(3)'] 
    \begin{equation}
    \label{interfaceentropy}
I^{AB}(u_l(t),u_r(t)) \leq 0 \qquad\quad  \text{for a.e. $t>0$\,.}
\end{equation}
\end{itemize}
%
\end{lemma}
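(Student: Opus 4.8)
The plan is to localize the adapted entropy inequality~\eqref{adaptedentropyinequality} along the interface $\{x=0\}$ and to identify the resulting singular term with the quantity $I^{AB}(u_l(t),u_r(t))$ from~\eqref{IABdef}. I would set $q_l(v)\doteq \mathrm{sgn}(v-A)\big(f_l(v)-f_l(A)\big)$ and $q_r(v)\doteq \mathrm{sgn}(v-B)\big(f_r(v)-f_r(B)\big)$, so that, recalling~\eqref{eq:cABdef}, the adapted entropy flux appearing in~\eqref{adaptedentropyinequality} equals $q_l(u)$ on $\{x<0\}$ and $q_r(u)$ on $\{x>0\}$, while~\eqref{IABdef} can be rewritten as $I^{AB}(u_l,u_r)=q_r(u_r)-q_l(u_l)$. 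By conditions (2.a)--(2.b) of Definition~\ref{defiAB} and assumption~\eqref{eq:flux-assumption-1}, the restriction of $u$ to each half-plane $\{x<0\}$, $\{x>0\}$ is a Kru\v{z}kov entropy solution of a uniformly convex conservation law; hence (cfr.\ Remark~\ref{rem:abentr-sol-prop1}) $u$ admits the strong one-sided traces $u_l(t)=u(0-,t)$, $u_r(t)=u(0+,t)$ for a.e.\ $t>0$ as in~\eqref{traces}, and the Kru\v{z}kov entropy-dissipation distributions $\nu_l\doteq \partial_t|u-A|+\partial_x q_l(u)$ on $\{x<0\}$ and $\nu_r\doteq \partial_t|u-B|+\partial_x q_r(u)$ on $\{x>0\}$ are non-positive. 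Moreover, since $u$ is bounded, applying the divergence theorem on rectangles shrinking towards the interface shows that $\nu_l,\nu_r$ extend to locally finite (non-positive) measures up to $\{x=0\}$.

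Next I would integrate~\eqref{adaptedentropyinequality} by parts. For a test function $\phi\in\mathcal C^1_c(\mathbb R\times\,]0,+\infty[)$, I would split the integral over $\{x<0\}$ and $\{x>0\}$ and integrate by parts separately in the two half-planes, the boundary contributions at $x=0$ being expressed through the strong traces $u_l,u_r$. This should give the identity
\begin{equation*}
\begin{aligned}
&\int_0^{\infty}\!\!\int_{-\infty}^{\infty}\big\{|u-c^{AB}|\,\phi_t + \mathrm{sgn}(u-c^{AB})\big(f(x,u)-f(x,c^{AB})\big)\phi_x\big\}\,\dif x \dif t\\
&\hskip 3.2cm = -\int_0^{\infty} I^{AB}(u_l(t),u_r(t))\,\phi(0,t)\,\dif t\ -\ \langle \nu_l,\phi\rangle\ -\ \langle \nu_r,\phi\rangle\,.
\end{aligned}
\end{equation*}
Since $\nu_l,\nu_r\le 0$, for every $\phi\ge 0$ the last two terms are non-negative; hence, if $I^{AB}(u_l(t),u_r(t))\le 0$ for a.e.\ $t>0$, the right-hand side is non-negative and~\eqref{adaptedentropyinequality} holds, which yields the implication (3)'~$\Rightarrow$~(3). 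For the converse, given $g\in\mathcal C^1_c(\,]0,+\infty[\,)$ with $g\ge 0$, I would insert $\phi(x,t)=g(t)\,\chi_\varepsilon(x)$ into the identity, with $\chi_\varepsilon\ge 0$ a standard cut-off satisfying $\chi_\varepsilon(0)=1$ and $\mathrm{supp}\,\chi_\varepsilon\subset(-\varepsilon,\varepsilon)$; letting $\varepsilon\to 0^+$, the terms $\langle \nu_l,\phi\rangle$ and $\langle \nu_r,\phi\rangle$ vanish by dominated convergence (the measures $|\nu_l|,|\nu_r|$ being locally finite and not charging $\{x=0\}$), while the interface term is unchanged, so~\eqref{adaptedentropyinequality} forces $\int_0^{\infty} I^{AB}(u_l(t),u_r(t))\,g(t)\,\dif t\le 0$ for every such $g$, whence $I^{AB}(u_l(t),u_r(t))\le 0$ for a.e.\ $t>0$, i.e.\ (3)'.

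The step I expect to be the main obstacle is the one concealed in ``integrate by parts up to the interface'': one has to show that the one-sided traces at $x=0$ of the adapted entropy flux $\mathrm{sgn}(u-c^{AB})\big(f(x,u)-f(x,c^{AB})\big)$ are precisely $q_l(u_l(t))$ and $q_r(u_r(t))$, and that the whole interface part of $\partial_t|u-c^{AB}|+\partial_x\big(\mathrm{sgn}(u-c^{AB})(f(x,u)-f(x,c^{AB}))\big)$ reduces to $I^{AB}(u_l(t),u_r(t))\,\dif t\otimes\delta_{\{x=0\}}$, with no additional singular contribution (this last point exploits the boundedness of $|u-c^{AB}|$, which rules out a singular $t$-derivative concentrated on $\{x=0\}$). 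This is exactly the content of the analysis in~\cite[Lemma 3.2]{BKTengquist} and~\cite[Section~4.8]{MR2807133}, which builds on the existence of strong traces for entropy solutions of uniformly convex conservation laws, see~\cite{panov}.
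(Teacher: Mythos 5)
Your proof is correct and follows essentially the same route the paper relies on: the paper does not prove Lemma~\ref{lem:traceseq} itself but defers to \cite[Lemma 3.2]{BKTengquist} and \cite[Section 4.8]{MR2807133}, and your argument --- splitting~\eqref{adaptedentropyinequality} across the two half-planes, integrating by parts up to the interface via the strong traces of \cite{panov}, identifying the boundary contribution with $-\int_0^\infty I^{AB}(u_l(t),u_r(t))\,\phi(0,t)\,\dif t$ plus the non-positive Kru\v{z}kov dissipation measures, and then testing with $g(t)\chi_\varepsilon(x)$ for the converse --- is precisely the content of those references. The one point worth making fully explicit is the local finiteness of $\nu_l,\nu_r$ up to $\{x=0\}$ (needed so that $\langle\nu_l,\phi\rangle,\langle\nu_r,\phi\rangle\to 0$ as $\varepsilon\to 0$), which you correctly obtain from the boundedness of $u$ together with the existence of the strong flux traces.
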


\begin{lemma}
\label{lem:traceseq2}
Let $(A,B)$ be a connection. Then, for any pair $(u_l,u_r)\in\R^2$, the conditions
\begin{equation}
        f_l(u_l)=f_r(u_r),
        \qquad\quad 
        I^{AB}(u_l, u_r)\leq 0\,,
\end{equation}
are equivalent to the conditions
\begin{equation}
\label{ABtraces}
\begin{aligned}
    &f_l(u_l) = f_r(u_r) \geq f_l(A) = f_r(B), \\ 
    \noalign{\smallskip}
&\big(u_l \leq \theta_l, \quad u_r \geq \theta_r\big) \ \ \Longrightarrow \ \ u_l = A, \ \ u_r = B\,.
\end{aligned}
\end{equation}
\end{lemma}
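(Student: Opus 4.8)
The plan is to prove Lemma~\ref{lem:traceseq2} by a direct case analysis, showing the equivalence of the two systems of conditions. First I would observe that the condition $f_l(u_l)=f_r(u_r)$ appears in both formulations, so the real content is to show that, \emph{under} the Rankine--Hugoniot identity $f_l(u_l)=f_r(u_r)$, the inequality $I^{AB}(u_l,u_r)\le 0$ is equivalent to the conjunction of $f_l(u_l)\ge f_l(A)$ together with the implication in the second line of~\eqref{ABtraces}. I would unpack the definition~\eqref{IABdef} of $I^{AB}$ and split according to the positions of $u_l$ relative to $\theta_l$ (equivalently, relative to $A$, since $f_l$ is uniformly convex with minimizer $\theta_l$ and $A\le\theta_l$) and of $u_r$ relative to $\theta_r$ (equivalently relative to $B$).

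The key cases are the following four. If $u_l\ge\theta_l$ and $u_r\le\theta_r$: then $u_l\ge A$ and $u_r\le B$, so $\mathrm{sgn}(u_l-A)=1$, $\mathrm{sgn}(u_r-B)=-1$ (up to boundary values handled separately), giving $I^{AB}=-(f_r(u_r)-f_r(B))-(f_l(u_l)-f_l(A))=-(f_l(u_l)+f_r(u_r))+2f_l(A)$; using $f_l(u_l)=f_r(u_r)$ this is $\le 0$ iff $f_l(u_l)\ge f_l(A)$, and the implication in~\eqref{ABtraces} is vacuous here, so the two conditions match. If $u_l\le\theta_l$ and $u_r\ge\theta_r$: then $\mathrm{sgn}(u_l-A)\le 0$ and $\mathrm{sgn}(u_r-B)\ge 0$, and a short computation shows $I^{AB}=f_l(u_l)+f_r(u_r)-2f_l(A)$ when $u_l\le A$ and $u_r\ge B$ — wait, one must be careful with the signs; more precisely when $u_l< A$ one has $\mathrm{sgn}(u_l-A)=-1$ so $-\mathrm{sgn}(u_l-A)(f_l(u_l)-f_l(A))=f_l(u_l)-f_l(A)\ge 0$ by convexity, and similarly the $u_r$ term is $\ge 0$, so $I^{AB}\ge 0$ with equality iff $u_l=A$ and $u_r=B$; hence $I^{AB}\le 0$ forces $u_l=A$, $u_r=B$, which is exactly the implication, and in that case $f_l(u_l)=f_l(A)$ so the inequality $f_l(u_l)\ge f_l(A)$ holds automatically. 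The two mixed cases $u_l\le\theta_l$, $u_r\le\theta_r$ and $u_l\ge\theta_l$, $u_r\ge\theta_r$ are handled similarly: in each, exactly one of the two summands in $I^{AB}$ is $\le 0$ (a true constraint) and the other is $\ge 0$ (automatically satisfied by convexity once $f_l(u_l)=f_r(u_r)$ is used), and one checks that the resulting constraint is again equivalent to $f_l(u_l)\ge f_l(A)$, with the implication being vacuous because its hypothesis fails.

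Finally I would collect the cases and deal with the sign-function boundary values ($u_l=\theta_l$, $u_r=\theta_r$, $u_l=A$, $u_r=B$) by noting that the expression $\mathrm{sgn}(u-k)(f(u)-f(k))$ vanishes at $u=k$ and is continuous there, so the borderline cases are consistent with the adjacent open cases; in particular when $u_l=A$ (so $f_l(u_l)=f_l(A)$) the RH identity forces $f_r(u_r)=f_r(B)$, hence $u_r\in\{B, 2\theta_r-B\}$, and one checks $I^{AB}\le0$ holds, matching $f_l(u_l)\ge f_l(A)$ with equality.

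I expect the main obstacle to be purely bookkeeping: keeping track of the six or so sign configurations, the degenerate boundary values of $\mathrm{sgn}$, and the repeated use of the convexity inequality $\mathrm{sgn}(u-A)(f_l(u)-f_l(A))\ge 0$ (valid precisely because $A\le\theta_l=\arg\min f_l$, with equality only at $u=A$) and its analogue for $f_r$ and $B$. There is no deep idea required — the statement is essentially a restatement of the admissibility region for the $(A,B)$-connection already implicit in~\cite[Lemma 3.2]{BKTengquist} and~\cite[Section~4.8]{MR2807133} — but care is needed to ensure no configuration is dropped and that strict versus non-strict inequalities are tracked correctly throughout.
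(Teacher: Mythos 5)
The paper gives no proof of Lemma~\ref{lem:traceseq2}: it is stated immediately after Lemma~\ref{lem:traceseq} and implicitly attributed to the cited analysis in \cite{BKTengquist} and \cite{MR2807133}, so there is no in-paper argument to compare against. Your direct case analysis is the natural argument and it is correct: splitting according to the position of $u_l$ relative to $\theta_l$ and of $u_r$ relative to $\theta_r$, and writing $I^{AB}=P_r(u_r)-P_l(u_l)$ with $P_l(u_l)=\mathrm{sgn}(u_l-A)(f_l(u_l)-f_l(A))$ and $P_r(u_r)=\mathrm{sgn}(u_r-B)(f_r(u_r)-f_r(B))$, one finds in the quadrant $u_l\le\theta_l$, $u_r\ge\theta_r$ that $-P_l\ge 0$ and $P_r\ge 0$ with equality only at $u_l=A$, $u_r=B$ (so $I^{AB}\le 0$ is exactly the second line of \eqref{ABtraces}), and in the other three quadrants that $I^{AB}\le 0$ reduces, via $f_l(u_l)=f_r(u_r)$, to $f_l(u_l)\ge f_l(A)$. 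Two small points to clean up. First, when $u_l=A$ the Rankine--Hugoniot identity gives $f_r(u_r)=f_r(B)$, hence $u_r\in\{B,\overline B\}$ with $\overline B=f_{r,-}^{-1}(f_r(B))$ as in \eqref{eq:bar-AB-def}; your "$2\theta_r-B$" tacitly assumes $f_r$ is symmetric about $\theta_r$, which is not part of the hypotheses. Second, the claim that the implication in \eqref{ABtraces} is "vacuous" in the three quadrants other than $\{u_l\le\theta_l,\,u_r\ge\theta_r\}$ fails on their closed boundaries (e.g.\ $u_l=\theta_l$ or $u_r=\theta_r$), where the hypothesis of the implication can still be active; your final paragraph on boundary values should explicitly verify these edge points (a direct check shows that there $I^{AB}\le 0$ together with $f_l(u_l)=f_r(u_r)$ forces $q:=f_l(u_l)=f_l(A)$ and hence $(u_l,u_r)=(A,B)$, so both formulations agree). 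With those touches the proof is complete.
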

\medskip

The first condition in~\eqref{ABtraces}
tells us that, when we choose
a connection $(A,B)$
and we employ the concept of
$AB$-entropy solution, we are 
imposing a constraint (from below) on the flux at the interface $x=0$.
In order to achieve existence, we need to compensate for this constraint with an  additional freedom in the admissibility criteria.
In fact, the second condition in~\eqref{ABtraces} 
prescribes the admissibility of exactly one undercompressive wave at the interface, given by $c^{AB}$ in~\eqref{eq:cABdef}. 
This rule corresponds to the 
 {\it $(A,B)$ characteristic condition} in~\cite[Definition~1.4]{BKTengquist}.
\medskip

\begin{remark}
\label{rem:char-ABsol}
 In view of~\eqref{RHtraces}, 
 we can extend the classical concept of genuine characteristic for solutions to conservation laws with continuous fluxes
 (see~\cite{dafermosgenchar})
 by considering also characteristics that are refracted by the discontinuity interface $x=0$. Thus, we will say  that a polygonal line $\vartheta : [0,T]\to\R$ is a {\it genuine characteristic for an $AB$-entropy solution~$u$} if  one of the following cases occurs:
 \begin{itemize}
 [leftmargin=25pt]
     \item[(i)] $\vartheta(t)<0$ for all $t\in\,]0,T[\,$, and $\vartheta$ is a  characteristic for the restriction of $u$ on  $]-\infty, 0[\,\times\,]0, T[\,$;
      \item[(ii)] $\vartheta(t)>0$ for all $t\in\,]0,T[\,$, and $\vartheta$ is a characteristic for the restriction of $u$ on  $]0,+\infty[\,\times\,]0, T[\,$;
      \item[(iii)] there exists $\tau\in \,]0, T[$\,, such that: 
      \begin{itemize}
      [leftmargin=10pt]
          \item[-]
          $\vartheta(t)<0$ for all $t\in\,]0,\tau [\,$, and $\vartheta$ is a characteristic for the restriction of $u$ on  $]-\infty, 0[\,\times\,]0, \tau[\,$,
          \item[-]
      $\vartheta(t)>0$ for all $t\in\,]\tau,T[\,$, and $\vartheta$ is a characteristic for the restriction of $u$ on  $]0,+\infty[\,\times\,]\tau, T[\,$,\\
      or viceversa.
      \item[-]
      $f_l(u_l(\tau))=f_r(u_r(\tau))$ and $I^{AB}(u_l(\tau),u_r(\tau)) \leq 0 $,
      \end{itemize}
      %
 \end{itemize}
where we are using the term ``characteristic'' for a classical genuine characteristic of a solution to the conservation law $u_t+f_l(u)_x=0$
on $\{x<0\}$, or to the conservation law $u_t+f_r(u)_x=0$
on $\{x>0\}$.
\end{remark}

{ 
\begin{remark}[Local solutions]
 Throughout the  paper  we say that 
 a function $u \in \mathbf L^{\infty}(\Omega)$  is 
 a (local) \textit{$AB$-entropy solution} of~\eqref{conslaw}
 on a domain $$\Omega \doteq \big\{(t,x) \;  | \; t \in [a, b], \quad \gamma_1(t) < x <  \gamma_2(t) \big\}\subset  \mathbb R\times\,]0,+\infty[$$ where $\gamma_1< \gamma_2 :[a,b] \to \mathbb R$ are Lipschitz curves if it satisfies the conditions of Definition~\ref{defiAB} localized on $\Omega$.
 Namely, if the following holds:
 \begin{enumerate}
     \item For any test functions $\phi \in \mathcal{C}^1_c(\Omega)$
with compact support 
contained in $\Omega$,
it holds true~\eqref{AB1}.
      \item 
      The map $t \mapsto u(\cdot,t)$ is continuous  from $I\doteq\{t>0\,:\, (x,t)\in\Omega\}$ to ${\bf L^1_{\mr{loc}}}(\Omega_t)$,
      $\Omega_t\doteq\{x\,:\, (x,t)\in\Omega\}$,
      and it holds:
     \begin{itemize}
     [leftmargin=27pt]
\item[(2.a)] for any non-negative test function $\phi\in\mathcal{C}^1_c(\Omega)$ with compact support 
contained in $\Omega\cap\big(]-\infty,0[\,\times\,]0,+\infty[\big)$, it holds true~\eqref{kruz-};
\item[(2.b)] or any non-negative test function $\phi\in\mathcal{C}^1_c(\Omega)$ with compact support 
contained in $\Omega\cap\big(]0, +\infty[\,\times\,]0,+\infty[\big)$, it holds true~\eqref{kruz+}.
\end{itemize}
     \item For any test functions $\phi \in \mathcal{C}^1_c(\Omega)$
with compact support 
contained in $\Omega $,
it holds true~\eqref{adaptedentropyinequality}.
 \end{enumerate}
We will sometimes implicitly use the following fact: assume that two local $AB$-entropy solutions~$u_1, u_2$, of~\eqref{conslaw}
are given on two
disjoint domains $\Omega_1, \Omega_2$,
such that $\partial \Omega_1 \cap \partial \Omega_2 = \Gamma$,
where $\Gamma$ is the graph of a Lipschitz curve $\gamma:[a,b] \to \mathbb R$, with $\Omega_1 \subset \{x \leq  \gamma(t)\}$, $\Omega_2 \subset \{x \geq  \gamma(t)\}$.
Moreover, assume that 
$u_1(t,\gamma(t)-) = u_2(t, \gamma(t)+)$ for a.e. $t \in [a, b]$ such that $\gamma(t) \neq 0$, and that 
$$f_l(u_1(0-,t))=f_r(u_2(0+,t)),\qquad\quad I^{AB}(u_1(0-,t),u_2(0+,t)) \leq 0,$$
for a.e. $t\in [a,b]$ such that $\gamma(t)=0$.
Then, by standard arguments one can deduce  that the function 
$$
u(x,t) = \begin{cases}
    u_1(x,t) & \text{if $(x,t) \in \Omega_1$},\\
    u_2(x,t) & \text{if $(x,t) \in \Omega_2$}
\end{cases}
$$
is an $AB$-entropy solution of~\eqref{conslaw} on $\Omega$.
\end{remark}
}
\medskip

It was proved in~\cite{Adimurthi2005,BKTengquist} (see also~\cite{MR2807133,Garavellodiscflux})
that  $AB$-entropy solutions of~\eqref{conslaw},\eqref{initdat}
with bounded initial data are unique and form an $\mathbf{L}^1$-contractive semigroup. Moreover, we will show that they are ${\bf L^1}$-stable also with respect to the
values $A,B$ of the connection.
This type of stability, beside 
being used to extend our main results
from the case of  non critical connections to the critical one,
has 
an interest on its own.
We will also
prove that $AB$-entropy solutions of~\eqref{conslaw},\eqref{initdat}
are ${\bf L^1}$-Lipschitz continuous in time. This property is an immediate consequence of the BV regularity
of such solutions in the case of
non critical connections. 
Instead,  
in the case of critical connections where
$A=\theta_l$ or $B=\theta_r$, and $f_l(\theta_l)\neq f_r(\theta_r)$, the total variation
of an $AB$-entropy solution
may well blow up in a neighborhood of the
flux-discontinuity interface $x=0$, 
as shown in~\cite{MR2743877}. 
However, we recover the ${\bf L^1}$-Lipschitz continuity in time also in this case exploiting the BV regularity of the flux of an $AB$-entropy solution, which
is established relying on the analysis pursued in this paper.
We collect all these (old and new) results in the following statement:
\begin{thm}\label{theoremsemigroup}
{\bf (Semigroup of $\textbf{\textit{AB}}$-Entropy Solutions)}
Let $f$ be a flux as in~\eqref{discflux}
satisfying the assumption~\eqref{eq:flux-assumption-1},
and let 
 $(A,B)$
 be a connection. Then there exists a map
 \begin{equation}
 \label{eq:f-AB-op}
    \sabp: [0,+\infty[\,\times\,{\bf L^\infty}(\R) \to
   {\bf L^\infty}(\R),
   \qquad\quad (t,u_0)\mapsto 
   \sabpt u_0\,,
   \end{equation}
enjoying the following properties:   
\begin{enumerate}
\item[(i)] For each $u_0 \in {\bf L}^{\infty}(\R)$, the function $u(x,t) \doteq \sabpt u_0(x)$ provides the unique bounded, $AB$-entropy  solution of the Cauchy problem \eqref{conslaw}, \eqref{initdat}.
    \item[(ii)] 
    $\sabp_0  u_0 = u_0$, \ $\sabp_s \circ \sabp_t u_0= \sabp_{s+t}u_0$, \ for all \ $t,s \geq 0$, $u_0\in {\bf L^\infty}(\R)$.
    \smallskip
    \item[(iii)] For any $u_0, v_0\in {\bf L^\infty}(\R)$, there exists a constant $L>0$,
depending on  $f$ and 
on $\|u_0\|_{\bf L^\infty}, \|v_0\|_{\bf L^\infty}$,
such that, for any $R > 0$, it holds: \\
$\big\Vert \sabpt u_0-\sabpt v_0\big\|_{{\bf L}^1([-R,R])}
\leq \big\Vert u_0-v_0\big\|_{{\bf L}^1([-R-Lt, R+Lt])}$,
    \ for all \ $t\geq 0$.
    
    \smallskip
\item[(iv)] For any $u_0
\in {\bf L^\infty}(\R)$, 
and for any $R > 0$, it holds: \\
$\big\Vert \sabpt u_0-\mathcal{S}_t^{[A^{\prime}B^{\prime}]+}u_0
\big\Vert_{{\bf L}^1([-R ,R])} \leq 2\,t\, \big| f_r(B)-f_r(B^{\prime}) \big|$,\\
for all connections $(A,B)$, $(A^{\prime}, B^{\prime})$, and for all $t \geq 0$.
\smallskip
{\item[(v)] 
For any
$u_0\in {\bf L^\infty}(\R)$,
and for any $R>0$,
there exists a constant $C_{R}>0$ 
depending on $f$, $\|u_0\|_{\bf L^1}$, $R$,
and on the connection $(A,B)$,
such that it holds:\\
$\big\Vert \sabpt u_0-\mc S^{[AB]+}_s u_0\big\Vert_{{\bf L}^1{([-R,R])} } \leq \dfrac{C_{R}}{t} 
|s-t|$, \  for all $s> t>0$.
}
\end{enumerate}
\end{thm}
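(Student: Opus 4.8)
The statements (i), (ii) and (iii) collect the classical well-posedness results for $AB$-entropy solutions, so the plan is to simply invoke~\cite{Adimurthi2005,BKTengquist,MR2807133,Garavellodiscflux}: existence, uniqueness and $\mathbf L^1$-contraction (hence finite speed of propagation, which gives the localized estimate in (iii) with $L$ depending only on $\|u_0\|_{\mathbf L^\infty}$, $\|v_0\|_{\mathbf L^\infty}$ through the $\mathbf L^\infty$ bound on the solutions and a Lipschitz bound on $f_l',f_r'$ on the relevant compact set). The semigroup property (ii) is again standard, following from uniqueness together with the fact that the time-shift of an $AB$-entropy solution is again an $AB$-entropy solution with the shifted datum as initial value. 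So the actual work is concentrated in (iv) and (v), which are the new stability statements; these I would prove in Appendix~\ref{app:stabconn} and merely cite here.

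For (iv), the plan is a doubling-of-variables / Kru\v{z}kov-type comparison between the two semigroups, adapted to the interface. The key point is that $c^{AB}$ and $c^{A'B'}$ are both stationary weak solutions (not entropic for the other connection, but still weak solutions satisfying Rankine--Hugoniot at $x=0$), and they differ only through the flux level at the interface: $f_l(A)=f_r(B)$ versus $f_l(A')=f_r(B')$. One runs the adapted entropy inequality~\eqref{adaptedentropyinequality} for $u=\sabpt u_0$ against the constant $c^{A'B'}$ (away from $x=0$ this is just the ordinary Kru\v zkov inequality with a constant, which holds) and symmetrically for $v=\mathcal S_t^{[A'B']+}u_0$ against $c^{AB}$; the terms away from the interface cancel by the standard argument, and the only surviving contribution is a boundary term at $x=0$ controlled by the jump in interface flux levels, i.e. by $|f_r(B)-f_r(B')|=|f_l(A)-f_l(A')|$. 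Integrating in time produces the factor $2t\,|f_r(B)-f_r(B')|$ on any interval $[-R,R]$ (the constant $2$ coming from the two one-sided contributions $x<0$ and $x>0$). The main technical care is in handling the strong traces at $x=0$ (available by Remark~\ref{rem:abentr-sol-prop1}) and checking that the interface error term has the claimed sign and size; I expect this to be the core computation of Appendix~\ref{app:stabconn}.

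For (v), the $\mathbf L^1_{\mathrm{loc}}$-Lipschitz continuity in time with the $1/t$ weight, the plan branches according to whether $(A,B)$ is critical. In the non-critical case one uses the uniform $BV$ bound on $\sabpt u_0$ on compact sets (Proposition~\ref{BVbound}, proved later in \S~\ref{sec:BVboundsABsol} from the attainable-set characterization): a uniform $BV_{\mathrm{loc}}$ bound on $u(\cdot,t)$ for $t\ge\tau$, combined with the equation $u_t=-f(x,u)_x$ read distributionally, immediately gives $\|u(\cdot,s)-u(\cdot,t)\|_{\mathbf L^1([-R,R])}\le C_R\,|s-t|$ for $s,t\ge\tau$, with $C_R$ proportional to the $BV$ bound times a Lipschitz constant of the fluxes; the $1/t$ factor enters because the $BV_{\mathrm{loc}}$ bound of $u(\cdot,t)$ degrades like $1/t$ as $t\to0^+$ (Ole  \v{\i}nik-type one-sided Lipschitz decay). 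In the critical case $u(\cdot,t)$ need not be $BV_{\mathrm{loc}}$ near $x=0$ (see~\cite{MR2743877}), so instead one works with the flux: the characterization of attainable profiles established in this paper yields a uniform $BV_{\mathrm{loc}}$ bound on $x\mapsto f(x,u(x,t))$ for $t\ge\tau$ (Proposition~\ref{BVbound}), and since $u_t=-\partial_x\big(f(x,u)\big)$ one again bounds $\|u(\cdot,s)-u(\cdot,t)\|_{\mathbf L^1}$ by the $BV$ norm of the flux times $|s-t|$, uniformly for $s,t\ge\tau$; scaling in $\tau$ produces the $C_R/t$ dependence. The hardest part of the whole theorem is thus not in this section at all but in the later Proposition~\ref{BVbound} that (v) relies on; granting that, (v) is a short distributional-derivative argument.
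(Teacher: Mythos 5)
Your treatment of (i)--(iii) and (v) matches the paper: (i)--(iii) are cited from the literature, and (v) is proved in Appendix~\ref{app:stabconn} exactly as you describe, splitting into the non-critical case (uniform $BV_{\mathrm{loc}}$ bound on the solution from Proposition~\ref{BVbound}-(i), then the standard argument) and the critical case (uniform $BV_{\mathrm{loc}}$ bound on the flux from Proposition~\ref{BVbound}-(ii), then the argument of Dafermos' Theorem~4.3.1), with the $1/t$ coming from the Ole\v{\i}nik-type decay. You correctly identify that the real burden sits in Proposition~\ref{BVbound}.

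For (iv) your overall architecture (Kru\v{z}kov doubling, interface error term of size $2|f_r(B)-f_r(B')|$ per unit time, integration over a trapezoid) is the paper's, but the mechanism you describe for producing the interface bound is not quite right and, taken literally, does not close. You propose to test $u=\sabpt u_0$ against the constant $c^{A'B'}$ and $v=\mathcal S_t^{[A'B']+}u_0$ against $c^{AB}$; this cannot by itself yield an estimate on $\|u-v\|_{\mathbf L^1}$, since no combination of $|u-c^{A'B'}|$ and $|v-c^{AB}|$ controls $|u-v|$. What the paper actually does is double the variables \emph{directly between $u$ and $v$}, obtaining the Kato inequality~\eqref{leqE} with a boundary remainder $E=\int_0^\infty \alpha(u_l,u_r,v_l,v_r)\,\phi(0,t)\,dt$, where $\alpha$ is the quantity in~\eqref{alpha} built from the strong traces of the two solutions. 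The two different interface entropy conditions then enter through a purely algebraic, pointwise-in-time estimate on the trace quadruple: Lemma~\ref{Elemmadiffconnect} shows that if $(u_l,u_r)$ satisfies $I^{AB}\le 0$ and $(v_l,v_r)$ satisfies $I^{A'B'}\le 0$, together with Rankine--Hugoniot, then $\alpha\le 2|f_r(B')-f_r(B)|$ (the proof reduces, via Lemma~\ref{lem:traceseq2}, to the case $(v_l,v_r)=(A',B')$ and a case analysis on the position of $u_r$ relative to $B,B'$). That trace-comparison lemma is the missing ingredient in your plan; once it is in place, the rest of your outline (choice of test functions approximating the characteristic function of the trapezoid, passage to the limit, $\mathbf L^1$-continuity at $t=0$) is exactly the paper's computation.
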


\noindent
The proof of the new properties (iv)-(v) is given in  Appendix~\ref{app:stabconn}.

    { 
\begin{remark}
Most of the literature on conservation laws with discontinuous flux as in~\eqref{discflux}
    considers fluxes $f_l, f_r$ satisfying
    \begin{equation}
\label{eq:flux-assumption-2}
f_l(0) = f_r(0), \quad f_l(1) = f_r(1)\,, \qquad\quad \theta_l \geq 0, \quad \theta_r \leq 1.
\end{equation}
However 
the existence of an $\mathbf{L}^1$-contractive semigroup of $AB$-entropy  solutions of the Cauchy problem \eqref{conslaw}, \eqref{initdat},
remains valid also without this assumption as shown for example in~\cite{Garavellodiscflux}.
On the other hand, 
by a reparametrization of the fluxes, one can always 
    reduce the problem to the setting where 
    the critical points 
    of $f_l, f_r$ satisfy~\eqref{eq:flux-assumption-2}.
In fact, given $f_l, f_r$, 
    for any pair of invertible affine maps $\phi_l, \phi_r : \mathbb R \to \mathbb R$, we can observe that a map $u(x,t)$
    is an $AB$-entropy  solution
    of~\eqref{conslaw}, \eqref{discflux} with fluxes $f_l, f_r$ if and only if 
$$\widetilde u(x,t) \doteq \begin{cases}
    \phi_l^{-1}(u) & \text{if $x <0,$} \\
    \phi_r^{-1}(u) & \text{if $x > 0,$}
\end{cases}
$$
is an $AB$-entropy  solution of~\eqref{conslaw}, \eqref{discflux} with  fluxes $f_l\circ \phi_l, \, f_r\circ \phi_r$.
\end{remark}
}
%
\smallskip
\begin{remark}
\label{rem:linf-bound-ABsol}
By the analysis in~\cite[\S 3.1]{Garavellodiscflux}
(see also~\cite[Remark~4.1]{anconachiri})
it follows that, for every $M>0$,
there exists  $C_M>0$ such that, if $\|u_0\|_{{\bf L}^\infty}\leq M$, and $A,B\leq M$, then 
$\|\sabpt u_0\|_{{\bf L}^\infty}\leq C_M$, for all~$t>0$.
\end{remark}
\smallskip
\begin{coro}
\label{cor:fluxtraces-stab}
Let $\{(A_n, B_n)\}_n$ be a sequence of connections that converges in $\mathbb R^2$ to a connection~$(A, B)$, and let $\{u_{n,0}\}_n$
be a sequence of functions in  ${\bf L}^\infty(\R)$
that converges in~$\bf L^1_{\mr{loc}}$ to $u_0\in {\bf L}^{\infty}(\R)$.
Let $u_{n,l}, u_{n,r}$ denote, respectively, 
the left and right traces 
at $x=0$ of $u_n(x,t)\doteq \mc S^{[A_nB_n]+}_t u_{n,0}(x)$,
defined as in~\eqref{traces}.
Similarly, let $u_l, u_r$ denote the left and right traces 
at~$x=0$ of $u(x,t)\doteq \mc S^{[AB]+}_t u_0(x)$.
Then, we have:
\begin{equation}
\label{eq:flux-trace-conv1}
     f_l(u_{n,l})
    \ \  \rightharpoonup  \ \
     f_l(u_{l}),
     \qquad 
     f_r(u_{n,r})
    \ \  \rightharpoonup \ \
     f_r(u_{r})
    \quad\text{weakly \ in}\quad
    \mathbf L^1(\mathbb R^+)\,.
\end{equation}
\end{coro}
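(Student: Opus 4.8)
\textbf{Proof plan for Corollary~\ref{cor:fluxtraces-stab}.}
The plan is to deduce the weak $\mathbf L^1$-convergence of the flux traces from the strong $\mathbf L^1_{\mr{loc}}$-convergence of the solutions themselves, which is provided by the stability estimates of Theorem~\ref{theoremsemigroup}, combined with the Rankine--Hugoniot relation~\eqref{RHtraces} at the interface. First I would fix $M>0$ large enough that $\|u_{n,0}\|_{{\bf L}^\infty}, \|u_0\|_{{\bf L}^\infty}\leq M$ and $A_n, B_n, A, B\leq M$ for all $n$ (possible since $(A_n,B_n)\to(A,B)$); by Remark~\ref{rem:linf-bound-ABsol} there is then a uniform bound $\|u_n(\cdot\,,t)\|_{{\bf L}^\infty}, \|u(\cdot\,,t)\|_{{\bf L}^\infty}\leq C_M$ for all $t>0$. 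Consequently the trace sequences $u_{n,l}, u_{n,r}$ are uniformly bounded in ${\bf L}^\infty(\mathbb R^+)$, and since $f_l, f_r$ are continuous, so are $f_l(u_{n,l}), f_r(u_{n,r})$; thus these sequences are bounded in ${\bf L}^\infty(\mathbb R^+)$, hence weakly-$\ast$ sequentially precompact, and it suffices to identify every weak-$\ast$ limit point (along a subsequence) with $f_l(u_l)$ and $f_r(u_r)$ respectively. Note that $\mathbf L^1(\mathbb R^+)$-weak convergence here should be understood tested against $\mathbf L^\infty(\mathbb R^+)$ functions with, say, compact support, which is enough to characterize the limit as a locally integrable function.

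The key step is to obtain, for a fixed test function $\psi\in\mathcal C^1_c(]0,+\infty[)$ in time, the identity
\begin{equation}
\label{eq:flux-trace-weak-id}
\int_0^\infty f_l(u_{n,l}(t))\,\psi(t)\dif t \ = \ -\int_0^\infty\!\!\int_{-\infty}^0 \big\{u_n\,\psi'(t)\,\chi(x) + f_l(u_n)\,\psi(t)\,\chi'(x)\big\}\dif x\dif t,
\end{equation}
where $\chi\in\mathcal C^1_c(]-\infty,0])$ is a fixed cutoff equal to $1$ near $x=0$; this is nothing but the weak formulation~\eqref{kruz-}-type divergence identity for the distributional equation $u_t+f_l(u)_x=0$ on $\{x<0\}$ (an integration by parts using the existence of the strong trace~\eqref{traces}), and an analogous identity holds on $\{x>0\}$ with $f_r, u_{n,r}$. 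Since $u_n\to u$ strongly in ${\bf L}^1_{\mr{loc}}(\mathbb R\times]0,+\infty[)$ by property (iii)--(iv) of Theorem~\ref{theoremsemigroup} (strong $\mathbf L^1_{\mr{loc}}$-stability with respect to both initial datum and connection), and $f_l, f_r$ are Lipschitz on the relevant bounded range of values, the right-hand side of~\eqref{eq:flux-trace-weak-id} converges to the corresponding expression for $u$, which by the same identity equals $\int_0^\infty f_l(u_l(t))\psi(t)\dif t$. This identifies the weak limit on test functions $\psi$ of the form $\psi(t)$; writing a general $\varphi\in\mathbf L^\infty$ with compact support as an $\mathbf L^1$-limit of such smooth functions and using the uniform $\mathbf L^\infty$ bound on the flux traces, one upgrades to weak $\mathbf L^1$-convergence. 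Since the limit is independent of the subsequence, the whole sequences converge, giving~\eqref{eq:flux-trace-conv1}.

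The main obstacle I anticipate is the justification of the divergence identity~\eqref{eq:flux-trace-weak-id} with the boundary term expressed through the \emph{strong} trace $u_{n,l}$: one must argue that the distributional relation $u_t+f_l(u)_x=0$ on the open half-plane $\{x<0\}$, together with the existence of strong one-sided traces at $x=0$ guaranteed by the result of~\cite{panov} recalled in Remark~\ref{rem:abentr-sol-prop1}, actually produces the stated boundary integral — i.e.\ that no singular part of the flux is lost at $x=0$. This is standard (it is the normal-trace theory for divergence-measure fields, or can be done by hand approximating $\chi$ by cutoffs supported in $x<-\varepsilon$ and letting $\varepsilon\to0$, using the strong trace to pass to the limit in $\int f_l(u(-\varepsilon,t))\psi(t)\dif t$), but it is the point where the convexity of $f_l, f_r$ and the $BV_{\mr{loc}}(\mathbb R\setminus\{0\})$ regularity from Remark~\ref{rem:abentr-sol-prop1} are genuinely used. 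A minor secondary point is to ensure the convergence is against the stated space $\mathbf L^1(\mathbb R^+)$ rather than merely locally in time; this follows from a uniform-in-time tail control, e.g.\ from the finite speed of propagation in property (iii) together with the fact that $u_{n,l}, u_{n,r}$ coincide with fixed (connection-independent) values outside a bounded time interval when the initial data have compact support, and by an approximation argument in the general ${\bf L}^\infty$ case.
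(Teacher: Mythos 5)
Your proposal is correct and follows essentially the same route as the paper: the paper also reduces the weak convergence of the interface flux traces to the strong $\mathbf L^1_{\mr{loc}}$-convergence of $u_n$ supplied by Theorem~\ref{theoremsemigroup}-(iii),(iv), via the weak-formulation/balance identity for $u_t+f_{l/r}(u)_x=0$ on the half-planes — the only cosmetic difference being that the paper integrates over a sharp box $[0,\overline x\,]\times[0,s]$ (choosing an auxiliary line $x=\overline x$ where $f_r(u_n(\overline x,\cdot))\to f_r(u(\overline x,\cdot))$ in $\mathbf L^1_{\mr{loc}}$, so that convergence of the primitives $s\mapsto\int_0^s f_r(u_{n,r})$ yields the weak convergence), whereas you use a smooth cutoff $\chi(x)$ so that only the trace at $x=0$ appears. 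One small slip: in your identity~the boundary term should carry the opposite sign, since for $\phi=\psi\chi$ with $\chi(0)=1$ one has $\int_0^\infty f_l(u_{n,l})\psi\dif t=\int_0^\infty\!\int_{-\infty}^0\{u_n\psi'\chi+f_l(u_n)\psi\chi'\}\dif x\dif t$; this does not affect the argument.
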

\noindent
The proof of the Corollary is given in  Appendix~\ref{app:stabconn}.

\begin{remark}
    We point out that, differently
    from~\eqref{eq:flux-trace-conv1},
    in general 
    the ${\bf L^1}$-convergence 
    $u_{n, l} \to u_l$
    and $u_{n, r} \to u_r$
    fails due to the possible formation of stationary boundary layers
    at the interface $x=0$, as one can see in the following
    \end{remark}
    \begin{exmp}
    Consider a  non critical connection 
    $(A, B)$ and the sequence of initial data
    $$
    u_{n,0}(x) = \begin{cases}
        \overline A, & \text{if $x \leq -n^{-1}$}, \\
        A, & \text{if $x \in \,]-n^{-1},0[$}, \\
        \overline B, & \text{if $x \geq 0$},
    \end{cases}
    $$
    with
   \begin{equation}
\label{eq:bar-AB-def}
     \overline A \doteq ({f_l}_{\mid [\theta_l, +\infty[\,})^{-1}\circ f_l(A)\,,
     \qquad\quad 
     \overline B \doteq 
    ({f_r}_{|\,]-\infty, \theta_r]})^{-1}\circ f_r(B),
\end{equation}
where $f_{|\,I}$
denotes the restriction of the function $f$ to the interval $I$.
    One can immediately check that the $AB$-entropy  solution
    of~\eqref{conslaw}, \eqref{initdat}, with initial datum $u_{n,0}$ is the stationary solution $u_n(t, x) = \mc S_t^{[AB]+}u_{n,0}(x) = u_{n,0}(x)$ and that $u_{n}$ converges in ${\mathbf L^1}$ to
    $$u(x,t)=\begin{cases}
        \overline A\ \ &\text{if}\quad x<0,
        \\
        \noalign{\smallskip}
        \overline B\ \ &\text{if}\quad x>0\,.
    \end{cases}
    $$
    Moreover, one has $u_{n,l}(t) = A$ and $u_l(t) = \overline A$ for every $t > 0$.
    \end{exmp}

\medskip

\subsection{Backward 
solution operator} 
In this section we shall first  review quickly the concept of 
backward solution
operator 
for conservation laws with flux
depending only on the state variable, and then
we will introduce the
definition of  
backward solution
operator associated to a connection~$(A,B)$,
for spatially discontinuous flux as in~\eqref{discflux}.

\subsubsection{Backward 
solution operator for conservation laws with space independent flux}\label{sec:backoopconv}
The use of the backward-forward method to characterize 
the attainable set for
conservation laws   was first proposed in~\cite{MR3643881,zuazua2020}  (see also~\cite{ZuazuainverseproblemHJ} in the framework of Hamilton-Jacobi equations). 
Because of the regularizing effect of the nonlinear dynamics 
of a conservation law 
\begin{equation}
\label{eq:convexconslaw}
    u_t+f(u)_x = 0,\qquad  x\in\R\,,
\end{equation}
with 
uniformly convex flux $f(u)$,
the only restriction to
controllability of~\eqref{eq:convexconslaw} at a fixed time $T>0$, when one regards as controls the initial data
\begin{equation}
\label{eq:indatum}
   u(x,0) = u_0(x), \qquad\quad\, x \in \mathbb R\,,
\end{equation}
is the decay of positive waves.
Therefore it is by now well known 
the characterization of the
attainable set
\begin{equation}
\label{eq:convvattsett}
    \mc A(T) = \big\{\omega \; : \; \omega = u(\cdot,T), \; \text{$u$ entropy weak solution of \eqref{eq:convexconslaw}-\eqref{eq:indatum} with $u_0\in{\bf L^\infty}$}\big\},
\end{equation}
in terms of the 
Ole\v{\i}nik-type inequality
\begin{equation}\label{eq:convexoleinik}
    D^+ \omega(x) \leq \frac{1}{Tf^{\prime \prime}(\omega(x))}, \quad \forall x \in \mathbb R\,,
\end{equation}
where $D^+ \omega$ denotes the upper Dini derivative of $\omega$
(see~\eqref{eq:dini_der_def}).
Similar results in the case of boundary controllability were obtained in~\cite{AGGcontrolconvex,anconamars,anconamars99,MR1612027}).

A different perspective to address this controllability problem 
was introduced in~\cite{MR3643881,zuazua2020},
and consists in constructing
initial data leading to 
 attainable targets $\omega$
 at a time horizon $T>0$,
through the definition
of an
appropriate concept of backward solution to~\eqref{eq:convexconslaw}.
Namely, letting $\mc S_t^+u_0(x)$ denote the (forward) entropy weak solution of the Cauchy problem~\eqref{eq:convexconslaw}-\eqref{eq:indatum}
evaluated at $(x,t)$,
it was defined in~\cite{zuazua2020} an appropriate \textit{backward operator} $\mc S_T^-: {\bf L^{\infty}} \to {\bf L^{\infty}}$, and proved that a profile $\omega$ belongs to $\mc A(T)$
if and only if $\omega = \mc S_{T}^+ \circ \mc S_{T}^- \omega$, i.e. if and only if it is a fixed point of the backward-forward operator $\mc S_{T}^+ \circ \mc S_{T}^-$ (see ~\cite[Corollary 1]{MR3643881}).
Moreover, for $\omega \in \mc A(T)$, the solution defined as 
\begin{equation}
\label{eq:u^*class}
    u^*(x,t) \doteq  \mc S_t^+ (\mc S_{T}^- \omega)(x),
    \qquad x\in\R,\ t\in [0,T]\,,
\end{equation}
is the unique  solution to \eqref{eq:convexconslaw} that
is locally Lipschitz on the strip $\mathbb R\times ]0,T[$, and yields $\omega$ at time $T$. Equivalently, 
\begin{equation}
\label{eq:indatum-lip}
    u^*_0 \doteq \mc S_{T}^- \omega
\end{equation} 
is the unique initial datum that produces a  solution to \eqref{eq:convexconslaw}
locally Lipschitz
on $]0,T[$,
yielding $\omega$ at time~$T$. 
The operator $\mc S_{t}^-$, for $t \geq 0$, is defined as follows
\begin{equation}
\label{eq:backw-sol-def}
    \mc S_{t}^-\omega(x)  \doteq  \mc S_t^+ \big(\omega(-\ \cdot\,)\big)(-x)\qquad x\in\R,\ t\geq 0\,.
\end{equation}
In words, we use $\omega(-\ \cdot\,)$ as initial datum for the forward operator $\mc S_t^+$, we compute  the (forward) solution
to \eqref{eq:convexconslaw}, and then we reverse the space variable.
{ 
\begin{remark}[Classical  solutions]
Throughout the  paper by a \emph{classical solution} to a conservation law with space independent flux $u_t + f(u)_x = 0$ we mean a locally Lipschitz function $u : \Omega \to \mathbb R$, $\Omega \subset \mathbb R\times\,]0,+\infty[$, such that 
$$
u_t(t, x) + f(u(x,t))_x = 0 \qquad \text{for a.e. $(t,x) \in \Omega$}.
$$
Any classical solution is an entropy admissible weak solution.
The function~\eqref{eq:u^*class} is a classical solution to~\eqref{eq:convexconslaw}. 
Sometimes in the literature classical solutions are denoted as {\it strong solutions}.
\end{remark}
}
\begin{remark}
    \label{charact-backw_sol}
    One can easily verify that
the function $\text{w}(x,t)\doteq \mc S_{t}^-\omega(x)$ is the entropy weak solution of the Cauchy problem
\begin{equation}
\label{eq:backw-cp}
    \begin{cases}
        \text{w}_t-f(\text{w})_x=0, \quad &x \in \mathbb{R}, \quad t \geq 0,
\\
\noalign{\smallskip}
    \text{w}(x,0)=\omega(x), \quad &x \in \mathbb{R}.
    \end{cases}
\end{equation}
In fact, by definition~\eqref{eq:backw-sol-def} it follows that $\text{w}(x,t)$
is a distributional solution 
 of~\eqref{eq:backw-cp}, since it is obtained from the distributional solution $\mc S_t^+ \big(\omega(-\ \cdot\,)\big)(x)$ of~\eqref{eq:convexconslaw} by the change of variable $x\mapsto -x$. 
 On the other hand, 
since every shock discontinuity
of $\mc S_t^+ \big(\omega(-\ \cdot\,)\big)(x)$, 
connecting a left state $u^-$ with a right state $u^+$, must satisfy the Lax condition $u^->u^+$
(equivalent to the entropy admissibility criterion since the  flux $f(u)$ in~\eqref{eq:convexconslaw} is convex, e.g. see~\cite{Dafermoscontphysics,MR0267257}),
it follows
that the left and right states 
$u^-, u^+$ of
every shock discontinuity in $\text{w}(x,t)$
must satisfy the reverse condition
$u^-<u^+$, which is the Lax admissibility condition for~\eqref{eq:backw-cp}, since the flux $-f(\text{w})$ is concave.
 Finally, we can observe that
 $\text{w}(x,0)=\mc S_0^+ \big(\omega(-\ \cdot\,)\big)(-x)=\omega (x)$, for all $x\in\R$,
 which completes the proof of our claim.
\end{remark}
This procedure to characterize the attainable profiles is motivated by the following observation.
Given a target profile 
$\omega$, if we know that
for any $t\in \,]0,T[\,$,
the map $x\mapsto v(x,t)\doteq \mc S_t^+ \big(\omega(-\ \cdot\,)\big)(x)$
is locally Lipschitz on $\R$,
it would follow that 
$u(x,t)\doteq v(-x,T-t) =
\mc S_{T-t}^-\,\omega(x) $
is a classical solution 
of~\eqref{eq:convexconslaw}
which attains the target profile
$\omega$ at time $t=T$,
and starts with the initial datum $u_0^*$ in~\eqref{eq:indatum-lip}.
Since classical solutions
of~\eqref{eq:convexconslaw}
are 
entropy admissible, 
by uniqueness of entropy weak solutions of the Cauchy problem for~\eqref{eq:convexconslaw}
it would follow that $u(x,t)=
\mc S_t^+ u_0^*(x)=u^*(x,t)$.
However, if $v$ admits shock
discontinuities, 
the function $v(-x,T-t)$
fails to be an entropy admissible solution of~\eqref{eq:convexconslaw},
despite still being a weak distributional solution of~\eqref{eq:convexconslaw}.
The 
one-sided Lipschitz condition~\eqref{eq:convexoleinik}
is precisely equivalent to the property that the map $x\mapsto v(x,t)\doteq \mc S_t^+ \big(\omega(-\ \cdot\,)\big)(x)$
is locally Lipschitz on $\R$,
for all $t\in \,]0,T[\,$
(e.g. see~\cite{AGN2012,anconamars}),
and thus one obtains
the characterization of
the elements of $\mc A(T)$
as fixed points of
 the  backward-forward operator.

\subsubsection{Backward 
solution operator in the spatially-discontinuous flux setting}\label{sec:backopdisc}
Given a flux  $f$  as in~\eqref{discflux}
satisfying the assumption~\eqref{eq:flux-assumption-1},
and a connection
 $(A,B)$,
  let 
 $\sabp$  be the \textit{forward semigroup operator} associated to the connection $(A,B)$, as in Theorem \ref{theoremsemigroup}.
Observe that, 
letting $\overline{A}, \overline{B}$
be as in~
\eqref{eq:bar-AB-def},
the pair $(\overline B, \overline A\,)$
turns out to be a connection for the symmetric flux
\begin{equation}
\label{eq:symm-flux}
    \overline{f}(x,u) = \begin{cases}
    f_r(u) , & x \leq 0, \\
    f_l(u), & x \geq 0\,,
    \end{cases}
\end{equation}
(see Figure \ref{Connectionpicsym}).\\

\begin{figure}[h]
\centering
\begin{tikzpicture}[scale = 0.7]
\draw[->] (-6,0)--(4,0)node[right]{$u$};
\draw[scale = 0.5, domain=-1:5, smooth, variable=\x] plot ({\x}, {(\x-2)*(\x-2)+1});
\draw[scale = 0.5, domain=-9:3, smooth, variable=\x] plot ({\x}, {(0.5*\x+1)*(0.5*\x+2)+1});

\draw (-5,5) node{$f_l$};
\draw (3,5) node{$f_r$};

\draw[very thick, blue] (-3.8,3)--(2.13,3);
\draw[dotted] (-3.8,3)--(-3.8,0)node[below]{$A$};
\draw[dotted] (2.1,3)--(2.1,0)node[below]{$B$};
\draw[dotted] (0.8,3)--(0.8,0)node[below]{$\overline A$};
\draw[dotted] (-0.1,3)--(-0.1,0)node[below]{$\overline B$};

\end{tikzpicture}
\caption{The connection $(\overline B, \overline A)$ of the symmetric flux $\overline f(x, u)$ defined in \eqref{eq:symm-flux}.}
\label{Connectionpicsym}
\end{figure}

Then, letting
$\sobapt u_0(x)$ denote
the unique $\overline B\,\overline A$-entropy solution
of
\begin{equation}\label{eq:invertedproblem}
    \begin{cases}
        u_t+\overline{f}(x,u)_x = 0 & x \in \mathbb{R}, \quad t \geq 0, \\
            \noalign{\smallskip}
        u(x,0) = u_0(x) &  x \in \mathbb R,
    \end{cases}
\end{equation}
evaluated at $(x,t)$,
we shall define the backward 
solution operator associated to the connection  $(A,B)$ in terms of the operator $\sobapt$ as follows.

\begin{defi}[{\bf $AB$-Backward solution operator}]\label{def:backop}
Given a connection $(A,B)$, the \textit{backward solution operator}
associated to $(A,B)$ is the map
$\mc S_{(\cdot)}^{\, [A B]-} :
[0,+\infty)\times {\bf L^{\infty}(\R)} \to {\bf L^{\infty}(\R)}$,  defined by
\begin{equation}
     \label{eq:backw-conn-sol-def}
    \sabmt \omega (x) \doteq  \sobapt 
    \big(\omega(-\ \cdot\,)\big)
     (-x)\qquad x\in\R, \ t\geq 0\,. 
\end{equation}
\end{defi}
\begin{remark}
\label{charact-backw_sol-disc}
One can show that the function
$\text{w}(x,t)\doteq \sabmt \omega (x)$ is the $\overline{A}\,\overline{B}$-entropy solution of the Cauchy problem
\begin{equation}
\label{eq:backw-cp-disc}
    \begin{cases}
        \text{w}_t-f(x,\text{w})_x=0, \quad &x \in \mathbb{R}, \quad t \geq 0,
\\
\noalign{\smallskip}
    \text{w}(x,0)=\omega(x), \quad &x \in \mathbb{R}.
    \end{cases}
\end{equation}
Notice that in~\eqref{eq:backw-cp-disc} the flux is $-f(x,\text{w})$,
which is a discontinuous function that
coincides with the
 uniformly strictly concave maps $-f_l(\text{w}), -f_r(\text{w})$, on the left and on the right, respectively, of $x=0$. 
 As observed in~\cite[\S 7]{anconachiri}, in the case
 of a two-concave flux as $-f(x,\text{w})$, one replaces
 the $\leq$~sign with the $\geq$ sign, and viceversa,
 in the Definition~\ref{ABsol} of 
 interface connection.
Thus, $(\overline{A},\,\overline{B})$
is indeed a connection for the flux 
$-f(x,\text{w})$.
The 
$\overline{A}\,\overline{B}$
interface entropy admissibility condition for $\text{w}(x,t)$ is formulated 
as in~\eqref{interfaceentropy}.
 In order to verify the claim that 
$\text{w}(x,t)$ is the $\overline{A}\,\overline{B}$-entropy solution of the Cauchy problem~\eqref{eq:backw-cp-disc}
we proceed as in Remark~\ref{charact-backw_sol}.
We first observe that
$\text{w}(x,t)$ is a distributional solution of~\eqref{eq:backw-cp-disc},
and that it is entropy admissible
in the regions $\{x<0\}$, $\{x>0\}$.
In fact,  by definition~\eqref{eq:backw-conn-sol-def}, $\text{w}(x,t)$
is obtained from $\sobapt 
    \big(\omega(-\ \cdot\,)\big)(x)$ with the  
    change of variable $x\mapsto -x$, and we have $\text{w}(x,0)=\sobap_0
    \big(\omega(-\ \cdot\,)\big)(-x)=\omega(x)$,
    for all $x\in\R$.
    Next, we check that $\text{w}(x,t)$ satisfies the $\overline A\, \overline B$
    entropy  condition~\eqref{interfaceentropy}
    for the two-concave flux $-f(x,\text{w})$, i.e. that,
    letting $\text{w}_l(t), \text{w}_r(t)$ denote the left and right traces of $\text{w}(x,t)$ at $x=0$, it holds true
    \begin{equation}
        \label{eq:entr-conc-w}
         \mathrm{sgn}(\text{w}_r(t)-\overline{B}\,  )\left(-f_r(\text{w}_r(t))+f_r(\,\overline{B}\,)\right)-
         \mathrm{sgn}(\text{w}_l(t)-\overline{A}\,)\left(-f_l(\text{w}_l(t))+f_l(\,\overline{A}\,)\right)
         \leq 0\quad  \text{for a.e. $t>0$}\,.
    \end{equation}
    Observe that 
    the left and right traces
    $u_l(t), u_r(t)$ 
    of $\sobapt 
    \big(\omega(-\ \cdot\,)\big)(x)$ at $x=0$, 
    satisfy
    the $\overline{B}\,\overline{A}$ entropy condition~\eqref{interfaceentropy} for the flux $\overline f$ in~\eqref{eq:symm-flux}, that
    reads
    \begin{equation}
        \label{eq:entr-conv-u}
         \mathrm{sgn}(u_r(t)-\overline{A}\,  )\left(f_l(u_r(t))-f_l(\,\overline{A}\,)\right)-
         \mathrm{sgn}(u_l(t)-\overline{B}\,)\left(f_r(u_l(t))-f_r(\,\overline{B}\,)\right)
         \leq 0\qquad  \text{for a.e. $t>0$}\,.
    \end{equation}
    On the other hand, 
    since one obtains $\text{w}(x,t)$
     from $\sobapt 
    \big(\omega(-\ \cdot\,)\big)(x)$
    reversing the space variables,
    we have $u_l(t)=\text{w}_r(t)$,
    $u_r(t)=\text{w}_l(t)$
    for all $t>0$.
    Hence, we recover~\eqref{eq:entr-conc-w}
    from~\eqref{eq:entr-conv-u},
    thus completing the proof of the claim.
\end{remark}
\smallskip

\begin{remark}
We observe that
if $\omega$ is an attainable state in $\mc A^{[AB]}(T)$, it will follow from our results that
the solution $v(x,t)\doteq \sobapt  \big(\omega(-\ \cdot\,)\big)(x)$
related to the backward solution operator
may well contain a shock discontinuity exiting from the interface $x=0$ at a time $\tau< T$.
As a consequence here,
differently from the space-independent flux setting,
the map
$x\mapsto v(x,t)$
is in general not
locally Lipschitz outside the interface $\{x = 0\}$.
In turn, this implies that,
for $\omega\in \mc A^{[AB]}(T)$,
the (forward) $AB$-entropy solution defined by
\begin{equation}
    u^*(x,t)\doteq 
    \sabpt \big(\sabmT \omega\big)(x),\qquad x\in\R,\ t\in [0,T]\,,
\end{equation}
will be in general different 
from $v(-x,T-t)$ on $\R\times [0,T[\,$.
However, exploiting the duality property enjoyed by the forward and backward solution operators
(see \S~\ref{sec:building-blocks}), one can still prove
that $u^*(x,T)= v(-x,0)=\omega(x)$ for all $x\in\R$, which shows that
$\omega$ is a fixed point 
of the backward-forward operator $\sabpT \circ \sabmT$ as stated in Theorem~\ref{thm:backfordiscflux}.

\end{remark}

\section{Technical tools for characterization
of the near-interface wave structure}
\label{sec:building-blocks}

In this section we 
introduce some 
technical tools needed to 
characterize the pointwise constraints satisfied by 
the attainable profiles
of~\eqref{conslaw}
in intervals containing the origin.
Throughout the section, 
 $f:\R\to\R$ will be a
twice continuously differentiable, uniformly  convex map,
and we let $\theta$ be its unique critical point, $f'(\theta)=0$.
Set 
\begin{equation}
\label{eq:lambda-def}
    \lambda(u,v)\doteq 
    \frac{f(v)-f(u)}{v-u},\qquad u,v\in\R, \ u\neq v\,,
\end{equation}
and observe that, by the convexity of $f$ one has 
\begin{equation}
\label{eq:der-mon}
    f'(u)<\lambda(u,v)<f'(v)\qquad \forall~ u<v\in\R\,.
\end{equation}

\subsection{Left backward shock (Figure \ref{fig:Rrare}, left)}\label{defi:ur}

For every $B >\theta$, \
$0<\ms R< T\cdot f^{\prime}(B)$,
we define here:
\begin{itemize}
[leftmargin=25pt]
\item[-]two constants 
$\bs t[\ms R, B, f]$, $\bs u[\ms R,B, f]$;
\smallskip
\item[-]a function 
$t\mapsto \bs y[\ms R,B, f](t)$, \ $t\in [\bs t[\ms R, B, f],\,T]$;
\end{itemize}
which enjoy the following properties
that will be justified in the sequel (see~\S~\ref{def:rsr-block}, \ref{subsec:part3b}), but that we highlight here to clarify the purpouse of their introduction.
Let $(A,B)$ be a connection for a flux 
as in~\eqref{conslaw},
and let 
$\overline A, \overline B$
be as in~
\eqref{eq:bar-AB-def}.
Then, the map $t\mapsto \bs y[\ms R,B, f_r](t)$
identifies the location of a shock curve
in a $\overline B \overline A$-entropy solution 
of $u_t+\overline f(x,u)_x=0$, with $\overline f$ as in~\eqref{eq:symm-flux},
defined on some domain $\Omega\subset \,]-\infty, 0]\times [0, +\infty[\,$.
Since a $\overline B \overline A$-entropy solution of~\eqref{eq:invertedproblem} is associated to the backward solution operator~\eqref{eq:backw-conn-sol-def}, 
we will say that $\bs y[\ms R,B, f_r]$
identifies the location of
a {\it left backward shock.}

This curve starts from the interface $\{x=0\}$
at time $t=\bs t[\ms R, B, f_r]$,
and reaches 
the point $x=\bs y[\ms R,B, f_r](T)$
at time $t=T$.
Such a shock discontinuity
has, at time $t=T$, 
 left state $\bs u[\ms R,B, f_r]$
 and right state $\overline{B}$.
The point $(-\ms R,0)$
is the center of a rarefaction fan 
located on the left of the curve~$x\mapsto(\bs y[\ms R,B, f_r](t), t)$.
\smallskip

We proceed to introduce these definitions as follows. Set
\begin{equation}
\label{eq:trbf-def}
         \bs t[\ms R, B, f] \doteq 
        \frac{\ms R}{f^{\prime}(B)},
        \qquad\quad\overline B \doteq 
    ({f}_{|\,]-\infty, \theta]})^{-1}\circ f(B)\,.
     \end{equation}
Then, consider
the Cauchy problem
      \begin{equation}\label{eq:yR}
\begin{cases}
    {y}'(t) = \lambda \Big(\!(f^{\prime})^{-1}\big(\frac{y(t)+\ms R}{t}\big), \overline{B}\, \Big), \qquad t\geq \bs t[\ms R, B, f]\,,
    \\
    \noalign{\smallskip}
    y( \bs t[\ms R, B, f] ) = 0.
    \end{cases}
\end{equation}
By~\eqref{eq:lambda-def}, the differential equation in~\eqref{eq:yR}
ensures that, for all $t\geq \bs t[\ms R, B, f]$, the pair \linebreak
$\big((f')^{-1}(\frac{y(t)+\ms R}{t}),\, \overline{B}\,\big)$
satisfies the Rankine-Hugoniot condition with 
slope $y'(t)$ for the conservation law $u_t+f(u)_x=0$.
Observe that, since
$g(t,y)\doteq \lambda\big((f')^{-1}\big(\frac{y+\ms R}{t}\big),\, \overline{B}\,\big)$ is locally
Lipschitz continuous in $y$,
by classical arguments 
it admits a unique solution $\bs y(t)$
defined on some maximal interval
$[t[\ms R, B, f],\,\tau[$\,.
On the other hand, 
because of~\eqref{eq:der-mon} we have
\begin{equation}
    g(t,y)>f'(\overline B)\qquad 
    \forall~t\in 
    \big[\bs t[\ms R, B, f],\,\min\{\tau,T\}\big[\,,\ y> - \ms R + T\cdot f'(\overline{B})\,,
\end{equation}
and hence, 
since $f'(\overline{B})<0$,
it follows that 
\begin{equation}
\label{eq:est-sol-cp-bs}
\begin{aligned}
     \bs y(t)&>\big(\min\{\tau,T\}-t[\ms R, B, f]
    \big)\cdot f'(\overline B)
    \\
    &\geq \min\{\tau,T\}\cdot f'(\overline B)
\end{aligned}
    \qquad\quad 
    \forall~t\in 
    \big[t[\ms R, B, f],\,\min\{\tau,T\}\big[\,.
\end{equation}
In turn, \eqref{eq:est-sol-cp-bs} implies that
$\tau>T$. 
Then, 
we will 
denote by
\begin{equation*}
    \bs y[\ms R,B, f] : \big[ \bs t[\ms R, B, f]  ,\, T\big]\ \to\ \,]-\infty,0[\,,
    \qquad t\mapsto \bs y[\ms R,B, f](t)\,,
\end{equation*}
the unique solution to~\eqref{eq:yR}
defined on the interval $\big[ \bs t[\ms R, B, f]  ,\, T\big]$.
Notice that $t\mapsto  \frac{d}{dt}{\bs y}[\ms R,B, f](t)$ is strictly decreasing, and $\frac{d}{dt}{\bs y}[\ms R,B, f](t)\leq 0$
for all $t\in [ \bs t[\ms R, B, f]  ,\, T]$.
Hence, by~\eqref{eq:est-sol-cp-bs}
with $\min\{\tau,T\}=T$,
the terminal point satisfies $\bs y[\ms R,B, f](T)\in\,]T\cdot f'(\overline{B}), 0[\,.$
Next, we set
\begin{equation}
\label{eq:urbf-def} 
    \bs u[\ms R,B, f]\doteq
    (f^{\prime})^{-1}\left(\frac{\ms R+\bs y[\ms R, B, f](T)}{T}\right)\,.
\end{equation}
Observe that, by construction, $\bs y[\ms R, B, f](T)$ and $\bs u[\ms R,B, f]$ depend continuously on the
parameters $\ms R$ and $B$,
and that we have
\begin{equation}
\label{eq:uB-ineq}
{ \overline B<~}
    \bs u[\ms R,B, f]< B\,.
\end{equation}


\smallskip

\subsection{Right backward shock (Figure~\ref{fig:Lrare}, right)}
\label{defi:vl}

Symmetrically to \S~\ref{defi:ur}, 
for every $A <\theta$, \linebreak
$T\cdot f^{\prime}(A)<\ms L< 0$,
we define here:
\begin{itemize}
[leftmargin=25pt]
\item[-]two constants 
$\bs s[\ms L, A, f]$, $\bs v[\ms L,A, f]$;
\smallskip
\item[-]a function 
$t\mapsto \bs x[\ms L,A, f](t)$, \ $t\in [\bs s[\ms L, A, f],\,T]$;
\end{itemize} 
which enjoy the following properties
that we highlight here as in \S~\ref{defi:ur} to clarify the purpouse of their introduction
(but we will justify them in the sequel, see~\S~\ref{def:lrs-block}, \ref{subsec:part3b}).
The map $t\mapsto \bs  x[\ms L,A, f_l](t)$
identifies the location of a shock curve
in a $\overline B \overline A$-entropy solution 
of $u_t+\overline f(x,u)_x=0$, with $\overline f$ as in~\eqref{eq:symm-flux},
defined on some domain $\Omega\subset [0,+\infty[\, \times [0, +\infty[\,$.
Since a $\overline B \overline A$-entropy solution of~\eqref{eq:invertedproblem} is associated to the backward solution operator~\eqref{eq:backw-conn-sol-def}, 
we will say that $\bs  x[\ms L,A, f_l]$
identifies the location of
a {\it right backward shock.}

This curves starts from the interface $\{x=0\}$
at time $t=\bs s[\ms L, A, f_l]$,
and reaches 
 the point $x=\bs x[\ms L,A, f_l](T)$
 at time $t=T$.
Such a shock discontinuity
has, at time $t=T$,  left state $\overline{A}$ and
 right state 
 $\bs v[\ms L,A, f_l]$.
The point $(-\ms L,0)$
is the center of a rarefaction fan 
located on the right of the curve~$t\mapsto (\bs x[\ms L,A, f_l](t), t)$.
\smallskip

We proceed to introduce these definitions as follows. Set
\begin{equation}
\label{eq:slaf-def}
         \bs s[\ms L, A, f] \doteq
        \frac{\ms L}{f^{\prime}(A)}, 
        \qquad\quad\overline A \doteq 
    ({f}_{|\,[\theta,+\infty[})^{-1}\circ f(A)\,.
     \end{equation}
Then, let
$\bs x[\ms L,A, f] : [ \bs s[\ms L, A, f]  , T] \ \to\ \,]0,+\infty[$
denote the unique solution to the Cauchy problem
      \begin{equation}\label{eq:xL}
\begin{cases}
    {x}'(t) = \lambda \left(\!(f^{\prime})^{-1}\big(\frac{x(t)+\ms L}{t}\big), \overline{A}\,\right), \qquad t \in \big[ \bs s[\ms L, A, f] ,\, T\big], \\
     \noalign{\smallskip}
    x( \bs s[\ms L, A, f] ) = 0.
    \end{cases}
\end{equation}
By~\eqref{eq:lambda-def}, the differential equation in~\eqref{eq:xL}
ensures that, for all $t\geq \bs s[\ms L, A, f]$, the pair \linebreak
$\big( \overline{A},\, (f')^{-1}(\frac{x(t)+\ms L}{t})\,\big)$
satisfies the Rankine-Hugoniot condition with 
slope $x'(t)$ for the conservation law $u_t+f(u)_x=0$.
The terminal point $\bs x[\ms L, A, f](T)$ 
depends continuously on the parameters~$\ms L$,~$A$, and
satisfies $\bs x[\ms L, A, f](T) \in \big]0,T\cdot f^{\prime}(\overline{A})\big[$\,.
Moreover,
 the map $t\mapsto \frac{d}{dt}{\bs x}[\ms L,A, f](t)$ is strictly increasing. Next, we define the quantity \begin{equation}
    \label{eq:vlaf-def} 
    \bs v[\ms L,A, f]\doteq
    (f^{\prime})^{-1}\left(\frac{\ms L+\bs x[\ms L,A, f](T)}{T}\right)\,,
\end{equation}
which depends continuously on $\ms L$
and $A$, and satisfies
\begin{equation}
\label{eq:vA-ineq}
   A< \bs v[\ms L,A, f]{ ~<
   \overline A}\,.
\end{equation}
\smallskip

\begin{figure}
\centering

\tikzset{every picture/.style={line width=0.75pt}} 

\begin{tikzpicture}[x=0.75pt,y=0.75pt,yscale=-0.65,xscale=0.7]

\draw  [color={rgb, 255:red, 0; green, 0; blue, 0 }  ,draw opacity=1 ][fill={rgb, 255:red, 208; green, 2; blue, 27 }  ,fill opacity=0.43 ] (513.45,73.94) -- (566.67,254.89) -- (420,224.75) -- (419.65,116.31) -- cycle ;
\draw  [color={rgb, 255:red, 0; green, 0; blue, 0 }  ,draw opacity=1 ][fill={rgb, 255:red, 74; green, 144; blue, 226 }  ,fill opacity=0.33 ] (106.98,71.85) -- (177.85,254.06) -- (247.33,193.67) -- (246.64,115.27) -- cycle ;
\draw    (65.98,256.14) -- (271.11,255.42) ;
\draw [shift={(274.11,255.41)}, rotate = 179.8] [fill={rgb, 255:red, 0; green, 0; blue, 0 }  ][line width=0.08]  [draw opacity=0] (5.36,-2.57) -- (0,0) -- (5.36,2.57) -- cycle    ;
\draw    (65.29,68.73) -- (271.33,70.78) ;
\draw    (247.23,19.64) -- (246.87,256.24) ;
\draw [shift={(247.24,16.64)}, rotate = 90.09] [fill={rgb, 255:red, 0; green, 0; blue, 0 }  ][line width=0.08]  [draw opacity=0] (5.36,-2.57) -- (0,0) -- (5.36,2.57) -- cycle    ;
\draw  [dash pattern={on 4.5pt off 4.5pt}]  (106.98,71.85) -- (177.85,254.06) ;
\draw    (247.33,193.67) -- (177.85,254.06) ;
\draw  [dash pattern={on 4.5pt off 4.5pt}]  (180.63,109.34) -- (177.85,254.06) ;
\draw  [dash pattern={on 4.5pt off 4.5pt}]  (202.17,127.04) -- (177.85,254.06) ;
\draw  [dash pattern={on 4.5pt off 4.5pt}]  (159.09,94.76) -- (177.85,254.06) ;
\draw [line width=0.75]  [dash pattern={on 4.5pt off 4.5pt}]  (137.55,84.35) -- (177.85,254.06) ;
\draw  [dash pattern={on 4.5pt off 4.5pt}]  (220.93,147.86) -- (177.85,254.06) ;
\draw  [dash pattern={on 4.5pt off 4.5pt}]  (180.63,109.34) -- (247.05,136.44) ;
\draw  [dash pattern={on 4.5pt off 4.5pt}]  (234.82,169.72) -- (177.85,254.06) ;
\draw    (106.98,71.85) -- (246.64,115.27) ;
\draw  [dash pattern={on 4.5pt off 4.5pt}]  (230.66,160.35) -- (248.72,170.45) ;
\draw  [dash pattern={on 4.5pt off 4.5pt}]  (215.37,140.57) -- (245.94,153.07) ;
\draw [color={rgb, 255:red, 208; green, 2; blue, 27 }  ,draw opacity=1 ]   (106.98,71.85) .. controls (179.24,97.88) and (223.01,132.24) .. (247.33,193.67) ;
\draw  [dash pattern={on 0.84pt off 2.51pt}]  (331.41,193.67) -- (247.33,193.67) ;
\draw    (602.86,257.17) -- (507.61,256.84) -- (397.74,256.45) ;
\draw [shift={(605.86,257.18)}, rotate = 180.2] [fill={rgb, 255:red, 0; green, 0; blue, 0 }  ][line width=0.08]  [draw opacity=0] (5.36,-2.57) -- (0,0) -- (5.36,2.57) -- cycle    ;
\draw    (601,69.77) -- (394.96,71.82) ;
\draw    (419.05,20.68) -- (419.42,257.29) ;
\draw [shift={(419.05,17.68)}, rotate = 89.91] [fill={rgb, 255:red, 0; green, 0; blue, 0 }  ][line width=0.08]  [draw opacity=0] (5.36,-2.57) -- (0,0) -- (5.36,2.57) -- cycle    ;
\draw  [dash pattern={on 4.5pt off 4.5pt}]  (473.5,111.25) -- (566.67,254.89) ;
\draw  [dash pattern={on 4.5pt off 4.5pt}]  (454.5,141.25) -- (566.67,254.89) ;
\draw  [dash pattern={on 4.5pt off 4.5pt}]  (497.89,85.18) -- (566.67,254.89) ;
\draw  [dash pattern={on 4.5pt off 4.5pt}]  (440.5,169.75) -- (566.67,254.89) ;
\draw  [dash pattern={on 4.5pt off 4.5pt}]  (454.5,141.25) -- (420,154.25) ;
\draw  [dash pattern={on 4.5pt off 4.5pt}]  (430,198.75) -- (566.67,254.89) ;
\draw    (513.45,73.94) -- (419.65,116.31) ;
\draw    (430,198.75) -- (419.06,202.49) ;
\draw  [dash pattern={on 4.5pt off 4.5pt}]  (440.5,169.75) -- (420,179.75) ;
\draw [color={rgb, 255:red, 208; green, 2; blue, 27 }  ,draw opacity=1 ]   (513.45,73.94) .. controls (449.94,115.38) and (444.32,163.32) .. (420,224.75) ;
\draw  [dash pattern={on 0.84pt off 2.51pt}]  (420,224.75) -- (595.51,225.58) ;
\draw  [dash pattern={on 4.5pt off 4.5pt}]  (420.5,122.75) -- (497.89,85.18) ;
\draw  [dash pattern={on 4.5pt off 4.5pt}]  (419.23,137.48) -- (473.5,111.25) ;

\draw (291.37,172.02) node [anchor=north west][inner sep=0.75pt]  [font=\tiny]  {$\mathbf{t}[\mathsf{R} ,\ B,\ f]$};
\draw (207.98,34.63) node [anchor=north west][inner sep=0.75pt]  [font=\tiny]  {$\overline{B}$};
\draw (90.95,34.58) node [anchor=north west][inner sep=0.75pt]  [font=\tiny]  {$\mathbf{y}[\mathsf{R} ,\ B,\ f]$};
\draw (22.91,54.37) node [anchor=north west][inner sep=0.75pt]  [font=\tiny]  {$t\ =\ T$};
\draw (226.05,232.41) node [anchor=north west][inner sep=0.75pt]  [font=\tiny]  {$B$};
\draw (167.68,274.05) node [anchor=north west][inner sep=0.75pt]  [font=\tiny]  {$-\mathsf{R}$};
\draw (254.53,12.72) node [anchor=north west][inner sep=0.75pt]  [font=\tiny]  {$t$};
\draw (273.99,267.8) node [anchor=north west][inner sep=0.75pt]  [font=\tiny]  {$x$};
\draw (444.05,236.36) node [anchor=north west][inner sep=0.75pt]  [font=\tiny]  {$\overline{B}$};
\draw (613.76,253.54) node [anchor=north west][inner sep=0.75pt]  [font=\tiny]  {$x$};
\draw (403.36,20.01) node [anchor=north west][inner sep=0.75pt]  [font=\tiny]  {$t$};
\draw (457.56,51.24) node [anchor=north west][inner sep=0.75pt]  [font=\tiny]  {$B$};
\draw (537.31,275.09) node [anchor=north west][inner sep=0.75pt]  [font=\tiny]  {$-\mathbf{y}[\mathsf{R} ,\ B,\ f]$};
\draw (593.74,212.14) node [anchor=north west][inner sep=0.75pt]  [font=\tiny]  {$\mathbf{\tau }[\mathsf{R} ,\ B,\ f]$};
\draw (621.15,46.04) node [anchor=north west][inner sep=0.75pt]  [font=\tiny]  {$t\ =\ T$};
\draw (507.45,43.95) node [anchor=north west][inner sep=0.75pt]  [font=\tiny]  {$\mathsf{R}$};

\end{tikzpicture}

\caption{The dual solutions $\bs y[\ms R, B, f](\cdot)$ (left) and $\bs x[\bs y[\ms R, B, f](T), \overline{B}, f](\cdot)$ (right) of the Cauchy problems \eqref{eq:yR}, \eqref{eq:xL}. This represents the statement of Lemma \ref{lemma:dualshocks}}
\label{fig:Rrare}
\end{figure}

\begin{figure}
\centering

\tikzset{every picture/.style={line width=0.75pt}} 

\tikzset{every picture/.style={line width=0.75pt}} 

\begin{tikzpicture}[x=0.75pt,y=0.75pt,yscale=-0.75,xscale=0.75]

\draw  [color={rgb, 255:red, 0; green, 0; blue, 0 }  ,draw opacity=1 ][fill={rgb, 255:red, 208; green, 2; blue, 27 }  ,fill opacity=0.43 ] (526.83,77.7) -- (578.1,240.58) -- (436.2,168.55) -- (436.1,101.55) -- cycle ;
\draw  [color={rgb, 255:red, 0; green, 0; blue, 0 }  ,draw opacity=1 ][fill={rgb, 255:red, 74; green, 144; blue, 226 }  ,fill opacity=0.33 ] (135.23,75.83) -- (154.89,123.06) -- (203.51,239.83) -- (270.45,185.48) -- (269.78,114.91) -- cycle ;
\draw    (95.74,241.71) -- (293.24,241.06) ;
\draw [shift={(296.24,241.05)}, rotate = 179.81] [fill={rgb, 255:red, 0; green, 0; blue, 0 }  ][line width=0.08]  [draw opacity=0] (5.36,-2.57) -- (0,0) -- (5.36,2.57) -- cycle    ;
\draw    (95.07,73.02) -- (293.57,74.86) ;
\draw    (270.36,29.13) -- (270,241.8) ;
\draw [shift={(270.36,26.13)}, rotate = 90.09] [fill={rgb, 255:red, 0; green, 0; blue, 0 }  ][line width=0.08]  [draw opacity=0] (5.36,-2.57) -- (0,0) -- (5.36,2.57) -- cycle    ;
\draw  [dash pattern={on 4.5pt off 4.5pt}]  (135.23,75.83) -- (203.51,239.83) ;
\draw    (270.45,185.48) -- (203.51,239.83) ;
\draw  [dash pattern={on 4.5pt off 4.5pt}]  (206.19,109.57) -- (203.51,239.83) ;
\draw  [dash pattern={on 4.5pt off 4.5pt}]  (226.94,125.5) -- (203.51,239.83) ;
\draw  [dash pattern={on 4.5pt off 4.5pt}]  (185.44,96.45) -- (203.51,239.83) ;
\draw [line width=0.75]  [dash pattern={on 4.5pt off 4.5pt}]  (164.69,87.08) -- (203.51,239.83) ;
\draw  [dash pattern={on 4.5pt off 4.5pt}]  (245.01,144.24) -- (203.51,239.83) ;
\draw  [dash pattern={on 4.5pt off 4.5pt}]  (206.19,109.57) -- (270.18,133.96) ;
\draw  [dash pattern={on 4.5pt off 4.5pt}]  (258.4,163.92) -- (203.51,239.83) ;
\draw    (135.23,75.83) -- (269.78,114.91) ;
\draw  [dash pattern={on 4.5pt off 4.5pt}]  (254.39,155.49) -- (271.79,164.58) ;
\draw  [dash pattern={on 4.5pt off 4.5pt}]  (239.66,137.68) -- (269.11,148.93) ;
\draw [color={rgb, 255:red, 208; green, 2; blue, 27 }  ,draw opacity=1 ]   (135.23,75.83) .. controls (204.85,99.26) and (247.02,130.19) .. (270.45,185.48) ;
\draw  [dash pattern={on 0.84pt off 2.51pt}]  (351.45,185.48) -- (270.45,185.48) ;
\draw    (612.86,242.63) -- (521.2,242.33) -- (415.35,241.99) ;
\draw [shift={(615.86,242.64)}, rotate = 180.19] [fill={rgb, 255:red, 0; green, 0; blue, 0 }  ][line width=0.08]  [draw opacity=0] (5.36,-2.57) -- (0,0) -- (5.36,2.57) -- cycle    ;
\draw    (611.17,73.96) -- (412.67,75.8) ;
\draw    (435.89,30.07) -- (436.24,242.74) ;
\draw [shift={(435.88,27.07)}, rotate = 89.91] [fill={rgb, 255:red, 0; green, 0; blue, 0 }  ][line width=0.08]  [draw opacity=0] (5.36,-2.57) -- (0,0) -- (5.36,2.57) -- cycle    ;
\draw  [dash pattern={on 4.5pt off 4.5pt}]  (492.42,102.82) -- (578.1,240.58) ;
\draw  [dash pattern={on 4.5pt off 4.5pt}]  (475.6,120.05) -- (578.1,240.58) ;
\draw  [dash pattern={on 4.5pt off 4.5pt}]  (511.83,87.83) -- (578.1,240.58) ;
\draw  [dash pattern={on 4.5pt off 4.5pt}]  (460.29,137.49) -- (578.1,240.58) ;
\draw  [dash pattern={on 4.5pt off 4.5pt}]  (492.42,102.82) -- (436.89,118.78) ;
\draw  [dash pattern={on 4.5pt off 4.5pt}]  (445.7,155.55) -- (578.1,240.58) ;
\draw    (452.29,142.99) -- (437.2,148.05) ;
\draw  [dash pattern={on 4.5pt off 4.5pt}]  (475.6,120.05) -- (438.1,132.55) ;
\draw [color={rgb, 255:red, 208; green, 2; blue, 27 }  ,draw opacity=1 ]   (526.83,77.7) .. controls (513.13,89.14) and (501.48,95.24) .. (492.42,102.82) .. controls (472.6,122.55) and (458.52,131.22) .. (436.2,168.55) ;
\draw  [dash pattern={on 0.84pt off 2.51pt}]  (436.2,168.55) -- (599.2,168.05) ;

\draw (319.4,160.1) node [anchor=north west][inner sep=0.75pt]  [font=\tiny]  {$\mathbf{\sigma }[ L,\ A,\ f]$};
\draw (224.29,44.83) node [anchor=north west][inner sep=0.75pt]  [font=\tiny]  {$A$};
\draw (133.25,44.83) node [anchor=north west][inner sep=0.75pt]  [font=\tiny]  {$\mathsf{L}$};
\draw (53.79,59.83) node [anchor=north west][inner sep=0.75pt]  [font=\tiny]  {$t\ =\ T$};
\draw (248.39,217.21) node [anchor=north west][inner sep=0.75pt]  [font=\tiny]  {$\overline{A}$};
\draw (571.03,263.19) node [anchor=north west][inner sep=0.75pt]  [font=\tiny]  {$-\mathsf{L}$};
\draw (277.17,22.34) node [anchor=north west][inner sep=0.75pt]  [font=\tiny]  {$t$};
\draw (295.91,251.94) node [anchor=north west][inner sep=0.75pt]  [font=\tiny]  {$x$};
\draw (460.72,217.27) node [anchor=north west][inner sep=0.75pt]  [font=\tiny]  {$A$};
\draw (623.25,239.11) node [anchor=north west][inner sep=0.75pt]  [font=\tiny]  {$x$};
\draw (420.55,28.9) node [anchor=north west][inner sep=0.75pt]  [font=\tiny]  {$t$};
\draw (472.77,57.89) node [anchor=north west][inner sep=0.75pt]  [font=\tiny]  {$\overline{A}$};
\draw (180.03,264.13) node [anchor=north west][inner sep=0.75pt]  [font=\tiny]  {$-\mathbf{x}[\mathsf{L} ,\ A,\ f]( T)$};
\draw (602.16,162.29) node [anchor=north west][inner sep=0.75pt]  [font=\tiny]  {$\mathbf{s}[\mathsf{L} ,\ A,\ f]$};
\draw (630.13,52.33) node [anchor=north west][inner sep=0.75pt]  [font=\tiny]  {$t\ =\ T$};
\draw (519.9,42.96) node [anchor=north west][inner sep=0.75pt]  [font=\tiny]  {$\mathbf{x}[\mathsf{L} ,\ A,\ f]( T)$};

\end{tikzpicture}

\caption{The dual solutions $\bs x[\ms L, A, f](\cdot)$ (right) and $\bs y[\bs x[\ms L, A, f](T), \overline{A}, f](\cdot)$ (left) of the Cauchy problems \eqref{eq:yR}, \eqref{eq:xL}. This represents the statement of 
Lemma~\ref{lemma:dualshocks}}
\label{fig:Lrare}
\end{figure}

\subsection{Duality of forward and
backward shocks}
\label{sec:dual-prop}
The definitions of backward shocks given in
\S~\ref{defi:ur}-\ref{defi:vl} turn out to  be dual one of the other, as clarified by the following:

\begin{lemma}\label{lemma:dualshocks}
With the notations introduced in \S~\ref{defi:ur}-\ref{defi:vl},
for every $B>\theta$, the following holds.
The maps 
\begin{equation}
\label{eq;yx-def-37}
\begin{aligned}
     &\bs y[\cdot\,, B, f](T)
     : \ ]0, T\cdot f^{\prime}(B)[\,\ \to\ ]T \cdot f^{\prime}(\overline{B}), 0[\,,\qquad
    \ms R \mapsto \bs y[\ms R, B, f](T), 
    \\
    &\bs x[\cdot\, ,\overline{B}, f](T)
    : \ ]T \cdot f^{\prime}(\overline{B}), 0[\, \ \to\ ]0, T\cdot f^{\prime}(B)[\,,\qquad 
   \ms L  \mapsto \bs x[\ms L, \overline{B}, f](T) 
    \end{aligned}
\end{equation}
are 
increasing,
and one is the inverse of the other, i.e.
it holds true
\begin{equation}\label{eq:yxinvertible}
    \begin{aligned}
        \ms R &= \bs x \big[ \bs y[\ms R, B, f](T), \overline{B}, f\big](T), \qquad \forall~\ms R \in  \,]0, T\cdot f^{\prime}(B)[, \\
        \noalign{\smallskip}
        \ms L &= \bs y \big[ \bs x[\ms L, \overline B, f](T), B, f\big](T), \qquad \forall~\ms L \in \,]T \cdot f^{\prime}(\overline{B}), 0[\,.
    \end{aligned}
\end{equation}
Moreover, one has
\begin{equation}
\label{eq:xy-1sdlim}
    \begin{aligned}
        \lim_{\ms R \to 0+}
        \,\bs y[R, B, f](T) &= T\cdot f^{\prime}(\overline{B}), \qquad  \lim_{\ms R \to T\cdot f^{\prime}(B)-}
        \bs y[\ms R, B, f](T) = 0,
        \\
        \noalign{\smallskip}
        \lim_{\ms L \to 0-}
       \bs x[\ms L, \overline{B}, f](T) &= T\cdot f^{\prime}( B),  \qquad 
       \lim_{\ms L \to T\cdot f^{\prime}(\overline{B})+}
       \bs x[\ms L, \overline{B}, f](T) = 0.
    \end{aligned}
\end{equation}
\end{lemma}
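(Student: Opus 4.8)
\emph{Overall plan.} The idea is to reduce both Cauchy problems \eqref{eq:yR} and \eqref{eq:xL} — the latter specialised to $A=\overline B$, so that $\overline A=B$ by $f(\overline B)=f(B)$ — to a single scalar \emph{separable} ODE, to solve that ODE in closed form, and then to read off monotonicity, the inversion identities \eqref{eq:yxinvertible} and the one‑sided limits \eqref{eq:xy-1sdlim} directly from the explicit formulas.

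\emph{Step 1 — reduction to a separable ODE and closed forms.} Along the shock curve $y(t)=\bs y[\ms R,B,f](t)$ of \eqref{eq:yR} I would introduce the rescaled left trace $w(t)\doteq\frac{y(t)+\ms R}{t}$, so that $w(\bs t[\ms R,B,f])=f'(B)$ and, by differentiating \eqref{eq:yR}, $t\,w'(t)=\lambda\big((f')^{-1}(w(t)),\overline B\big)-w(t)$, which is separable in $(w,t)$. Setting $u\doteq\bs u[\ms R,B,f]=(f')^{-1}(w(T))\in\,]\overline B,B[$ (cf.~\eqref{eq:urbf-def}, \eqref{eq:uB-ineq}) and integrating from $\bs t[\ms R,B,f]$ to $T$ (after the substitution $w=f'(p)$) gives $\ln\frac{T}{\bs t[\ms R,B,f]}=\int_u^B\frac{f''(p)\,dp}{f'(p)-\lambda(p,\overline B)}$. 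The crucial point is that this integral is elementary: since $\tfrac{d}{dp}\lambda(p,\overline B)=\tfrac{f'(p)-\lambda(p,\overline B)}{p-\overline B}$, the right‑hand side equals $\ln\dfrac{(B-\overline B)\,f'(B)}{(u-\overline B)(f'(u)-\lambda(u,\overline B))}$ (using $\lambda(B,\overline B)=0$, which holds because $f(\overline B)=f(B)$). Recalling $\bs t[\ms R,B,f]=\ms R/f'(B)$ from \eqref{eq:trbf-def} this yields
\[ \ms R=\frac{T\,(u-\overline B)\big(f'(u)-\lambda(u,\overline B)\big)}{B-\overline B},\qquad \bs y[\ms R,B,f](T)=Tf'(u)-\ms R=\frac{T\,(B-u)\big(f'(u)-\lambda(u,B)\big)}{B-\overline B}, \]
the last equality being once more $f(\overline B)=f(B)$ in the form $(u-\overline B)\lambda(u,\overline B)+(B-u)\lambda(u,B)=0$. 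The identical computation applied to \eqref{eq:xL} with $A=\overline B$, $\overline A=B$, parametrised by $u\doteq\bs v[\ms L,\overline B,f]\in\,]\overline B,B[$ (cf.~\eqref{eq:vlaf-def}, \eqref{eq:vA-ineq}), gives
\[ \ms L=\frac{T\,(B-u)\big(f'(u)-\lambda(u,B)\big)}{B-\overline B},\qquad \bs x[\ms L,\overline B,f](T)=Tf'(u)-\ms L=\frac{T\,(u-\overline B)\big(f'(u)-\lambda(u,\overline B)\big)}{B-\overline B}. \]

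\emph{Step 2 — monotonicity, bijectivity, limits.} For $u\in\,]\overline B,B[$ put $\ms R(u)\doteq\frac{T(u-\overline B)(f'(u)-\lambda(u,\overline B))}{B-\overline B}=Tf'(B)\exp\big(-\int_u^B\frac{f''(p)\,dp}{f'(p)-\lambda(p,\overline B)}\big)$. By \eqref{eq:der-mon} the integrand is strictly positive, so $\ms R(\cdot)$ is $C^1$ and strictly increasing; moreover $\ms R(u)\to Tf'(B)$ as $u\to B^-$, while a Taylor expansion at $\overline B$ gives $f'(p)-\lambda(p,\overline B)\sim\tfrac12 f''(\overline B)(p-\overline B)$, so the integral diverges and $\ms R(u)\to0^+$ as $u\to\overline B^+$; hence $\ms R(\cdot):\,]\overline B,B[\to\,]0,Tf'(B)[$ is a strictly increasing bijection. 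Likewise $\ms L(u)\doteq\frac{T(B-u)(f'(u)-\lambda(u,B))}{B-\overline B}=Tf'(\overline B)\exp\big(-\int_{\overline B}^u\frac{f''(p)\,dp}{\lambda(p,B)-f'(p)}\big)$ is strictly increasing (here $f'(\overline B)<0$, the integrand being positive), with $\ms L(u)\to Tf'(\overline B)$ as $u\to\overline B^+$ and $\ms L(u)\to0^-$ as $u\to B^-$; so $\ms L(\cdot):\,]\overline B,B[\to\,]Tf'(\overline B),0[$ is a strictly increasing bijection. By Step 1 the map $\ms R\mapsto\bs y[\ms R,B,f](T)$ coincides with $\ms L\circ\ms R^{-1}$ and $\ms L\mapsto\bs x[\ms L,\overline B,f](T)$ with $\ms R\circ\ms L^{-1}$; both are increasing, and the four limits in \eqref{eq:xy-1sdlim} are exactly the endpoint values of $\ms L(\cdot)$ and $\ms R(\cdot)$ just recorded.

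\emph{Step 3 — inversion.} Since $\ms L\circ\ms R^{-1}$ and $\ms R\circ\ms L^{-1}$ are mutually inverse, this is precisely \eqref{eq:yxinvertible}: for $\ms R=\ms R(u)$ one gets $\bs y[\ms R,B,f](T)=\ms L(u)$ and then $\bs x[\ms L(u),\overline B,f](T)=\ms R(u)=\ms R$, and symmetrically for the second identity.

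\textbf{Main obstacle.} The only non‑routine point is the algebraic collapse in Step 1 — that $\int_u^B\frac{f''(p)\,dp}{f'(p)-\lambda(p,\overline B)}$ is the logarithm of an elementary expression, and that $Tf'(u)-\ms R(u)$ simplifies to $\frac{T(B-u)(f'(u)-\lambda(u,B))}{B-\overline B}$; both reduce to the single relation $f(\overline B)=f(B)$ (equivalently $\lambda(B,\overline B)=0$ together with $(u-\overline B)\lambda(u,\overline B)+(B-u)\lambda(u,B)=0$) and must be verified carefully from \eqref{eq:lambda-def}. An alternative, more geometric route — the picture behind Figures~\ref{fig:Rrare}--\ref{fig:Lrare} — would be to realise $\bs y[\ms R,B,f]$ and $\bs x[\ms L,\overline B,f]$ as the shock curves of the explicit entropy solutions of $u_t+f(u)_x=0$ issuing from the Riemann‑type data $\{\overline B,B,\overline B\}$, resp.\ $\{B,\overline B,B\}$, arranged so the rarefaction fan overtakes a standing shock at time $\bs t[\ms R,B,f]$, resp.\ $\bs s[\ms L,\overline B,f]$, and then to invoke $\mathbf L^1$‑uniqueness; I expect the direct computation above to be the shorter to write out in full.
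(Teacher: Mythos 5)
Your proposal is correct, and it reaches the conclusion by a genuinely different route from the paper. The paper never integrates the ODEs \eqref{eq:yR}, \eqref{eq:xL} explicitly: it builds the piecewise-smooth distributional solutions on the polygonal regions of Figures~\ref{fig:Rrare}--\ref{fig:Lrare}, applies the divergence theorem to $(u,f(u))$ to obtain the two conservation identities \eqref{eq:massconsyR}, \eqref{eq:massconsyR2} in terms of the Legendre transform $f^*$, deduces the inversion \eqref{eq:yxinvertible} from the fact that the auxiliary map $\Upsilon$ has $\Upsilon'>0$ and hence a unique zero, and finally gets monotonicity by implicitly differentiating \eqref{eq:massconsyR} with respect to $\ms R$; the limits \eqref{eq:xy-1sdlim} then follow from monotonicity and surjectivity. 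Your separation-of-variables computation produces exactly the same algebraic relation: your closed form $\ms R=\tfrac{T(u-\overline B)(f'(u)-\lambda(u,\overline B))}{B-\overline B}$ is, after substituting $\bs y[\ms R,B,f](T)=Tf'(u)-\ms R$, literally identity \eqref{eq:massconsyR} rewritten via $f^*(f'(u))=uf'(u)-f(u)$. The key cancellation you flag — that $\tfrac{d}{dp}\log\big[(p-\overline B)(f'(p)-\lambda(p,\overline B))\big]=\tfrac{f''(p)}{f'(p)-\lambda(p,\overline B)}$, which follows from $(p-\overline B)\,\partial_p\lambda(p,\overline B)=f'(p)-\lambda(p,\overline B)$ and $\lambda(B,\overline B)=0$ — is verified correctly, and the domain of integration is legitimate because $f'(p)-\lambda(p,\overline B)>0$ for $p>\overline B$ by \eqref{eq:der-mon} together with \eqref{eq:uB-ineq}. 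What your approach buys is an explicit simultaneous parametrization of both terminal maps by the single state $u\in\,]\overline B,B[\,$, so that monotonicity, the mutual-inverse property and all four one-sided limits are read off at once from the endpoint behaviour of $\ms R(\cdot)$ and $\ms L(\cdot)$; what the paper's argument buys is that it never needs the ODE to be integrable in closed form — the conservation identity is a structural consequence of the Rankine--Hugoniot construction and would survive perturbations of the setup where the explicit antiderivative is unavailable. Both proofs are complete; yours is a legitimate, and arguably more self-contained, alternative.
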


\begin{proof}
\hfill\\
{\bf 1.} \
We will prove only the first equality in \eqref{eq:yxinvertible}, 
the proof of the second one being entirely similar.
Fix $\ms R \in \,]0, T \cdot f^{\prime}(B)[$, and consider the polygonal
region  (the blue set in Figure~\ref{fig:Rrare}) defined by
\begin{equation}
\label{eq:delta12def-0}
    \begin{aligned}
   \Delta &\doteq \Delta_1\cup\Delta_2,
    \\
    \noalign{\medskip}
      \Delta_1&\doteq
      \Big\{(x, t) \in\,]-\infty,0[\, \times \,]0, T[\,   \; : \; 
      \ms L \!-\!(T-t) \cdot f^{\prime}(\bs u)<x
      <\ms L\!-\!(T-t)\cdot f^{\prime}(\bar B),
       \ \bs t<t < T \Big\},
       \\
      \noalign{\bigskip}
      \Delta_2&\doteq
      \Big\{(x, t) \in\,]-\infty,0[\, \times \,]0, T[\,   \; : \; 
      \ms L \!-\!(T-t) \cdot f^{\prime}(\bs u)
       <x< (t-\bs t)\cdot\! f^{\prime}(\,B\,),
      \ 0 \leq t \leq \bs t
       \Big\},
    \end{aligned}
\end{equation}
where $\bs u\doteq \bs u[\ms R, B, f]$
is the constant in~\eqref{eq:urbf-def}, $\bs t \doteq \bs t[\ms R, B, f]$
is defined as in~\eqref{eq:trbf-def} and $\ms L \doteq  \bs y[\ms R, B, f](T)$.
Observe that the function $v :  \Delta \to\R$ defined by
\begin{equation}
    v(x,t)\doteq 
     \begin{cases}
     \overline{B}  &\text{if}\qquad   \gamma(t)< x < 0,
\\
\noalign{\smallskip}
(f^{\prime})^{\strut -1}\bigg(\dfrac{x+\ms R}{t}\bigg) \quad &\text{otherwise,}
     \end{cases}
\end{equation}
is locally Lipschitz continuous 
and satisfies the equation~\eqref{eq:convexconslaw}
at every point  $(x,t)\in\Delta$ outside the curve $\gamma(\cdot )\doteq  \bs y[\ms R, B, f](\cdot)$.
Moreover, because of the construction of $\bs y[\ms R, B, f ](\cdot)$,
$u$ satisfies the Rankine-Hugoniot conditions along 
the curve~$\gamma$.
Therefore $v(x,t)$ is a distributional solution
of~\eqref{eq:convexconslaw} on $\Delta$. 
Hence, applying the divergence theorem 
to the piecewise smooth vector field
$(v,f(v))$ on $\Delta$, 
and setting $\tau_1\doteq T-\bs y[\ms R, B, f]/f'(\overline B)$, we find
\begin{equation*}
\begin{aligned}
        0&=\big(f(\overline B )-\overline  Bf'(\overline B)\big)(T-\tau_1)+
        \big(\bs u\, f'(\bs u) -f(\bs u)\big)T+
        \\&\big(f(\,B\,)-Bf'(\,B\,)\big)\, \bs t + f(B)(\tau_1-\bs t)\,. 
\end{aligned}
\end{equation*}
Then, observing that $f(B)=f(\overline B)$ and that $f'(\,B\,) \bs t = \ms R$, we find
\begin{equation}
\label{eq:cons-gamma}
    \overline{B}\,\bs y[\ms R, B, f](T)+ B\, \ms R
    -\big(\bs u\, f'(\bs u) - f(\bs u)\big) T - f(B) T=0\,.
\end{equation}
Since $f'(\bs u)=(\bs y[\ms R, B, f](T)+\ms R)/T$,
and because the Legendre transform $f^*$ of $f$
satisfies the identity $$
f^*(f'(u))=u\,f'(u) - f(u)\qquad\forall~u\,,
$$  
(e.g. see~[\S A.2]\cite{MR2041617}), we derive
from~\eqref{eq:cons-gamma} the identity
\begin{equation}
\label{eq:massconsyR}
\qquad
\overline{B}\, \bs y[\ms R, B, f](T)+ B\, \ms R  -f^{*}\left(\frac{\bs y[\ms R, B, f](T)+\ms R}{T}\right)\,T-f(B)\,T=0\qquad \forall~\ms R \in \,]0, T\cdot f^{\prime}(B))[\,.
\end{equation}
\smallskip

\noindent
{\bf 2.}
Next, consider the polygonal
region  (the red set in Figure~\ref{fig:Lrare} with $A=\overline{B}$ and $\overline{A}=B$) defined by
\begin{equation}
\label{eq:gamma12def-0}
    \begin{aligned}
   \Gamma &\doteq  \Gamma_1\cup  \Gamma_2,
    \\
    \noalign{\medskip}
       \Gamma_1&\doteq
      \Big\{(x, t) \in\,]0,+\infty[\, \times \,]0, T[\, \; : \; 
      \bs x\big[\bs  y[\ms R, B, f](T), \overline{B}, f\big](T)\!-\!(T-t)\cdot f^{\prime}(B) < x <
      \\
      &\hspace{2.4in} < \bs x\big[\bs  y[\ms R, B, f](T), \overline{B}, f\big](T) \!-\!(T-t) \cdot f^{\prime}(\bs v),
       \ \bs s <t < T \Big\},
       \\
      \noalign{\bigskip}
        \Gamma_2&\doteq
      \Big\{(x, t) \in
      \,]0,+\infty[\, \times \,]0, T[\,  \; : \; 
       (t-\bs s)\cdot\! f^{\prime}(\,\overline{B}\,) < x <
      \\
      &\hspace{2.4in}< \bs x\big[\bs  y[\ms R, B, f](T), \overline{B}, f\big](T) \!-\!(T-t) \cdot f^{\prime}(\bs v),
      \ 0 \leq t \leq \bs s
       \Big\},
    \end{aligned}
\end{equation}
where $\bs v\doteq \bs v\big[\bs y[\ms R, B, f](T), \overline{B}, f\big]$
is the constant defined as in~\eqref{eq:vlaf-def}, with 
$\ms L=\bs y[\ms R, B, f](T)$, $A=\overline{B}$ and $\bs s = \bs s[\ms L, A, f]$.
Observe that the function $u :  \Gamma\to\R$ defined by
\begin{equation}
    u(x,t)\doteq 
     \begin{cases}
     B  &\text{if}\qquad  0< x < \gamma(t),
\\
\noalign{\smallskip}
(f^{\prime})^{\strut -1}\bigg(\dfrac{x+\bs y[\ms R, B, f](T)}{t}\bigg) \quad &\text{otherwise,}
     \end{cases}
\end{equation}
with $\gamma(t)\doteq \bs x\big[\bs  y[\ms R, B, f](T), \overline{B}, f\big](t)$,
is a distributional solution
of~\eqref{eq:convexconslaw} on $ \Gamma$
for the symmetric arguments of the previous point.
Then, repeating the same type of
analysis of above for the piecewise smooth vector field
$(u,f(u))$ on $ \Gamma$, one finds
the identity
\begin{equation}
\label{eq:massconsyR2}
    \begin{aligned}
        &B\, \bs x \big[ \bs y[\ms R, B, f](T), \overline{B}, f\big](T) +\overline{B}\,\bs y[\ms R, B, f](T) +
        \\
        \noalign{\smallskip}
        &\quad -f^*\left(\frac{\bs x \big[ \bs y[\ms R, B, f](T), \overline{B}, f\big](T)+\bs y[\ms R, B, f](T)}{T}\right)T- f(B)\, T =0
    \end{aligned}
    \qquad \forall~\ms R \in \,]0, T\cdot f^{\prime}(B))[\,.
\end{equation}
Notice that,  by 
definition of the function
$\bs y[\ms R, B, f](\cdot)$
in \S~\ref{defi:ur},
the terminal value satisfies\linebreak  
$\bs y[\ms R, B, f](T) \in \,]T\cdot f^{\prime}(\overline{B}),0[$\,,
for all $\ms R \in \,]0, T\cdot f^{\prime}(B))[$\,. In turn, from the
definition of
$\bs x[\ms L, A, f]$
in \S\ref{defi:vl},
with $A = \overline{B}$, 
and $\ms L = \bs y[\ms R, B, f](T)$,
it follows that  
\begin{equation}
\label{eq:xyinint}
    \bs x \big[ \bs y[\ms R, B, f](T), \overline{B}, f\big](T) \in \,]0 ,T \cdot f^{\prime}(B)[\,,\
    \qquad \forall~\ms R \in \,]0, T\cdot f^{\prime}(B))[\,.
\end{equation}
\smallskip

\noindent
{\bf 3.}
We fix now $\ms R \in \,]0, T\cdot f^{\prime}(B))[\,$, and we
consider the map
$\Upsilon : \ ]0, T \cdot f^{\prime}(B)[\ \to \R$, defined by
\begin{equation}
\Upsilon(x) \doteq   \overline{B}\, \bs y[\ms R, B, f](T) + B\, x -f^*\left(\frac{\bs y[\ms R, B, f](T)+x}{T}\right)T  - f(B)\,T\,.
\end{equation}
Observe that, by~\eqref{eq:massconsyR}, \eqref{eq:massconsyR2}, \eqref{eq:xyinint}, one has
\begin{equation}
    \Upsilon(\ms R)=
    \Upsilon\Big(\bs x \big[ \bs y[\ms R, B, f](T), \overline{B}, f\big](T)\Big)=0\,.
\end{equation}
Hence,  it is sufficient to show that $\Upsilon$ admits only one zero in the interval $]0, T\cdot f^{\prime}(B)[$ to conclude the proof of the first equality in \eqref{eq:yxinvertible}.
To this end, differentiating $\Upsilon$ 
and recalling the well known property of the Legendre transform (e.g. see~\cite[\S A.2]{MR2041617}),
$$
(f^*)^{\prime}(p) = (f^{\prime})^{-1}(p)
\qquad\forall~p\,,
$$
we find
\begin{equation}
\label{eq:diffups}
    \begin{aligned}
        \Upsilon'(x)&=B-(f^{\prime})^{-1}\left(\frac{ \bs y[\ms R, B, f](T) +x}{T}\right) \\   
    & = (f^{\prime})^{-1}\left(\frac{0+Tf^{\prime}(B)}{T}\right)-(f^{\prime})^{-1}\left(\frac{ \bs y[\ms R, B, f](T) +x}{T}\right)\,.
    \end{aligned}
\end{equation}
Since $\bs y[\ms R, B, f](T)  < 0$, $ x < T\cdot f^{\prime}(B)$, and because $f^{\prime}$ is strictly increasing
as $f'$, we deduce from~\eqref{eq:diffups}
that $\Upsilon'(x)>0$ for all $x\in \,]0, T\cdot f^{\prime}(B))[\,$.
Therefore $\Upsilon$ is strictly increasing in the interval $\,]0, T\cdot f^{\prime}(B))[\,$, completing the proof of of the first equality in \eqref{eq:yxinvertible}.
\smallskip

\noindent
{\bf 4.}
We show now that the map $\ms R \mapsto \bs y(\ms R)\doteq \bs y[\ms R, B, f](T)$ is strictly increasing in the interval $\,]0, T\cdot f^{\prime}(B)[$\,. Differentiating \eqref{eq:massconsyR} with respect to $\ms R$, we obtain
\begin{equation}\label{eq:consdiffereq}
\left[\overline{B}-(f^{\prime})^{-1}\Big( \frac{y(\ms R)+\ms R}{T} \Big) \right] \bs y'(\ms R) 
= (f^{\prime})^{-1}\Big( \frac{\bs y(\ms R)+\ms R}{T} \Big)-B\qquad \forall~\ms R \in \,]0, T\cdot f^{\prime}(B))[\,.
\end{equation}
Since $T\cdot f'(\,\overline{B}\,)<\bs y(\ms R) <0$ and $0<\ms R <T\cdot f'(B)$, 
because $f'$ is strictly increasing we deduce
$$
\overline{B} < (f')^{-1}\Big( \frac{\bs y(\ms R)}{T} \Big)<
(f^{\prime})^{-1}\Big( \frac{\bs y(\ms R)+\ms R}{T} \Big) < 
(f')^{-1}\Big( \frac{\ms R}{T} \Big)
<B\,,
$$
which, together with~\eqref{eq:consdiffereq}, implies that $\bs y'(\ms R) >0$
for all $R\in \,]0, T\cdot f^{\prime}(B))[$\,, as wanted. 
In turn, since $\ms L \mapsto \bs x[\ms L, \overline{B}, f](T)$ is the inverse of $\ms R \mapsto \bs y[\ms R, B, f](T)$, this implies that $\ms L \mapsto \bs x[\ms L, \overline{B}, f](T)$ is strictly increasing as well in its domain, and 
that the image of the maps 
$\bs y[\cdot\,, B, f](T)$, $\bs x[\cdot\,, \overline{B}, f](T)$, in~\eqref{eq;yx-def-37} are the sets $\,]0, T\cdot f^{\prime}(B))[\,$ and $\,]0, T\cdot f^{\prime}(B)[$, respectively.
This, together with the monotonicity of the maps $\bs y[\cdot\,, B, f](T)$, $\bs x[\cdot\,, \overline{B}, f](T)$, 
in particular implies the one-sided limits in~\eqref{eq:xy-1sdlim},
thus concluding the proof of the Lemma.
\end{proof}

\begin{remark}\label{rem:monotonicityxy}
    As a consequence of Lemma \ref{lemma:dualshocks}
    and of the monotonicity of $f'$, we find that
    the maps 
    \begin{equation}
        \ms R \mapsto \bs u[\ms R, B, f],\qquad\quad
        \ms L \mapsto \bs v[\ms L, A, f],
    \end{equation}
    defined as in~\eqref{eq:urbf-def}
 and~\eqref{eq:vlaf-def}, 
    are strictly increasing, and that we have
    \begin{equation}\label{eq:limitsuv}
        \begin{aligned}
           & 
           \lim_{\ms R \to 0+}  \bs u[\ms R, B, f]  = \overline{B}, \qquad  
           \lim_{\ms R \to T\cdot f^{\prime}(B)-}\bs u[\ms R, B, f] = B,\\
           \noalign{\smallskip}
            & 
            \lim_{\ms L \to 0-}  \bs v[\ms L, A, f] = \overline{A}, \qquad\  \lim_{\ms L \to T\cdot f^{\prime}(A)+}\bs v[\ms L, A, f] = A.
        \end{aligned}
    \end{equation}
    This implies that
    the functions 
    \begin{equation*}
    \begin{aligned}
        &\bs u[\cdot, \cdot, f]: \ ]0, T\cdot f'(B)[\,
 \times \,]\theta,+\infty[\, \to \mathbb R,
 \\
 &\bs v[\cdot, \cdot, f]: \ ]T\cdot f'(A),0[\, \times \,]\theta,+\infty[\, \to \mathbb R
 \end{aligned}
    \end{equation*}
can be extended to continuous function on $[0, T\cdot f'(B)]\times$ $\,]\theta,+\infty[$\, and
 $[T\cdot f'(A),0] \times \,]\theta,+\infty[$, setting
 \begin{equation}
 \label{eq:u-v-cont-ext}
 \begin{aligned}
     \bs u[0, B, f]  &= \overline{B},\qquad\quad \bs u[T\cdot f^{\prime}(B), B, f]  =B,
     \\
     \noalign{\smallskip}
     \bs v[0, A, f] &= \overline{A},
     \qquad\quad\
     \bs v[T\cdot f^{\prime}(A), A, f] =
     A.
 \end{aligned}
 \end{equation}
 Moreover, one has
 \begin{equation}
 \label{eq:uv-ba-ineq}
     \begin{aligned}
        \bs u[\ms R, B, f] < B\qquad\quad &\forall~\ms R\in \,]0, T\cdot f'(B)[\,,
         \\
         \noalign{\smallskip}
        \bs v[\ms L, A, f] > A\qquad\quad &\forall~\ms L\in\,]T\cdot f'(A),0[\,.
     \end{aligned}
 \end{equation}
\end{remark}

\subsection{Right forward shock-rarefaction wave pattern (Figure~\ref{fig:Rrare}, right)}
\label{def:rsr-block}
For every $B >\theta$, \
$0<\ms R< T\cdot f^{\prime}(B)$,
we define now:
\begin{itemize}
[leftmargin=25pt]
\item[-]a constant $\bs \tau[\ms R, B, f]$;
\smallskip
\item[-]a function 
$(x,t)\mapsto u[\ms R, B, f](x,t),$ $(x,t)\in \Gamma[\ms R, B, f]$;
\end{itemize} 
with the following properties.
When $f=f_r$, the function
$u[\ms R, B, f](x,t)$ defines a (forward) solution
associated to the operator
$\sabp$, which contains 
a shock 
starting 
from the interface $\{x=0\}$ at time
$t=\bs \tau[\ms R, B, f_r]$.
%
The  location of such a shock is given by the map $t\mapsto \bs x\big[\bs y[\ms R, B, f_r](T), \overline{B}, f_r\big](t)$,
where 
$\bs y[\ms R,B, f_r]$ and $\bs x[\ms L,\overline{B}, f_r]$
with $\ms L=\bs y[\ms R, B, f_r](T)$,
are the backward shocks 
of a backward solution 
associated to the operator
$\sabm$
introduced in \S~\ref{defi:ur}-\ref{defi:vl}. 
Because of Lemma~\ref{lemma:dualshocks},
the shock
$t\mapsto \bs x\big[\bs y[\ms R, B, f_r](T), \overline{B}, f_r\big](t)$
reaches the point $x=R$
at time $t=T$.
We can regard 
$\bs x\big[\bs y[\ms R, B, f_r](T), \overline{B}, f_r\big]$ as
the ``dual shock''
of the backward shock $\bs y[\ms R,B, f_r]$.
\smallskip

We proceed to introduce these definitions as follows. 
With the same notations of \S~\ref{defi:ur}-\ref{defi:vl},
for every $B >\theta$, \
$0<\ms R< T\cdot f^{\prime}(B)$,
we set
\begin{equation}
\label{eq:tau-rbf-def}
    \bs \tau[\ms R, B, f]\doteq
    \bs s \big[\bs y[\ms R,B,f](T), \; \overline{B}, \; f\big]=\frac{\bs y[\ms R,B,f](T)}{f'(\,\overline{B}\,)}\,.
\end{equation}
Notice that, by the construction in \S~\ref{defi:ur}, 
and because of Lemma~\ref{lemma:dualshocks},
$\bs \tau[\ms R, B, f]$ depends continuously on the parameters $\ms R, B$,
 the image of the map $\ms R\mapsto \bs \tau[\ms R, B, f]$, $\ms R\in\,]0, T\cdot f'(B)[\,$, is the set~$\,]0,T[\,$, and $\ms R\mapsto \bs \tau[\ms R, B, f]$ is decreasing.

Next, we denote by $ \Gamma[\ms R, B, f] \subset (0, T) \times \mathbb R$ the polygonal set (the pink set in Figure~\ref{fig:Rrare})
\begin{equation}
\label{eq:gammadef}
    \Gamma[\ms R, B, f]\doteq \Gamma_1[\ms R, B, f]\cup\Gamma_2[\ms R, B, f]\,,
\end{equation}
with
\begin{equation}
\label{eq:gamma12def}
    \begin{aligned}
      \Gamma_1[\ms R, B, f]&\doteq 
      \Big\{(x, t) \in\,]0,+\infty[\, \times \,]0, T[\,   \; : \; \ms R-(T-t)\cdot f^{\prime}(B) < x < \ms R -(T-t) \cdot f^{\prime}(\bs u[\ms R, B, f]),
      \\
    \noalign{\smallskip}
      &\hspace{3in}
      \bs \tau[\ms R, B, f] < t < T \Big\},
      \\
      \noalign{\bigskip}
      \Gamma_2[\ms R, B, f]&\doteq 
      \Big\{(x, t) \in\,]0,+\infty[\, \times \,]0, T[\,  \; : \; -(\bs \tau[\ms R, B, f]\!-t) \cdot\! f^{\prime}(\,\overline{B}\,) < x < \ms R \!-\!(T-t) \cdot f^{\prime}(\bs u[\ms R, B, f]),
      \\
    \noalign{\smallskip}
      &\hspace{3in}
      0 \leq t \leq \bs \tau[\ms R, B, f]\Big\}.
    \end{aligned}
\end{equation}
%
Then, set $\gamma(t)\doteq \bs x\big[\bs y[\ms R, B, f](T), \overline{B}, f\big](t),$
and denote by $\ms u[\ms R, B, f]: \Gamma[\ms R, B, f] \to \mathbb R$ the function defined by 
\begin{equation}
\label{eq:urbff-def}
     \ms u[\ms R, B, f] (x,t)\doteq
     \begin{cases}
     \!B  &\text{if}\qquad  0< x < \gamma(t),
\\
\noalign{\smallskip}
\!(f^{\prime})^{\strut -1}\bigg(\dfrac{x-\ms R+T \cdot f^{\prime}(\bs u[\ms R, B, f])}{t}\bigg) &\text{otherwise.}
     \end{cases}
\end{equation}
Notice that, by~\eqref{eq:xL} and~\eqref{eq:yxinvertible},
one has $\gamma(\bs \tau[\ms R, B, f])=0$, 
$\gamma(T)=\ms R$.
Moreover, by the same arguments of the
proof of Lemma~\ref{lemma:dualshocks} it follows that
$u[\ms R, B, f] (x,t)$
is a distributional solution of~\eqref{eq:convexconslaw}
on $\Gamma[\ms R, B, f]$. Furthermore, 
since
$t\mapsto \gamma'(t)$
is strictly increasing
as observed in \S~\ref{defi:vl}, it follows
that also the
map 
$$
t\mapsto 
\dfrac{\gamma(t)-\ms R+T \cdot f^{\prime}(\bs u[\ms R, B, f])}{t}
$$
is strictly increasing. 
Therefore,
by virtue of~\eqref{eq:yxinvertible},
and relying on~\eqref{eq:uv-ba-ineq}, we find
\begin{equation}
\label{eq:lax-rfs-cond}
    \begin{aligned}
         \lim_{x\to\gamma(t)+}
         \ms u[\ms R, B, f] (x,t)&\leq 
         \lim_{x\to\gamma(T)+}
         \ms u[\ms R, B, f] (x,T)
         \\
         &= 
         \lim_{x\to \ms R+}\ms u[\ms R, B, f] (x,T)
         \\
         &=\bs u[\ms R, B, f]<B\\
         &=\lim_{x\to\gamma(t)-}
         \ms u[\ms R, B, f] (x,t)
         \qquad \forall~t\in
    [\bs \tau[\ms R, B, f],T]\,,
    \end{aligned}
\end{equation}
which shows that the
Lax entropy condition 
is satisfied along the curve $(t,\gamma(t))$, $t\in [\bs\tau[\ms R, B, f],T]$.
Since the flux in~\eqref{eq:convexconslaw} is strictly convex,
this proves that
$u[\ms R, B, f] (x,t)$
provides an entropy weak solution of~\eqref{eq:convexconslaw}
on the region $\Gamma[\ms R, B, f]$.
Notice that, by~\eqref{eq:der-mon}, from~\eqref{eq:lax-rfs-cond} we deduce in particular that $f'(B)>\lambda\big(\bs u[\ms R, B, f], B\big)=\gamma'(T)$,
which in turn, by the strict monotonicity of 
$\dot\gamma(t)$, yields
\begin{equation}
\label{eq:lax-rfs-cond-2}
    f'(B)>\gamma'(t)\qquad \forall~t\in
    [\bs \tau[\ms R, B, f],T]\,.
\end{equation}
Hence, relying on~\eqref{eq:lax-rfs-cond-2}, we find
\begin{equation}
    \label{eq:lax-rfs-cond-3}
    f'(B)>\frac{\gamma(T)-\gamma(\bs\tau[\ms R, B, f])}{T-\bs\tau[\ms R, B, f]}=
    \frac{\ms R}{T-\bs\tau[\ms R, B, f]}\,.
\end{equation}

\subsection{Left forward rarefaction-shock wave pattern (Figure~\ref{fig:Lrare}, left)}
\label{def:lrs-block}
Symmetrically to~\S~\ref{def:rsr-block}, 
for every $A <\theta$, \
$T\cdot f^{\prime}(A)<\ms L< 0$,
we define here:

\begin{itemize}
[leftmargin=25pt]
\item[-]a constant $\bs \sigma[\ms L, A, f]$;
\smallskip
\item[-]a function 
$(x,t)\mapsto \ms v[\ms L, A, f](x,t),$ $(x,t)\in \Delta[\ms L, A, f]$;
\end{itemize} 
with the following properties.
When $f=f_l$, the function
$\ms v[\ms L, A, f](x,t)$ defines a (forward) solution
associated to the operator
$\sabp$, which contains 
a shock 
starting 
from the interface $\{x=0\}$ at time
$t=\bs \sigma[\ms L, A, f]$.
The  location of such a shock is given by the map $t\mapsto \bs y \big[\bs x[\ms L,A,f_l](T), \; \overline{A}, \; f_l\big](t)$,
where 
$\bs x[\ms L,A, f_l]$ and $\bs y[\ms R,\overline{A}, f_l]$
with $\ms R=\bs x[\ms L,A,f_l](T)$, 
are the backward shocks 
of a backward solution 
associated to the operator
$\sabm$
introduced in \S~\ref{defi:ur}-\ref{defi:vl}. 
Because of Lemma~\ref{lemma:dualshocks},
the shock
$t\mapsto \bs y \big[\bs x[\ms L,A,f_l](T), \; \overline{A}, \; f_l\big](t)$
reaches the point $x=L$
at time $t=T$.
We can regard \linebreak 
$\bs y \big[\bs x[\ms L,A,f_l](T), \; \overline{A}, \; f_l\big]$ as
the ``dual shock''
of the backward shock $\bs x[\ms L,A,f_l]$.
\smallskip

We proceed to introduce these definitions as follows. 
With the same notations of \S~\ref{defi:ur}-\ref{defi:vl},
for every $A <\theta$, \
$T\cdot f^{\prime}(A)<\ms L< 0$
we set
\begin{equation}
\label{eq:sigma-laf-def}
    \bs \sigma[\ms L, A, f]\doteq
     \bs t \big[\bs x[\ms L,A,f](T), \; \overline{A}, \; f\big]=
     \frac{\bs x[\ms L,A,f](T)}{f'(\,\overline{A}\,)}\,.
\end{equation}
By the construction in \S~\ref{defi:vl}, 
and because of Lemma~\ref{lemma:dualshocks},
$\bs\sigma[\ms L, A, f]$ depends continuously on the parameters $\ms L, A$, the image of the map
$\ms L \mapsto \bs\sigma[\ms L, A, f]$, $\ms L\in \,]T\cdot f'(A), 0[\,$, is the set $\,]0,T[\,$,
and $\ms L \mapsto \bs\sigma[\ms L, A, f]$
is increasing.

Next,  we denote by $\Delta[\ms L, A, f] \subset (0, T) \times \mathbb R$ the polygonal set (the blue set in Figure~\ref{fig:Lrare})
\begin{equation}
\label{eq:deltadef}
    \Delta[\ms L, A, f]\doteq \Delta_1[\ms L, A, f]\cup\Delta_2[\ms L, A, f]\,,
\end{equation}
with
\begin{equation}
\label{eq:delta12def}
    \begin{aligned}
      \Delta_1[\ms L, A, f]&\doteq 
      \Big\{(x, t) \in\,]-\infty,0[\, \times \,]0, T[\,   \; : \; \ms L -(T-t) \cdot f^{\prime}(\bs v[\ms L, A, f])<x<\ms L-(T-t)\cdot f^{\prime}(A),
      \\
    \noalign{\smallskip}
      &\hspace{3in}
      \bs \sigma[\ms L, A, f] < t < T \Big\},
      \\
      \noalign{\bigskip}
      \Delta_2[\ms L, A, f]&\doteq 
      \Big\{(x, t) \in\,]-\infty,0[\, \times \,]0, T[\,   \; : \; \ms L\! -\!(T-t) \cdot f^{\prime}(\bs v[\ms L, A, f])< x < -(\bs \sigma[\ms L, A, f]\!-t) \cdot\! f^{\prime}(\,\overline{A}\,), 
      \\
    \noalign{\smallskip}
      &\hspace{3in}
      0 \leq t \leq \bs \sigma[\ms L, A, f]\Big\}.
    \end{aligned}
\end{equation}
Then, set $\gamma(t)\doteq \bs y\big[\bs x[\ms L, A, f](T), \overline{A}, f\big](t)$,
and denote by $\ms v[\ms L, A, f]: \Delta[\ms L, A, f]\to \mathbb R$ the function defined by 
\begin{equation}
\label{eq:vlaff-def}
     \ms v[\ms L, A, f] (x,t)\doteq
     \begin{cases}
     \!A  &\text{if}\qquad  \gamma(t)< x < 0,
\\
\noalign{\smallskip}
\!(f^{\prime})^{\strut -1}\bigg(\dfrac{x-\ms L+T \cdot f^{\prime}(\bs v[\ms L, A, f])}{t}\bigg) &\text{otherwise.}
     \end{cases}
\end{equation}
Observe that, by~\eqref{eq:yR} and~\eqref{eq:yxinvertible},
one has $\gamma(\bs \sigma[\ms L, A, f])=0$, 
$\gamma(T)=\ms L$.
With the same arguments of \S~\ref{def:rsr-block}, it follows that $v[\ms L, A, f] (x,t)$ provides an entropy weak solution of~\eqref{eq:convexconslaw}
on the region $\Delta[\ms L, A, f]$,
and that we have
\begin{equation}
    \label{eq:lax-rfs-cond-4}
    f'(A)<\frac{\gamma(T)-\gamma(\bs \sigma[\ms L, A, f])}{T-\bs \sigma[\ms L, A, f]}=
    \frac{\ms L}{T-\bs \sigma[\ms L, A, f]}\,.
\end{equation}
\begin{remark}
\label{rem:caratt-u}
The constant $\bs u[\ms R, B, f]$ defined in \S~\ref{defi:ur} 
is crucial to characterize the 
jump of an attainable profile
$\omega\in \mc A^{[AB]}(T)$
at the point
\begin{equation*}
\ms R \doteq  \inf \big\{ R > 0 \; : \; x-T \cdot f_r^{\prime}(\omega(x+)) \geq 0  \quad \forall \; x \geq R\big\},
\end{equation*}
when $\ms R \in \,]0, T \cdot f_r^{\prime}(B){ [}$.
The state $\bs u[\ms R, B, f]$ is constructed so to be the largest right state that one can achieve
at $(\ms R,T)$ with a shock that \textit{isolates} the interface $\{x=0\}$ from the semiaxis $\{x>0\}$. In fact, 
the constant $\bs u[\ms R, B, f]$ with $f=f_r$, identifies
a unique state $\bs u$ { $< B$} that has the
 property:
\begin{itemize}
[leftmargin=25pt]
    \item[-] If $\omega =\sabp_T u_0$,  and $u(t,x)=\sabp_t u_0(x)$ admits a shock 
    generated in $\{x\geq 0\}$
    at some time $t=\tau$,
    and reaching the point $(\ms R,T)$, then letting 
    $\gamma(t)$, $t\in [\tau, T]$, denote the location of such a shock, one has
    \begin{equation}
    \label{eq:char-u-1}
    u_\gamma\doteq 
        \lim_{t\to T-} u(t,\gamma(t)+)\leq (f_r^{\prime})^{-1}(\mathsf R/T) \quad \Longrightarrow\quad  u_{\gamma}  \leq  \bs u\,.
    \end{equation}
\end{itemize} 
In particular, one has $u_\gamma=\bs u$
in~\eqref{eq:char-u-1} only in the case 
where $\sabp_t u_0$ 
coincides in the polygonal region $\Gamma[\ms R, B, f]$ with the right forward shock-rarefaction pattern described in section~\ref{def:rsr-block}.
By definition of $u_\gamma$ it follows that either 
$\omega(\ms R+)=u_\gamma$,
or else there is another jump
connecting $u_\gamma$
with $\omega(\ms R+)$ which
must satisfy the Lax entropy condition $\omega(\ms R+)< u_\gamma$.
Therefore, as a consequence of~\eqref{eq:char-u-1}
we find a necessary condition
for the attainability of $\omega$ at time $T$ given by 
\begin{equation}
\label{eq:lax-om-r-cond}
    \omega(\ms R+)\leq  \bs u[\ms R, B, f_r]\,,
\end{equation}
(see \eqref{eq:2a} of Theorem~\ref{thm:attprofiles}
and the proof in \S~\ref{sec:2a}).
The interesting fact is that,
in the case $\ms R \in \,]0, T \cdot f_r^{\prime}(B){ [}$,
condition~\eqref{eq:lax-om-r-cond}, 
together with the condition
\begin{equation}
\label{eq:omega-cond-1}
    \omega(x)\geq B \qquad \forall~x\in \,]0, \ms R[\,,
\end{equation}
(see~\eqref{eq:2bl-r1}, \eqref{eq:2b-lr2}, 
of Theorem~\ref{thm:attprofiles}),
is also sufficient to
guarantee the existence of
an $AB$-entropy solution $u(x,t)$ 
that satisfies
\begin{equation}
u(x,T)=\omega(x)\qquad\quad
\forall~x\in ]0, \ms R], 
\qquad
u(\ms R,T)=\omega(\ms R+).
\end{equation}
To illustrate this claim, 
in view of the definitions introduced in the previous sections {  we  proceed as follows.}
    \begin{itemize}
    [leftmargin=25pt]
    \item[-]
        By  solving \eqref{eq:yR} one determines the end point $\bs y[\ms R, B, f_r](T)$ of a ``left backward shock'' (Figure~\ref{fig:Rrare}, left). The map $t \mapsto \bs y[\ms R, B, f_r](t)$ represents the position of a shock in a 
        $\overline{B}\overline{A}$-entropy solution
        (which is associated to the backward solution operator
        $\sabm$, 
        see Definition~\ref{def:backop});
        
        \item[-] given the final position $\bs y[\ms R, B, f_r]$
        of the ``backward shock" , one considers the solution $t  \mapsto \gamma(t)\doteq\bs x[\bs y[\ms R, B, f_r], \overline{B}, f_r](t)$ to~\eqref{eq:xL}, when $\ms L =\bs y[\ms R, B, f_r]$, $A = \overline{B}$, $f = f_r$ (see Figure~\ref{fig:Rrare}, right). This map represents the position of a shock in a ``forward solution", i.e. in an $AB$-entropy solution associated to the (forward) operator 
        $\sabp$ in~\eqref{eq:f-AB-op}. Actually, {  we will show in \S \ref{subsec:part3b}, using the results of this section, that} $(t, \gamma(t))$
        is the location
        of a shock of $\sabpt u_0$, with $u_0=\sabmt\omega$.

        \item[-] once determined the point $\bs y[\ms R, B, f_r](T)$, one defines $\bs u[\ms R, B, f_r]$ as the state realizing the slope $(\bs y[\ms, B, f_r](T)+\ms R)/T$ (see~\eqref{eq:urbf-def}):
        $$
        f_r^{\prime}(\bs u[\ms R, B, f_r]) = \frac{\bs y[\ms R, B, f_r](T)+\ms R}{T};
        $$
        
        \item[-] thanks to Lemma \ref{lemma:dualshocks}, we know that the final position at time $T$ of the shock $\gamma(t)$
        satisfies 
        $$
        \gamma(T)=
        \ms R. 
        $$
        Using this procedure, if a profile $\omega$ satisfies the conditions~\eqref{eq:lax-om-r-cond}-\eqref{eq:omega-cond-1},
        we will show in \S~\ref{sec:(3)-(2)}-\ref{subsec:part3b} that we can construct admissible $AB$-shocks 
        that produce at time $T$ the given jump in the profile $\omega$ at position~$\ms R$.
    \end{itemize}
Entirely symmetric considerations hold for the  state $\bs v[\ms L, A, f]$ defined in \S~\ref{defi:vl} (see Figure~\ref{fig:Lrare}).
As a byproduct of this analysis we will obtain that attainable profiles are fixed points of the backward forward solution operator, as stated in Theorem~\ref{thm:backfordiscflux}.
\end{remark}

\section{Statement of the main results}
\label{sec:statement-main}
Conditions (1), (2) of Theorem~\ref{thm:backfordiscflux} will be shown to be equivalent by proving
that they are both equivalent to a  characterization of the
attainable set $\mc A^{[AB]}(T)$ in~\eqref{eq:attset} via
 Ole\v{\i}nik-type inequalities and state constraints.
To present these results we need 
to introduce some furhter notations.

Given a flux $f(x,u)$ as in~\eqref{discflux}, we
will use the notations
$f_{l,-}^{-1}\doteq
({f_l}_{\mid (-\infty,\theta_l]})^{-1}$,\,
$f_{r,-}^{-1}\doteq
({f_r}_{\mid (-\infty,\theta_r]})^{-1}$, for the inverse of the restriction of 
$f_l$, $f_r$  to their decreasing part, respectively, 
and 
$f_{l,+}^{-1}\doteq ({f_l}_{\mid [\theta_l,+\infty)})^{-1}$, \,
$f_{r,+}^{-1}\doteq ({f_r}_{\mid [\theta_{r},+\infty)})^{-1}$, for the inverse of the restriction of 
$f_l$, $f_r$ to their increasing part, respectively.
Then, we set 
\begin{equation}
\label{pimap-def}
\pi_{l,\pm}\doteq f_{l,\pm}^{-1} \circ f_l\,,
\qquad\quad
\pi_{r,\pm}\doteq f_{r,\pm}^{-1} \circ f_r\,,
\qquad\quad
\pi_{l,\pm}^{r}\doteq f_{l,\pm}^{-1} \circ f_r\,,
\qquad\quad
\pi_{r,\pm}^{l}\doteq f_{r,\pm}^{-1} \circ f_l\,.
\end{equation}
Moreover, in connection with a
function $\omega:\R\to\R$    
we define the quantities
%
\begin{equation}
\label{eq:LR-def}
    \begin{aligned}
    \ms R[\omega, f_r] &\doteq  \inf \big\{ R > 0 \; : \; x-T \cdot f_r^{\prime}(\omega(x)) \geq 0  \quad \forall \; x \geq R\big\},
    \\
    \noalign{\smallskip}
    \ms L[\omega, f_l] &\doteq  \sup \big\{ L < 0 \; : \; x-T \cdot f_l^{\prime}(\omega(x)) \leq 0 \quad \forall \; x \leq L\big\},
\end{aligned}
\end{equation}
and, if $\ms L[\omega, f_l] \in \,]T\cdot f_l^{\prime}(A), 0[$\,,
we set
\begin{equation}
            \label{eq:Rtildedef}
             \widetilde{\ms R}
            [\omega,f_l,f_r,A,B]
            \doteq  \big(T-\bs \sigma\big[\ms L[\omega,f_l], A, f_l\big]\big) \cdot f_r^{\prime}(B),
            \end{equation}
while, if $\ms R[\omega, f_r] \in \,]0, T\cdot f^{\prime}_r(B)[\,$, we set
\begin{equation}
            \label{eq:Ltildedef}
            \widetilde{\ms L}[\omega,f_l,f_r,A,B]
            \doteq \big(T-\bs \tau\big[\ms R[\omega,f_r], B, f_r\big]\big) \cdot f_l^{\prime}(A).
            \end{equation}
where $\bs \sigma[\ms L, A, f_l]$, $\bs \tau[\ms R, B, f_r]$, denote the shock starting
times introduced in
\S~\ref{def:rsr-block}-\ref{def:lrs-block}. 
Recalling~\eqref{eq:xy-1sdlim}, \eqref{eq:tau-rbf-def}, \eqref{eq:sigma-laf-def},  we can extend by continuity the definitions~\eqref{eq:Rtildedef}, \eqref{eq:Ltildedef},
setting
\begin{equation}
    \label{eq:tildeLR-bis}
    \begin{aligned}
        \widetilde{\ms R}
            [\omega,f_l,f_r,A,B]
            &\doteq 0,\qquad\text{if}\qquad
            \ms L[\omega, f_l]=0\,,
            \\
            \noalign{\smallskip}
            \widetilde{\ms L}[\omega,f_l,f_r,A,B]
            &\doteq 0,\qquad\text{if}\qquad
            R[\omega, f_r]=0\,.
    \end{aligned}
\end{equation}
Such quantities are used to express the pointwise constraints satisfied by $\omega$  in intervals containing the origin
whenever $\omega$ is attainable.
Next, to express the Ole\v{\i}nik-type inequalities satisfied by 
the attainable profiles it is useful to introduce the functions, 
\begin{equation}
\label{eq:ghdef}
    \begin{aligned}
        g[\omega,f_l,f_r](x)&\doteq
        \!\dfrac{f'_l(\omega(x))\left[  f'_r \circ 
        \pi_{r,-}^{l}
        (\omega(x)) \right]^2 }
     {  \left[f''_r\circ 
      \pi_{r,-}^{l}
      (\omega(x))\right]\!\left[f'_l(\omega(x))\right]^2\!\big(T\!\cdot\! f'_l(\omega(x))-x \big)+x\big[f'_r\circ 
       \pi_{r,-}^{l}
       (\omega(x))\big] ^2\!  f''_l(\omega(x)) },
        \\
        \noalign{\medskip}
        h[\omega,f_l,f_r](x)&\doteq\!\dfrac{f'_r(\omega(x))\big[  f'_l \circ 
        \pi_{l,+}^{r}
        (\omega(x)) \big]^2 }
     {  \big[f''_l\circ 
     \pi_{l,+}^{r}
     (\omega(x))\big]\!\left[f'_r(\omega(x))\right]^2\big( T\!\cdot\!f'_r(\omega(x))-x \big)+x\big[f'_l\circ 
     \pi_{l,+}^{r}
     (\omega(x))\big] ^2\!  f''_r(\omega(x)) },
    \end{aligned}
\end{equation}
{  defined for $x \in \,]\ms L[\omega,f_l], 0[$\,, 
$\omega(x)\leq A$, and
for 
$x \in \, ]0, \ms R[\omega,f_r][$\,,
$\omega(x)\geq B$, respectively}.
{ 
\begin{remark}
The definitions of the functions $g, h$ are meaningful in their domains. 
In fact, the maps $\pi_{r,-}^{l},$
$\pi_{l,+}^{r}$ in~\eqref{pimap-def} 
(that appear in the definitions of $g, h$)
are well defined if
$\omega(x)\leq A$, and $\omega(x)\geq B$, respectively.
Moreover, by definition~\eqref{eq:LR-def}, we have
$$
\begin{aligned}
    & Tf_l^\prime(\omega(x))-x < 0, \qquad f_l^\prime(\omega(x)) < 0 \qquad \forall \; x \in \,]\ms L[\omega,f_l],\, 0[\,,
    \\
    & Tf_r^\prime(\omega(x))-x> 0, \qquad f_r^\prime(\omega(x)) > 0 \qquad \forall \; x \in \,]0,\, \ms R[\omega,f_r][\,.
\end{aligned}
    $$
Hence, relying also on~\eqref{eq:flux-assumption-1},  we deduce that the denominator of $g$ is strictly negative for \linebreak $x \in \,]\ms L[\omega,f_l], 0[$\,, while 
the denominator of $h$ is strictly positive for $x \in \, ]0, \ms R[\omega,f_r][$\,.
The functions $g, h$ will provide 
a one-sided upper bound for the derivative of $\omega$ only in the interval $\,]\ms L[\omega,f_l], 0[$,
assuming $\omega(x)\leq A$, and
on the interval $]0, \ms R[\omega,f_r][$\,, assuming
$\omega(x)\geq B$, respectively.
\end{remark}
}
Since by Remark~\ref{rem:abentr-sol-prop1} we know that 
$\mc A^{[AB]}(T) \subset BV_{\mr{loc}}(\R\setminus\{0\})$, 
we can partition the attainable set as
\begin{equation}
\label{eq:att-set-decomp}
    \mc A^{[AB]}(T)= \bigcup_{\ms L \leq 0,\, \ms R \geq 0} \big( \mc A^{[AB]}(T) \cap \msc A^{\ms L, \ms R}\big),
\end{equation}
where
\begin{equation}
\label{eq:ALR-def1}
\begin{aligned}
    \msc A^{\ms L, \ms R} &\doteq  \Big\{ \omega \in 
     (\mathbf{L^\infty}\cap BV_{\mr{loc}})
    (\mathbb R\setminus\{0\}) \; : \; \
    \ms L[\omega, f_l] = \ms L, \quad  \ms R[\omega, f_r] = \ms R\Big\}\,.
\end{aligned}
\end{equation}
The characterization of the attainable profiles 
in $\msc A^{\ms L, \ms R}$ will be given in:
\begin{itemize}
\item[-]Theorem~\ref{thm:attprofiles}, \ \ if \ \ 
$\ms L < 0$, $\ms R > 0$\,, and $(A,B)$ is non critical;
\smallskip
\item[-]Theorem~\ref{thm:attprofilescrit}, \ \ if \ \ 
$\ms L < 0$, $\ms R > 0$\,,  and $(A,B)$ is  critical;
\smallskip
\item[-]Theorem~\ref{thm:attprofiles2}, \ \ if \ \
$\ms L < 0$, $\ms R = 0$ \ or \ $\ms L = 0$, $\ms R > 0$;
\smallskip
\item[-]Theorem~\ref{thm:attprofiles3}, \ if \ \
$\ms L = 0$, $\ms R = 0$.
\end{itemize}
\smallskip

\begin{remark}
\label{rem:BVclass}
Any element of $\msc A^{\ms L, \ms R}$
is an equivalence class of functions that 
admit one-sided limit at any point $x\in\R$,
and that have at most countably many discontinuities.
Therefore, for any element of $\msc A^{\ms L, \ms R}$, we can always choose a representative  which is left or right continuous. For sake of uniqueness, throughout the paper we will 
consider a representative of $\omega$ that is right continuous.
\end{remark}

Throughout the following
\begin{equation}
\label{eq:dini_der_def}
D^- \omega(x)= \liminf_{h\rightarrow0}   \frac{\omega(x+h)-\omega(x)}{h} ,\quad \quad 
D^+ \omega(x)= \limsup_{h\rightarrow0}   \frac{\omega(x+h)-\omega(x)}{h},
\end{equation}
will denote, respectively, the lower and
the  upper Dini derivative of a function $\omega$ at $x$.

\begin{thm}\label{thm:attprofiles}
    In the same setting of Theorem~\ref{thm:backfordiscflux}, 
    let $(A, B)$ be a {non critical}  connection, 
    let $\mc A^{[AB]}(T)$, $T>0$, be the set
    in~\eqref{eq:attset}, and
    let $\omega$ be an element of the
    set $ \msc A^{\ms L, \ms R}$  
    in~\eqref{eq:ALR-def1},
    with $\ms L < 0$, $\ms R > 0$.
Then, 
$\omega \in \mc A^{[AB]}(T)$ if and only if the limits $\omega(0\pm)$ exist, and there
hold:

\begin{enumerate}
[leftmargin=20pt]
     \item[(i)]      
    the following Ole\v{\i}nik-type inequalities are satisfied 
    \begin{equation}\label{eq:1a}
    \begin{aligned}
    D^+ \omega(x) &\leq \frac{1}{T \cdot f_l^{\second}(\omega(x))} \qquad \forall \; x \in\,]\!-\!\infty,\;  \ms L[\,,
    \\
    \noalign{\smallskip}
       D^+ \omega(x) &\leq \frac{1}{T \cdot f_r^{\second}(\omega(x))} \qquad \forall \; x \in \,]\ms R, +\infty[\,.
        \end{aligned}
    \end{equation}
    Moreover, 
    letting $g, h$ be the functions in~\eqref{eq:ghdef},
    and letting
    $\widetilde {\ms L}\doteq \widetilde{\ms L}[\omega,f_l,f_r,A,B]$, \linebreak $\widetilde {\ms R}\doteq \widetilde{\ms R}
            [\omega,f_l,f_r,A,B]$,
    be the constants in~\eqref{eq:Rtildedef},
    \eqref{eq:Ltildedef},
    if $\ms R \in \,]0, T \cdot f_r^{\prime}(B)[$\,, and if  \ $ \widetilde{\ms L}>{\ms L}$,  then one has
    \begin{equation}
    \label{eq:1b1}
        D^+\omega(x) \leq g[\omega, f_l, f_r](x) \qquad \forall \; x \in\,]\ms L,\, \widetilde{\ms L}\,[\,,
    \end{equation}
    while, if $\ms L \in \,]T\cdot f_l^{\prime}(A), 0[$\,, and if \ $ \widetilde{\ms R}<{\ms R}$,  then one has
    \begin{equation}
    \label{eq:1b12}
        D^+\omega(x) \leq h[\omega, f_l, f_r](x) \qquad \forall \; x \in \,]\widetilde{\ms R }, \; \ms R[\,.
    \end{equation}  

\item[(ii)] letting $\bs u[\ms R, B, f_r]$, $\bs v[\ms L, A, f_l]$,  be 
constants defined as
in~\eqref{eq:urbf-def},
\eqref{eq:vlaf-def},
the following pointwise state constraints are satisfied
\smallskip

\begin{equation}
\label{eq:2a}
\begin{aligned}
\ms L \in \,]T\cdot f_l^{\prime}(A), 0[
  \qquad  &\Longrightarrow \qquad
  \omega(\ms L-) \geq \bs v[\ms L, A, f_l]
\geq  \omega(\ms L+)\,,
\\
\noalign{\medskip}
 \ms R \in \,]0, T \cdot f_r^{\prime}(B)[
  \qquad  &\Longrightarrow \qquad 
  \omega(\ms R+) \leq \bs u[\ms R, B, f_r] \leq  \omega(\ms R-)\,.
\end{aligned}
\qquad
\end{equation}
\smallskip
\begin{equation}
\label{eq:2bl-r1}
\begin{aligned}
\Big[\ms L \in \,]T\cdot f_l^{\prime}(A), 0[\,\;  \ \mathrm{and} \;  \ \ms R \leq  \widetilde{\ms R}\,
\Big]
\quad \mathrm{or} \quad\ \ms L \leq T\!\cdot\!f_l^{\prime}(A)
\qquad
&\Longrightarrow\qquad 
\omega(x) = B\qquad \forall~x\in\,]0, \ms R[\,,
\qquad
%
\\
\noalign{\smallskip}
\Big[\ms R \in \,]0, T\cdot f_r^{\prime}(B)[\,\; \ \mathrm{and} \; \  \widetilde{\ms L}\leq \ms L\,\Big] \quad \mathrm{or} \quad \ms R \geq T\!\cdot\!f_r^{\prime}(B)
\qquad&\Longrightarrow\qquad 
\omega(x) = A\qquad \forall~x\in\,]\ms L, 0[\,,
\qquad
\end{aligned}
\end{equation}
\smallskip
\begin{align}
\label{eq:2b-lr2}
\hspace{0.6in}\ms L \in \,]T\cdot f_l^{\prime}(A), 0[\; \  \mathrm{and}
 \; \ \widetilde{\ms R}<\ms R\qquad
 &\Longrightarrow\qquad   
 \left\{
 \begin{aligned}
     \omega(x)&=B\qquad\forall~x\in\,]0, 
      \widetilde{\ms R}\,],
      \\
      \omega( \widetilde{\ms R}+)&= B,
      \\
      \omega(x)&\geq B\qquad\forall~x\in\,]\,\widetilde{\ms R},\,\ms R[\,,
 \end{aligned} 
 \right.
%
\\
\noalign{\medskip}
\label{eq:2b-lr2-2}
\hspace{0.6in}\ms R \in \,]0, T\cdot f_r^{\prime}(B)[\, \; \ \mathrm{and}
\; \ \ms L <
 \widetilde{\ms L}\qquad
 &\Longrightarrow\qquad   
 \left\{
 \begin{aligned}
     \omega(x)&=A\qquad\forall~x\in [\, 
      \widetilde{\ms L},\,0[\,,
      \\
      \omega( \widetilde{\ms L}-)&= A,
      \\
      \omega(x)&\leq A\qquad\forall~x\in\,]\ms L, \widetilde{\ms L}\,[\,,
 \end{aligned} 
 \right.
 \end{align}
%
\smallskip
\begin{equation}
\label{eq:3b-lr2}
    \quad\ \begin{aligned}
        \ms L \leq T\cdot f'_l(A)
        \qquad
 &\Longrightarrow\qquad   \omega(\ms L-)\geq \omega(\ms L+),
 \\
 \noalign{\smallskip}
        \ms R \geq T\cdot f'_r(B)
        \qquad
 &\Longrightarrow\qquad   \omega(\ms R-)\geq \omega(\ms R+).
    \end{aligned}
\end{equation} 

\end{enumerate}
\end{thm}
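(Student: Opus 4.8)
The plan is to prove the two directions of Theorem~\ref{thm:attprofiles} separately, following the strategy (I)--(II) outlined in the introduction. I sketch how I would organize the argument.

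\textbf{Necessity (attainable $\Longrightarrow$ constraints).} Suppose $\omega = \sabpT u_0$ for some $u_0 \in BV_{\mr{loc}}(\R)$ (the $\mathbf{L}^\infty$ case will be recovered afterwards by density and the stability in Theorem~\ref{theoremsemigroup}(iii)). I would run Dafermos' theory of generalized characteristics, using the refracted notion of Remark~\ref{rem:char-ABsol}. Through a point $(x,T)$ with $x < \ms L$ (or $x > \ms R$) one traces back a minimal/maximal backward characteristic; since $\ms L, \ms R$ are defined precisely as the thresholds beyond which $x - T f'_\bullet(\omega(x))$ has the right sign, such characteristics reach $t=0$ without touching the interface, so the solution is, near that point, a genuinely convex-flux entropy solution and the classical Ole\v{\i}nik bound $D^+\omega(x) \le 1/(T f''_\bullet(\omega(x)))$ applies, giving \eqref{eq:1a}. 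The pointwise bounds \eqref{eq:2a} at $x = \ms L, \ms R$ and the inequalities \eqref{eq:1b1}--\eqref{eq:1b12} on $]\ms L, \widetilde{\ms L}[$, $]\widetilde{\ms R}, \ms R[$ are the delicate part: here the backward characteristic from $(x,T)$ must interact with the interface, and I would invoke the wave-pattern analysis of \S~\ref{def:rsr-block}--\ref{def:lrs-block} together with Remark~\ref{rem:caratt-u}. The key point is the extremality property \eqref{eq:char-u-1}: any admissible shock that isolates the interface from $\{x>0\}$ and reaches $(\ms R,T)$ has right state $\le \bs u[\ms R,B,f_r]$, with equality exactly for the pattern $\ms u[\ms R, B, f_r]$; combined with the Rankine--Hugoniot constraint \eqref{RHtraces} and Lemma~\ref{lem:traceseq2} at the interface this forces $\omega(\ms R+) \le \bs u[\ms R,B,f_r] \le \omega(\ms R-)$ and the function $g, h$ bounds (which are just the Ole\v{\i}nik rate computed along the deformed characteristic field emanating from the dual shock). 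The trichotomy \eqref{eq:2bl-r1}--\eqref{eq:3b-lr2} on $]\ms L,0[$ and $]0,\ms R[$ follows by a case analysis on the position of the interface relative to the backward shocks $\bs y[\ms R, B, f_r]$, $\bs x[\ms L, A, f_l]$, using the mass-balance identities \eqref{eq:massconsyR}--\eqref{eq:massconsyR2} from the proof of Lemma~\ref{lemma:dualshocks} and monotonicity of $\bs u, \bs v$ (Remark~\ref{rem:monotonicityxy}); essentially, once the interface lies on the ``wrong side'' of the dual shock the only admissible trace configuration is the stationary connection $c^{AB}$, forcing $\omega \equiv B$ (resp. $\equiv A$) there.

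\textbf{Sufficiency (constraints $\Longrightarrow$ attainable).} Given $\omega$ satisfying (i)--(ii), I would explicitly construct $u_0 := \sabmT \omega$ and verify $\sabpT u_0 = \omega$. The construction splits $\R$ into three zones. On $\{x < \ms L\} \cup \{x > \ms R\}$ the Ole\v{\i}nik bound \eqref{eq:1a} means, exactly as in the space-independent case (\S~\ref{sec:backoopconv}), that $v(\cdot,t) := \sobapt(\omega(-\cdot))(\cdot)$ stays locally Lipschitz, so $u^*(x,t) = \sabpt u_0(x)$ is a classical solution there and attains $\omega$. In the near-interface zones $]\ms L,0[$ and $]0,\ms R[$ I would paste in the explicit wave patterns $\ms v[\ms L, A, f_l]$ on $\Delta[\ms L,A,f_l]$ and $\ms u[\ms R,B,f_r]$ on $\Gamma[\ms R,B,f_r]$ from \S~\ref{def:rsr-block}--\ref{def:lrs-block}, which by Lemma~\ref{lemma:dualshocks} reach $\omega(\ms R+)$ at $(\ms R,T)$ and $\omega(\ms L-)$ at $(\ms L,T)$; the compatibility at the interface $x=0$ is guaranteed precisely by the pointwise constraints \eqref{eq:2a}--\eqref{eq:3b-lr2} via Lemma~\ref{lem:traceseq2}, so the glued function is a bona fide local $AB$-entropy solution (using the pasting lemma in the ``Local solutions'' remark). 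Filling the remaining gap between these patterns and the profile prescribed on $]\ms L,\widetilde{\ms L}[$ (resp. $]\widetilde{\ms R},\ms R[$) uses \eqref{eq:1b1}--\eqref{eq:1b12} to ensure the intermediate characteristic field is again Lipschitz. Once this candidate solution $u^*$ is built on $\R\times[0,T]$ with $u^*(\cdot,T)=\omega$, uniqueness of $AB$-entropy solutions (Theorem~\ref{theoremsemigroup}(i)) identifies it with $\sabpt(u^*(\cdot,0))$, and the duality of the forward/backward operators (\S~\ref{sec:building-blocks}) identifies $u^*(\cdot,0)$ with $\sabmT\omega$, completing the proof of $\omega \in \mc A^{[AB]}(T)$; the same computation yields $\omega = \sabpT\circ\sabmT\omega$, i.e. the fixed-point statement.

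\textbf{Main obstacle.} The hard part is the near-interface analysis in both directions: correctly identifying \emph{all} the admissible trace configurations at $x=0$ and showing that the extremal state $\bs u[\ms R,B,f_r]$ (and $\bs v[\ms L,A,f_l]$) really is the sharp bound — i.e. that no cleverer admissible $AB$-wave structure can push $\omega(\ms R+)$ above it — and conversely that the explicit dual patterns of \S~\ref{def:rsr-block}--\ref{def:lrs-block} glue consistently across the interface under exactly the hypotheses \eqref{eq:2a}--\eqref{eq:3b-lr2}. This is where the genuine-characteristic bookkeeping with refraction (Remark~\ref{rem:char-ABsol}), the interface entropy condition $I^{AB}\le 0$, and the duality of Lemma~\ref{lemma:dualshocks} all have to be combined carefully; the Ole\v{\i}nik-type bounds \eqref{eq:1a}, by contrast, are essentially classical. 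Extending from $u_0 \in BV_{\mr{loc}}$ to $u_0 \in \mathbf{L}^\infty$ at the very end is routine given Theorem~\ref{theoremsemigroup}(iii) and the $BV_{\mr{loc}}$-regularizing nature of the constraints.
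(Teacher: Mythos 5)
Your overall architecture matches the paper's: necessity via Dafermos-type generalized characteristics refracted at the interface plus the extremality of $\bs u[\ms R,B,f_r]$, $\bs v[\ms L,A,f_l]$; sufficiency via an explicit pasting of the wave patterns of \S~\ref{def:rsr-block}--\ref{def:lrs-block} with the characteristic/compression-front construction away from the interface; and the fixed-point identity via the duality of Lemma~\ref{lemma:dualshocks}. Two steps that you treat as routine are, however, exactly where the paper has to work hardest, and as stated your sketch would not close them.

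First, the inequalities \eqref{eq:1b1}--\eqref{eq:1b12} are \emph{not} ``just the Ole\v{\i}nik rate computed along the deformed characteristic field''. They are equivalent to the monotonicity of the foot-point map $\phi(x)=-\tau(x)\, f'_r\circ\pi^l_{r,-}(\omega(x))$ of the refracted backward characteristics, and the problematic case is when $\omega(x_1)<A=\omega(x_2)$ with $x_1<x_2$: there the line issuing from $(0,\tau(x_2))$ with slope $f'_r(\overline B)$ need not a priori be a genuine characteristic (the right trace at the interface could be $B$ rather than $\overline B$), so the non-crossing argument does not apply directly. The paper resolves this (\S~\ref{sec:1b12}, Case 4) by a blow-up at the interface point $(0,\overline t)$ combined with the preclusion of rarefactions emanating from $x=0$ (Proposition~\ref{prop:norare}) and the trace-stability of Corollary~\ref{cor:fluxtraces-stab}, to force $u_r(\overline t)=\overline B$. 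Both of these tools require the $BV_{\mathrm{loc}}$ hypothesis~\eqref{eq:Hhyp} and the non-criticality of $(A,B)$; your proposal never invokes them. Similarly, the bound $\omega(\ms R+)\le \bs u[\ms R,B,f_r]$ is obtained by an ODE comparison between the curve $\xi_{\ms R}$ of \eqref{eq:hr} and the dual shock $\bs x[\bs y[\ms R,B,f_r](T),\overline B,f_r]$, leading to crossing characteristics under the contrary assumption; ``invoking Remark~\ref{rem:caratt-u}'' presupposes this rather than proves it.

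Second, the reduction from $BV_{\mathrm{loc}}$ to $\mathbf L^\infty$ data cannot be done ``by density and the stability in Theorem~\ref{theoremsemigroup}(iii)'' applied to the conclusions: one-sided limits, Dini-derivative bounds and pointwise constraints do not pass to $\mathbf L^1_{\mathrm{loc}}$ limits. What the paper actually does (\S~\ref{subsec:reductionBV}) is a bootstrap: the Ole\v{\i}nik-type inequalities satisfied by the $BV$ approximations $\sabpt u_{n,0}$ yield \emph{uniform} one-sided Lipschitz bounds \eqref{eq:ol-est-un-32} (here non-criticality, i.e.\ $f'_r(B)>0$, $f'_l(A)<0$, is essential to keep the denominators away from zero), hence uniform local $BV$ bounds; lower semicontinuity of the total variation then gives $\sabpt u_0\in BV_{\mathrm{loc}}(\R)$ for the actual solution, i.e.\ condition~\eqref{eq:Hhyp}, and the direct argument is then applied to $\omega$ itself. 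Your closing phrase about the ``$BV_{\mathrm{loc}}$-regularizing nature of the constraints'' points in the right direction, but the mechanism needs to be this bootstrap, not a limit passage in the conclusions.
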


\begin{figure}[ht]
\centering

\tikzset{every picture/.style={line width=0.75pt}} 

\begin{tikzpicture}[x=0.75pt,y=0.75pt,yscale=-0.7,xscale=0.7]

\draw  [fill={rgb, 255:red, 208; green, 2; blue, 27 }  ,fill opacity=0.45 ] (455.87,52.53) -- (411.87,237.87) -- (335.87,142.53) -- (335.87,74.53) -- cycle ;
\draw [line width=1.5]    (335.59,237.62) -- (335.59,18.87) ;
\draw [shift={(335.59,14.87)}, rotate = 90] [fill={rgb, 255:red, 0; green, 0; blue, 0 }  ][line width=0.08]  [draw opacity=0] (6.97,-3.35) -- (0,0) -- (6.97,3.35) -- cycle    ;
\draw [line width=1.5]    (90.8,239.53) -- (590.13,238.3) ;
\draw [shift={(594.13,238.29)}, rotate = 179.86] [fill={rgb, 255:red, 0; green, 0; blue, 0 }  ][line width=0.08]  [draw opacity=0] (6.97,-3.35) -- (0,0) -- (6.97,3.35) -- cycle    ;
\draw    (88.13,51.13) -- (590.2,52.17) ;
\draw  [dash pattern={on 4.5pt off 4.5pt}]  (247.2,51.87) -- (334.53,85.87) ;
\draw  [dash pattern={on 4.5pt off 4.5pt}]  (316.83,51.95) -- (332.53,56.53) ;
\draw  [dash pattern={on 4.5pt off 4.5pt}]  (369.87,51.87) -- (332.53,56.53) ;
\draw  [dash pattern={on 4.5pt off 4.5pt}]  (293.2,51.87) -- (337.87,65.2) ;
\draw  [dash pattern={on 4.5pt off 4.5pt}]  (206.53,51.87) -- (334.53,107.87) ;
\draw  [dash pattern={on 4.5pt off 4.5pt}]  (223.2,51.2) -- (335.2,97.2) ;
\draw  [dash pattern={on 4.5pt off 4.5pt}]  (271.2,52.53) -- (332.53,75.2) ;
\draw  [dash pattern={on 4.5pt off 4.5pt}]  (335.67,219.33) -- (346,236.85) ;
\draw  [dash pattern={on 4.5pt off 4.5pt}]  (335.2,153.2) -- (394.53,236.53) ;
\draw  [dash pattern={on 4.5pt off 4.5pt}]  (337.67,203.33) -- (359.33,238.18) ;
\draw    (335.67,184) -- (373.33,238.33) ;
\draw  [dash pattern={on 4.5pt off 4.5pt}]  (190.53,51.87) -- (335.43,118.6) ;
\draw  [dash pattern={on 4.5pt off 4.5pt}]  (175.2,51.87) -- (333.87,129.2) ;
\draw    (138.67,52) -- (166.67,236.18) ;
\draw  [dash pattern={on 4.5pt off 4.5pt}]  (335.2,165.2) -- (383.53,236.71) ;
\draw    (166.53,51.87) -- (335.87,142.53) ;
\draw  [dash pattern={on 4.5pt off 4.5pt}]  (138.67,52) -- (272,236.85) ;
\draw  [dash pattern={on 4.5pt off 4.5pt}]  (138.67,52) -- (196.67,237.51) ;
\draw  [dash pattern={on 4.5pt off 4.5pt}]  (138.67,52) -- (214,236.85) ;
\draw  [dash pattern={on 4.5pt off 4.5pt}]  (138.67,52) -- (335.67,219.33) ;
\draw  [dash pattern={on 4.5pt off 4.5pt}]  (138.67,52) -- (233.33,236.85) ;
\draw  [dash pattern={on 4.5pt off 4.5pt}]  (138.67,52) -- (182.67,238.85) ;
\draw  [dash pattern={on 4.5pt off 4.5pt}]  (138.67,52) -- (335.59,237.62) ;
\draw  [dash pattern={on 4.5pt off 4.5pt}]  (138.67,52) -- (315.33,238.18) ;
\draw  [dash pattern={on 4.5pt off 4.5pt}]  (138.67,52) -- (252.67,238.85) ;
\draw  [dash pattern={on 4.5pt off 4.5pt}]  (146.53,52.53) -- (335.2,165.2) ;
\draw  [dash pattern={on 4.5pt off 4.5pt}]  (138.67,52) -- (334.67,199.51) ;
\draw    (138.67,52) -- (334,179.51) ;
\draw  [dash pattern={on 4.5pt off 4.5pt}]  (138.67,52) -- (294,238.85) ;
\draw  [dash pattern={on 4.5pt off 4.5pt}]  (157.87,54.53) -- (335.2,153.2) ;
\draw  [dash pattern={on 4.5pt off 4.5pt}]  (119.61,52.51) -- (122.67,241.51) ;
\draw  [dash pattern={on 4.5pt off 4.5pt}]  (93.33,52.18) -- (101.33,238.18) ;
\draw  [dash pattern={on 4.5pt off 4.5pt}]  (127.27,50.68) -- (152,238.85) ;
\draw  [dash pattern={on 4.5pt off 4.5pt}]  (106.86,51.68) -- (115.33,236.18) ;
\draw  [dash pattern={on 4.5pt off 4.5pt}]  (123.33,52.18) -- (138,238.18) ;
\draw  [dash pattern={on 4.5pt off 4.5pt}]  (363.2,102.53) -- (335.87,108.53) ;
\draw  [dash pattern={on 4.5pt off 4.5pt}]  (415.2,52.53) -- (337.87,65.2) ;
\draw  [dash pattern={on 4.5pt off 4.5pt}]  (385.87,85.2) -- (335.2,97.2) ;
\draw  [dash pattern={on 4.5pt off 4.5pt}]  (425.87,66.53) -- (334.53,85.87) ;
\draw  [dash pattern={on 4.5pt off 4.5pt}]  (367.2,103.2) -- (411.87,237.87) ;
\draw  [dash pattern={on 4.5pt off 4.5pt}]  (343.2,129.2) -- (411.87,237.87) ;
\draw  [dash pattern={on 4.5pt off 4.5pt}]  (343.87,125.87) -- (333.87,129.2) ;
\draw  [dash pattern={on 4.5pt off 4.5pt}]  (353.87,113.87) -- (411.87,237.87) ;
\draw  [dash pattern={on 4.5pt off 4.5pt}]  (425.87,66.53) -- (411.87,237.87) ;
\draw  [dash pattern={on 4.5pt off 4.5pt}]  (410.53,73.2) -- (411.87,237.87) ;
\draw  [dash pattern={on 4.5pt off 4.5pt}]  (395.2,81.2) -- (411.87,237.87) ;
\draw  [dash pattern={on 4.5pt off 4.5pt}]  (380.53,91.87) -- (411.87,237.87) ;
\draw  [dash pattern={on 4.5pt off 4.5pt}]  (441.2,59.2) -- (411.87,237.87) ;
\draw [color={rgb, 255:red, 208; green, 2; blue, 27 }  ,draw opacity=1 ]   (335.87,142.53) .. controls (348.53,105.2) and (393.2,72.53) .. (455.87,52.53) ;
\draw  [dash pattern={on 4.5pt off 4.5pt}]  (353.87,113.87) -- (335.43,118.6) ;
\draw  [dash pattern={on 4.5pt off 4.5pt}]  (472.97,53.52) -- (486.53,236.53) ;
\draw    (472.97,53.52) -- (499.2,238.53) ;
\draw  [dash pattern={on 4.5pt off 4.5pt}]  (472.97,53.52) -- (457.87,235.87) ;
\draw    (472.97,53.52) -- (450.53,238.53) ;
\draw  [dash pattern={on 4.5pt off 4.5pt}]  (480.53,53.87) -- (509.87,236.53) ;
\draw  [dash pattern={on 4.5pt off 4.5pt}]  (465.89,54.04) -- (433.2,238.53) ;
\draw  [dash pattern={on 4.5pt off 4.5pt}]  (472.97,53.52) -- (471.2,238.53) ;
\draw  [dash pattern={on 0.84pt off 2.51pt}]  (335.87,142.53) -- (593.87,142.53) ;

\draw (128.67,28.4) node [anchor=north west][inner sep=0.75pt]  [font=\footnotesize]  {$\mathsf{L}$};
\draw (569.33,34.07) node [anchor=north west][inner sep=0.75pt]  [font=\footnotesize]  {$\omega $};
\draw (600,231.4) node [anchor=north west][inner sep=0.75pt]  [font=\footnotesize]  {$x$};
\draw (346,14.4) node [anchor=north west][inner sep=0.75pt]  [font=\footnotesize]  {$t$};
\draw (451.33,28.4) node [anchor=north west][inner sep=0.75pt]  [font=\footnotesize]  {$\mathsf{R}$};
\draw (584,120.07) node [anchor=north west][inner sep=0.75pt]  [font=\scriptsize]  {$\mathbf{\tau }[\mathsf{R} ,\ B,\ f_{r}]$};
\draw (255.33,28.4) node [anchor=north west][inner sep=0.75pt]  [font=\footnotesize]  {$A$};
\draw (376,28.73) node [anchor=north west][inner sep=0.75pt]  [font=\footnotesize]  {$B$};
\draw (161.33,24.4) node [anchor=north west][inner sep=0.75pt]  [font=\footnotesize]  {$\tilde{\mathsf{L}}$};

\end{tikzpicture}

\caption{Case 1. }
\label{case1}
\end{figure}

\begin{figure}[ht]
\centering

\tikzset{every picture/.style={line width=0.75pt}} 

\begin{tikzpicture}[x=0.75pt,y=0.75pt,yscale=-0.7,xscale=0.7]

\draw  [fill={rgb, 255:red, 74; green, 144; blue, 226 }  ,fill opacity=0.49 ] (315.2,177.53) -- (118.67,64.33) -- (118.67,64.33) -- (295.33,250.51) -- (315.67,231.67) -- cycle ;
\draw  [fill={rgb, 255:red, 208; green, 2; blue, 27 }  ,fill opacity=0.45 ] (435.87,64.87) -- (391.87,250.2) -- (315.87,154.87) -- (315.87,86.87) -- cycle ;
\draw [line width=1.5]    (315.59,249.96) -- (315.59,31.2) ;
\draw [shift={(315.59,27.2)}, rotate = 90] [fill={rgb, 255:red, 0; green, 0; blue, 0 }  ][line width=0.08]  [draw opacity=0] (6.97,-3.35) -- (0,0) -- (6.97,3.35) -- cycle    ;
\draw [line width=1.5]    (70.8,251.87) -- (570.13,250.63) ;
\draw [shift={(574.13,250.62)}, rotate = 179.86] [fill={rgb, 255:red, 0; green, 0; blue, 0 }  ][line width=0.08]  [draw opacity=0] (6.97,-3.35) -- (0,0) -- (6.97,3.35) -- cycle    ;
\draw    (68.13,63.46) -- (570.2,64.5) ;
\draw  [dash pattern={on 4.5pt off 4.5pt}]  (227.2,64.2) -- (314.53,98.2) ;
\draw  [dash pattern={on 4.5pt off 4.5pt}]  (296.83,64.29) -- (312.53,68.87) ;
\draw  [dash pattern={on 4.5pt off 4.5pt}]  (349.87,64.2) -- (312.53,68.87) ;
\draw  [dash pattern={on 4.5pt off 4.5pt}]  (273.2,64.2) -- (317.87,77.53) ;
\draw  [dash pattern={on 4.5pt off 4.5pt}]  (186.53,64.2) -- (314.53,120.2) ;
\draw  [dash pattern={on 4.5pt off 4.5pt}]  (203.2,63.53) -- (315.2,109.53) ;
\draw  [dash pattern={on 4.5pt off 4.5pt}]  (251.2,64.87) -- (312.53,87.53) ;
\draw  [dash pattern={on 4.5pt off 4.5pt}]  (315.67,231.67) -- (326,249.18) ;
\draw  [dash pattern={on 4.5pt off 4.5pt}]  (315.2,165.53) -- (374.53,248.87) ;
\draw  [dash pattern={on 4.5pt off 4.5pt}]  (317.67,215.67) -- (339.33,250.51) ;
\draw  [dash pattern={on 4.5pt off 4.5pt}]  (315.67,196.33) -- (353.33,250.67) ;
\draw  [dash pattern={on 4.5pt off 4.5pt}]  (170.53,64.2) -- (315.43,130.93) ;
\draw  [dash pattern={on 4.5pt off 4.5pt}]  (155.2,64.2) -- (313.87,141.53) ;
\draw    (118.67,64.33) -- (146.67,248.51) ;
\draw    (315.2,177.53) -- (367.2,250.53) ;
\draw    (146.53,64.2) -- (315.87,154.87) ;
\draw  [dash pattern={on 4.5pt off 4.5pt}]  (118.67,64.33) -- (252,249.18) ;
\draw  [dash pattern={on 4.5pt off 4.5pt}]  (118.67,64.33) -- (176.67,249.85) ;
\draw  [dash pattern={on 4.5pt off 4.5pt}]  (118.67,64.33) -- (194,249.18) ;
\draw  [dash pattern={on 4.5pt off 4.5pt}]  (118.67,64.33) -- (213.33,249.18) ;
\draw  [dash pattern={on 4.5pt off 4.5pt}]  (118.67,64.33) -- (162.67,251.18) ;
\draw  [dash pattern={on 4.5pt off 4.5pt}]  (118.67,64.33) -- (295.33,250.51) ;
\draw  [dash pattern={on 4.5pt off 4.5pt}]  (118.67,64.33) -- (232.67,251.18) ;
\draw  [dash pattern={on 4.5pt off 4.5pt}]  (118.67,64.33) -- (274,251.18) ;
\draw  [dash pattern={on 4.5pt off 4.5pt}]  (137.87,66.87) -- (315.2,165.53) ;
\draw  [dash pattern={on 4.5pt off 4.5pt}]  (99.61,64.85) -- (102.67,253.85) ;
\draw  [dash pattern={on 4.5pt off 4.5pt}]  (73.33,64.51) -- (81.33,250.51) ;
\draw  [dash pattern={on 4.5pt off 4.5pt}]  (107.27,63.01) -- (132,251.18) ;
\draw  [dash pattern={on 4.5pt off 4.5pt}]  (86.86,64.02) -- (95.33,248.51) ;
\draw  [dash pattern={on 4.5pt off 4.5pt}]  (103.33,64.51) -- (118,250.51) ;
\draw  [dash pattern={on 4.5pt off 4.5pt}]  (343.2,114.87) -- (315.87,120.87) ;
\draw  [dash pattern={on 4.5pt off 4.5pt}]  (395.2,64.87) -- (317.87,77.53) ;
\draw  [dash pattern={on 4.5pt off 4.5pt}]  (365.87,97.53) -- (315.2,109.53) ;
\draw  [dash pattern={on 4.5pt off 4.5pt}]  (405.87,78.87) -- (314.53,98.2) ;
\draw  [dash pattern={on 4.5pt off 4.5pt}]  (347.2,115.53) -- (391.87,250.2) ;
\draw  [dash pattern={on 4.5pt off 4.5pt}]  (323.2,141.53) -- (391.87,250.2) ;
\draw  [dash pattern={on 4.5pt off 4.5pt}]  (323.87,138.2) -- (313.87,141.53) ;
\draw  [dash pattern={on 4.5pt off 4.5pt}]  (333.87,126.2) -- (391.87,250.2) ;
\draw  [dash pattern={on 4.5pt off 4.5pt}]  (405.87,78.87) -- (391.87,250.2) ;
\draw  [dash pattern={on 4.5pt off 4.5pt}]  (390.53,85.53) -- (391.87,250.2) ;
\draw  [dash pattern={on 4.5pt off 4.5pt}]  (375.2,93.53) -- (391.87,250.2) ;
\draw  [dash pattern={on 4.5pt off 4.5pt}]  (360.53,104.2) -- (391.87,250.2) ;
\draw  [dash pattern={on 4.5pt off 4.5pt}]  (421.2,71.53) -- (391.87,250.2) ;
\draw [color={rgb, 255:red, 208; green, 2; blue, 27 }  ,draw opacity=1 ]   (315.87,154.87) .. controls (328.53,117.53) and (373.2,84.87) .. (435.87,64.87) ;
\draw  [dash pattern={on 4.5pt off 4.5pt}]  (333.87,126.2) -- (315.43,130.93) ;
\draw  [dash pattern={on 4.5pt off 4.5pt}]  (452.97,65.85) -- (466.53,248.87) ;
\draw    (452.97,65.85) -- (479.2,250.87) ;
\draw  [dash pattern={on 4.5pt off 4.5pt}]  (452.97,65.85) -- (437.87,248.2) ;
\draw    (452.97,65.85) -- (430.53,250.87) ;
\draw  [dash pattern={on 4.5pt off 4.5pt}]  (460.53,66.2) -- (489.87,248.87) ;
\draw  [dash pattern={on 4.5pt off 4.5pt}]  (445.89,66.38) -- (413.2,250.87) ;
\draw  [dash pattern={on 4.5pt off 4.5pt}]  (452.97,65.85) -- (451.2,250.87) ;
\draw  [dash pattern={on 0.84pt off 2.51pt}]  (315.87,154.87) -- (573.87,154.87) ;
\draw  [dash pattern={on 4.5pt off 4.5pt}]  (186.53,111.87) -- (316.53,187.2) ;
\draw  [dash pattern={on 4.5pt off 4.5pt}]  (276.53,183.87) -- (316.53,207.87) ;
\draw  [dash pattern={on 4.5pt off 4.5pt}]  (151.2,88.53) -- (295.33,250.51) ;
\draw  [dash pattern={on 4.5pt off 4.5pt}]  (307.87,219.87) -- (295.33,250.51) ;
\draw  [dash pattern={on 4.5pt off 4.5pt}]  (192.53,117.87) -- (295.33,250.51) ;
\draw  [dash pattern={on 4.5pt off 4.5pt}]  (279.2,187.87) -- (295.33,250.51) ;
\draw  [dash pattern={on 4.5pt off 4.5pt}]  (225.87,144.53) -- (295.33,250.51) ;
\draw  [dash pattern={on 4.5pt off 4.5pt}]  (237.2,149.87) -- (315.67,196.33) ;
\draw  [dash pattern={on 4.5pt off 4.5pt}]  (295.2,205.2) -- (295.33,250.51) ;
\draw  [dash pattern={on 4.5pt off 4.5pt}]  (255.2,166.53) -- (295.33,250.51) ;
\draw [color={rgb, 255:red, 208; green, 2; blue, 27 }  ,draw opacity=1 ]   (118.67,64.33) .. controls (181.87,111.2) and (273.87,165.2) .. (315.67,231.67) ;
\draw  [dash pattern={on 4.5pt off 4.5pt}]  (295.2,205.2) -- (315,217.67) ;
\draw  [dash pattern={on 0.84pt off 2.51pt}]  (611.87,69.2) -- (315.67,231.67) ;
\draw  [dash pattern={on 0.84pt off 2.51pt}]  (572.53,231.2) -- (315.67,231.67) ;
\draw  [dash pattern={on 4.5pt off 4.5pt}]  (316.53,241.87) -- (306,250.33) ;

\draw (108.67,40.73) node [anchor=north west][inner sep=0.75pt]  [font=\footnotesize]  {$\mathsf{L}$};
\draw (549.33,46.4) node [anchor=north west][inner sep=0.75pt]  [font=\footnotesize]  {$\omega $};
\draw (580,243.73) node [anchor=north west][inner sep=0.75pt]  [font=\footnotesize]  {$x$};
\draw (326,26.73) node [anchor=north west][inner sep=0.75pt]  [font=\footnotesize]  {$t$};
\draw (431.33,40.73) node [anchor=north west][inner sep=0.75pt]  [font=\footnotesize]  {$\mathsf{R}$};
\draw (564,132.4) node [anchor=north west][inner sep=0.75pt]  [font=\scriptsize]  {$\mathbf{\tau }[\mathsf{R} ,\ B,\ f_{r}]$};
\draw (235.33,40.73) node [anchor=north west][inner sep=0.75pt]  [font=\footnotesize]  {$A$};
\draw (356,41.07) node [anchor=north west][inner sep=0.75pt]  [font=\footnotesize]  {$B$};
\draw (141.33,36.73) node [anchor=north west][inner sep=0.75pt]  [font=\footnotesize]  {$\tilde{\mathsf{L}}$};
\draw (611.33,43.07) node [anchor=north west][inner sep=0.75pt]  [font=\scriptsize]  {$\tilde{\mathsf{R}}$};
\draw (566,210.07) node [anchor=north west][inner sep=0.75pt]  [font=\scriptsize]  {$\mathbf{\sigma }[\mathsf{L} ,\ A,\ f_{l}]$};
\draw (299.33,257.73) node [anchor=north west][inner sep=0.75pt]  [font=\footnotesize]  {$\overline{A}$};
\draw (334,258.4) node [anchor=north west][inner sep=0.75pt]  [font=\footnotesize]  {$\overline{B}$};

\end{tikzpicture}

\caption{Case 2. }
\label{case2}
\end{figure}

\begin{figure}[ht]
\centering

\tikzset{every picture/.style={line width=0.75pt}} 

\begin{tikzpicture}[x=0.75pt,y=0.75pt,yscale=-0.7,xscale=0.7]

\draw  [fill={rgb, 255:red, 208; green, 2; blue, 27 }  ,fill opacity=0.45 ] (455.87,69.87) -- (411.87,255.2) -- (335.87,159.87) -- (335.87,91.87) -- cycle ;
\draw [line width=1.5]    (335.59,254.96) -- (335.59,36.2) ;
\draw [shift={(335.59,32.2)}, rotate = 90] [fill={rgb, 255:red, 0; green, 0; blue, 0 }  ][line width=0.08]  [draw opacity=0] (6.97,-3.35) -- (0,0) -- (6.97,3.35) -- cycle    ;
\draw [line width=1.5]    (83.92,255.58) -- (583.26,254.34) ;
\draw [shift={(587.26,254.34)}, rotate = 179.86] [fill={rgb, 255:red, 0; green, 0; blue, 0 }  ][line width=0.08]  [draw opacity=0] (6.97,-3.35) -- (0,0) -- (6.97,3.35) -- cycle    ;
\draw    (88.13,68.46) -- (590.2,69.5) ;
\draw  [dash pattern={on 4.5pt off 4.5pt}]  (324.16,69.29) -- (336.53,71.87) ;
\draw  [dash pattern={on 4.5pt off 4.5pt}]  (369.87,69.2) -- (332.53,73.87) ;
\draw  [dash pattern={on 4.5pt off 4.5pt}]  (335.67,236.67) -- (349.87,254.53) ;
\draw  [dash pattern={on 4.5pt off 4.5pt}]  (335.2,170.53) -- (399.87,254.53) ;
\draw  [dash pattern={on 4.5pt off 4.5pt}]  (337.67,220.67) -- (365.2,255.87) ;
\draw  [dash pattern={on 4.5pt off 4.5pt}]  (335.67,201.33) -- (377.87,255.87) ;
\draw  [dash pattern={on 4.5pt off 4.5pt}]  (335.2,187.2) -- (390.53,254.53) ;
\draw  [dash pattern={on 4.5pt off 4.5pt}]  (363.2,119.87) -- (335.87,125.87) ;
\draw  [dash pattern={on 4.5pt off 4.5pt}]  (415.2,69.87) -- (337.87,82.53) ;
\draw  [dash pattern={on 4.5pt off 4.5pt}]  (385.87,102.53) -- (335.2,114.53) ;
\draw  [dash pattern={on 4.5pt off 4.5pt}]  (425.87,83.87) -- (334.53,103.2) ;
\draw  [dash pattern={on 4.5pt off 4.5pt}]  (367.2,120.53) -- (411.87,255.2) ;
\draw  [dash pattern={on 4.5pt off 4.5pt}]  (343.2,146.53) -- (411.87,255.2) ;
\draw  [dash pattern={on 4.5pt off 4.5pt}]  (343.87,143.2) -- (333.87,146.53) ;
\draw  [dash pattern={on 4.5pt off 4.5pt}]  (353.87,131.2) -- (411.87,255.2) ;
\draw  [dash pattern={on 4.5pt off 4.5pt}]  (425.87,83.87) -- (411.87,255.2) ;
\draw  [dash pattern={on 4.5pt off 4.5pt}]  (410.53,90.53) -- (411.87,255.2) ;
\draw  [dash pattern={on 4.5pt off 4.5pt}]  (395.2,98.53) -- (411.87,255.2) ;
\draw  [dash pattern={on 4.5pt off 4.5pt}]  (380.53,109.2) -- (411.87,255.2) ;
\draw  [dash pattern={on 4.5pt off 4.5pt}]  (441.2,76.53) -- (411.87,255.2) ;
\draw [color={rgb, 255:red, 208; green, 2; blue, 27 }  ,draw opacity=1 ]   (335.87,159.87) .. controls (348.53,122.53) and (393.2,89.87) .. (455.87,69.87) ;
\draw  [dash pattern={on 4.5pt off 4.5pt}]  (353.87,131.2) -- (335.43,135.93) ;
\draw  [dash pattern={on 4.5pt off 4.5pt}]  (472.97,70.85) -- (486.53,253.87) ;
\draw    (472.97,70.85) -- (499.2,253.87) ;
\draw  [dash pattern={on 4.5pt off 4.5pt}]  (472.97,70.85) -- (457.87,253.2) ;
\draw    (472.97,70.85) -- (449.87,253.87) ;
\draw  [dash pattern={on 4.5pt off 4.5pt}]  (480.53,71.2) -- (509.87,253.87) ;
\draw  [dash pattern={on 4.5pt off 4.5pt}]  (465.89,71.38) -- (433.2,255.87) ;
\draw  [dash pattern={on 4.5pt off 4.5pt}]  (472.97,70.85) -- (471.2,253.2) ;
\draw  [dash pattern={on 0.84pt off 2.51pt}]  (335.87,159.87) -- (593.87,159.87) ;
\draw  [dash pattern={on 0.84pt off 2.51pt}]  (578,69.5) -- (334.33,200.35) ;
\draw  [dash pattern={on 0.84pt off 2.51pt}]  (591.2,199.89) -- (334.33,200.35) ;
\draw  [fill={rgb, 255:red, 74; green, 144; blue, 226 }  ,fill opacity=0.64 ] (183.67,68.69) -- (222,256.02) -- (334.33,200.35) -- (335.87,118.53) -- cycle ;
\draw  [dash pattern={on 4.5pt off 4.5pt}]  (217.67,89.02) -- (221.67,255.69) ;
\draw  [dash pattern={on 4.5pt off 4.5pt}]  (197.67,77.02) -- (221.67,255.69) ;
\draw  [dash pattern={on 4.5pt off 4.5pt}]  (278.33,133.69) -- (221.67,255.69) ;
\draw  [dash pattern={on 4.5pt off 4.5pt}]  (183.33,68.35) -- (189,255.02) ;
\draw  [dash pattern={on 4.5pt off 4.5pt}]  (263.67,119.69) -- (221.67,255.69) ;
\draw  [dash pattern={on 4.5pt off 4.5pt}]  (183.33,68.35) -- (208.33,255.02) ;
\draw  [dash pattern={on 4.5pt off 4.5pt}]  (183.33,68.35) -- (171,255.02) ;
\draw  [dash pattern={on 4.5pt off 4.5pt}]  (183.33,68.35) -- (152.53,256.53) ;
\draw  [dash pattern={on 4.5pt off 4.5pt}]  (249.67,108.35) -- (221.67,255.69) ;
\draw  [dash pattern={on 4.5pt off 4.5pt}]  (183.33,68.35) -- (134.53,254.53) ;
\draw  [dash pattern={on 4.5pt off 4.5pt}]  (234.33,97.69) -- (221.67,255.69) ;
\draw    (183.33,68.35) -- (221.67,255.69) ;
\draw    (183.33,68.35) -- (121.67,254.35) ;
\draw    (334.33,200.35) -- (221.67,255.69) ;
\draw [color={rgb, 255:red, 208; green, 2; blue, 27 }  ,draw opacity=1 ]   (183.33,68.35) .. controls (239.67,101.02) and (243,101.69) .. (271,125.02) .. controls (299,148.35) and (317.67,171.02) .. (334.33,200.35) ;
\draw  [dash pattern={on 4.5pt off 4.5pt}]  (276.33,129.02) -- (334,150.69) ;
\draw  [dash pattern={on 4.5pt off 4.5pt}]  (249.67,108.35) -- (331.67,138.35) ;
\draw  [dash pattern={on 4.5pt off 4.5pt}]  (319,176.35) -- (336.33,182.35) ;
\draw  [dash pattern={on 4.5pt off 4.5pt}]  (291.67,143.02) -- (221.67,255.69) ;
\draw  [dash pattern={on 4.5pt off 4.5pt}]  (217.67,89.02) -- (334,127.69) ;
\draw  [dash pattern={on 4.5pt off 4.5pt}]  (309,161.69) -- (335.67,171.69) ;
\draw  [dash pattern={on 4.5pt off 4.5pt}]  (295.67,146.35) -- (333,160.35) ;
\draw  [dash pattern={on 4.5pt off 4.5pt}]  (303.67,157.69) -- (221.67,255.69) ;
\draw  [dash pattern={on 4.5pt off 4.5pt}]  (315,172.35) -- (221.67,255.69) ;
\draw  [dash pattern={on 4.5pt off 4.5pt}]  (325,185.02) -- (221.67,255.69) ;
\draw  [dash pattern={on 4.5pt off 4.5pt}]  (320.53,255.2) -- (336.53,246.87) ;
\draw  [dash pattern={on 4.5pt off 4.5pt}]  (288.67,256) -- (334.33,235.69) ;
\draw  [dash pattern={on 4.5pt off 4.5pt}]  (245.62,254.97) -- (333.67,212.35) ;
\draw  [dash pattern={on 4.5pt off 4.5pt}]  (267.07,255.26) -- (332.33,224.35) ;
\draw  [dash pattern={on 4.5pt off 4.5pt}]  (296.72,68.92) -- (335.2,79.2) ;
\draw  [dash pattern={on 4.5pt off 4.5pt}]  (265.85,68.74) -- (335.2,87.2) ;
\draw  [dash pattern={on 4.5pt off 4.5pt}]  (207.67,68.35) -- (335.2,107.2) ;
\draw  [dash pattern={on 4.5pt off 4.5pt}]  (241.2,69.87) -- (333.67,97.69) ;
\draw  [dash pattern={on 4.5pt off 4.5pt}]  (156.2,68.32) -- (99.2,253.32) ;
\draw  [dash pattern={on 4.5pt off 4.5pt}]  (137.2,68.32) -- (94.53,253.2) ;
\draw  [dash pattern={on 4.5pt off 4.5pt}]  (173.17,70.13) -- (106.2,253.32) ;
\draw  [dash pattern={on 0.84pt off 2.51pt}]  (110.53,72.53) -- (335.87,159.87) ;

\draw (177.33,49.73) node [anchor=north west][inner sep=0.75pt]  [font=\footnotesize]  {$\mathsf{L}$};
\draw (517.33,51.4) node [anchor=north west][inner sep=0.75pt]  [font=\footnotesize]  {$\omega $};
\draw (600,248.73) node [anchor=north west][inner sep=0.75pt]  [font=\footnotesize]  {$x$};
\draw (346,31.73) node [anchor=north west][inner sep=0.75pt]  [font=\footnotesize]  {$t$};
\draw (451.33,45.73) node [anchor=north west][inner sep=0.75pt]  [font=\footnotesize]  {$\mathsf{R}$};
\draw (584,137.4) node [anchor=north west][inner sep=0.75pt]  [font=\scriptsize]  {$\mathbf{\tau }[\mathsf{R} ,\ B,\ f_{r}]$};
\draw (255.33,45.73) node [anchor=north west][inner sep=0.75pt]  [font=\footnotesize]  {$A$};
\draw (376,46.07) node [anchor=north west][inner sep=0.75pt]  [font=\footnotesize]  {$B$};
\draw (100.67,45.07) node [anchor=north west][inner sep=0.75pt]  [font=\footnotesize]  {$\tilde{\mathsf{L}}$};
\draw (572.33,49.07) node [anchor=north west][inner sep=0.75pt]  [font=\scriptsize]  {$\tilde{\mathsf{R}}$};
\draw (587,184.07) node [anchor=north west][inner sep=0.75pt]  [font=\scriptsize]  {$\mathbf{\sigma }[\mathsf{L} ,\ A,\ f_{l}]$};
\draw (285.33,262.73) node [anchor=north west][inner sep=0.75pt]  [font=\footnotesize]  {$\overline{A}$};
\draw (354,263.4) node [anchor=north west][inner sep=0.75pt]  [font=\footnotesize]  {$\overline{B}$};

\end{tikzpicture}

\caption{Case 3. }
\label{case3}
\end{figure}

\begin{figure}
    \centering

\tikzset{every picture/.style={line width=0.75pt}} 

\begin{tikzpicture}[x=0.75pt,y=0.75pt,yscale=-0.7,xscale=0.7]

\draw    (91,262.73) -- (561.86,263.72) ;
\draw [shift={(564.86,263.72)}, rotate = 180.12] [fill={rgb, 255:red, 0; green, 0; blue, 0 }  ][line width=0.08]  [draw opacity=0] (5.36,-2.57) -- (0,0) -- (5.36,2.57) -- (3.56,0) -- cycle    ;
\draw    (364.12,260.74) -- (364.12,43.56) ;
\draw [shift={(364.12,40.56)}, rotate = 90] [fill={rgb, 255:red, 0; green, 0; blue, 0 }  ][line width=0.08]  [draw opacity=0] (5.36,-2.57) -- (0,0) -- (5.36,2.57) -- (3.56,0) -- cycle    ;
\draw    (81.07,83.56) -- (553.67,82.57) ;
\draw    (127.07,84.9) -- (204,261.93) ;
\draw [color={rgb, 255:red, 0; green, 0; blue, 0 }  ,draw opacity=1 ]   (192.77,84.06) -- (364.12,260.74) ;
\draw    (364.12,260.74) -- (515.76,83.56) ;
\draw  [dash pattern={on 4.5pt off 4.5pt}]  (272.38,86.05) -- (365.39,187.88) ;
\draw  [dash pattern={on 4.5pt off 4.5pt}]  (290.07,84.06) -- (366.65,171.95) ;
\draw  [dash pattern={on 4.5pt off 4.5pt}]  (249.64,85.55) -- (362.86,203.8) ;
\draw  [dash pattern={on 4.5pt off 4.5pt}]  (325.45,83.07) -- (365.2,129.57) ;
\draw  [dash pattern={on 4.5pt off 4.5pt}]  (306.5,83.07) -- (362.86,148.06) ;
\draw  [dash pattern={on 4.5pt off 4.5pt}]  (341.88,85.06) -- (365.39,112.23) ;
\draw  [dash pattern={on 4.5pt off 4.5pt}]  (460.66,85.06) -- (362.86,203.8) ;
\draw  [dash pattern={on 4.5pt off 4.5pt}]  (442.47,85.35) -- (365.39,187.88) ;
\draw  [dash pattern={on 4.5pt off 4.5pt}]  (426.04,83.36) -- (366.65,165.98) ;
\draw  [dash pattern={on 4.5pt off 4.5pt}]  (407.09,84.36) -- (362.86,148.06) ;
\draw  [dash pattern={on 4.5pt off 4.5pt}]  (376,85.55) -- (367.91,99.29) ;
\draw  [dash pattern={on 4.5pt off 4.5pt}]  (389.39,86.35) -- (366.65,122.18) ;
\draw  [dash pattern={on 4.5pt off 4.5pt}]  (356.54,83.56) -- (367.91,99.29) ;
\draw  [dash pattern={on 4.5pt off 4.5pt}]  (515.76,83.56) -- (381.81,262.73) ;
\draw  [dash pattern={on 4.5pt off 4.5pt}]  (515.76,83.56) -- (436.15,261.73) ;
\draw    (515.76,83.56) -- (467.74,261.73) ;
\draw  [dash pattern={on 4.5pt off 4.5pt}]  (515.76,83.56) -- (398.24,263.72) ;
\draw  [dash pattern={on 4.5pt off 4.5pt}]  (175.89,84.36) -- (355.87,263.2) ;
\draw  [dash pattern={on 4.5pt off 4.5pt}]  (229.42,85.35) -- (364.12,226.9) ;
\draw  [dash pattern={on 4.5pt off 4.5pt}]  (209.2,83.36) -- (362.86,244.81) ;
\draw  [dash pattern={on 4.5pt off 4.5pt}]  (515.76,83.56) -- (451.31,262.73) ;
\draw  [dash pattern={on 4.5pt off 4.5pt}]  (478.36,86.35) -- (364.12,226.9) ;
\draw  [dash pattern={on 4.5pt off 4.5pt}]  (494.78,84.86) -- (362.86,244.81) ;
\draw  [dash pattern={on 4.5pt off 4.5pt}]  (515.76,83.56) -- (417.19,262.73) ;
\draw    (127.07,84.9) -- (259.2,262.53) ;
\draw  [dash pattern={on 4.5pt off 4.5pt}]  (112.53,83.87) -- (165.2,262.53) ;
\draw  [dash pattern={on 4.5pt off 4.5pt}]  (122.4,84.23) -- (181.87,261.87) ;
\draw  [dash pattern={on 4.5pt off 4.5pt}]  (127.07,84.9) -- (212.53,259.87) ;
\draw  [dash pattern={on 4.5pt off 4.5pt}]  (127.07,84.9) -- (225.87,261.2) ;
\draw  [dash pattern={on 4.5pt off 4.5pt}]  (105.87,84.53) -- (151.2,261.87) ;
\draw  [dash pattern={on 4.5pt off 4.5pt}]  (127.07,84.9) -- (241.87,261.2) ;
\draw  [dash pattern={on 4.5pt off 4.5pt}]  (170.8,85.5) -- (333.87,260.53) ;
\draw  [dash pattern={on 4.5pt off 4.5pt}]  (154.53,87.87) -- (293.87,262.53) ;
\draw  [dash pattern={on 4.5pt off 4.5pt}]  (141.3,84.26) -- (279.2,261.87) ;
\draw  [dash pattern={on 4.5pt off 4.5pt}]  (167.87,87.2) -- (311.2,262.53) ;
\draw  [dash pattern={on 4.5pt off 4.5pt}]  (132.63,84.93) -- (265.2,261.2) ;
\draw  [dash pattern={on 4.5pt off 4.5pt}]  (533.33,82.73) -- (511.87,262.53) ;
\draw  [dash pattern={on 4.5pt off 4.5pt}]  (523.33,83.4) -- (486.53,262.53) ;

\draw (512.79,65.82) node [anchor=north west][inner sep=0.75pt]  [font=\scriptsize]  {$\mathsf{R}$};
\draw (297.97,66.32) node [anchor=north west][inner sep=0.75pt]  [font=\scriptsize]  {$A$};
\draw (411.7,65.33) node [anchor=north west][inner sep=0.75pt]  [font=\scriptsize]  {$B$};
\draw (189.3,67.32) node [anchor=north west][inner sep=0.75pt]  [font=\scriptsize]  {$\mathsf{L}$};
\draw (375.05,29.49) node [anchor=north west][inner sep=0.75pt]  [font=\scriptsize]  {$t$};
\draw (576.73,265.89) node [anchor=north west][inner sep=0.75pt]  [font=\scriptsize]  {$x$};
\draw (89.53,67.66) node [anchor=north west][inner sep=0.75pt]  [font=\scriptsize]  {$\omega $};

\end{tikzpicture}

    \caption{Case 4.}
    \label{fig:case4}
\end{figure}

\begin{remark}
\label{eq-lax-ineq-3}
    Notice that conditions~\eqref{eq:2bl-r1}, \eqref{eq:2b-lr2} imply $\omega(\ms R-)\geq B$.
    On the other hand, if $\ms R<T\cdot f'_r(B)$,
    by virtue of~\eqref{eq:2a}, 
    and because of~\eqref{eq:uv-ba-ineq}, we have 
    $\omega(\ms R+)\leq B$.
    Hence, because of~\eqref{eq:3b-lr2}, it follows that
    the inequality $\omega(\ms R-)\geq \omega(\ms R+)$ is always
    satisfied.
    With similar arguments we  deduce that also 
    the inequality 
    $\omega(\ms L-)\geq \omega(\ms L+)$ is always verified.
\end{remark} 
\begin{remark}\label{rem:threecases}
If 
$\ms R[\omega, f_r]\in\,]0,T\cdot f'_r(B)[$\,,
applying~\eqref{eq:lax-rfs-cond-3}
with $f_r$ in place of $f$ and $\ms R=\ms R[\omega, f_r]$, 
and recalling~\eqref{eq:Rtildedef}, 
we derive
\begin{equation}
\label{eq:lax-rfs-cond-5}
    \frac{\ms R[\omega, f_r]}{f'_r(B)}<T-\bs\tau\big[\ms R[\omega, f_r], B, f_r\big]
    =\frac{\widetilde{\ms L}[\omega,f_l,f_r,A,B]}{f'_l(A)}\,.
\end{equation}
Similarly, if $\ms L[\omega, f_l]\in\,]T\cdot f'_l(A),0[$\,, applying~\eqref{eq:lax-rfs-cond-4}
with $f_l$ in place of $f$ and $\ms L=\ms L[\omega, f_l]$, 
and recalling that $f'_l(A)<0$
we find
\begin{equation}
\label{eq:lax-rfs-cond-6}
    \frac{\ms L[\omega, f_l]}{f'_l(A)}<T-\bs\sigma\big[\ms L[\omega, f_l], A, f_l\big]\,.
\end{equation}
Hence, if $\widetilde 
            {\ms L}[\omega,f_l,f_r,A,B]
            \geq \ms L[\omega, f_l]$, combining~\eqref{eq:lax-rfs-cond-5},
            \eqref{eq:lax-rfs-cond-6}, we deduce
\begin{equation}
    \frac{\ms R[\omega, f_r]}{f'_r(B)}<T-\bs\sigma\big[\ms L[\omega, f_l], A, f_l\big]\,,
\end{equation}            
which, in turn, by~\eqref{eq:Rtildedef} yields
\begin{equation}
    \ms R[\omega, f_r]<\widetilde 
            {\ms R}[\omega,f_l,f_r,A,B]\,.
\end{equation}
With entirely similar arguments one can show that, if $\widetilde 
            {\ms R}[\omega,f_l,f_r,A,B]
            \leq \ms R[\omega, f_r]$, 
            then one has
\begin{equation}
   \ms L[\omega, f_l]>\widetilde 
            {\ms L}[\omega,f_l,f_r,A,B]\,.
\end{equation} 
Therefore, when $\ms L[\omega, f_l]\in\,]T\cdot f'_l(A),0[$\,,\, and $\ms R[\omega, f_r]\in\,]0,T\cdot f'_r(B)[$\,,
we have
    \begin{equation}
    \label{eq:LR-prop26}
         \begin{aligned}
             \widetilde 
             {\ms L}[\omega,f_l,f_r,A,B]
             \geq \ms L[\omega, f_l]
             \quad &\Longrightarrow \quad \widetilde {\ms R}[\omega,f_l,f_r,A,B] > \ms R[\omega, f_r]\,,
             \\
             \noalign{\smallskip}
             \widetilde 
             {\ms R}[\omega,f_l,f_r,A,B]
             \leq \ms R[\omega, f_r]
             \quad &\Longrightarrow \quad \widetilde {\ms L}[\omega,f_l,f_r,A,B] < \ms L[\omega, f_l]\,.
         \end{aligned}
     \end{equation}
These implications, in particular, show that it can never occur the case where
\begin{equation}
    \widetilde 
             {\ms L}[\omega,f_l,f_r,A,B]
             \geq \ms L[\omega, f_l]
             \qquad\text{and}\qquad
             \widetilde {\ms R}[\omega,f_l,f_r,A,B]
             \leq \ms R[\omega, f_r]\,.
\end{equation}
    \end{remark}
    \begin{remark}
    \label{rem:cases-thm-4.1}
Notice that by condition~\eqref{eq:2bl-r1} in Theorem \ref{thm:attprofiles},
and because of~\eqref{eq:Rtildedef},
it follows that
if $\ms L[\omega, f_l]\in\,]T\cdot f'_l(A),0[\,$, and 
$\ms R[\omega, f_r]\leq \widetilde {\ms R}[\omega,f_l,f_r,A,B]$,
then one has 
$\ms R[\omega, f_r]< T\cdot f^{\prime}_r(B)$.
Therefore, we have
\begin{equation}
\label{eq:LRtR-impl1}
 \Big[\ms L[\omega, f_l]\in\,]T\cdot f'_l(A),0[\,\quad\text{and}\quad
 \ms R[\omega, f_r]\geq  T\cdot f^{\prime}_r(B)\Big]
 \quad
 \Longrightarrow\quad
 \ms R[\omega, f_r]> \widetilde {\ms R}[\omega,f_l,f_r,A,B]\,.
\end{equation}
Similarly, one can show that,
by~\eqref{eq:Rtildedef}, \eqref{eq:2bl-r1}, we have
\begin{equation}
\label{eq:LRtR-impl2}
 \Big[\ms R[\omega, f_r]\in\,]0, T\cdot f'_r(B)[\,\quad\text{and}\quad
 \ms L[\omega, f_l]\leq  T\cdot f^{\prime}_l(A)\Big]
 \quad
 \Longrightarrow\quad
 \ms L[\omega, f_l]< \widetilde {\ms L}[\omega,f_l,f_r,A,B]\,.
\end{equation}
Then, relying on~\eqref{eq:LR-prop26},
\eqref{eq:LRtR-impl1}, \eqref{eq:LRtR-impl2}, we deduce 
that, for non critical connections, 
we can distinguish 
six cases 
of pointwise constraints prescribed by condition (ii)
of Theorem~\ref{thm:attprofiles},
which depend on the reciprocal positions of the points
$\ms L=\ms L[\omega, f_l]$,  $\ms R=\ms R[\omega, f_r]$,
and
$\widetilde{\ms L}=\widetilde 
             {\ms L}[\omega,f_l,f_r,A,B]$, 
$\widetilde{\ms R}=\widetilde 
             {\ms R}[\omega,f_l,f_r,A,B]$: 

\noindent
\textsc{Case 1:}
If $\ms L \leq T\cdot f_l^{\prime}(A)<0$,\  $0 <\ms R< T\cdot f^{\prime}_r(B)$\, (Figure \ref{case1}), then    $\widetilde{\ms L} > \ms L$, and 
    it holds true 
    \begin{equation}
    \label{eq:case-i-A}
    \omega(\ms L-)\geq \omega(\ms L+)\,,\qquad
    \omega(x)\leq A 
        \quad\ \forall~x\in\,]{\ms L}, \widetilde{\ms L}\,[\,,
        \qquad\ \ \omega(\,\widetilde{\ms L}-)=A,\qquad \ \ 
     \omega(x)=A\quad\ \forall~x\in\,]\widetilde{\ms L}, 0[\,,        
    \end{equation}
    \begin{equation}    
    \label{eq:case-i-B}
        \omega(x)=B\ \quad\ \forall~x\in\,]0, {\ms R}[\,,
        \qquad \ \ 
  \omega(\ms R+) \leq \bs u[\ms R, B, f_r]\leq B\,;
    \end{equation}
\textsc{Case 2:} If 
    $T\cdot f_l^{\prime}(A)< \ms L<0$,\ $0< \ms R< T\cdot f^{\prime}_r(B)$, and $\widetilde{\ms L} > \ms L$, \ $\widetilde{\ms R }> \ms R$ \ (Figure \ref{case2}), then
    it holds true~\eqref{eq:case-i-B} and
    \begin{equation}
    \label{eq:case-ii-A}
    \omega(\ms L-) \geq \bs v[\ms L, A, f_l]\geq A\,,
    \quad\ \ \omega(x)\leq A 
        \quad \forall~x\in\,]{\ms L}, \widetilde{\ms L}\,[\,,
        \quad\ \ \omega(\,\widetilde{\ms L}-)=A,\quad \ \
     \omega(x)=A\quad \forall~x\in\,]\widetilde{\ms L}, 0[\,;      
    \end{equation}
\\
the symmetric ones:
\smallskip

\noindent\textsc{Case 1b:}
If $T\cdot f_l^{\prime}(A)< \ms L<0$, $0<T\cdot f^{\prime}_r(B)\leq \ms R$, then $\widetilde{\ms R }< \ms R$ and it holds true that
\begin{equation}    
    \label{eq:case-iprime-A}
        \omega(x)=A\qquad \forall~x\in\,]{\ms L}, 0[\,,\qquad\ \omega(\ms L-) \geq \bs v[\ms L, A, f_l]\geq A\,,
    \end{equation}
    \begin{equation}
    \label{eq:case-iprime-B}
    \omega(x)= B 
        \quad\ \forall~x\in\,]0, \widetilde{\ms R}\,[\,,
        \qquad\ \ \omega(\,\widetilde{\ms R}+)=B,\qquad \ \ 
     \omega(x)\geq B\quad\ \forall~x\in\,]\widetilde{\ms R}, \ms R[\,;        
    \end{equation}
\noindent\textsc{Case 2b:}
If    $T\cdot f_l^{\prime}(A)< \ms L<0$,\ $0< \ms R< T\cdot f^{\prime}_r(B)$, and $\widetilde{\ms L}< \ms L$, \ $\widetilde{\ms R }< \ms R$, then it holds true~\eqref{eq:case-iprime-A} and    
    
\begin{equation}
    \label{eq:case-iiprime-B}
    \omega(x)= B 
        \quad \forall~x\in\,]0, \widetilde{\ms R}\,[\,,
        \quad\ \ \omega(\,\widetilde{\ms R}+)=B,\quad \ \
     \omega(x)\geq B\quad \forall~x\in\,]\widetilde{\ms R}, \ms R[\,
     \quad \ \
  \omega(\ms R+) \leq \bs u[\ms R, B, f_r]\leq B\,;
     \end{equation}
\noindent
and the remaining ones:
\smallskip

\noindent\textsc{Case 3:}
If 
    $T\cdot f_l^{\prime}(A)<\ms L <0$, $0<\ms R < T\cdot f_r^{\prime}(B)$, and $\widetilde{\ms L} \leq \ms L$, $\widetilde{\ms R} \geq \ms R$ (Figure \ref{case3}),
    then it holds true~\eqref{eq:case-i-B}, \eqref{eq:case-iprime-A};
\smallskip

\noindent\textsc{Case 4:}
If $\ms L \leq T\cdot f_l^{\prime}(A)<0$ and $\ms R \geq T \cdot f_r^{\prime}(B)>0$
    (Figure~\ref{fig:case4}),
    then it holds true
  \begin{equation}    
    \label{eq:case-iv}
    \begin{aligned}
        &\omega(x)=A\qquad \forall~x\in\,]{\ms L}, 0[\,,
    \qquad\quad
        \omega(\ms L-)\geq \omega(\ms L+)\,,
        \\
        &\omega(x)=B\ \quad\ \forall~x\in\,]0, {\ms R}[\,,
    \qquad\quad
        \omega(\ms R-)\geq \omega(\ms R+)\,.
    \end{aligned}
        \end{equation}

\smallskip

\noindent
The six cases are depicted in Figure~\ref{fig:cases}. One can regard the intervals $]T\!\cdot\! f_l^{\prime}(A), 0[$
and $]0, T\!\cdot\! f_r^{\prime}(B)[$   as ``{\it active zones}" 
for the presence of shocks in an $AB$-entropy solution 
that attains $\omega$
at time $T$: as soon as $\ms L$
belongs to
$]T\!\cdot\!f_l^{\prime}(A), 0[$\, or $\ms R$ 
belongs to  $]0, T\!\cdot\! f_r^{\prime}(B)[$\,, 
{  it is needed
a shock located in $\{x< 0\}$
or in $\{x> 0\}$, respectively,
}
in order to produce the discontinuity occurring in $\omega$ at $\ms L$ or $\ms R$.
\end{remark}

\begin{remark}
\label{rem:a-g}
When the connection  is not critical and
$\ms L\doteq \ms L[\omega, f_l] < 0$, $\ms R\doteq \ms R[\omega, f_r] > 0$,
the analysis of 
attainable profiles
$\omega\in \mc A^{AB}(T)$
pursued in~\cite{adimurthi2020exact}
catches
only the profiles described in Cases 3 and~4 of Remark~\ref{rem:cases-thm-4.1}.
In fact, 
the characterization of 
$\mc A^{AB}(T)$ established in
~\cite[Theorem~6.1]{adimurthi2020exact}
requires that all profiles
$\omega\in \mc A^{AB}(T)$ satisfy the equalities
\begin{equation*}
    \omega(x)=A\qquad\forall~x\in \,]{\ms L}, 0[\,,
    \qquad\qquad
     \omega(x)=B\qquad\forall~x\in \,]0, \ms R[\,.
\end{equation*}
Therefore, such a characterization 
in particular excludes all attainable profiles $\omega$ that either satisfy conditions~\eqref{eq:case-i-A} or \eqref{eq:case-ii-A}, of Cases~1 and~2,
with
\begin{equation*}
    \omega(x){ ~<~} A\qquad\text{{ for some}}\ \ x\in \,]{\ms L}, \widetilde{\ms L}[\,,
\end{equation*}
or satisfy conditions~\eqref{eq:case-iprime-B}, \eqref{eq:case-iiprime-B},
of Cases~1B and~2B,
with
\begin{equation*}
    \omega(x){ ~>~} B\qquad\text{{ for some}}\ \  x\in \,] \widetilde{\ms R}, {\ms R}[\,.
\end{equation*}
%
\end{remark}
\medskip

\begin{figure}
    \centering

\tikzset{every picture/.style={line width=0.75pt}} 

\begin{tikzpicture}[x=0.75pt,y=0.75pt,yscale=-1,xscale=1]

\draw    (199.67,51) -- (199.67,206.67) ;
\draw [shift={(199.67,209.67)}, rotate = 270] [fill={rgb, 255:red, 0; green, 0; blue, 0 }  ][line width=0.08]  [draw opacity=0] (8.93,-4.29) -- (0,0) -- (8.93,4.29) -- cycle    ;
\draw    (199.67,51) -- (366.67,51) ;
\draw[->] [shift={(369.67,51)}, rotate = 180] [fill={rgb, 255:red, 0; green, 0; blue, 0 }  ][line width=0.08] [draw opacity=0] (8.93,-4.29) -- (0,0) -- (8.93,4.29) -- cycle  ;
\draw  [dash pattern={on 0.84pt off 2.51pt}]  (279.67,52) -- (280.33,207.67) ;
\draw  [dash pattern={on 0.84pt off 2.51pt}]  (200.5,131) -- (370.33,131) ;

\draw (262,34.07) node [anchor=north west][inner sep=0.75pt]  [font=\scriptsize]  {$Tf_{r}^{\prime} ( B)$};
\draw (155.33,124.07) node [anchor=north west][inner sep=0.75pt]  [font=\scriptsize]  {$Tf_{l}^{\prime}( A)$};
\draw (205,77.4) node [anchor=north west][inner sep=0.75pt]  
[font=\footnotesize]  {$ \begin{array}{l}
(2) ,\   (2\textsc{B}) \\
\noalign{\smallskip}
\ \ \ \ (3)
\end{array}$};
\draw (316,85.07) node [anchor=north west][inner sep=0.75pt]  
[font=\footnotesize]  {$(1\textsc{B})$};
\draw (234.67,165.4) node [anchor=north west][inner sep=0.75pt]  
[font=\footnotesize]  {$(1)$};
\draw (310.67,162.4) node [anchor=north west][inner sep=0.75pt]  
[font=\footnotesize]  {$(4)$};
\draw (180.67,204.4) node [anchor=north west][inner sep=0.75pt]  [font=\scriptsize]  {$\mathsf{L}$};
\draw (367.33,34.73) node [anchor=north west][inner sep=0.75pt]  [font=\footnotesize]  {$\mathsf{R}$};
\draw (186,35.4) node [anchor=north west][inner sep=0.75pt]  [font=\footnotesize]  {$0$};

\end{tikzpicture}

    \caption{The different cases of Remark \ref{rem:threecases}.}
    \label{fig:cases}
\end{figure}

\begin{remark}
    \label{rem:toildeLR-limitingcase}
    Notice that, if $A=\theta_l$, or $\ms R= 0$,
by definition~\eqref{eq:Ltildedef},
and because of~\eqref{eq:tildeLR-bis}, it follows that
$\widetilde{\ms L}=0$.
Similarly, if $B=\theta_r$, or $\ms L= 0$,
we have $\widetilde{\ms R}=0$.
Thus, in the case of critical connections, or whenever $\ms L=0$
or $\ms R=0$ (for critical and non critical connections),
the characterization 
of the profiles
$\omega\in \mc A^{AB}(T)\cap \msc A^{\ms L, \ms R}$ will not involve the constants~$\widetilde{\ms L}$,~$\widetilde{\ms R}$.
\end{remark}


\begin{thm}\label{thm:attprofilescrit}
    In the same setting of Theorem~\ref{thm:attprofiles}, 
    let $\omega$ be an element of the
    set $ \msc A^{\ms L, \ms R}$  
    in~\eqref{eq:ALR-def1},
    with $\ms L < 0$, $\ms R > 0$,
and assume that $(A, B ) = (\theta_l, B)$ (connection critical from the left). Then, 
$\omega \in \mc A^{[AB]}(T)$ if and only if $B \neq \theta_r$, the limits $\omega(0\pm)$ exist and there hold:

\begin{enumerate}
[leftmargin=20pt]
     \item[(i)]      
    the following Ole\v{\i}nik-type inequalities are satisfied 
    \begin{equation}\label{eq:1acrit}
    \begin{aligned}
    D^+ \omega(x) &\leq \frac{1}{T \cdot f_l^{\second}(\omega(x))} \qquad \forall \; x \in\,]\!-\!\infty,\;  \ms L[\,,
    \\
    \noalign{\smallskip}
       D^+ \omega(x) &\leq \frac{1}{T \cdot f_r^{\second}(\omega(x))} \qquad \forall \; x \in \,]\ms R, +\infty[\,.
        \end{aligned}
    \end{equation}
    Moreover, letting $g$ be the function in~\eqref{eq:ghdef}, then
      one has
    \begin{equation}
    \label{eq:1b1crit}
        D^+\omega(x) \leq g[\omega, f_l, f_r](x) \qquad \forall \; x \in\,]\ms L,\,0\,[\,.
    \end{equation}

\item[(ii)] 
letting $\bs u[\ms R, B, f_r]$, $\bs\tau[\ms R, B, f_r]$,  be 
constants defined as
in~\eqref{eq:urbf-def}, \eqref{eq:tau-rbf-def}, respectively,
the following pointwise state constraints are satisfied
 \begin{equation}\label{eq:critcond-2}
    (f'_l)^{-1}\bigg(\frac{x}{T-\bs\tau[\ms R, B, f_r]}\bigg)\leq \omega(x)<\theta_l,
    \qquad\forall~x\in\,]\ms L, 0[\,,
    \end{equation}
    \smallskip
\begin{equation}\label{eq:critcond}
        \omega(\ms L-)\geq \omega(\ms L+), \qquad\qquad
        \omega(0-) = \theta_l\,,
    \end{equation}
    \smallskip
        \begin{equation}
\label{eq:2bl-r1crit}
\begin{aligned}
\omega(x) = B\quad\ \forall~x\in\,]0, \ms R[\,,
\qquad\quad \ms R \in \,]0, T\cdot f_r^\prime(B)[\,,
\end{aligned}
\end{equation}
\smallskip
\smallskip
\begin{equation}
\label{eq:2acrit}
  \omega(\ms R+) \leq \bs u[\ms R, B, f_r] \leq  \omega(\ms R-)\,.
\qquad
\end{equation}
\smallskip
\end{enumerate}

\noindent
Symmetrycally, assume that $(A, B ) = (A, \theta_r)$ (connection critical from the right). Then, 
$\omega \in \mc A^{[AB]}(T)$ if and only if $A \neq \theta_l$,  the limits $\omega(0\pm)$ exist and there hold:

\begin{enumerate}
[leftmargin=20pt]
     \item[(i)']      
    the following Ole\v{\i}nik-type inequalities are satisfied 
    \begin{equation}\label{eq:1acrit1}
    \begin{aligned}
    D^+ \omega(x) &\leq \frac{1}{T \cdot f_l^{\second}(\omega(x))} \qquad \forall \; x \in\,]\!-\!\infty,\;  \ms L[\,,
    \\
    \noalign{\smallskip}
       D^+ \omega(x) &\leq \frac{1}{T \cdot f_r^{\second}(\omega(x))} \qquad \forall \; x \in \,]\ms R, +\infty[\,.
        \end{aligned}
    \end{equation}
    Moreover, letting $h$ be the function in~\eqref{eq:ghdef},
      then one has
    \begin{equation}
    \label{eq:1b1crit1}
        D^+\omega(x) \leq h[\omega, f_l, f_r](x) \qquad \forall \; x \in\,]0,\ms R[.
    \end{equation}

\item[(ii)'] letting $\bs v[\ms R, B, f_r]$,  $\bs \sigma[\ms L, A, f_l]$, be 
constants defined as
in~\eqref{eq:vlaf-def}, \eqref{eq:sigma-laf-def}, respectively,
the following pointwise state constraints are satisfied
\begin{equation}
\label{eq:2acrit1}
\begin{aligned}
  \omega(\ms L-) \geq \bs v[\ms L, A, f_l] \geq  \omega(\ms L+)\,,
\end{aligned}
\qquad
\end{equation}
\smallskip
\begin{equation}
\label{eq:2bl-r1crit1}
\begin{aligned}
\omega(x) = A\quad\ \forall~x\in\,]\ms L, 0[\,,
\qquad
\ms L \in \,]T\cdot f_l^\prime(A), 0[\,,
\end{aligned}
\end{equation}
\smallskip
\begin{equation}\label{eq:critcond1}
        \omega(0+) = \theta_r\,, \qquad\quad\omega(\ms R-) \geq \omega(\ms R+)\,, 
    \end{equation}
\smallskip
    \begin{equation}\label{eq:critcond1-2}
        \theta_r<\omega(x)\leq (f'_r)^{-1}\bigg(
        \frac{x}{T-\bs \sigma[\ms L, A, f_l]}
        \bigg)\qquad\forall~x\in\,]0, \ms R[\,.
    \end{equation}
\smallskip
\end{enumerate}

\end{thm}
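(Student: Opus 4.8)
The plan is to deduce Theorem~\ref{thm:attprofilescrit} from the non-critical characterization (Theorem~\ref{thm:attprofiles}, equivalently the non-critical case of Theorem~\ref{thm:backfordiscfluxcycle}) by a perturbation in the connection parameter. The engine is property~(iv) of Theorem~\ref{theoremsemigroup}, which gives the $\mathbf L^1_{\mathrm{loc}}$-continuity of $(A,B,u_0)\mapsto\sabpt u_0$ in $(A,B)$ — used both for the flux $f$ and, through Definition~\ref{def:backop}, for the symmetric flux $\overline f$ governing the backward operator — together with the lower/upper $\mathbf L^1$-semicontinuity properties of Appendix~\ref{app:uplwsmicsolns}. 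I argue only in the left-critical case $(A,B)=(\theta_l,B)$ with $B\ne\theta_r$; the right-critical case is symmetric. Since $f_l$ is continuous, I fix non-critical connections $(A_n,B_n)$ with $A_n\uparrow\theta_l$, $f_l(A_n)=f_r(B_n)$ and $B_n\downarrow B$, so that $(A_n,B_n)\to(\theta_l,B)$ in $\R^2$ and, by Remark~\ref{rem:toildeLR-limitingcase}, the auxiliary constants $\widetilde{\ms L}[\,\cdot\,]$, $\widetilde{\ms R}[\,\cdot\,]$ attached to these connections tend to $0$; this is exactly the degeneration that collapses the six cases of Remark~\ref{rem:cases-thm-4.1} onto the single block \eqref{eq:1acrit}--\eqref{eq:2acrit}.

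\textbf{Necessity.} Let $\omega=\sabpT u_0\in\mc A^{[\theta_l B]}(T)$. For $x<\ms L$ (resp. $x>\ms R$) the minimal backward generalized characteristic through $(x,T)$ of the solution $\sabpt u_0$ stays in $\{x<0\}$ (resp. $\{x>0\}$), hence is a genuine characteristic of $u_t+f_l(u)_x=0$ (resp. $u_t+f_r(u)_x=0$), and Dafermos' theory~\cite{dafermosgenchar} yields the Oleĭnik inequalities \eqref{eq:1acrit1} exactly as in the space-independent case. For the behaviour on $]\ms L,0[$ and the pointwise constraints I set $\omega_n\doteq\mathcal{S}_T^{[A_nB_n]+}u_0$: by property~(iv), $\|\omega_n-\omega\|_{\mathbf L^1([-R,R])}\le 2T|f_r(B_n)-f_r(B)|\to0$ for every $R>0$, while each $\omega_n\in\mc A^{[A_nB_n]}(T)$ obeys the constraints of Theorem~\ref{thm:attprofiles}. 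Passing to the limit $n\to\infty$ — using Appendix~\ref{app:uplwsmicsolns} to carry the one-sided Dini (Oleĭnik-type) estimates through the $\mathbf L^1_{\mathrm{loc}}$-convergence, the continuity of $\ms R\mapsto\bs\tau[\ms R,B,f_r]$ and $\ms R\mapsto\bs u[\ms R,B,f_r]$, and the fact that $\omega_n\equiv A_n$ on an interval shrinking to $\{0\}$ from the left with $A_n\to\theta_l$ — I recover \eqref{eq:1b1crit}, \eqref{eq:critcond-2}, \eqref{eq:critcond}, \eqref{eq:2bl-r1crit}, \eqref{eq:2acrit}; in particular \eqref{eq:critcond-2} squeezes $\omega(x)$ between $(f_l')^{-1}\!\big(x/(T-\bs\tau)\big)\to\theta_l$ and $\theta_l$ as $x\to0^-$, so that the limit $\omega(0-)=\theta_l$ exists, while the existence of $\omega(0+)$ comes from the trace structure of Remark~\ref{rem:abentr-sol-prop1}. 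Finally, if $B=\theta_r$ then $Tf_r'(B)=0$, and for non-critical $(A_n,B_n)\to(\theta_l,\theta_r)$ the constraints \eqref{eq:2bl-r1} degenerate in the limit to $\omega\equiv\theta_r$ on $]0,\ms R[$ and $\omega\equiv\theta_l$ on $]\ms L,0[$, which is incompatible with $\ms R[\omega,f_r]>0$, $\ms L[\omega,f_l]<0$; hence $\mc A^{[\theta_l\theta_r]}(T)\cap\msc A^{\ms L,\ms R}=\varnothing$, accounting for the clause $B\ne\theta_r$.

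\textbf{Sufficiency.} Conversely, assume $\omega$ satisfies all the listed conditions. The core step is to construct a sequence $\omega_n\in(\mathbf L^\infty\cap BV_{\mathrm{loc}})(\R\setminus\{0\})$, uniformly bounded in $\mathbf L^\infty$, with $\omega_n\to\omega$ in $\mathbf L^1_{\mathrm{loc}}$ and such that $\omega_n$ satisfies the constraints of Theorem~\ref{thm:attprofiles} relative to $(A_n,B_n)$ — whence, by the already-proved non-critical case, $\omega_n=\mathcal{S}_T^{[A_nB_n]+}\circ\mathcal{S}_T^{[A_nB_n]-}\omega_n\in\mc A^{[A_nB_n]}(T)$. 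Such $\omega_n$ is obtained from $\omega$ by leaving it unchanged outside a shrinking neighbourhood of $x=0$ and, near $x=0$, replacing it by the value $A_n$ on a suitable interval so as to realise one of the admissible cases of Remark~\ref{rem:cases-thm-4.1}; since $A_n\to\theta_l$ and the modified region collapses to $\{0\}$, both $\omega_n\to\omega$ in $\mathbf L^1_{\mathrm{loc}}$ and the uniform $\mathbf L^\infty$ bound are immediate. One then passes to the limit in the fixed-point identity: combining property~(iii) and property~(iv) of Theorem~\ref{theoremsemigroup} applied to $\overline f$, with the $\mathbf L^\infty$ bounds of Remark~\ref{rem:linf-bound-ABsol}, gives $\mathcal{S}_T^{[A_nB_n]-}\omega_n\to\sabmT\omega$ in $\mathbf L^1_{\mathrm{loc}}$, and then, combining (iii) and (iv) for $f$ (again with Remark~\ref{rem:linf-bound-ABsol}), $\mathcal{S}_T^{[A_nB_n]+}\circ\mathcal{S}_T^{[A_nB_n]-}\omega_n\to\sabpT\circ\sabmT\omega$ in $\mathbf L^1_{\mathrm{loc}}$; since the left-hand side equals $\omega_n\to\omega$, one concludes $\omega=\sabpT\circ\sabmT\omega\in\mc A^{[\theta_l B]}(T)$.

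\textbf{Main obstacle.} The hard part is twofold: first, the explicit construction of the approximating profiles $\omega_n$ that satisfy \emph{simultaneously all} the case-dependent constraints of Theorem~\ref{thm:attprofiles} for $(A_n,B_n)$ — this is the ``approximate controllability'' construction, which requires one to select, for each $n$, the correct case in Remark~\ref{rem:cases-thm-4.1} and to check the Oleĭnik inequalities \eqref{eq:1b1}, \eqref{eq:1b12} for the perturbed profile; and second, transferring the one-sided differential inequalities and the pointwise trace conditions through $\mathbf L^1_{\mathrm{loc}}$-limits, which is precisely what the semicontinuity results of Appendix~\ref{app:uplwsmicsolns} are built for. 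By contrast, the Oleĭnik inequalities away from the interface are routine via generalized characteristics.
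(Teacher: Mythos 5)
Your overall strategy coincides with the paper's: approximate the critical connection by non-critical ones $(A_n,B_n)\to(\theta_l,B)$, use Theorem~\ref{theoremsemigroup}-(iv) and the semicontinuity results of Appendix~\ref{app:uplwsmicsolns} to pass the non-critical characterization to the limit for necessity, and for sufficiency build admissible $(A_n,B_n)$-profiles $\omega_n\to\omega$ and pass to the limit in the fixed-point identity. However, both halves have genuine gaps. For necessity, you claim the constraints \eqref{eq:1b1crit}--\eqref{eq:2acrit} are ``recovered'' by carrying the non-critical constraints of Theorem~\ref{thm:attprofiles} through the $\mathbf L^1_{\mathrm{loc}}$-limit, but several of them cannot be obtained this way: since $T f_l'(A_n)\to 0$ while $\ms L_n\to\widehat{\ms L}\leq\ms L<0$, for large $n$ one falls into the regimes $\ms L_n\leq Tf_l'(A_n)$ of Remark~\ref{rem:cases-thm-4.1}, and the constants $\widetilde{\ms L}_n$, $\widetilde{\ms R}_n$ degenerate, so the limiting information on $]\ms L,0[$ is essentially vacuous. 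The paper supplements the limit with direct generalized-characteristic arguments on the critical solution $u$ itself (Steps~11--13 of \S~\ref{sec:criticalcases-b}): the identity $\omega(0-)=\theta_l$ is proved by a crossing argument between the backward characteristics from $(x,T)$, $x<0$, refracted at the interface, and the minimal backward characteristic from $(\ms R,T)$; the equality $\omega\equiv B$ on $]0,\ms R[$ together with $\ms R<T f_r'(B)$ (hence $B\neq\theta_r$) is likewise proved directly, not by degeneration of \eqref{eq:2bl-r1}; and the lower bound in \eqref{eq:critcond-2} requires first showing that the relevant backward characteristics of $u_n$ impact the interface before $\bs\tau[\ms R_n,B_n,f_r]$ and only then passing $n\to\infty$. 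Your squeeze argument for the existence of $\omega(0-)$ presupposes \eqref{eq:critcond-2}, whose proof is precisely what is missing.

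For sufficiency, the construction of $\omega_n$ is the mathematical core and you explicitly defer it. The paper's choice (\S~\ref{sec:criticalcases-c}, Step~9) is not a generic ``replacement by $A_n$ near $x=0$'': one must take $\omega_n=\min\{A_n,\omega\}$ on the whole of $]\ms L,0[$, set $\omega_n\equiv B_n$ on $]0,\ms R+\delta_{1,n}[$, and --- crucially --- choose the shift $\delta_{1,n}$ so that $\bs\tau[\ms R+\delta_{1,n},B_n,f_r]=\bs\tau[\ms R,B,f_r]$. This last normalization is what makes $\widetilde{\ms L}_n=(T-\bs\tau[\ms R,B,f_r])\,f_l'(A_n)$, so that the constraint $\omega_n\geq A_n$ on $]\widetilde{\ms L}_n,0[$ required by \eqref{eq:2b-lr2-2} follows from the hypothesis \eqref{eq:critcond-2}; without it there is no reason the perturbed profile satisfies the case-dependent constraints of Theorem~\ref{thm:attprofiles} for $(A_n,B_n)$. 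Since you name this step as ``the main obstacle'' without resolving it, the proposal as written does not constitute a proof.
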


\begin{figure}
    \centering
    
\tikzset{every picture/.style={line width=0.75pt}} 

\begin{tikzpicture}[x=0.75pt,y=0.75pt,yscale=-0.7,xscale=0.7]

\draw  [fill={rgb, 255:red, 208; green, 2; blue, 27 }  ,fill opacity=0.45 ] (441.2,62.87) -- (397.2,248.2) -- (321.2,152.87) -- (321.2,84.87) -- cycle ;
\draw [line width=1.5]    (320.92,247.96) -- (320.92,29.2) ;
\draw [shift={(320.92,25.2)}, rotate = 90] [fill={rgb, 255:red, 0; green, 0; blue, 0 }  ][line width=0.08]  [draw opacity=0] (6.97,-3.35) -- (0,0) -- (6.97,3.35) -- cycle    ;
\draw [line width=1.5]    (76.13,249.87) -- (575.47,248.63) ;
\draw [shift={(579.47,248.62)}, rotate = 179.86] [fill={rgb, 255:red, 0; green, 0; blue, 0 }  ][line width=0.08]  [draw opacity=0] (6.97,-3.35) -- (0,0) -- (6.97,3.35) -- cycle    ;
\draw    (73.47,61.46) -- (575.54,62.5) ;
\draw  [dash pattern={on 4.5pt off 4.5pt}]  (320.5,218.25) -- (364.5,249.25) ;
\draw  [dash pattern={on 4.5pt off 4.5pt}]  (355.2,62.2) -- (322,66.75) ;
\draw  [dash pattern={on 4.5pt off 4.5pt}]  (301.5,63.25) -- (320.1,158.3) ;
\draw  [dash pattern={on 4.5pt off 4.5pt}]  (319,172.75) -- (377.5,247.75) ;
\draw    (321,229.67) -- (346.5,248.75) ;
\draw  [dash pattern={on 4.5pt off 4.5pt}]  (260.5,64.25) -- (319,172.75) ;
\draw  [dash pattern={on 4.5pt off 4.5pt}]  (216,62.75) -- (320.5,192.75) ;
\draw    (124,62.33) -- (152,246.51) ;
\draw  [dash pattern={on 4.5pt off 4.5pt}]  (320.5,192.75) -- (377.5,247.75) ;
\draw  [dash pattern={on 4.5pt off 4.5pt}]  (124,62.33) -- (257.33,247.18) ;
\draw  [dash pattern={on 4.5pt off 4.5pt}]  (124,62.33) -- (182,247.85) ;
\draw  [dash pattern={on 4.5pt off 4.5pt}]  (124,62.33) -- (199.33,247.18) ;
\draw    (124,62.33) -- (321,229.67) ;
\draw  [dash pattern={on 4.5pt off 4.5pt}]  (124,62.33) -- (218.67,247.18) ;
\draw  [dash pattern={on 4.5pt off 4.5pt}]  (124,62.33) -- (168,249.18) ;
\draw  [dash pattern={on 4.5pt off 4.5pt}]  (124,62.33) -- (321.5,242.75) -- (330.5,247.75) ;
\draw  [dash pattern={on 4.5pt off 4.5pt}]  (124,62.33) -- (300.67,248.51) ;
\draw  [dash pattern={on 4.5pt off 4.5pt}]  (124,62.33) -- (238,249.18) ;
\draw  [dash pattern={on 4.5pt off 4.5pt}]  (143.5,65.75) -- (320.5,218.25) ;
\draw  [dash pattern={on 4.5pt off 4.5pt}]  (124,62.33) -- (279.33,249.18) ;
\draw  [dash pattern={on 4.5pt off 4.5pt}]  (182,64.25) -- (321.5,207.75) ;
\draw  [dash pattern={on 4.5pt off 4.5pt}]  (104.94,62.85) -- (108,251.85) ;
\draw  [dash pattern={on 4.5pt off 4.5pt}]  (78.67,62.51) -- (86.67,248.51) ;
\draw  [dash pattern={on 4.5pt off 4.5pt}]  (112.6,61.01) -- (137.33,249.18) ;
\draw  [dash pattern={on 4.5pt off 4.5pt}]  (92.2,62.02) -- (100.67,246.51) ;
\draw  [dash pattern={on 4.5pt off 4.5pt}]  (108.67,62.51) -- (123.33,248.51) ;
\draw  [dash pattern={on 4.5pt off 4.5pt}]  (348.53,112.87) -- (321.2,118.87) ;
\draw  [dash pattern={on 4.5pt off 4.5pt}]  (400.53,62.87) -- (323.2,75.53) ;
\draw  [dash pattern={on 4.5pt off 4.5pt}]  (371.2,95.53) -- (320.53,107.53) ;
\draw  [dash pattern={on 4.5pt off 4.5pt}]  (411.2,76.87) -- (319.87,96.2) ;
\draw  [dash pattern={on 4.5pt off 4.5pt}]  (352.53,113.53) -- (397.2,248.2) ;
\draw  [dash pattern={on 4.5pt off 4.5pt}]  (328.53,139.53) -- (397.2,248.2) ;
\draw  [dash pattern={on 4.5pt off 4.5pt}]  (329.2,136.2) -- (319.2,139.53) ;
\draw  [dash pattern={on 4.5pt off 4.5pt}]  (339.2,124.2) -- (397.2,248.2) ;
\draw  [dash pattern={on 4.5pt off 4.5pt}]  (411.2,76.87) -- (397.2,248.2) ;
\draw  [dash pattern={on 4.5pt off 4.5pt}]  (395.87,83.53) -- (397.2,248.2) ;
\draw  [dash pattern={on 4.5pt off 4.5pt}]  (380.53,91.53) -- (397.2,248.2) ;
\draw  [dash pattern={on 4.5pt off 4.5pt}]  (365.87,102.2) -- (397.2,248.2) ;
\draw  [dash pattern={on 4.5pt off 4.5pt}]  (426.53,69.53) -- (397.2,248.2) ;
\draw [color={rgb, 255:red, 208; green, 2; blue, 27 }  ,draw opacity=1 ]   (321.2,152.87) .. controls (333.87,115.53) and (378.53,82.87) .. (441.2,62.87) ;
\draw  [dash pattern={on 4.5pt off 4.5pt}]  (339.2,124.2) -- (320.77,128.93) ;
\draw  [dash pattern={on 4.5pt off 4.5pt}]  (458.3,63.85) -- (471.87,246.87) ;
\draw    (458.3,63.85) -- (484.53,248.87) ;
\draw  [dash pattern={on 4.5pt off 4.5pt}]  (458.3,63.85) -- (443.2,246.2) ;
\draw    (458.3,63.85) -- (435.87,248.87) ;
\draw  [dash pattern={on 4.5pt off 4.5pt}]  (465.87,64.2) -- (495.2,246.87) ;
\draw  [dash pattern={on 4.5pt off 4.5pt}]  (451.23,64.38) -- (418.53,248.87) ;
\draw  [dash pattern={on 4.5pt off 4.5pt}]  (458.3,63.85) -- (456.53,248.87) ;
\draw  [dash pattern={on 0.84pt off 2.51pt}]  (321.2,152.87) -- (579.2,152.87) ;
\draw  [dash pattern={on 4.5pt off 4.5pt}]  (320.9,161) -- (386.5,248.75) ;
\draw  [dash pattern={on 4.5pt off 4.5pt}]  (321.5,207.75) -- (377.5,247.75) ;

\draw (114,38.73) node [anchor=north west][inner sep=0.75pt]  [font=\footnotesize]  {$\mathsf{L}$};
\draw (554.67,44.4) node [anchor=north west][inner sep=0.75pt]  [font=\footnotesize]  {$\omega $};
\draw (585.33,241.73) node [anchor=north west][inner sep=0.75pt]  [font=\footnotesize]  {$x$};
\draw (331.33,24.73) node [anchor=north west][inner sep=0.75pt]  [font=\footnotesize]  {$t$};
\draw (436.67,38.73) node [anchor=north west][inner sep=0.75pt]  [font=\footnotesize]  {$\mathsf{R}$};
\draw (569.33,130.4) node [anchor=north west][inner sep=0.75pt]  [font=\scriptsize]  {$\mathbf{\tau }[\mathsf{R} ,\ B,\ f_{r}]$};
\draw (301.67,44.73) node [anchor=north west][inner sep=0.75pt]  [font=\footnotesize]  {$\theta _{l}$};
\draw (361.33,39.07) node [anchor=north west][inner sep=0.75pt]  [font=\footnotesize]  {$B$};

\end{tikzpicture}

    \caption{Typical profile of Theorem \ref{thm:attprofilescrit} for connections critical at the left $(\theta_l, B)$.}
    \label{fig:criticalLRneq0}
\end{figure}

\begin{remark}
    \label{rem:cases-thm-4.8}
    For critical connections, 
    whenever $\ms L < 0 < \ms R$ 
we can distinguish 
two cases 
of pointwise constraints prescribed by Theorem~\ref{thm:attprofilescrit}
on an attainable profile $\omega$,
which depend on the side in which
the connection is critical.

\noindent
\textsc{Case 1:}
If $A=\theta_l$, and $\ms L < 0 < \ms R  < T\cdot f'_r(B)$ (Figure \ref{fig:criticalLRneq0}), then it holds true
\begin{equation*}
    (f'_l)^{-1}\bigg(\frac{x}{T-\bs\tau[\ms R, B, f_r]}\bigg)\leq \omega(x)<\theta_l,
    \qquad\forall~x\in\,]\ms L, 0[\,,\qquad \ \ 
     \omega(0-) = \theta_l\,,        
    \end{equation*}
    \begin{equation*}    
        \omega(x)=B\ \quad\ \forall~x\in\,]0, {\ms R}[\,,
        \qquad \ \ 
  \omega(\ms R+) \leq \bs u[\ms R, B, f_r]\leq B\,;
    \end{equation*}
    
\noindent
\textsc{Case 2:}
If $B=\theta_r$, and $T\cdot f_l^\prime(A)<\ms L <  0 < \ms R$, then it holds true
    \begin{equation*}    
        \omega(x)=A\ \quad\ \forall~x\in\,]{\ms L}, 0[\,,
        \qquad \ \ 
  A \geq \bs v[\ms L, A, f_l]\geq \omega(\ms L+)\,;
    \end{equation*}
    \begin{equation*}
 \theta_r<\omega(x)\leq (f'_r)^{-1}\bigg(
        \frac{x}{T-\bs \sigma[\ms L, A, f_l]}
        \bigg)\qquad\forall~x\in\,]0, \ms R[\,,\qquad \ \ 
     \omega(0+) = \theta_r\,.       
    \end{equation*}
    
\noindent
In both cases an $AB$-entropy solution
that attains $\omega$ at time $T$ must contain a shock located in $\{x> 0\}$
(in \textsc{Case 1}),
or in $\{x< 0\}$ (in \textsc{Case 2}), 
in order to produce the discontinuity occurring in $\omega$ at $\ms R$ or 
$\ms L$.
\end{remark}

\begin{thm}\label{thm:attprofiles2}
In the same setting of Theorem~\ref{thm:attprofiles},
    let $\omega$ be an element of the set $\msc A^{\ms L, \ms R}$ in~\eqref{eq:ALR-def1}, 
let $g, h$ be the functions in~\eqref{eq:ghdef},
and let $\bs u[\ms R, B, f_r]$, $\bs v[\ms L, A, f_l]$,  be 
constants defined as
in~\eqref{eq:urbf-def},
\eqref{eq:vlaf-def}.
    Then, if $\ms L < 0$, $\ms R = 0$,
    $\omega \in \mc A^{[AB]}(T)$ if and only if the limits $\omega(0\pm)$ exist, and it
     holds:

\begin{enumerate}
    \item[(i)] 
    the following Ole\v{\i}nik-type inequalities are satisfied 
    \begin{equation}\label{eq:1a2}
    \begin{aligned}
     D^+ \omega(x) &\leq \frac{1}{T \cdot f_l^{\second}(\omega(x))} \qquad \forall \; x \in \,]\!-\!\infty,\;  \ms L[\,,\\
       \noalign{\smallskip}
       D^+ \omega(x) &\leq \frac{1}{T \cdot f_r^{\second}(\omega(x))} \qquad \forall \; x \in \,]0, +\infty[\,,
        \end{aligned}
    \end{equation}
    \medskip
\begin{equation}\label{eq:1b2}
    D^+\omega(x) \leq g[\omega, f_l, f_r](x) \qquad \forall \; x \in \ ]\ms L,0[\,.\quad\
\end{equation}

\item[(ii)] the following pointwise state constraints are satisfied:

\begin{equation}\label{eq:2b2}
    \begin{cases}
        \omega(x) \leq  A\ \ &\text{if}\quad A<\theta_l,
        \\
        \omega(x) <  A\ \ &\text{if}\quad A=\theta_l,
    \end{cases}\qquad \forall \; x \in \,]\ms L, 0[\,,
\end{equation}

\begin{equation}\label{eq:2a2}
    \omega(\ms 0+) \leq \pi^l_{r,-}(\omega(0-))\,,
\end{equation}
and
\begin{align}
\label{eq:2a2b}
\ms L \in \,]T\cdot f_l^{\prime}(A), 0[
  \qquad  &\Longrightarrow \qquad
    \omega(\ms L-) \geq \bs v[\ms L, A, f_l]\geq  \omega(\ms L+)\,,
    \\
    \noalign{\smallskip}
    \label{eq:3a2b}
\ms L~{\leq}~
T\cdot f_l^{\prime}(A)
\qquad  &\Longrightarrow \qquad
    \omega(\ms L-) \geq \omega(\ms L+).
\end{align}
\end{enumerate}
Symmetrically, if $\ms L = 0$, $\ms R > 0$, then $\omega \in \mc A^{[AB]}(T)$ if and only if it holds true:
\begin{enumerate}
    \item[(i)\,$^\prime$] 
     the following Ole\v{\i}nik-type inequalities are satisfied 
    \begin{equation}\label{eq:1a2-prime}
    \begin{aligned}
     D^+ \omega(x) &\leq \frac{1}{T \cdot f_l^{\second}(\omega(x))} \qquad \forall \; x \in \,]\!-\!\infty,\;  0[\,,\\
       \noalign{\smallskip}
       D^+ \omega(x) &\leq \frac{1}{T \cdot f_r^{\second}(\omega(x))} \qquad \forall \; x \in \,]\ms R, +\infty[\,,
        \end{aligned}
    \end{equation}
    \medskip
\begin{equation}\label{eq:1b2-prime}
    D^+\omega(x) \leq h[\omega, f_l, f_r](x) \qquad \forall \; x \in \ ]0, \ms R[\,.\quad\
\end{equation}

\item[(ii)\,$^\prime$] the following pointwise state constraints are satisfied:

\begin{equation}\label{eq:2b2-prime}
    \begin{cases}
        \omega(x) \geq  B\ \ &\text{if}\quad B>\theta_r,
        \\
        \omega(x) > B\ \ &\text{if}\quad B=\theta_r,
    \end{cases}
    \qquad \forall \; x \in \,]0, \ms R[\,,
\end{equation}

\begin{equation}\label{eq:2a2-prime}
    \omega(\ms 0-) \geq \pi^r_{l,+}(\omega(0+))\,,
\end{equation}
and
\begin{align}
\label{eq:2a2b-prime}
\ms R \in \,]0, T \cdot f_r^{\prime}(B)[\,
  \qquad  &\Longrightarrow \qquad 
    \omega(\ms R+) \leq \bs u[\ms R, B, f_r] \leq  \omega(\ms R-)\,,
\\
\noalign{\smallskip}
\label{eq:3a2b-prime}
\ms R ~{\geq}~
T \cdot f_r^{\prime}(B)
\qquad  &\Longrightarrow \qquad 
    \omega(\ms R+) \leq \omega(\ms R-) . 
\end{align}
\end{enumerate}
\end{thm}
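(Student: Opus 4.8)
The plan is to prove the two implications of the iff along the same two-step roadmap used for Theorem~\ref{thm:attprofiles}, treating in detail the case $\ms L<0$, $\ms R=0$; the case $\ms L=0$, $\ms R>0$ is obtained by the entirely symmetric argument interchanging the roles of the two half-lines, of $f_l$ and $f_r$, and of $A$ and $B$. As for the other characterization theorems, I would first establish the statement for a \emph{non critical} connection $(A,B)$ and an attainable profile $\omega\in\mc A^{[AB]}(T)\cap BV_{\mr{loc}}(\R)$ by a direct argument based on generalized characteristics, and then recover the general situation (merely bounded data, and/or critical connections) by approximation, using the $\mathbf L^1_{\mr{loc}}$-stability of $(A,B,u_0)\mapsto\mc S^{[AB]+}_tu_0$ from Theorem~\ref{theoremsemigroup}, Corollary~\ref{cor:fluxtraces-stab}, and the one-sided semicontinuity estimates of Appendix~\ref{app:uplwsmicsolns}; in this limit the strict inequality $\omega(x)<A$ in~\eqref{eq:2b2} for $A=\theta_l$ is forced by the classification of admissible interface traces (Lemma~\ref{lem:traceseq2}) together with the non-existence of rarefactions issuing from the interface (Appendix~\ref{app:no-rarefaction}). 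It is also worth noticing that, since $\ms R=0$ forces $\widetilde{\ms R}=\widetilde{\ms L}=0$ (cf.~\eqref{eq:tildeLR-bis} and Remark~\ref{rem:toildeLR-limitingcase}), conditions~(i)--(ii) here are precisely the ``$\ms R\to0$'' degeneration of conditions~(i)--(ii) of Theorem~\ref{thm:attprofiles}, which dictates the shape of the extremal solutions used below.

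\textbf{Necessity.} Assume $\omega=\mc S^{[AB]+}_Tu_0$ and set $u(x,t)\doteq\mc S^{[AB]+}_tu_0(x)$; by Remark~\ref{rem:abentr-sol-prop1} the traces $\omega(0\pm)$ exist (the full characterization upgrades this from a.e.\ $t$ to $t=T$). Since $\ms R[\omega,f_r]=0$, every point $x>0$ is reached at time $T$ by an extremal backward characteristic (in the sense of Remark~\ref{rem:char-ABsol}) lying entirely in $\{x>0\}$; Dafermos' theory~\cite{dafermosgenchar} applied to $u_t+f_r(u)_x=0$ on $\{x>0\}$ then gives the Oleĭnik bound~\eqref{eq:1a2} on $]0,+\infty[$, and the same theory on $\{x<0\}$ gives~\eqref{eq:1a2} on $]-\infty,\ms L[$. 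For $x$ in the influence region of the interface within $]\ms L,0[$, the backward characteristic from $(x,T)$ is refracted at $x=0$ at some $\tau\in\,]0,T[$ (case (iii) of Remark~\ref{rem:char-ABsol}); the refraction is governed by the Rankine--Hugoniot identity~\eqref{RHtraces} and the interface entropy inequality~\eqref{interfaceentropy}, which by Lemma~\ref{lem:traceseq2} force $u_l(\tau)\le A$, hence~\eqref{eq:2b2}, and whose differentiation along the refracted characteristic yields the sharper bound~\eqref{eq:1b2} with the function $g$ of~\eqref{eq:ghdef} (on the complementary part of $]\ms L,0[$ the plain Oleĭnik bound already implies~\eqref{eq:1b2}). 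The interface trace relation and entropy inequality also give $\omega(0+)\le\pi^l_{r,-}(\omega(0-))$, i.e.~\eqref{eq:2a2}. Finally, the behaviour at $x=\ms L$ in~\eqref{eq:2a2b}--\eqref{eq:3a2b} is read from the left forward rarefaction--shock pattern of \S\ref{def:lrs-block}: if $\ms L\in\,]T\!\cdot\!f_l'(A),0[$ a shock must emanate from the interface into $\{x<0\}$ to create the jump of $\omega$ at $\ms L$, and the largest left state it can carry at time $T$ is $\bs v[\ms L,A,f_l]$, while for $\ms L\le T\!\cdot\!f_l'(A)$ only the plain Lax inequality $\omega(\ms L-)\ge\omega(\ms L+)$ survives.

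\textbf{Sufficiency.} Given $\omega$ satisfying~(i)--(ii), I would set $u_0\doteq\mc S^{[AB]-}_T\omega$ and prove $\mc S^{[AB]+}_Tu_0=\omega$, i.e.\ that $\omega$ is a fixed point of $\mc S^{[AB]+}_T\circ\mc S^{[AB]-}_T$, which by Theorem~\ref{thm:backfordiscflux} is equivalent to $\omega\in\mc A^{[AB]}(T)$. Concretely, I would exhibit an $AB$-entropy solution $u^\ast$ on $\R\times[0,T]$ with $u^\ast(\cdot,T)=\omega$, built by gluing three pieces: on $\{x>0\}$, a classical (rarefaction-type) solution whose characteristics transport the boundary values $\omega(x)$ at time $T$ back to $t=0$ --- available precisely because $\ms R=0$ together with~\eqref{eq:1a2},~\eqref{eq:1b2} hold and the interface trace $\pi^l_{r,-}(\omega(0-))$ in~\eqref{eq:2a2} is consistent with the $c^{AB}$-layer; a stationary $c^{AB}$ layer at $x=0$; and on $\{x<0\}$, either a classical solution (when $\ms L\le T\!\cdot\!f_l'(A)$) or, when $\ms L$ lies in the active zone $]T\!\cdot\!f_l'(A),0[$, the left forward rarefaction--shock pattern $\ms v[\ms L,A,f_l]$ of \S\ref{def:lrs-block}, whose shock --- by Lemma~\ref{lemma:dualshocks} applied to its dual backward shock $\bs x[\ms L,A,f_l]$ --- reaches $x=\ms L$ at time $T$ with left state $\bs v[\ms L,A,f_l]$, matching~\eqref{eq:2a2b}. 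That the glued function is an $AB$-entropy solution is checked via the local-gluing remark and the Lax inequality~\eqref{eq:lax-rfs-cond-4}; that it coincides with $\mc S^{[AB]+}_t\big(\mc S^{[AB]-}_T\omega\big)$ --- and in particular reduces to $\omega$ at time $T$ --- follows from the space-reversal duality between $\mc S^{[AB]+}$ and $\mc S^{[AB]-}$ of Definition~\ref{def:backop} and Remark~\ref{charact-backw_sol-disc}, combined with uniqueness of $AB$-entropy solutions (Theorem~\ref{theoremsemigroup}).

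\textbf{Main obstacle.} The delicate point is the near-interface bookkeeping: one must verify that~\eqref{eq:1a2}--\eqref{eq:2a2} are \emph{exactly} the conditions under which the backward solution $v(x,t)\doteq\overline{\mc S}^{[\overline B\,\overline A]+}_t\big(\omega(-\cdot)\big)(x)$ stays a classical solution on $\{x>0\}$ for every $t<T$, its only discontinuity being the interface shock corresponding to the jump of $\omega$ at $\ms L$, so that reversing time produces no spurious entropy violation on the right half-line; and, at the same time, that the forward evolution of $u_0$ generates no new shock entering $\{x>0\}$ from the interface --- which would contradict $\ms R[\omega,f_r]=0$. The second sensitive point is the passage to critical connections and to merely bounded data, where the explicit wave patterns of \S\ref{def:lrs-block} degenerate: there one approximates $\omega$ by attainable profiles for non critical connections $(A_n,B_n)\to(A,B)$ and passes to the limit, controlling the interface traces through Corollary~\ref{cor:fluxtraces-stab} and the one-sided semicontinuity of solutions from Appendix~\ref{app:uplwsmicsolns}.
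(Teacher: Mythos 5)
Your proposal is correct and follows essentially the same route as the paper: necessity via generalized characteristics (reducing to the arguments already used for Theorem~\ref{thm:attprofiles}, with a separate treatment of the new interface constraint~\eqref{eq:2a2}), sufficiency via the explicit gluing construction of \S~\ref{sec:(3)-(2)} together with the backward--forward duality of \S~\ref{subsec:part3b}, and the critical case by approximation with non-critical connections using Theorem~\ref{theoremsemigroup}-(iv), Corollary~\ref{cor:fluxtraces-stab} and Appendix~\ref{app:uplwsmicsolns}. The one loose citation is your appeal to Appendix~\ref{app:no-rarefaction} for the strict inequality $\omega(x)<A$ in~\eqref{eq:2b2} when $A=\theta_l$: Proposition~\ref{prop:norare} is established only for non-critical connections, so it is not available there; the strictness follows instead from the fact that, by definition of $\ms L$ and the non-crossing of genuine backward characteristics, the characteristic issuing from $(x,T)$ with $x\in\,]\ms L,0[\,$ must reach the interface at a non-negative time, which forces $f_l'(\omega(x))<0$, i.e. $\omega(x)<\theta_l$.
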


\begin{figure}

\tikzset{every picture/.style={line width=0.75pt}} 

\begin{tikzpicture}[x=0.75pt,y=0.75pt,yscale=-0.7,xscale=0.7]
\draw  [fill={rgb, 255:red, 74; green, 144; blue, 226 }  ,fill opacity=0.64 ] (191.33,84.33) -- (229.67,271.67) -- (342,216) -- (341,131.67) -- cycle ;
\draw [line width=1.5]    (342.26,269.96) -- (342.26,51.2) ;
\draw [shift={(342.26,47.2)}, rotate = 90] [fill={rgb, 255:red, 0; green, 0; blue, 0 }  ][line width=0.08]  [draw opacity=0] (6.97,-3.35) -- (0,0) -- (6.97,3.35) -- cycle    ;
\draw [line width=1.5]    (98.13,271.87) -- (597.47,270.63) ;
\draw [shift={(601.47,270.62)}, rotate = 179.86] [fill={rgb, 255:red, 0; green, 0; blue, 0 }  ][line width=0.08]  [draw opacity=0] (6.97,-3.35) -- (0,0) -- (6.97,3.35) -- cycle    ;
\draw    (103.2,83.3) -- (596.87,84.5) ;
\draw  [dash pattern={on 4.5pt off 4.5pt}]  (225.67,105) -- (229.67,271.67) ;
\draw  [dash pattern={on 4.5pt off 4.5pt}]  (205.67,93) -- (229.67,271.67) ;
\draw  [dash pattern={on 4.5pt off 4.5pt}]  (286.33,149.67) -- (229.67,271.67) ;
\draw  [dash pattern={on 4.5pt off 4.5pt}]  (191.33,84.33) -- (197,271) ;
\draw  [dash pattern={on 4.5pt off 4.5pt}]  (271.67,135.67) -- (229.67,271.67) ;
\draw  [dash pattern={on 4.5pt off 4.5pt}]  (191.33,84.33) -- (216.33,271) ;
\draw  [dash pattern={on 4.5pt off 4.5pt}]  (191.33,84.33) -- (179,271) ;
\draw  [dash pattern={on 4.5pt off 4.5pt}]  (191.33,84.33) -- (159,271) ;
\draw  [dash pattern={on 4.5pt off 4.5pt}]  (257.67,124.33) -- (229.67,271.67) ;
\draw  [dash pattern={on 4.5pt off 4.5pt}]  (191.33,84.33) -- (141,271) ;
\draw  [dash pattern={on 4.5pt off 4.5pt}]  (242.33,113.67) -- (229.67,271.67) ;
\draw    (191.33,84.33) -- (229.67,271.67) ;
\draw    (191.33,84.33) -- (129.67,270.33) ;
\draw    (191.33,84.33) -- (341,131.67) ;
\draw    (342.33,216.33) -- (229.67,271.67) ;
\draw [color={rgb, 255:red, 208; green, 2; blue, 27 }  ,draw opacity=1 ]   (191.33,84.33) .. controls (247.67,117) and (251,117.67) .. (279,141) .. controls (307,164.33) and (325.67,187) .. (342.33,216.33) ;
\draw  [dash pattern={on 4.5pt off 4.5pt}]  (284.33,145) -- (342,166.67) ;
\draw  [dash pattern={on 4.5pt off 4.5pt}]  (257.67,124.33) -- (339.67,154.33) ;
\draw  [dash pattern={on 4.5pt off 4.5pt}]  (327,192.33) -- (344.33,198.33) ;
\draw  [dash pattern={on 4.5pt off 4.5pt}]  (299.67,159) -- (229.67,271.67) ;
\draw  [dash pattern={on 4.5pt off 4.5pt}]  (225.67,105) -- (342,143.67) ;
\draw  [dash pattern={on 4.5pt off 4.5pt}]  (317,177.67) -- (343.67,187.67) ;
\draw  [dash pattern={on 4.5pt off 4.5pt}]  (303.67,162.33) -- (341,176.33) ;
\draw  [dash pattern={on 4.5pt off 4.5pt}]  (311.67,173.67) -- (229.67,271.67) ;
\draw  [dash pattern={on 4.5pt off 4.5pt}]  (323,188.33) -- (229.67,271.67) ;
\draw  [dash pattern={on 4.5pt off 4.5pt}]  (333,201) -- (229.67,271.67) ;
\draw  [dash pattern={on 4.5pt off 4.5pt}]  (324.2,271.55) -- (345,261) ;
\draw  [dash pattern={on 4.5pt off 4.5pt}]  (296.67,271.98) -- (342.33,251.67) ;
\draw  [dash pattern={on 4.5pt off 4.5pt}]  (253.62,270.95) -- (341.67,228.33) ;
\draw  [dash pattern={on 4.5pt off 4.5pt}]  (275.07,271.24) -- (340.33,240.33) ;
\draw  [dash pattern={on 4.5pt off 4.5pt}]  (243.85,85.39) -- (342.33,115) ;
\draw  [dash pattern={on 4.5pt off 4.5pt}]  (322.16,85.62) -- (342.33,91.67) ;
\draw  [dash pattern={on 4.5pt off 4.5pt}]  (339.67,154.33) -- (423,270.33) ;
\draw  [dash pattern={on 4.5pt off 4.5pt}]  (342,143.67) -- (433.67,270.33) ;
\draw  [dash pattern={on 4.5pt off 4.5pt}]  (342.33,115) -- (481,267.67) ;
\draw  [dash pattern={on 4.5pt off 4.5pt}]  (341.67,111) -- (503.67,269.67) ;
\draw  [dash pattern={on 4.5pt off 4.5pt}]  (340.33,96.33) -- (525.67,268.33) ;
\draw  [dash pattern={on 4.5pt off 4.5pt}]  (243.85,85.39) -- (341.67,111) ;
\draw  [dash pattern={on 4.5pt off 4.5pt}]  (341.67,127) -- (453,269) ;
\draw  [dash pattern={on 4.5pt off 4.5pt}]  (279.67,85) -- (340.33,96.33) ;
\draw    (341.67,107) -- (518.33,270.33) ;
\draw  [dash pattern={on 4.5pt off 4.5pt}]  (215,83.67) -- (341.67,127) ;
\draw  [dash pattern={on 4.5pt off 4.5pt}]  (342.33,91.67) -- (530.33,267.67) ;
\draw    (243.85,85.39) -- (340.33,119) ;
\draw    (340.33,119) -- (466.33,269.67) ;
\draw    (243.85,85.39) -- (341.67,107) ;
\draw    (341.84,84.65) -- (541.67,270.33) ;
\draw    (341,131.67) -- (442.33,270.33) ;
\draw  [dash pattern={on 4.5pt off 4.5pt}]  (342.33,251.67) -- (356,269.33) ;
\draw  [dash pattern={on 4.5pt off 4.5pt}]  (344.33,198.33) -- (397,271) ;
\draw  [dash pattern={on 4.5pt off 4.5pt}]  (341,176.33) -- (409.67,269.67) ;
\draw  [dash pattern={on 4.5pt off 4.5pt}]  (344.33,235.67) -- (367.67,269.67) ;
\draw  [dash pattern={on 4.5pt off 4.5pt}]  (342.33,216.33) -- (380,270.67) ;
\draw  [dash pattern={on 4.5pt off 4.5pt}]  (457.67,86.33) -- (570,268.33) ;
\draw  [dash pattern={on 4.5pt off 4.5pt}]  (421,86.33) -- (563.33,268.33) ;
\draw  [dash pattern={on 4.5pt off 4.5pt}]  (394.33,85) -- (557,268.33) ;
\draw  [dash pattern={on 4.5pt off 4.5pt}]  (370,84.33) -- (549.67,269) ;
\draw  [dash pattern={on 4.5pt off 4.5pt}]  (355.33,84.67) -- (544.33,268.33) ;
\draw  [dash pattern={on 4.5pt off 4.5pt}]  (494.33,85) -- (574.67,269.67) ;
\draw  [dash pattern={on 4.5pt off 4.5pt}]  (164.2,84.3) -- (107.2,269.3) ;
\draw  [dash pattern={on 4.5pt off 4.5pt}]  (145.2,84.3) -- (98.13,271.87) ;
\draw  [dash pattern={on 4.5pt off 4.5pt}]  (181.17,86.11) -- (114.2,269.3) ;

\draw (187.33,61.4) node [anchor=north west][inner sep=0.75pt]  [font=\footnotesize]  {$\mathsf{L}$};
\draw (576,66.4) node [anchor=north west][inner sep=0.75pt]  [font=\footnotesize]  {$\omega $};
\draw (606.67,263.73) node [anchor=north west][inner sep=0.75pt]  [font=\footnotesize]  {$x$};
\draw (352.67,46.73) node [anchor=north west][inner sep=0.75pt]  [font=\footnotesize]  {$t$};
\draw (278.67,278.71) node [anchor=north west][inner sep=0.75pt]  [font=\footnotesize]  {$\overline{A}$};

\end{tikzpicture}

    \caption{Theorem \ref{thm:attprofiles2} when $\ms L < 0$, $\ms R = 0$ and $\ms L \in \, ]T \cdot f_l^{\prime}(A), 0[$.}
    \label{fig:attprofiles2a}
\end{figure}

\begin{figure}
    \centering

\tikzset{every picture/.style={line width=0.75pt}} 

\begin{tikzpicture}[x=0.75pt,y=0.75pt,yscale=-0.7,xscale=0.7]

\draw [line width=1.5]    (337.59,250.64) -- (337.59,31.89) ;
\draw [shift={(337.59,27.89)}, rotate = 90] [fill={rgb, 255:red, 0; green, 0; blue, 0 }  ][line width=0.08]  [draw opacity=0] (6.97,-3.35) -- (0,0) -- (6.97,3.35) -- cycle    ;
\draw [line width=1.5]    (92.8,252.55) -- (592.13,251.32) ;
\draw [shift={(596.13,251.31)}, rotate = 179.86] [fill={rgb, 255:red, 0; green, 0; blue, 0 }  ][line width=0.08]  [draw opacity=0] (6.97,-3.35) -- (0,0) -- (6.97,3.35) -- cycle    ;
\draw    (90.13,64.15) -- (592.2,65.18) ;
\draw    (140.67,65.02) -- (338.67,128.53) ;
\draw  [dash pattern={on 4.5pt off 4.5pt}]  (239.19,66.07) -- (337.67,95.69) ;
\draw  [dash pattern={on 4.5pt off 4.5pt}]  (317.49,66.31) -- (337.67,72.35) ;
\draw  [dash pattern={on 4.5pt off 4.5pt}]  (337.59,139.26) -- (421.33,249.2) ;
\draw    (338.67,128.53) -- (428,251.87) ;
\draw  [dash pattern={on 4.5pt off 4.5pt}]  (337.67,95.69) -- (476.33,248.35) ;
\draw  [dash pattern={on 4.5pt off 4.5pt}]  (337,91.69) -- (499,250.35) ;
\draw  [dash pattern={on 4.5pt off 4.5pt}]  (335.67,77.02) -- (521,249.02) ;
\draw  [dash pattern={on 4.5pt off 4.5pt}]  (239.19,66.07) -- (337,91.69) ;
\draw  [dash pattern={on 4.5pt off 4.5pt}]  (337,107.69) -- (448.33,249.69) ;
\draw  [dash pattern={on 4.5pt off 4.5pt}]  (275,65.69) -- (335.67,77.02) ;
\draw    (337,87.69) -- (513.67,251.02) ;
\draw  [dash pattern={on 4.5pt off 4.5pt}]  (210.33,64.35) -- (337,107.69) ;
\draw  [dash pattern={on 4.5pt off 4.5pt}]  (337.67,72.35) -- (525.67,248.35) ;
\draw    (239.19,66.07) -- (335.67,99.69) ;
\draw    (335.67,99.69) -- (461.67,250.35) ;
\draw    (239.19,66.07) -- (337,87.69) ;
\draw    (337.17,65.33) -- (537,251.02) ;
\draw  [dash pattern={on 4.5pt off 4.5pt}]  (336.33,115.02) -- (439.33,251.2) ;
\draw  [dash pattern={on 4.5pt off 4.5pt}]  (337.67,232.35) -- (348,249.87) ;
\draw  [dash pattern={on 4.5pt off 4.5pt}]  (336.33,157.02) -- (404.67,250.53) ;
\draw  [dash pattern={on 4.5pt off 4.5pt}]  (339.67,216.35) -- (361.33,251.2) ;
\draw  [dash pattern={on 4.5pt off 4.5pt}]  (337.67,197.02) -- (375.33,251.35) ;
\draw  [dash pattern={on 4.5pt off 4.5pt}]  (453,67.02) -- (565.33,249.02) ;
\draw  [dash pattern={on 4.5pt off 4.5pt}]  (416.33,67.02) -- (558.67,249.02) ;
\draw  [dash pattern={on 4.5pt off 4.5pt}]  (389.67,65.69) -- (552.33,249.02) ;
\draw  [dash pattern={on 4.5pt off 4.5pt}]  (365.33,65.02) -- (545,249.69) ;
\draw  [dash pattern={on 4.5pt off 4.5pt}]  (350.67,65.35) -- (539.67,249.02) ;
\draw  [dash pattern={on 4.5pt off 4.5pt}]  (489.67,65.69) -- (570,250.35) ;
\draw  [dash pattern={on 4.5pt off 4.5pt}]  (190,66.67) -- (333,113.69) ;
\draw  [dash pattern={on 4.5pt off 4.5pt}]  (166.67,65.67) -- (335.33,122.53) ;
\draw    (140.67,65.02) -- (168.67,249.2) ;
\draw  [dash pattern={on 4.5pt off 4.5pt}]  (339.67,179.02) -- (388,250.53) ;
\draw  [dash pattern={on 4.5pt off 4.5pt}]  (140.67,65.02) -- (337.59,139.26) ;
\draw  [dash pattern={on 4.5pt off 4.5pt}]  (140.67,65.02) -- (274,249.87) ;
\draw  [dash pattern={on 4.5pt off 4.5pt}]  (140.67,65.02) -- (198.67,250.53) ;
\draw  [dash pattern={on 4.5pt off 4.5pt}]  (140.67,65.02) -- (216,249.87) ;
\draw  [dash pattern={on 4.5pt off 4.5pt}]  (140.67,65.02) -- (337.67,232.35) ;
\draw  [dash pattern={on 4.5pt off 4.5pt}]  (140.67,65.02) -- (235.33,249.87) ;
\draw  [dash pattern={on 4.5pt off 4.5pt}]  (140.67,65.02) -- (184.67,251.87) ;
\draw  [dash pattern={on 4.5pt off 4.5pt}]  (140.67,65.02) -- (337.59,250.64) ;
\draw  [dash pattern={on 4.5pt off 4.5pt}]  (140.67,65.02) -- (317.33,251.2) ;
\draw  [dash pattern={on 4.5pt off 4.5pt}]  (140.67,65.02) -- (254.67,251.87) ;
\draw  [dash pattern={on 4.5pt off 4.5pt}]  (140.67,65.02) -- (338,174.53) ;
\draw  [dash pattern={on 4.5pt off 4.5pt}]  (140.67,65.02) -- (336.67,212.53) ;
\draw  [dash pattern={on 4.5pt off 4.5pt}]  (140.67,65.02) -- (336,192.53) ;
\draw  [dash pattern={on 4.5pt off 4.5pt}]  (140.67,65.02) -- (296,251.87) ;
\draw  [dash pattern={on 4.5pt off 4.5pt}]  (140.67,65.02) -- (336.33,157.02) ;
\draw  [dash pattern={on 4.5pt off 4.5pt}]  (121.61,65.53) -- (124.67,254.53) ;
\draw  [dash pattern={on 4.5pt off 4.5pt}]  (95.33,65.2) -- (103.33,251.2) ;
\draw  [dash pattern={on 4.5pt off 4.5pt}]  (129.27,63.7) -- (154,251.87) ;
\draw  [dash pattern={on 4.5pt off 4.5pt}]  (108.86,64.7) -- (117.33,249.2) ;
\draw  [dash pattern={on 4.5pt off 4.5pt}]  (125.33,65.2) -- (140,251.2) ;

\draw (137.33,42.09) node [anchor=north west][inner sep=0.75pt]  [font=\footnotesize]  {$\mathsf{L}$};
\draw (571.33,47.09) node [anchor=north west][inner sep=0.75pt]  [font=\footnotesize]  {$\omega $};
\draw (602,244.42) node [anchor=north west][inner sep=0.75pt]  [font=\footnotesize]  {$x$};
\draw (348,27.42) node [anchor=north west][inner sep=0.75pt]  [font=\footnotesize]  {$t$};

\end{tikzpicture}

    \caption{Theorem \ref{thm:attprofiles2} when $\ms L < 0$, $\ms R = 0$ and $\ms L \leq T \cdot f_l^{\prime}(A)$.}
    \label{fig:attprofiles2b}
\end{figure}

\begin{remark}
\label{eq-lax-ineq-4}
Notice that
the implications~\eqref{eq:2a2b}-\eqref{eq:3a2b},
\eqref{eq:2a2b-prime}-\eqref{eq:3a2b-prime} can be extended \linebreak to $\ms L =T\cdot f'_l(A)$ and to $\ms R=T\cdot f'_r(B)$, respectively.
In fact,
by definition~\eqref{eq:LR-def} of
$\ms L=\ms L[\omega, f_l]$, one has $f'_l(\omega(\ms L-))\geq \ms L/T$.
Hence, if $\ms L =T\cdot f'_l(A)$ it follows that 
$f'_l(\omega(\ms L-))\geq f'_l(A)$ which yields
$\omega(\ms L-)\geq A$
by the monotonicity of $f'_l$.
Thus, recalling that by~\eqref{eq:u-v-cont-ext}
we have $\bs v[T\cdot f^{\prime}(A), A, f] =
     A$, we derive
     \begin{equation}
     \label{eq:est-tfprimaea-1}
         \omega(T\cdot f'_l(A)-) \geq \bs v[T\cdot f'_l(A), A, f_l]\,.
     \end{equation}
On the other hand, 
since~\eqref{eq:2b2} implies $\omega(T\cdot f'_l(A+)\leq A$,
we deduce from~\eqref{eq:est-tfprimaea-1} that
\begin{equation}
     \label{eq:est-tfprimaea-2}
         \omega(T\cdot f'_l(A)-) \geq \omega(T\cdot f'_l(A)+)\,.
     \end{equation}
With entirely similar arguments one can show that we have
\begin{align}
\label{eq:est-tfprimaea-3}
     \omega(T\cdot f'_l(B)+) &\leq \bs u[T\cdot f'_l(B), B, f_r]\,,
    \\
    \noalign{\smallskip}
    \label{eq:est-tfprimaea-4}
       \omega(T\cdot f'_r(B)+)&\leq \omega(T\cdot f'_r(B)-)\,.
\end{align}
Hence, relying on~\eqref{eq:2b2}, \eqref{eq:2a2b}, \eqref{eq:3a2b},
\eqref{eq:2b2-prime}, 
\eqref{eq:2a2b-prime}, 
\eqref{eq:3a2b-prime}, 
and on~\eqref{eq:est-tfprimaea-2}, \eqref{eq:est-tfprimaea-4}, 
with the same arguments of Remark~\ref{eq-lax-ineq-3} we deduce that the 
inequalities $\omega(\ms L-)\geq \omega(\ms L+)$,
$\omega(\ms R-)\geq \omega(\ms R+)$
are always satisfied.
\end{remark}

\begin{remark}\label{rem:threecases1}
Relying on Remark~\ref{rem:toildeLR-limitingcase}, we can view 
the conditions that characterize the pointwise constraints 
of attainable profiles in Theorem~\ref{thm:attprofiles2} as limiting cases of 
 the conditions of Theorems~\ref{thm:attprofiles}, \ref{thm:attprofilescrit},
 classified in Remarks~\ref{rem:cases-thm-4.1}, \ref{rem:cases-thm-4.8}. Namely:
\begin{itemize}
[leftmargin=20pt]
    \item[-] For non critical connections, the case  $\ms L \in \,]T \cdot f_l^{\prime}(A), 0[\,$, $\ms R = 0$ (Figure~\ref{fig:attprofiles2a}),  is the limiting situation 
   as $\ms R\to 0$ of \textsc{Case~2} in Remark~\ref{rem:cases-thm-4.1}.
   For critical connections  with $A<\theta_l$, $B=\theta_r$, if the constraint~\eqref{eq:2b2}
   is satisfied with the equality, 
    the case  $\ms L \in \,]T \cdot f_l^{\prime}(A), 0[\,$, $\ms R = 0$, is the limiting situation 
   as $\ms R\to 0$ 
    of \textsc{Case~2} in Remark~\ref{rem:cases-thm-4.8}.
    %
    \item[-] For non critical connections, the case  $\ms L \leq T \cdot f_l^{\prime}(A)$, $\ms R = 0$ (Figure~\ref{fig:attprofiles2b}),  is the limiting situation as $\ms R\to 0$ of \textsc{Case~1} 
    in Remark~\ref{rem:cases-thm-4.1}.
   For critical connections with $A=\theta_l$, $B>\theta_r$, 
   the case  $\ms L \leq T \cdot f_l^{\prime}(A)$, $\ms R = 0$,  is the limiting situation as $\ms R\to 0$ of 
    \textsc{Case~1} in Remark~\ref{rem:cases-thm-4.8}
\end{itemize}
Symmetrically, we have:
    \begin{itemize}
[leftmargin=20pt]
    \item[-] For non critical connections, the case  $\ms L = 0$, $\ms R \in \,]0, T \cdot f_r^{\prime}(B)[\,$,    is the limiting situation 
   as $\ms L\to 0$ of \textsc{Case~2B} in Remark~\ref{rem:cases-thm-4.1}.
   For critical connections  with $A=\theta_l$, $B>\theta_r$,
   if the constraint~\eqref{eq:2b2-prime}
   is satisfied with the equality,
   the case  $\ms L = 0$, $\ms R \in \,]0, T \cdot f_r^{\prime}(B)[\,$ is the limiting situation 
   as $\ms L\to 0$ of \textsc{Case~1} in Remark~\ref{rem:cases-thm-4.8}.
    \item[-] For non critical connections, the case  $\ms L = 0$, $\ms R\geq 
    T \cdot f_r^{\prime}(B)$, is the limiting situation 
   as $\ms L\to 0$ of \textsc{Case~1B} in Remark~\ref{rem:cases-thm-4.1}.
    For critical connections with $A<\theta_l$, $B=\theta_r$, 
    the case $\ms R\geq 
    T \cdot f_r^{\prime}(B)$ is the limiting situation 
   as $\ms L\to 0$ of \textsc{Case~2} in Remark~\ref{rem:cases-thm-4.8}.
\end{itemize}
Notice that, for non critical connections, no limiting situation of \textsc{Case~3}
or of~\textsc{Case~4} in Remark~\ref{rem:cases-thm-4.1} arises as 
characterizing the pointwise constraints 
of attainable profiles in Theorem~\ref{thm:attprofiles2}.\\
The same type 
of conditions discussed in Remark~\ref{rem:cases-thm-4.1} 
require the presence
of shocks in an $AB$-entropy solution 
that attains at time $T$ a profile 
satisfying the conditions of
Theorem~\ref{thm:attprofiles2}.
In fact, for such profiles
it is needed a shock located in $\{x<0\}$
(in $\{x>0\}$) to produce the 
discontinuity in $\omega$ at $x=\ms L$
(at $x=\ms R$)
if and only if 
$\ms L \in \,]T\cdot f_l^\prime(A), 0[$, and $\ms R = 0$
($\ms L = 0$
and $\ms R \in \,]0, T\cdot f_r^\prime(B)[\,$). 
\end{remark}

\begin{thm}\label{thm:attprofiles3}
    In the same setting of Theorem~\ref{thm:attprofiles},
    let $\omega$ be an element of the set $\msc A^{\ms L, \ms R}$ in~\eqref{eq:ALR-def1},   with $\ms L = 0$, $\ms R = 0$.
Then $\omega \in \mc A^{[AB]}(T)$ if and only if the limits $\omega(0\pm)$ exist, and it holds true:

\begin{enumerate}
    \item[(i)] 
     the following Ole\v{\i}nik-type inequalities are satisfied 
    \begin{equation}\label{eq:1a3}
    \begin{aligned}
    D^+ \omega(x) &\leq \frac{1}{T \cdot f_l^{\second}(\omega(x))} \qquad \forall \; x \in \,]-\!\infty,\;  0[\,,\\
       \noalign{\smallskip}
       D^+ \omega(x) &\leq \frac{1}{T \cdot f_r^{\second}(\omega(x))} \qquad \forall \; x \in \,]0, +\infty[\,.
        \end{aligned}
    \end{equation}

\item[(ii)] the following pointwise state constraints are satisfied:
\begin{equation}\label{eq:2a3}
    \omega(\ms 0-) \geq \overline A, \qquad \omega(\ms 0+) ~{\leq}~ \overline B\,, 
\end{equation}
%
\end{enumerate}
\end{thm}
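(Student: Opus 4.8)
\medskip
\noindent\emph{Plan of proof.} I would establish the two implications separately, following the two–step scheme of the Introduction. The structural fact driving everything is that, since $\omega\in\mathbf L^\infty$ and the sets in \eqref{eq:LR-def} are downward/upward closed, the hypothesis $\ms L[\omega,f_l]=\ms R[\omega,f_r]=0$ is equivalent to
\begin{equation*}
x-T\!\cdot\! f_l'(\omega(x))\le 0\ \ \forall\,x<0,\qquad\qquad x-T\!\cdot\! f_r'(\omega(x))\ge 0\ \ \forall\,x>0,
\end{equation*}
i.e.\ every extremal (straight) backward characteristic issued at time $T$ from a point of $\{x<0\}$, resp.\ $\{x>0\}$, stays in that half-plane and reaches $t=0$; no characteristic touches the interface, so the ``active zones'' are empty and the wave structure near $x=0$ degenerates to the stationary connection. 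This is also why the statement is uniform in critical and non critical connections, the only change being $\overline A=A$ if $A=\theta_l$ and $\overline B=B$ if $B=\theta_r$.

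\smallskip
\noindent\textbf{Necessity.} Write $\omega=u(\cdot,T)$ with $u=\sabp_\cdot u_0$. On each half-plane $u$ is an entropy weak solution of a uniformly convex conservation law, so I would apply Dafermos' theory of generalized characteristics there: by the observation above the minimal and maximal backward characteristics from $(x,T)$, $x\neq0$, are straight segments reaching $t=0$ with full age $T$ inside the relevant half-plane, whence the classical Ole\v{\i}nik one-sided estimate gives \eqref{eq:1a3}. Non-crossing of these characteristics shows that $x\mapsto f_l'(\omega(x))-x/T$ is non-increasing on $]{-\infty},0[$ and $x\mapsto f_r'(\omega(x))-x/T$ is non-decreasing on $]0,+\infty[$; with $\omega\in\mathbf L^\infty$ this forces $\omega(0\pm)$ to exist, and letting $x\to0$ yields $\omega(0-)\ge\theta_l$, $\omega(0+)\le\theta_r$. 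Finally, since $u$ is an $AB$-entropy solution, Lemmas~\ref{lem:traceseq}--\ref{lem:traceseq2} give $f_l(\omega(0-))=f_r(\omega(0+))\ge f_l(A)=f_r(B)$; combining with $\omega(0-)\ge\theta_l$ and the monotonicity of $f_l$ on $[\theta_l,+\infty)$ (recall $f_l(\overline A)=f_l(A)$, $\overline A\ge\theta_l$, see \eqref{eq:bar-AB-def}) gives $\omega(0-)\ge\overline A$, and symmetrically $\omega(0+)\le\overline B$, i.e.\ \eqref{eq:2a3}.

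\smallskip
\noindent\textbf{Sufficiency.} Set $u_0\doteq\sabmT\omega\in\mathbf L^\infty(\R)$ and $v(x,t)\doteq\sobapt\big(\omega(-\,\cdot\,)\big)(x)$, so $u_0(x)=v(-x,T)$; as in Remark~\ref{charact-backw_sol-disc}, $w(x,t)\doteq v(-x,T-t)$ is a distributional solution of $w_t+f(x,w)_x=0$ with $w(\cdot,0)=u_0$, $w(\cdot,T)=\omega$. The point is to upgrade $w$ to an $AB$-entropy solution, i.e.\ to rule out anti-entropy shocks in $v$. On $\{x>0\}$ (where the flux of $v$ is $f_l$) its datum is the reflection of $\omega$ on $]{-\infty},0[\,$, and by the one-sided Lipschitz theory for convex scalar laws recalled in \S\ref{sec:backoopconv} the first inequality in \eqref{eq:1a3} is exactly the condition forcing $v(\cdot,t)$ to be locally Lipschitz on $\{x>0\}$ for $t\in\,]0,T[$; symmetrically for the second inequality and $\{x<0\}$. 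Near the interface, $v(0-,0)=\omega(0+)\le\overline B\le\theta_r$ and $v(0+,0)=\omega(0-)\ge\overline A\ge\theta_l$ with characteristic speeds $f_r'(\omega(0+))\le0$, $f_l'(\omega(0-))\ge0$, so these characteristics leave $\{x=0\}$ and (by $\ms L=\ms R=0$) $v$ coincides near the interface with the stationary $\overline B\,\overline A$-connection, carrying no interface shock. Hence $v(\cdot,t)$ is locally Lipschitz on $\R\setminus\{0\}$ for $t\in\,]0,T[$, $w$ is Kru\v{z}kov-admissible off $\{x=0\}$, and its traces along $\{x=0\}$ are those of the $AB$-connection, so $w$ satisfies the $AB$-interface inequality; by uniqueness of $AB$-entropy solutions (Theorem~\ref{theoremsemigroup}) $w=\sabp_\cdot u_0$ on $\R\times[0,T]$, hence $\sabpT u_0=\omega$ and $\omega\in\mc A^{[AB]}(T)$ by \eqref{eq:attset} (this also shows $\omega=\sabpT\circ\sabmT\omega$). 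The reverse implication is immediate.

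\smallskip
\noindent\textbf{Main obstacle.} The necessity part and the half-plane Lipschitz regularity in the sufficiency part are essentially bookkeeping over Dafermos' characteristics and the classical convex theory. The delicate step is the interface analysis in the sufficiency part: showing that the constraints \eqref{eq:2a3}, together with $\ms L=\ms R=0$, genuinely prevent a shock from being created at $x=0$ in the backward solution $v$ (equivalently, that $w$ is entropy-admissible across $\{x=0\}$, not merely a distributional solution). This relies on the duality between $\sabp$ and $\sobap$ established in \S\ref{sec:building-blocks}; it is the same mechanism as in the general proof of Theorem~\ref{thm:backfordiscfluxcycle} in \S\ref{sec:proof-main-thm}, except that in the present degenerate regime the backward/forward shock blocks of \S\ref{sec:building-blocks} are absent and the verification reduces to compatibility with the connection wave. (The statement could alternatively be recovered as the limiting case $\ms L,\ms R\to0$ of Theorems~\ref{thm:attprofiles} and~\ref{thm:attprofiles2}, but the direct argument avoids the continuity bookkeeping.)
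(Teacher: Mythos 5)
Your sufficiency argument (reverse the backward solution $v=\sobapt(\omega(-\cdot))$ and check it is $AB$-admissible) is a legitimate alternative to the paper's route, which instead builds the forward solution explicitly by the characteristic/compression-front construction of \S\ref{subsec:const-inda} and only afterwards verifies the fixed-point identity; in the degenerate regime $\ms L=\ms R=0$ your shortcut works because no backward shock blocks are present and the interface of $v$ carries only the stationary $(\overline B,\overline A)$ wave flanked by centered rarefactions, exactly as you say.

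The necessity direction, however, has a genuine gap in the derivation of \eqref{eq:2a3}. You obtain $\omega(0+)\le\overline B$ from the two facts $\omega(0+)\le\theta_r$ (correct, from $\ms R=0$) and $f_l(\omega(0-))=f_r(\omega(0+))\ge f_r(B)$, the latter quoted from Lemmas~\ref{lem:traceseq}--\ref{lem:traceseq2}. But those lemmas constrain the \emph{temporal} traces $u_l(t),u_r(t)$ for a.e.\ $t>0$; they say nothing about the \emph{spatial} one-sided limits of $\omega=u(\cdot,T)$ at the single time $T$, and the transfer is not automatic. Indeed the identity $f_l(\omega(0-))=f_r(\omega(0+))$ is false in general: a Lax shock can arrive at, or sit exactly on, the interface at time $T$, in which case $\omega(0\pm)$ are the two states of that shock and satisfy the Rankine--Hugoniot relation for the shock speed, not the stationary relation $f_l(u_l)=f_r(u_r)$ (already in the continuous-flux model case $f_l=f_r$, $A=B=\theta$, the profile $\omega(0-)=1$, $\omega(0+)=-2$ for $f(u)=u^2/2$ is attainable with $f(\omega(0-))\ne f(\omega(0+))$). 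Since for a non-critical connection one has $\overline B<\theta_r$, the inequality $\omega(0+)\le\theta_r$ alone does not give \eqref{eq:2a3}, and the missing piece is precisely the content of the paper's argument: one either runs the comparison with the extremal backward shock of \S\ref{sec:2a} in the limit $\ms R\to0$ (using $\bs u[0,B,f_r]=\overline B$, cf.\ Remark~\ref{rem:constr-LR=0}), or, as in \S\ref{sec:othercases} Case~2, picks times $t_n\uparrow T$ with $u_r(t_n)\le\overline B$ (such times must exist, since $u_r\ge B$ near $T$ would force $\omega(0+)\ge B>\theta_r$) and shows that $\omega(0+)>\overline B$ would make the backward characteristics from $(x_n,T)$, $x_n\downarrow0$, cross those from $(0,t_n)$. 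A symmetric crossing argument is needed for $\omega(0-)\ge\overline A$. (Separately, the paper only carries out this characteristic analysis for non-critical connections and recovers the critical case by the $\mathbf L^1_{\mr{loc}}$-stability/approximation argument of \S\ref{sec:criticalcases-b}; your claim that the proof is uniform in the connection would need the same justification.)
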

\begin{figure}
    \centering

\tikzset{every picture/.style={line width=0.75pt}} 

\begin{tikzpicture}[x=0.75pt,y=0.75pt,yscale=-0.7,xscale=0.7]

\draw [line width=1.5]    (355.59,266.62) -- (355.59,47.87) ;
\draw [shift={(355.59,43.87)}, rotate = 90] [fill={rgb, 255:red, 0; green, 0; blue, 0 }  ][line width=0.08]  [draw opacity=0] (6.97,-3.35) -- (0,0) -- (6.97,3.35) -- cycle    ;
\draw [line width=1.5]    (110.8,268.53) -- (610.13,267.3) ;
\draw [shift={(614.13,267.29)}, rotate = 179.86] [fill={rgb, 255:red, 0; green, 0; blue, 0 }  ][line width=0.08]  [draw opacity=0] (6.97,-3.35) -- (0,0) -- (6.97,3.35) -- cycle    ;
\draw    (108.13,80.13) -- (610.2,81.17) ;
\draw    (183.87,81.2) -- (183.2,267.2) ;
\draw  [dash pattern={on 4.5pt off 4.5pt}]  (183.87,81.2) -- (223.2,267.87) ;
\draw  [dash pattern={on 4.5pt off 4.5pt}]  (183.87,81.2) -- (237.2,265.87) ;
\draw    (183.87,81.2) -- (253.33,265.85) ;
\draw  [dash pattern={on 4.5pt off 4.5pt}]  (183.87,81.2) -- (211.87,266.53) ;
\draw  [dash pattern={on 4.5pt off 4.5pt}]  (139.61,81.51) -- (142.67,270.51) ;
\draw  [dash pattern={on 4.5pt off 4.5pt}]  (116.67,81.18) -- (124.67,267.18) ;
\draw  [dash pattern={on 4.5pt off 4.5pt}]  (147.27,79.68) -- (172,267.85) ;
\draw  [dash pattern={on 4.5pt off 4.5pt}]  (126.86,80.68) -- (135.33,265.18) ;
\draw  [dash pattern={on 4.5pt off 4.5pt}]  (143.33,81.18) -- (158,267.18) ;
\draw  [dash pattern={on 4.5pt off 4.5pt}]  (492.97,82.52) -- (506.53,265.53) ;
\draw    (492.97,82.52) -- (519.2,267.53) ;
\draw  [dash pattern={on 4.5pt off 4.5pt}]  (492.97,82.52) -- (477.87,264.87) ;
\draw    (492.97,82.52) -- (470.53,267.53) ;
\draw  [dash pattern={on 4.5pt off 4.5pt}]  (500.53,82.87) -- (529.87,265.53) ;
\draw  [dash pattern={on 4.5pt off 4.5pt}]  (415.87,81.2) -- (461.2,265.87) ;
\draw  [dash pattern={on 4.5pt off 4.5pt}]  (492.97,82.52) -- (491.2,267.53) ;
\draw  [dash pattern={on 4.5pt off 4.5pt}]  (391.2,267.2) -- (356.08,146.55) ;
\draw  [dash pattern={on 4.5pt off 4.5pt}]  (355.87,79.87) -- (293.2,267.87) ;
\draw    (435.2,265.3) -- (355.87,79.87) ;
\draw  [dash pattern={on 4.5pt off 4.5pt}]  (356.53,237.87) -- (346.12,268) ;
\draw  [dash pattern={on 4.5pt off 4.5pt}]  (354.53,203.2) -- (335.65,267.1) ;
\draw    (355.87,79.87) -- (279.2,265.3) ;
\draw  [dash pattern={on 4.5pt off 4.5pt}]  (353.87,168.53) -- (327.87,265.2) ;
\draw  [dash pattern={on 4.5pt off 4.5pt}]  (407.2,268.53) -- (357.26,123.02) ;
\draw  [dash pattern={on 4.5pt off 4.5pt}]  (377.97,266.27) -- (355.87,188.53) ;
\draw  [dash pattern={on 4.5pt off 4.5pt}]  (356.53,97.2) -- (307.2,267.87) ;
\draw  [dash pattern={on 4.5pt off 4.5pt}]  (355.2,140.53) -- (317.26,267.58) ;
\draw  [dash pattern={on 4.5pt off 4.5pt}]  (420.53,265.87) -- (356.53,97.2) ;
\draw  [dash pattern={on 4.5pt off 4.5pt}]  (356.53,228.53) -- (367.2,267.87) ;
\draw    (211.75,79.8) -- (257.87,267.87) ;
\draw  [dash pattern={on 4.5pt off 4.5pt}]  (166.03,81.41) -- (177.87,265.87) ;
\draw  [dash pattern={on 4.5pt off 4.5pt}]  (183.87,81.2) -- (195.87,266.53) ;
\draw  [dash pattern={on 4.5pt off 4.5pt}]  (262.39,81.11) -- (257.87,267.87) ;
\draw  [dash pattern={on 4.5pt off 4.5pt}]  (244.26,80.98) -- (257.87,267.87) ;
\draw  [dash pattern={on 4.5pt off 4.5pt}]  (317.01,82.4) -- (269.2,268.53) ;
\draw    (282.11,80.83) -- (257.87,267.87) ;
\draw  [dash pattern={on 4.5pt off 4.5pt}]  (225.71,80.42) -- (257.87,267.87) ;
\draw  [dash pattern={on 4.5pt off 4.5pt}]  (339.57,81.63) -- (272.53,267.2) ;
\draw  [dash pattern={on 4.5pt off 4.5pt}]  (300.53,81.2) -- (267.2,265.2) ;
\draw  [dash pattern={on 4.5pt off 4.5pt}]  (397.33,81) -- (453.2,267.53) ;
\draw  [dash pattern={on 4.5pt off 4.5pt}]  (466,81) -- (470.53,267.53) ;
\draw  [dash pattern={on 4.5pt off 4.5pt}]  (382.67,82) -- (449.2,265.2) ;
\draw  [dash pattern={on 4.5pt off 4.5pt}]  (476.67,81.33) -- (470.53,267.53) ;
\draw  [dash pattern={on 4.5pt off 4.5pt}]  (434.67,81.33) -- (465.87,266.53) ;
\draw  [dash pattern={on 4.5pt off 4.5pt}]  (366.67,80.67) -- (443.87,265.87) ;
\draw  [dash pattern={on 4.5pt off 4.5pt}]  (451.33,80.33) -- (465.87,266.53) ;

\draw (589.33,63.07) node [anchor=north west][inner sep=0.75pt]  [font=\footnotesize]  {$\omega $};
\draw (620,260.4) node [anchor=north west][inner sep=0.75pt]  [font=\footnotesize]  {$x$};
\draw (366,43.4) node [anchor=north west][inner sep=0.75pt]  [font=\footnotesize]  {$t$};

\end{tikzpicture}

    \caption{Structure of profiles described by Theorem \ref{thm:attprofiles3}.}
    \label{fig:att3}
\end{figure}

\begin{remark}\label{rem:LR0}
\label{rem:constr-LR=0}
Recalling that by~\eqref{eq:u-v-cont-ext} we have $\bs v[0, A, f_l] = \overline{A}$, $\bs u[0, B, f_r] = \overline{B}$, we can rephrase the constraint~\eqref{eq:2a3} as
\begin{equation}
    \label{eq:2a3-bis}
    \omega(\ms 0-) \geq~\bs v[0, A, f_l] , \qquad \omega(\ms 0+) \leq ~\bs u[0, B, f_r]\,. 
\end{equation}
{ 
Any profile $\omega$ 
satisfying the conditions of Theorem~\ref{thm:attprofiles3} is attainable by $AB$-entropy solutions that
don't contain shocks in $\{x<0\}$
or in $\{x>0\}.$

Since  by Lemma \ref{lemma:dualshocks} we have
$$
\lim_{\ms R \to 0^+} \bs u[\ms R, f_r, B] = \overline B, \qquad \lim_{\ms R \to 0^-} \bs v[\ms L, f_l, A] = \overline A,
$$
and because of Remark 4.8,
we can recover the conditions that characterize the pointwise constraints 
of attainable profiles in Theorem~\ref{thm:attprofiles3} 
 as limiting cases of the conditions of Theorems~\ref{thm:attprofiles}, \ref{thm:attprofilescrit}, classified in Remarks~\ref{rem:cases-thm-4.1}, \ref{rem:cases-thm-4.8}. Namely:
\begin{enumerate}
[leftmargin=20pt]
    \item[-] For a non critical connection, 
the condition~\eqref{eq:2a3} is the limit situation as $\ms L, \ms R \to 0$ of
    the CASE 2 of Remark \ref{rem:cases-thm-4.1}.
    \item[-] For a critical connection  with $A=\theta_l$, $B>\theta_r$, 
    the second condition of \eqref{eq:2a3} is 
    is the limiting situation 
   as $\ms R\to 0$ 
    of \textsc{Case~1} in Remark~\ref{rem:cases-thm-4.8}.
    The first condition of \eqref{eq:2a3}
    is trivially satisfied, because $\overline A= \theta_l$, and since $\ms L = 0$ 
by definition~\eqref{eq:LR-def}
implies
$\omega(0-) \geq \theta_l$.   
The case of a critical connection with $A<\theta_l$, $B=\theta_r$ is symmetric,
and can be recovered as 
limiting situation 
   as $\ms L\to 0$ 
    of \textsc{Case~2} in Remark~\ref{rem:cases-thm-4.8}.
\end{enumerate}
}
\end{remark}

\medskip

\begin{remark}
\label{rem:denseconds}
    By Remarks~\ref{rem:threecases1}, \ref{rem:LR0}, the conditions that characterize the pointwise constraints of attainable profiles provided by Theorems~\ref{thm:attprofiles}, \ref{thm:attprofilescrit} are essentially ``dense" in the set of all conditions 
    characterizing the pointwise constraints of any profile $\omega \in \mc A^{[AB]}(T)$
    (in the sense that the further conditions 
    provided by Theorems~\ref{thm:attprofiles2}, \ref{thm:attprofiles3}
    can be recovered via a limiting procedure as
    the parameters $\ms L, \ms R \to 0$).
\end{remark}

Combining Theorems \ref{thm:attprofiles}, \ref{thm:attprofilescrit}, \ref{thm:attprofiles2}, \ref{thm:attprofiles3}, with Theorem \ref{thm:backfordiscflux}, we obtain:
   \begin{thm}\label{thm:backfordiscfluxcycle}
   In the same setting of Theorem \ref{thm:backfordiscflux}, let
   $(A,B)$ be
 a connection. Then,
for every $T>0$,
 and for any $\omega\in  {\bf L^\infty}(\R)$,
 the following conditions are equivalent. 
\begin{enumerate}
    \item $\omega \in \mc A^{AB}(T)$.
    \smallskip
        \item $\mc \sabpT \circ \sabmT \omega = \omega\,$.
        \smallskip
    \item $\omega$ is an element of 
    the set $\msc A^{\ms L, \ms R}$ in~\eqref{eq:ALR-def1},   with $\ms L \leq 0$, $\ms R \geq 0$, that
    satisfies the conditions of Theorem \ref{thm:attprofiles}, 
    \ref{thm:attprofilescrit}, \ref{thm:attprofiles2}, or \ref{thm:attprofiles3}.
\end{enumerate}
Moreover, if $(A,B)$ is a non critical connection, i.e. if $A\neq \theta_l, B\neq \theta_r$, then the conditions (2) and~(3) are equivalent to
\begin{enumerate}
    \item[(1)']
    $\omega \in \mc A_{bv}^{[AB]}(T)$, where
    \begin{equation}
\label{eq:attset-bv}
    \mc A_{bv}^{[AB]}(T) \doteq  \big\{\mc \sabpT u_0 \; : \; u _0 \in BV_{loc}(\mathbb R)\big\}\,,
\end{equation}
\end{enumerate}
and it holds true
\begin{equation}
    \label{eq:atteq-bv-inf}
    \mc A^{[AB]}(T)=
    \mc A_{bv}^{[AB]}(T)\,.
\end{equation}
\end{thm}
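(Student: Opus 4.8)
The plan is to derive Theorem~\ref{thm:backfordiscfluxcycle} by assembling the results already in hand, establishing the loop $(1)\Rightarrow(3)\Rightarrow(2)\Rightarrow(1)$ and, for non critical connections, the companion loop through $(1)'$. Before doing so I would record two structural facts that organize the bookkeeping. First, by Remark~\ref{rem:abentr-sol-prop1} and Remark~\ref{rem:linf-bound-ABsol} every attainable profile lies in $({\bf L^\infty}\cap BV_{\mr{loc}})(\R\setminus\{0\})$; hence any $\omega\in{\bf L^\infty}(\R)$ that is not of this regularity fails both~(1) and~(3) (condition~(3) explicitly demands $\omega\in\msc A^{\ms L,\ms R}$), and we may restrict attention to $\omega\in({\bf L^\infty}\cap BV_{\mr{loc}})(\R\setminus\{0\})$. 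Second, for such $\omega$ the quantities $\ms L[\omega,f_l]\leq 0$ and $\ms R[\omega,f_r]\geq 0$ of~\eqref{eq:LR-def} are finite, so $\omega$ belongs to the single class $\msc A^{\ms L,\ms R}$ with $(\ms L,\ms R)=(\ms L[\omega,f_l],\ms R[\omega,f_r])$, $\ms L\leq 0\leq\ms R$.

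Next I would prove $(1)\Leftrightarrow(3)$. Using the disjoint decomposition~\eqref{eq:att-set-decomp}, $\omega\in\mc A^{[AB]}(T)$ if and only if $\omega\in\mc A^{[AB]}(T)\cap\msc A^{\ms L,\ms R}$ for the pair above, and I then split into the exhaustive cases and invoke the matching characterization: Theorem~\ref{thm:attprofiles} when $\ms L<0<\ms R$ and $(A,B)$ is non critical; Theorem~\ref{thm:attprofilescrit} when $\ms L<0<\ms R$ and $(A,B)$ is critical; Theorem~\ref{thm:attprofiles2} when exactly one of $\ms L,\ms R$ vanishes; Theorem~\ref{thm:attprofiles3} when $\ms L=\ms R=0$. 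Since each of these is an ``if and only if'' statement, membership in $\mc A^{[AB]}(T)$ is equivalent to the existence of the one-sided limits $\omega(0\pm)$ together with the Ole\v{\i}nik-type inequalities and interface state constraints listed there, which is precisely condition~(3). The only point that requires attention, rather than being immediate, is checking that this case split is complete and mutually exclusive (it is, since $\ms L\leq 0\leq\ms R$ and the criticality alternative is a genuine disjunction), so that no attainable $\omega$ escapes the list and no spurious constraint is imposed.

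The equivalence $(1)\Leftrightarrow(2)$, and in the non critical case the equivalence $(1)'\Leftrightarrow(2)$ together with the identity $\mc A^{[AB]}(T)=\mc A_{bv}^{[AB]}(T)$ of~\eqref{eq:atteq-bv-inf}, are already contained in Theorem~\ref{thm:backfordiscflux}; chaining them with $(1)\Leftrightarrow(3)$ closes the argument and yields the equivalence of $(1)$, $(2)$, $(3)$ in general and of $(1)'$, $(2)$, $(3)$ for non critical $(A,B)$. I do not expect a substantive obstacle here: Theorem~\ref{thm:backfordiscfluxcycle} is a synthesis of the main theorems, its difficulty having been absorbed into the proofs of Theorem~\ref{thm:backfordiscflux} and of Theorems~\ref{thm:attprofiles}--\ref{thm:attprofiles3}. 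The one step that genuinely needs a couple of lines, as opposed to a pure citation, is the opening reduction — discarding profiles outside $BV_{\mr{loc}}(\R\setminus\{0\})$, checking that the parameter pair $(\ms L,\ms R)$ is well defined and finite, and confirming exhaustiveness of the case distinction — after which the theorem follows.
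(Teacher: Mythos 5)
Your proposal is circular within the logical architecture of this paper. You derive the theorem by citing Theorem~\ref{thm:backfordiscflux} for the equivalence $(1)\Leftrightarrow(2)$ and Theorems~\ref{thm:attprofiles}, \ref{thm:attprofilescrit}, \ref{thm:attprofiles2}, \ref{thm:attprofiles3} for the equivalence $(1)\Leftrightarrow(3)$, treating all of these as independently established results. But none of them is proved independently of Theorem~\ref{thm:backfordiscfluxcycle}: the entirety of \S~\ref{sec:proof-main-thm} is the proof of this theorem, and Theorem~\ref{thm:backfordiscflux} together with the ``if and only if'' content of the four characterization theorems are obtained precisely as consequences of the cycle $(1)\Rightarrow(3)\Rightarrow(2)\Rightarrow(1)$ established there. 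The implication $(1)\Rightarrow(3)$ is the ``only if'' half of Theorems~\ref{thm:attprofiles}--\ref{thm:attprofiles3}, and $(3)\Rightarrow(1)$ is their ``if'' half; invoking those theorems to prove the cycle proves nothing. Your two preliminary observations (restriction to $(\mathbf{L^\infty}\cap BV_{\mr{loc}})(\R\setminus\{0\})$ via Remark~\ref{rem:abentr-sol-prop1}, finiteness of $\ms L[\omega,f_l]$, $\ms R[\omega,f_r]$, and exhaustiveness of the case split) are correct and do appear in the paper via the decomposition~\eqref{eq:att-set-decomp}, but they are bookkeeping, not the proof.

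What is actually required, and what the paper supplies, is: (i) a direct proof of $(1)\Rightarrow(3)$ for non critical connections, first under the $BV_{\mr{loc}}$ regularity hypothesis~\eqref{eq:Hhyp} (via generalized characteristics, a blow-up argument at the interface, and a comparison with the extremal backward shocks of \S~\ref{sec:dual-prop}), then removing~\eqref{eq:Hhyp} by an approximation and lower-semicontinuity-of-variation argument; (ii) a direct proof of $(3)\Rightarrow(2)\Rightarrow(1)$ for non critical connections by explicitly constructing an initial datum $u_0$ and an $AB$-entropy solution attaining $\omega$ at time $T$, using the shock--rarefaction wave patterns of \S~\ref{def:rsr-block}--\ref{def:lrs-block}, and then verifying $u_0=\sabmT\omega$ via the duality Lemma~\ref{lemma:dualshocks}; (iii) the extension to critical connections by ${\bf L^1}$-stability with respect to the connection (Theorem~\ref{theoremsemigroup}-(iv), Corollary~\ref{cor:fluxtraces-stab}) and a perturbation of $\omega$ by admissible $(A_n,B_n)$-envelopes. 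The reduction of the $(1)'$ statement to the rest (via $(2)\Rightarrow(1)'$ using Proposition~\ref{BVbound}, and $(1)'\Rightarrow(3)$ as a special case of $(1)\Rightarrow(3)$) is the only part of the theorem that genuinely is a short deduction, and it is the one piece your proposal does not address explicitly.
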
 

\begin{remark}
[Comparison with previous results]
\label{rem:otherresults}
Theorems~\ref{thm:attprofiles}, \ref{thm:attprofilescrit}, \ref{thm:attprofiles2}, \ref{thm:attprofiles3} yield the first  {\it complete 
characterization of the attainable set at time $T > 0$
in terms of Ole\v{\i}nik-type inequalities and unilateral constraints},
for critical and non critical connections. 
Partial results in this direction have been
recently obtained for \textit{strict subsets} of $\mc A^{AB}(T)$.
In particular, we refer to: 
\begin{itemize}
[leftmargin=14pt]
    \item the work \cite{anconachiri}, where 
    it is characterized
    only the  subset ${\mc A}_{\rm L}^{AB}(T) \subset \mc A^{AB}(T)$ given by
    $$\quad \ \ 
   {\mc A}_{\rm L}^{[AB]}(T) = \{\omega \in \mc A^{[AB]}(T) \; | \; \exists \; \text{$AB$-entropy solution} \; u \in \mr{Lip}_{\mr{loc}}((0,T) \times \mathbb R \setminus\{0\}) \;  : \; u(T, x) = \omega \}.
    $$
   In particular, all the profiles $\omega$ for which $\ms L \in \,]T \cdot f_l^\prime(A), 0[\,$ or $\ms R \in \,]0, T \cdot f_r^\prime(B)[\,$ are missing in the characterization provided in~\cite{anconachiri}.
    In fact, as observed in Remarks~\ref{rem:cases-thm-4.1},
    \ref{rem:cases-thm-4.8},
    \ref{rem:threecases1},
    an $AB$-entropy solutions leading to such profiles at time $T$
    must contain
a shock located in $\{x< 0\}$
or in $\{x> 0\}$, respectively,
in order to produce the discontinuity occurring in $\omega$ at $\ms L$ or $\ms R$.
    
    \item the work \cite{adimurthi2020exact}, 
    in which, whenever either $\ms L=0$, or $\ms R =0$,
    the set
    $\mc A^{[AB]}(T)$ is fully characterized  
    in terms 
    of triples (a monotone function and a pair of points)
    related to the
    Lax-Oleinik representation formula of solutions (obtained in~\cite{MR2028700} via
the Hamiton-Jacobi dual formulation).
    Instead, in the case of critical connections, all attainable profiles 
    with $\ms L < 0$ and $\ms R > 0$
    described by Theorem~\ref{thm:attprofilescrit}
    are missing in~\cite{adimurthi2020exact}.
    On the other hand, 
    when $\ms L < 0$, $\ms R > 0$
    and $(A, B)$,
    is a non critical connection, 
   only the profiles of 
    \textsc{Cases 3, 4}, discussed in Remark~\ref{rem:threecases}, are characterized in~\cite{adimurthi2020exact}, while  the ones of \textsc{Cases 1, 2, 1b, 2b} are missing. In fact, the profiles constructed in~\cite{adimurthi2020exact} with $\ms L< 0$, $\ms R> 0$ for non critical connections,  satisfy always the condition
    $\omega(x) = A$ for all $x \in (\ms L, 0)$, and $\omega(x) = B$ for all $x \in (0, \ms R)$,
    which is in general not fulfilled by profiles  of \textsc{Cases 1, 2, 1b, 2b}
    (cfr. Remark~\ref{rem:a-g}).
\end{itemize}
We point out that, as a byproduct of
the characterization
of $\mc A^{AB}(T)$ via Ole\v{\i}nik-type estimates, one can establish
uniform BV bounds on solutions to~\eqref{conslaw}, \eqref{discflux}
in the case of non critical connections,  
and on the flux of solutions to~\eqref{conslaw}, \eqref{discflux}
for general connections (see. Proposition~\ref{BVbound} in Appendix~\ref{app:stabconn}).
In turn such bounds yield the 
${\mathbf L^1_{loc}}$-Lipschitz continuity 
in time of $AB$-entropy solutions
(see the proof of Theorem \ref{theoremsemigroup}-$(v)$)
in Appendix~\ref{app:stabconn}).
\end{remark}


\section{Proof of Theorem~\ref{thm:backfordiscfluxcycle}}
\label{sec:proof-main-thm}

\subsection{Proof roadmap}
\label{roadmap}

Observe that if $(A,B)$ is a non critical connection, then recalling Definition~\ref{def:backop},
and relying on Proposition~\ref{BVbound}
in Appendix~\ref{app:stabconn},
we deduce that $\sabmT \omega\in BV_{loc}(\R)$
for all $\omega\in {\bf L^\infty}(\R)$.
Hence setting $u_0\doteq \sabmT \omega$,
we deduce immediately the implication 
$(2) \Rightarrow (1)'$.
On the other hand, since
$\mc A_{bv}^{[AB]}(T)\subset \mc A^{[AB]}(T)$, from the implication 
$(1) \Rightarrow (3)$, one deduces that
$(1)' \Rightarrow (3)$ holds as well.

Therefore, 
in order to establish Theorem \ref{thm:backfordiscfluxcycle}
it will be sufficient to prove the equivalence of the conditions (1), (2), (3).
We provide here a road map of the proof of $(1) \Rightarrow (2)\Rightarrow  (3) \Rightarrow (1)$. 
There are three main parts, which are somewhat independent one from the other.
\begin{itemize}
[leftmargin=43pt]
    \item[{\bf Part 1.}] {\bf The case of a non critical connection ${\bf (1)}\Rightarrow{\bf (3)}$.} 
    In Sections~\ref{sec:(1)-(3)}-\ref{subsec:reductionBV} we prove the implication $(1) \Rightarrow (3)$ of Theorem~\ref{thm:backfordiscfluxcycle} 
    when $(A, B)$ is  a \textit{non critical} connection. 
    The proof has a bootstrap-like structure,
    and it is divided in two steps.
    We first prove that ${\bf (1)}\Rightarrow{\bf (3)}$ under the regularity assumption \eqref{eq:Hhyp}  formulated below, and next we show that this regularity property always holds true.
    \begin{itemize}
    [leftmargin=10pt]
    \item[$\bullet$]{\it \nameref{sec:(1)-(3)}.} 
This is the \uline{first fundamental block} of our proof.
We prove in \S~\ref{sec:(1)-(3)} the implication $(1) \Rightarrow (3)$ for profiles $\omega\in \mc A^{[AB]}(T)$ that satisfy the BV condition:
     \begin{equation}\label{eq:Hhyp}
     \tag{H}
     \quad\exists~u_0\in {\bf L}^{\infty}(\R)\ : \ 
     \omega = \sabpT u_0\,, \quad\ \text{and}\quad\ \sabpt u_0\in BV_{\mr{loc}}(\R)\quad\forall~t>0\,.
    \end{equation}
    The derivation of
    the conditions of Theorem \ref{thm:attprofiles}, \ref{thm:attprofiles2}, and \ref{thm:attprofiles3}
     is obtained exploiting 
     as in~\cite{anconachiri} the non crossing property of genuine characteristics 
     in the domains $\{x>0, \, t>0\}$,
     $\{x<0, \, t>0\}$,
    together with the non existence of
    rarefactions emanating from the interface (cfr.~Appendix~\ref{app:no-rarefaction} and~\cite{adimurthi2020exact}).
    Two key novel points of the analysis here are:\\
    - a blowup argument, possible thanks to assumption~\eqref{eq:Hhyp},  to 
    %
    derive the Ole\v{\i}nik-type inequalities
    satisfied by $\omega$  in regions comprising points 
    with characteristics reflected by the interface $x=0$, and points with characteristics refracted by 
    $x=0$.  \\
    - a comparison argument
    (based on the duality of forward and backward shocks of~\S~\ref{sec:dual-prop}, and on the property of the states  $\bs u[\ms R, B, f_r]$, $\bs v[\ms L, A, f_l]$, defined in \S~\ref{defi:ur}, \ref{defi:vl})
    to establish the unilateral inequalities satisfied by $\omega$
    at points of discontinuity generated by shocks that {\it isolate} the interface
    $\{x=0\}$ 
    from the semiaxes $\{x<0\}$, $\{x>0\}$ (cfr. Remark~\ref{rem:caratt-u}).
    \item[$\bullet$]{\it \nameref{subsec:reductionBV}.}
    We prove in~\S~\ref{subsec:reductionBV}
    the implication $(1) \Rightarrow (3)$ 
    for every $\omega \in \mc A^{AB}(T)$
    by showing that 
    every \linebreak $\omega \in \mc A^{AB}(T)$ actually
    satisfies condition (H), and then the conclusion follows 
    by Part 1.a. 
    This is achieved: considering a sequence of functions $u_{n,0}\in BV(\R)$ that \linebreak $\bf L^1_{\mr{loc}}$-converge to $u_0\in {\bf L^\infty}(\R)$;
    observing that $\sabpt u_{n,0}\in BV(\R)$ (see~\cite{MR2743877,Garavellodiscflux});
    deriving uniform BV bounds on $\sabpT u_{n,0}$ based on the Ole\v{\i}nik-type inequalities enjoyed by $\sabpT u_{n,0}$ because of Part 1.a;
    relying on the $\bf L^1_{\mr{ loc}}$-stability 
    of the semigroup map $u_0\mapsto \sabpT u_0$ (see Theorem \ref{theoremsemigroup}-(iii)) and on the lower semicontinuity 
    of the total variation with respect to ${\bf L^1}$-convergence. 
    \end{itemize}
    \smallskip
    \item[{\bf Part 2.}] {\bf The case of a non critical connection ${\bf (3)} \Rightarrow {\bf (2)}\Rightarrow {\bf (1)}$.} 
    The implication 
    $(2) \Rightarrow (1)$\linebreak 
    of Theorem~\ref{thm:backfordiscfluxcycle}
    immediately follows observing that,
    by virtue of (2), one has
    $\omega = \sabpT u_0\in \mc A^{[AB]}(T)$, with $u_0\doteq \sabmT \omega$.
    Hence, in Sections~\ref{sec:(3)-(2)}-\ref{subsec:part3b} we  prove 
    only the implication $(3) \Rightarrow (2)$ of Theorem~\ref{thm:backfordiscfluxcycle}, in the case of a {non critical} connection $(A,B)$.
    This is the \uline{second fundamental block} of our proof,
    which consists in first showing that $(3) \Rightarrow (1)$, and next in proving that $(3) \Rightarrow (2)$.
    \begin{itemize}
    [leftmargin=10pt]
    \item[$\bullet$]{\it \nameref{sec:(3)-(2)}.}
    Given $\omega\in \msc A^{\ms L, \ms R}$
    satisfying the condition of Theorem~\ref{thm:attprofiles},
    we construct explicitly 
    in \S~\ref{sec:(3)-(2)}
    an $AB$-entropy admissible solution $u(x,t)\doteq \mc S_t^{[AB]+}u_0 (x)$, $u_0\in {\bf L^\infty}(\R)$, such that $u( \cdot,T) = \omega$. 
    The case where $\omega\in \msc A^{\ms L, \ms R}$
    satisfies the condition of Theorem~\ref{thm:attprofiles2}, or \ref{thm:attprofiles3} is entirely similar or simpler. The construction of $u_0$ and $u$ follows  a  by now  standard procedure (see \cite{anconachiri}, \cite{anconamars})
    in regions of $\{x<0\}$ or of $\{x>0\}$
    that are not influenced by waves reflected or refracted by the interface $x=0$.
    Namely, in these regions, one
     construct the solution $u$ along two type of lines that correspond to its characteristics:
     genuine characteristics $\vartheta_y$ 
     ending at points $(y,T)$, where 
     $u=\omega(y)$, in the case 
     $\omega$ is continuous at $y$; 
     compression fronts $\eta_{y,z}$
     connecting points $(z,0)$ and $(y,T)$,
     where $u=(f'_l)^{-1}(\frac{y-x}{T})$,
     if $y<0$, and $u=(f'_r)^{-1}(\frac{y-x}{T})$,
     if $y>0$,
     in the case 
     $\omega$ is discontinuous at $y$.
     A key novel point of the analysis here is the construction of $u$ in two polygonal regions around the interface $x=0$, 
     which relies on the properties of
     the {\it shock-rarefaction/rarefaction-shock wave patterns} established in \S~\ref{def:rsr-block}-\ref{def:lrs-block},
     which in turn are based on the 
     duality properties of forward/backward
     shocks derived in~\ref{sec:dual-prop}.
     Thanks to this construction, one can in
     particular explicitly produce $AB$-entropy solutions that attain at time $T$
     the profiles of~\textsc{Cases 1, 2, 1b, 2b} discussed in Remark~\ref{rem:threecases}, that are not present in~\cite{adimurthi2020exact} 
     (cfr. Remark~\ref{rem:otherresults}).
    \item[$\bullet$]{\it \nameref{subsec:part3b}.}
    Given $\omega\in \msc A^{\ms L, \ms R}$
    satisfying the conditions of Theorem~\ref{thm:attprofiles}, we show in \S~\ref{subsec:part3b} that $\omega$
    is a fixed point of the 
     backward-forward operator $\sabpT \circ \sabmT$.
    The case where $\omega\in \msc A^{\ms L, \ms R}$
    satisfies the condition of Theorem~\ref{thm:attprofiles2}, or \ref{thm:attprofiles3} is entirely similar.
    Building on the analysis
    pursued in the previous part, 
    in order to prove that 
    $\omega = \mc \sabpT \circ \sabmT \omega$
    it is sufficient to show that, if $u_0$ is the initial datum of the $AB$-entropy solution $u(x,t)$ constructed in Part 2.a,
    then  one has $u_0=\mc S^{[AB]-}_T \omega$.
    This is again achieved 
    exploiting the duality properties of forward/backward shocks derived in~\ref{sec:dual-prop},
    and the structural properties of
     the {\it shock-rarefaction/rarefaction-shock wave patterns} established in \S~\ref{def:rsr-block}-\ref{def:lrs-block}.
    \end{itemize}
    \smallskip
    \item[{\bf Part 3.}] {\bf The case of a critical connection
    ${\bf (1) \Leftrightarrow (2)\Leftrightarrow  (3)}$.} 
    In Sections~\ref{sec:criticalcases-a}, \ref{sec:criticalcases-b}, \ref{sec:criticalcases-c} we recover the equivalence of the conditions (1), (2), (3) of Theorem~\ref{thm:backfordiscfluxcycle}  in the case of critical
    connections, invoking the validity of this equivalence for non critical connections established in Parts~1-2.
    The proof is divided in three steps.
    \begin{itemize}
    [leftmargin=10pt]
    \item[$\bullet$]{\it \nameref{sec:criticalcases-a}.}
    In \S~\ref{sec:criticalcases-a}
    we prove the implication $(1)\Rightarrow (2)$, 
    relying on the $\bf L^1_{\mr{loc}}$-stability 
    of the maps $(A,B, u_0)\mapsto \sabpT u_0$, $(A,B, u_0)\mapsto \sabmT u_0$ (see~Theorem \ref{theoremsemigroup}-(iv)
    and Definition~\ref{def:backop}).
    The reverse implication  is immediate
as observed in Part 2.
    \item[$\bullet$]{\it \nameref{sec:criticalcases-b}.}
    In \S~\ref{sec:criticalcases-b}
    we prove the implication $(1)\Rightarrow (3)$, \linebreak
    relying on the ${\bf L^1}$-weak stability 
    of the maps 
    $(A,B)\mapsto f_l(u_l)$,  
    $(A,B)\mapsto f_r(u_r)$,
    where $u_l, u_r$ denote, respectively the left and right states of $u(x,t)\doteq \sabpt u_0(x)$ at $x=0$ (see Corollary~\ref{cor:fluxtraces-stab}), and on the lower/upper ${\bf L^1}$-semicontinuity property of solutions to conservation laws with uniformly convex flux (see Lemma~\ref{lemma:chara} in Appendix~\ref{app:uplwsmicsolns}). 
    \item[$\bullet$]{\it \nameref{sec:criticalcases-c}.}
    In \S~\ref{sec:criticalcases-c}
    we prove the implication 
    $(3)\Rightarrow (1)$\linebreak 
    exploiting again the ${\bf L^1}_{\ms loc}$-stability of the semigroup map of Theorem~\ref{theoremsemigroup}-(iv), and using a perturbation argument.
    Namely, given  $\omega\in \msc A^{\ms L, \ms R}$
    satisfying the conditions of Theorem~\ref{thm:attprofilescrit}, \ref{thm:attprofiles2}, or \ref{thm:attprofiles3}, we construct a sequence $\{\omega_n\}_{n}$
of perturbations
of $\omega$
with the property that $\omega_n\stackrel{{\bf L^1}}{\to}\omega$,
and 
$\omega_n \in \mc A^{[A_nB_n]}(T)$, for 
    a sequence of non critical connections $\{(A_n,B_n)\}_n$.
    This is another key point of our analysis, since it provides a general explicit procedure to approximate an attainable profile for a critical connection by attainable profiles for non critical connections.
    \end{itemize}
\end{itemize}
{ \begin{remark}
%
In the case of critical connections,
one may provide a direct proof of the
implications $(2) \Rightarrow (1)$,
$(3) \Rightarrow (1)$, $(3) \Rightarrow (2)$ of Theorem \ref{thm:backfordiscfluxcycle} with similar arguments as the ones used in the case of non critical connections. Only the implication 
$(1) \Rightarrow (3)$ in the case of
critical connections cannot be 
directly established
with the same line of proof followed in 
\S~\ref{sec:(1)-(3)} for non critical connection. The reason is twofold. 
On one hand 
we
cannot rely on the 
property of non existence of
    rarefactions emanating from the interface, since we establish in 
    Appendix~\ref{app:no-rarefaction} 
    this property only in the case of non critical connections.
    On the other hand 
we cannot exploit
the uniform $BV_{\mr{loc}}$ bounds to
perform the blowup argument of \S~\ref{sec:1b12}, since they are
enjoyed by $AB$-entropy solutions only when the connection is non critical
(see \S~\ref{sec:BVboundsABsol}).
An alternative, direct proof of
$(1) \Rightarrow (3)$
can be obtained
relying  on the property of preclusion of rarefactions emanating from the interface derived in~\cite{adimurthi2020exact}
for general connections.
Using this property, it seems 
reasonable that one may then establish 
the Ole\v{\i}nik-type estimates 
that characterize the attainable profiles 
for critical connections performing a longer, technical analysis of
the structure of characteristics that avoids the blow up argument of~\S~\ref{sec:1b12}.
\end{remark}
}

\medskip

\subsection{Part 1.a -
$(1) \Rightarrow (3)$ for non critical connections assuming~\eqref{eq:Hhyp} }
\label{sec:(1)-(3)}

In this Subsection,
given an element $\omega$
of the set $\mc A^{[AB]}(T)$
for a non critical connection
$(A, B)$, assuming 
that 
$\omega$ satisfies~\eqref{eq:Hhyp},
we will show that $\omega$ fulfills condition (3) of Theorem~\ref{thm:backfordiscfluxcycle}.
Recalling~\eqref{eq:att-set-decomp}, 
this is equivalent to show that,
letting
\begin{equation}
\label{eq:LR-def-23}
\ms L \doteq \ms L[\omega,f_l]\,,
     \qquad\quad  
     \ms R \doteq  \ms R[\omega, f_r],
\end{equation}
be quantities defined as in~\eqref{eq:LR-def},
it holds true that:
\begin{itemize}
    \item[2a-i)] If $\ms L < 0$, $\ms R > 0$, and if $\omega$ satisfies \eqref{eq:Hhyp}, then $\omega$ satisfies the conditions of Theorem \ref{thm:attprofiles};
    \item[2a-ii)] If $\ms L = 0$, $\ms R > 0$ or viceversa, and if $\omega$ satisfies \eqref{eq:Hhyp}, then $\omega$ satisfies the conditions\linebreak  of Theorem~\ref{thm:attprofiles2};
    \item[2a-iii)] If $\ms L = 0$, $\ms R = 0$, then $\omega$ satisfies the conditions of Theorem~\ref{thm:attprofiles3}.
\end{itemize}
We will  prove 2a-i) in \S~\ref{sec:3b-lr2}-\ref{sec:1b12},
while 2a-ii) is proven in~\S~\ref{sec:othercases},
and 2a-iii) is discussed in~\S~\ref{sec:othercases-2}.
The further assumption that $\omega$ satisfies \eqref{eq:Hhyp} is needed only to 
ensure the existence of the one-sided limits 
$\omega(0\pm)$, and to show that $\omega$ satisfies~\eqref{eq:1b1}-\eqref{eq:1b12} in case 2a-i), and 
\eqref{eq:1b2}, \eqref{eq:1b2-prime} in case 2a-ii).
\smallskip

Throughout the subsection we will assume that 
\begin{equation}
\label{eq:att-prof-23}
    \omega = \mc S_T^{[AB]+}u_0, \qquad u_0 \in \mathbf L^{\infty}(\mathbb R)\,,
\end{equation}
and we set $u(x,t)\doteq S_t^{[AB]+}u_0(x)$, $x\in\R$, $t\geq 0$.
Under assumption \eqref{eq:Hhyp} 
there exist the limits  $u(0\pm, t)$, for all $t>0$.
We let $u_l(t), u_r(t)$ denote the left and right traces 
at~$x=0$ of $u(x,t)$, $t>0$.


\subsubsection{
\rm 
($\ms L<0$, $\ms R>0$, proof of \eqref{eq:3b-lr2})}
\label{sec:3b-lr2} 
The inequalities $\omega(\ms L-)\geq \omega(\ms L+)$, 
$\omega(\ms R-)\geq \omega(\ms R+)$
are the Lax conditions 
which are satisfied since $u$ is an entropy admissible solution of
 the conservation law 
$u_t+f_l(u)_x=0$, on $x<0$,
and of
$u_t+f_r(u)_x=0$, on $x>0$,
and the fluxes $f_l, f_r$
are convex.

\subsubsection{\rm 
($\ms L<0$, $\ms R>0$, proof of \eqref{eq:1a})}
\label{sec:1a} 
By definition~\eqref{eq:LR-def}, \eqref{eq:LR-def-23} of $\ms L$,
$\ms R$,
it follows that backward characteristics for $u$
starting at $(x,T)$,
with $x  \in ]-\infty, 0[\, \cup\, ]\ms R, +\infty[$,
never crosses the interface $x=0$.
Thus, we  recover the Ole\v{\i}nik estimates~\eqref{eq:1a} 
as a classical property
of solutions 
to conservation laws with strictly convex flux, which 
follows from the fact that genuine characteristics never intersect in the interior of the domain
(e.g. see~\cite[Lemma 3.2]{anconachiri}).


\subsubsection{\rm 
($\ms L<0$, $\ms R>0$, first part of the proof of \eqref{eq:2a})}
\label{sec:2a} Letting $\bs u[\ms R, B, f_r]$  be the
constant defined as
in~\eqref{eq:urbf-def} with $f=f_r$,
we will prove 
the implication
\begin{equation}
\label{eq:2aproof}
    \ms R \in ]0, T\cdot f^{\prime}(B)[ \quad \Longrightarrow \quad \omega(\ms R+) \leq \bs u[\ms R, B, f_r],
\end{equation}
assuming  
\begin{equation}\label{eq:contraip}
   \ms R \in ]0, T\cdot f^{\prime}(B)[\, ,\qquad\quad  \omega(\ms R+) > \bs u[\ms R, B, f_r]\,,
\end{equation}
and showing that~\eqref{eq:contraip} leads to a contradiction. To complete the proof of~\eqref{eq:2a} we will show 
in \S~\ref{sec:2bl-r1}
that
\begin{equation}
\label{eq:2aproof-2}
    \ms R \in ]0, T\cdot f^{\prime}(B)[ \quad \Longrightarrow \quad \bs u[\ms R, B, f_r]\leq \omega(\ms R-).
\end{equation}
The proof of the first implication in~\eqref{eq:2a} is obtained in entirely similar way.

We divide the proof of~\eqref{eq:2aproof} in two steps.
In the first step we construct 
the leftmost characteristic curve $\xi_{\strut {\ms R}}$ that
starts on the interface $x=0$
and reaches the point
$(\ms R,T)$, remaining
in the region $\{x>0\}$,
with the property
that all maximal backward characteristics 
starting on $\xi_{\strut {\ms R}}$ don't cross the interface~$x=0$.
In the second step,
we show that
$\xi_{\strut {\ms R}}$ is located on the left 
of the shock curve $\bs x$ constructed as in \S~\ref{def:rsr-block}
that emanates from the interface $x=0$ and reaches the point 
$(\ms R,T)$. 
{ Thanks to the assumption~\eqref{eq:contraip}
this leads to a contradiction} in accordance
with the characterizing 
property of  $\bs u[\ms R, B, f_r]$ discussed in Remark~\ref{rem:caratt-u}.
\vspace{0.5cm}

\noindent
\textbf{Step 1} 
Consider the map 
$\xi_{\strut {\ms R}}: [\tau_{\strut {\ms R}},T]\to [0,+\infty[$
defined by setting
\begin{equation}\label{eq:hr}
\begin{aligned}
    \xi_{\strut {\ms R}}(t) &\doteq  \inf \big\{ R > 0 \; : \; x-t \cdot 
    f_r^{\prime}(u{ (x,t)})
   \geq 0  \quad \forall \; x \geq R\big\}, 
    \quad t\geq 0\,,
\\
\noalign{\medskip}
    \tau_{\strut {\ms R}}&\doteq\inf\big\{ t \in [0, T] \; : \; \xi_{\strut {\ms R}}(s) > 0 \quad \forall \; s \in [t, T]\big\}\,.
\end{aligned}
\end{equation}
Notice that by definition~\eqref{eq:hr} 
we have
\begin{equation}
\label{eq:bc-cond-5}
    \xi_{\strut {\ms R}}(\tau_{\strut {\ms R}}) = 0,\qquad \xi_{\strut {\ms R}}(T) = \ms R,\qquad
    \xi_{\strut {\ms R}}(t)>0
    \quad\ \forall~t\in\, ]\tau_{\strut {\ms R}} , T],  
\end{equation}
and that $\xi_{\strut {\ms R}}$ is a backward characteristic for $u$ starting at $(\ms R,T)$, so that
it holds true
(e.g. see~\cite{dafermosgenchar})
\begin{equation}\label{eq:hRH}
\xi'_{\strut {\ms R}}(t)=
\begin{cases}
    f'_r(u(\xi_{\strut {\ms R}}(t)\pm,t)
    \qquad &\text{if}\quad u(\xi_{\strut {\ms R}}(t)-,t)=u(\xi_{\strut {\ms R}}(t)+,t),
    \\
    \noalign{\smallskip}
    \lambda_r\big(u(\xi_{\strut {\ms R}}(t)-,t),\, u(\xi_{\strut {\ms R}}(t)+,t))\big)
    \qquad &\text{if}\quad u(\xi_{\strut {\ms R}}(t)-,t)\neq u(\xi_{\strut {\ms R}}(t)+,t)\,,
\end{cases}
\end{equation}
where
\begin{equation}
\label{eq:lambdar-def}
    \lambda_r(u,v)\doteq 
    \frac{f_r(v)-f_r(u)}{v-u},\qquad u,v\in\R, \ u\neq v\,.
\end{equation}
%
We shall provide now a lower bound on the slope of $\xi_{\strut {\ms R}}$.
Let $t_0 \in \,]\tau_{\strut {\ms R}}, T]$, and observe that 
by  definition~\eqref{eq:hr}
it follows that
the minimal backward characteristic starting at $(\xi_{\strut {\ms R}}(t_0), t_0)$ must cross the interface $x=0$
at some non-negative time. 
Since such a characteristic is genuine and
has slope $f'_r(u(\xi_{\strut {\ms R}}(t_0)-,t_0))\geq 0$,
and because of the 
$AB$-entropy condition~\eqref{ABtraces},
it follows that $f_r(u(\xi_{\strut {\ms R}}(t_0)-, t_0))\geq f_r(B)$
and $u(\xi_{\strut {\ms R}}(t_0)-, t_0) \geq\theta_r$.
Hence, it holds true
\begin{equation}
\label{eq:geqBineq}
    u(\xi_{\strut {\ms R}}(t_0)-,  t_0) \geq  B\,.
\end{equation}
On the other hand, by definition~\eqref{eq:hr} we have
\begin{equation}
\label{eq:leqineq-uxiR}
    f'_r(u(\xi_{\strut {\ms R}}(t_0)+,t_0))\leq \xi_{\strut {\ms R}}(t_0)/t_0\,.
\end{equation}
Thus, letting $\vartheta_{\xi_{{\ms R}}(t_0),+}$ denote the maximal backward characteristic 
starting at
$(\xi_{\strut {\ms R}}(t_0),  t_0)$,
because of~\eqref{eq:leqineq-uxiR} it holds true
\begin{equation}
\label{eq:char1-bound1}
    \vartheta_{\xi_{{\ms R}}(t_0),+}(0)=
    \xi_{\strut {\ms R}}(t_0) -  t_0 \cdot  f_r^{\prime}(u(\xi_{\strut {\ms R}}(t_0)+, t_0))\geq 0\,,
\end{equation}
and~\eqref{eq:bc-cond-5} implies 
\begin{equation}
\begin{aligned}
    \vartheta_{\xi_{{\ms R}}(t_0),+}(t)&>0\qquad
    \forall~t\in\,]0, t_0]\,.
\end{aligned}
\end{equation}
Moreover, 
observe that by
the properties of backward characteristics,
and by definition~\eqref{eq:hr},
the maximal backward characteristics $\vartheta_{\ms R,+}$
starting at $(\ms R, T)$
satisfies
\begin{equation*}
    \xi_{\strut {\ms R}}(t)\leq 
    \vartheta_{\ms R, +}(t)\qquad \forall~t\in [\tau_{\strut {\ms R}}, T]\,,
\end{equation*}
and, in particular, one has
\begin{equation}
    \label{eq:char2-bound2}
    \xi_{\strut {\ms R}}(t_0)\leq 
    \vartheta_{\ms R, +}(t_0)\,.
\end{equation}
Since maximal 
{  \st{genuine?}}
backward characteristics 
cannot intersect in the interior of the domain, 
it follows from~\eqref{eq:char2-bound2} that
%
\begin{equation}
\label{eq:char2-bound3}
        \xi_{\strut {\ms R}}(t_0) -  t_0 \cdot  f_r^{\prime}(u(\xi_{\strut {\ms R}}(t_0)+, t_0))=\vartheta_{\xi_{{\ms R}}(t_0),+}(0) \leq \vartheta_{\ms R, +}(0) =\ms R- T \cdot f_r^{\prime}
        (\omega(\ms R+))\,.
\end{equation}
In turn, \eqref{eq:char2-bound3} 
yields
\begin{equation}
\label{eq:xietar-est-3}
    \xi_{\strut {\ms R}}(t)-\ms R + T \cdot f_r^{\prime}(\omega(\ms R+))\leq 
     t_0 \cdot  f_r^{\prime}(u(\xi_{\strut {\ms R}}(t_0)+, t_0))\,.
\end{equation}
Moreover, 
one has
\begin{equation}
\label{eq:xietar-est-1}
    \frac{\xi_{\strut {\ms R}}(t_0)-
    \vartheta_{\ms R, +}(0)}{t_0}\leq
    \vartheta'_{\ms R, +}=f'_r(\omega(\ms R+))\,.
\end{equation}
Since the definition~\eqref{eq:LR-def} of $\ms R$ and ~\eqref{eq:contraip} imply
$f'_r(\omega(\ms R+))
    \leq{\ms R}/{T}<f'_r(B)$,
    we deduce from~\eqref{eq:xietar-est-1} that
    \begin{equation}
    \label{eq:xietar-est-2}
        \frac{\xi_{\strut {\ms R}}(t_0)-\ms R + T \cdot f_r^{\prime}(\omega(\ms R+))}{t_0}<
    f'_r(B).
    \end{equation}
By the monotonicity of $f_r^{\prime}$, in turn
the estimates~\eqref{eq:xietar-est-3}, \eqref{eq:xietar-est-2}
yield
\begin{equation}
\label{eq:xiR-ineq2}
\begin{aligned}
(f_r^{\prime})^{-1} \left( \frac{\xi_{\strut {\ms R}}(t_0)-\ms R + T \cdot f_r^{\prime}(\omega(\ms R+))}{t_0}   \right)
&\leq u(\xi_{\strut {\ms R}}(t_0)+,  t_0),
\\
\noalign{\smallskip}
(f_r^{\prime})^{-1} \left( \frac{\xi_{\strut {\ms R}}(t_0)-\ms R + T \cdot f_r^{\prime}(\omega(\ms R+))}{t_0}   \right)&<B.
\end{aligned}
\end{equation}
Therefore, recalling~\eqref{eq:hRH}, \eqref{eq:lambdar-def}, 
and because of the convexity of $f_r$,
we derive
from~\eqref{eq:contraip},\eqref{eq:geqBineq},
\eqref{eq:xiR-ineq2}, that
\begin{equation}
\label{eq:fullspeedineqh}
    \xi_{\strut {\ms R}}'(t_0)>
    \lambda_r \!\left(\!     (f_r^{\prime})^{-1} \!\!\left( \frac{\xi_{\strut {\ms R}}(t_0)-\ms R + T \cdot f_r^{\prime}(\bs u[\ms R, B, f_r])}{t_0}   \right)\!, B \!\right)
    \qquad\forall~t_0\in\,]\tau_{\strut {\ms R}}, T]\,.
\end{equation}


\vspace{0.5cm}

\begin{figure}
    \centering
   
\tikzset{every picture/.style={line width=0.75pt}} 

\begin{tikzpicture}[x=0.75pt,y=0.75pt,yscale=-0.9,xscale=0.9]

\draw [line width=1.5]    (228.26,248.96) -- (228.26,30.2) ;
\draw [shift={(228.26,26.2)}, rotate = 90] [fill={rgb, 255:red, 0; green, 0; blue, 0 }  ][line width=0.08]  [draw opacity=0] (6.97,-3.35) -- (0,0) -- (6.97,3.35) -- cycle    ;
\draw [line width=1.5]    (130.13,250.2) -- (492.53,249.21) ;
\draw [shift={(496.53,249.2)}, rotate = 179.84] [fill={rgb, 255:red, 0; green, 0; blue, 0 }  ][line width=0.08]  [draw opacity=0] (6.97,-3.35) -- (0,0) -- (6.97,3.35) -- cycle    ;
\draw    (127.47,61.8) -- (490.53,61.2) ;
\draw [color={rgb, 255:red, 74; green, 144; blue, 226 }  ,draw opacity=1 ]   (228.67,189.07) .. controls (244.53,165.87) and (262.51,142.3) .. (282.53,121.2) .. controls (302.56,100.1) and (323.87,79.87) .. (348.67,62.07) ;
\draw    (348.67,62.07) -- (228.53,99.2) ;
\draw    (348.67,62.07) -- (329.2,247.87) ;
\draw  [dash pattern={on 4.5pt off 4.5pt}]  (227.87,155.87) .. controls (233.87,150.53) and (233.81,140.21) .. (241.2,133.2) .. controls (248.59,126.19) and (258.14,119.97) .. (267.87,113.87) .. controls (277.59,107.76) and (282.67,95.28) .. (291.87,87.87) .. controls (301.07,80.45) and (335.67,74.74) .. (348.67,62.07) ;
\draw    (227.87,155.87) -- (452.53,247.2) ;
\draw [color={rgb, 255:red, 74; green, 144; blue, 226 }  ,draw opacity=1 ] [dash pattern={on 4.5pt off 4.5pt}]  (228.67,189.07) -- (379.87,248.53) ;
\draw    (262.53,115.2) -- (278.77,41.15) ;
\draw [shift={(279.2,39.2)}, rotate = 102.37] [color={rgb, 255:red, 0; green, 0; blue, 0 }  ][line width=0.75]    (6.56,-1.97) .. controls (4.17,-0.84) and (1.99,-0.18) .. (0,0) .. controls (1.99,0.18) and (4.17,0.84) .. (6.56,1.97)   ;

\draw (463.33,43.4) node [anchor=north west][inner sep=0.75pt]  [font=\footnotesize]  {$\omega $};
\draw (498.53,252.6) node [anchor=north west][inner sep=0.75pt]  [font=\footnotesize]  {$x$};
\draw (242.67,17.73) node [anchor=north west][inner sep=0.75pt]  [font=\footnotesize]  {$t$};
\draw (296,254.8) node [anchor=north west][inner sep=0.75pt]  [font=\footnotesize]  {$\zeta _{\mathsf{R}}{}_{,+}( 0)$};
\draw (434.67,255.47) node [anchor=north west][inner sep=0.75pt]  [font=\footnotesize]  {$\zeta _{0}{}_{,}{}_{+}( 0)$};
\draw (205.33,147.8) node [anchor=north west][inner sep=0.75pt]  [font=\footnotesize]  {$\tau _{\mathsf{R}}$};
\draw (267.33,18.47) node [anchor=north west][inner sep=0.75pt]  [font=\footnotesize]  {$\xi _{\mathsf{R}}( t)$};
\draw (209.33,180.47) node [anchor=north west][inner sep=0.75pt]  [font=\footnotesize]  {$\mathbf{s}$};
\draw (344,42.13) node [anchor=north west][inner sep=0.75pt]  [font=\footnotesize]  {$\mathsf{R}$};
\draw (280.53,125.27) node [anchor=north west][inner sep=0.75pt]  [font=\footnotesize,color={rgb, 255:red, 74; green, 144; blue, 226 }  ,opacity=1 ]  {$\mathbf{x}$};
\draw (373.33,256.47) node [anchor=north west][inner sep=0.75pt]  [font=\footnotesize,color={rgb, 255:red, 74; green, 144; blue, 226 }  ,opacity=1 ]  {$-\mathsf{L}$};

\end{tikzpicture}

    \caption{Illustration of the proof in \S~\ref{sec:2a}. The black lines are characteristics of the solution $u$, that cross inside the domain and therefore lead to a contradiction. The blue lines are the comparison curves.}
    \label{fig:sec2a}
\end{figure}

\noindent
\textbf{Step 2} (Comparison with an extremal shock). 
Let $\bs y[\ms R, B, f_r](\cdot)$ be the function defined in
\S~\ref{defi:ur}
with $f=f_r$, 
set
\begin{equation}
\label{eq:def_l_32}
    \ms L \doteq \bs y[\ms R, B, f_r](T),
\end{equation}
and consider the function
\begin{equation}
    \bs x[\ms L, \overline B, f_r](t),\qquad\quad 
    t \in \big[ \bs s[\ms L, \overline B, f_r] , T\big]\,,
\end{equation}
defined as in \S~\ref{defi:vl},
with $A = \overline B$ ($\overline B$ as in~\eqref{eq:bar-AB-def}),
and $f=f_r$.
By definition \eqref{eq:xL},
and applying Lemma \ref{lemma:dualshocks},
it holds true
\begin{equation}
\label{eq:fwsh-cond23}
 \bs x[\ms L, \overline B, f_r](\bs s[\ms L, \overline B, f_r])=0,
 \qquad\quad
\bs x[\ms L, \overline B, f_r](T) = \ms R,
\end{equation}
and 
\begin{equation}
\label{eq:fwsh-cond24}
     \frac{d}{dt} {\bs x}[\ms L, \overline B, f_r](t) = \lambda_r \!\left(\!(f_r^{\prime})^{-1}\bigg(\frac{\bs x[\ms L, \overline B, f_r](t)+\ms L}{t}\bigg), B\,\right), \qquad t \in \big[ \bs s[\ms L, \bar B, f] , T\big].
\end{equation}
Moreover,  because of~\eqref{eq:urbf-def}, \eqref{eq:def_l_32}, we have
\begin{equation}
    \ms L= T \cdot f_r^{\prime}(\bs u[\ms R, B, f_r])-\ms R\,.
\end{equation}
Recall that by~\eqref{eq:bc-cond-5}, \eqref{eq:fwsh-cond23}, it holds 
$$
\xi_{\ms R}(T) = \ms R  = {\bs x}[\ms L, \overline B, f_r](T).
$$
Then, by virtue of~\eqref{eq:fullspeedineqh}, \eqref{eq:fwsh-cond24}, a  comparison argument yields
\begin{equation}
\label{eq:comparison}
    \xi_{\strut {\ms R}}(t) < \bs x[\ms L, \overline B, f_r](t), \qquad \forall \; t \in \big[\max\big\{\!\tau_{\strut {\ms R}},\, \bs s[\ms L, \overline B, f_r]\big\},\, T\big[\,.
\end{equation}
Notice that, if $\bs s[\ms L, \overline B, f_r] \geq \tau_{\strut {\ms R}}$,
then because of~\eqref{eq:fwsh-cond23},
\eqref{eq:comparison}, 
and since $\xi_{\strut {\ms R}}(t)\geq 0$, 
{ for all $t\in [\tau_{\strut {\ms R}} , T]$},
we find the contradiction $0\leq \xi_{\strut {\ms R}}(\bs s[\ms L, \overline B, f_r])<0$.
Hence it must be 
\begin{equation}
\label{eq:stau-est-2}
    \bs s[\ms L, \overline B, f_r] < \tau_{\strut {\ms R}}.
\end{equation}
Next, observe that by definition~\eqref{eq:hr}
and because of~\eqref{eq:bc-cond-5}, we have  
$u(0 +, \tau_{\strut {\ms R}}) \leq \theta_r$. Thus, by virtue of the 
$AB$-entropy condition~\eqref{ABtraces},
it follows that
 $u(0 +, \tau_{\strut {\ms R}}) \leq \overline B$. 
 Then, letting \linebreak $\zeta_{0,+}: [0, \tau_{\strut {\ms R}}]\to [0,+\infty[$
 denote
 the maximal backward characteristic 
 starting at $(0,\tau_{\strut {\ms R}})$, one has 
\begin{equation}\label{eq:secondcontraineq}
    \zeta_{0,+}(0) = - \tau_{\strut {\ms R}} \cdot f_r^{\prime}(u(0+,\tau_{\strut {\ms R}}))  \geq - \tau_{\strut {\ms R}} \cdot f_r^{\prime}(\overline B).
\end{equation}
On the other hand, 
by virtue of~\eqref{eq:contraip},
\eqref{eq:def_l_32},
\eqref{eq:stau-est-2}, 
and recalling the definitions~\eqref{eq:urbf-def}, \eqref{eq:slaf-def} of $\bs u[\ms R, B, f_r]$, $\bs s[\ms L, \overline B, f_r]$, we find that 
the maximal backward characteristic 
$\vartheta_{\ms R,+}
: [0,T]\to [0,+\infty[$
from $(\ms R, T)$
satisfies
\begin{equation}
\label{eq:secondcontraineq-2}
\begin{aligned}
    \vartheta_{\ms R,+}(0) = \ms R-T \cdot f_r^{\prime}(\omega(\ms R+))  & <
    \ms R-T \cdot f_r^{\prime}(\bs u[\ms R, B, f_r]) 
    \\
    &=-\bs y[\ms R, B, f_r](T)
    \\
    &=  -\bs s[\ms L, \overline B, f_r] \cdot f_r^{\prime}(\overline B)\
    \\
    & <
    -\tau_{\strut {\ms R}} \cdot f_r^{\prime}(\overline B) \,. 
    \end{aligned}
\end{equation}
Thus, we deduce from~\eqref{eq:secondcontraineq}-\eqref{eq:secondcontraineq-2} that 

\begin{equation}
\label{eq:char1-ineq23}
    \vartheta_{\ms R,+}(0)< \zeta_{0,+}(0),
\end{equation}
while~\eqref{eq:char2-bound2} yield 
\begin{equation}
\label{eq:char2-ineq23}
    \vartheta_{\ms R,+}(\tau_{\strut {\ms R}})>0 =\zeta_{0,+}(\tau_{\strut {\ms R}}).
\end{equation}
The inequalities~\eqref{eq:char1-ineq23}-\eqref{eq:char2-ineq23}
imply that the genuine characteristics 
$\zeta_{0,+}, \vartheta_{\ms R,+}$
 intersect each other in the interior of the domain, which gives a contradiction
 and thus completes the proof of the implication~\eqref{eq:2aproof}. 

\subsubsection{\rm 
($\ms L<0$, $\ms R>0$, proof of~
\eqref{eq:2b-lr2}-\eqref{eq:2b-lr2-2})
}
\label{sec:2bl-r1r2}
We will prove only the implication~\eqref{eq:2b-lr2-2}, the
proof of~\eqref{eq:2b-lr2} being entirely similar. 
Let $\widetilde {\ms L}\doteq \widetilde{\ms L}[\omega,f_l,f_r,A,B]$
    be the constant
    in~
    \eqref{eq:Ltildedef},
and assume that 
\begin{equation}
\label{eq:assumpt-534}
    \ms R \in \,]0, T\cdot f_r^{\prime}(B)[\,,\qquad\quad \ms L < \widetilde{\ms L}\,.
\end{equation}
\smallskip

\noindent
\textbf{Step 1.} \big(proof of:\, $\omega(x) \leq A$ in $]\ms L, 0[\,$\big).\\
By definition~\eqref{eq:LR-def}, \eqref{eq:LR-def-23} of $\ms L$,
it follows that backward genuine characteristics starting at points $(x,T)$,
with $x \in \,]\ms L, 0[$ of continuity for $\omega$, must cross the interface $x=0$ at some non-negative time. 
Since such characteristics 
have slope $f'_l(\omega(x))\leq 0$,
and because of the 
$AB$-entropy condition~\eqref{ABtraces},
it follows that $f_l(\omega(x))\geq f_l(A)$
and $\omega(x) \leq\theta_l$
at any point $x \in \,]\ms L, 0[$ of continuity for $\omega$.
Hence, we have
$\omega(x\pm)\leq A$
for all $x\in \,]\ms L, 0[\,$.
\smallskip

\noindent
\textbf{Step 2.} \big(proof of:\, $\omega(x) = A$ in $]\widetilde {\ms L}, 0[\,$\big).\\
In a similar way to \eqref{eq:hr}, consider  
the map
$\xi_{\strut {\ms L}}: [\tau_{\strut {\ms L}},T]\to ]-\infty, 0[$
defined symmetrically by setting
\begin{equation}\label{eq:hl}
\begin{aligned}
    \xi_{\strut {\ms L}}(t) &\doteq  \sup \big\{ L < 0 \; : \; x-t \cdot f_l^{\prime}(u{ (x,t)})
    \leq 0  \quad \forall \; x \leq L\big\}, 
    \quad t\geq 0\,,
\\
\noalign{\medskip}
    \tau_{\strut {\ms L}}&\doteq\inf\big\{ t \in [0, T] \; : \; \xi_{\strut {\ms L}}(s) < 0 \quad \forall \; s \in [t, T]\big\}\,.
\end{aligned}
\end{equation}
Notice that by definition~\eqref{eq:hl}
we have
$$\xi_{\strut {\ms L}}(\tau_{\strut {\ms L}}) = 0, \qquad \xi_{\strut {\ms L}}(T) = \ms L, \qquad \xi_{\strut {\ms L}}(t) < 0 \ \quad \forall \; t \in \,]\tau_{\strut {\ms L}},\,T].$$
We claim that
\begin{equation}\label{eq:timeclaim}
    \begin{aligned}
     \tau_{\strut {\ms L}}\leq 
    \tau_{\strut {\ms R}}
        \quad\ \Longrightarrow \ \quad \tau_{\strut {\ms R}} \leq \bs \tau[\ms R, B, f_r],
    \end{aligned}
\end{equation}
where $\bs\tau[\ms R, B, f_r]$ is the constant defined as in~\eqref{eq:tau-rbf-def}, with $f=f_r$.
We will prove the implication~\eqref{eq:timeclaim}
with similar arguments to the proof of~\eqref{eq:2a} in \S~\ref{sec:2a},
assuming
\begin{equation}
    \label{eq:tau-ineq-29}
    \tau_{\strut {\ms L}}\leq 
    \tau_{\strut {\ms R}}\,,\qquad\quad
    \tau_{\strut {\ms R}}>\bs \tau[\ms R, B, f_r]
    \,,
\end{equation}
and showing that~\eqref{eq:tau-ineq-29}
lead to a contradiction.
\smallskip

Since $\tau_{\strut {\ms L}}\leq 
 \tau_{\strut {\ms R}}$, by definitions~\eqref{eq:hr}, \eqref{eq:hl},
    and by virtue of the 
$AB$-entropy condition~\eqref{ABtraces},
    it follows 
\begin{equation}\label{eq:tracesABtr}
    (u_l(t), u_r(t)) = (A, B)\qquad \forall \; t \in \,]\tau_{\strut {\ms R}}, T]\,,
\end{equation}
which in turn implies 
\begin{equation}
\label{eq:Beq29}
    u(\xi_{\strut {\ms R}}(t)-,  t) = B\qquad \forall \; t \in \,]\tau_{\strut {\ms R}}, T]\,.
\end{equation}
Let
$\zeta_{0,+}$,
$\vartheta_{\xi_{{\ms R}}(t),+}$\,,  be
 the maximal backward characteristic
 starting at $(0,\tau_{\strut {\ms R}})$, and  at
$(\xi_{\strut {\ms R}}(t),  t)$,
$t\in \,]\tau_{\strut {\ms R}}, T]$,
respectively. Relying on~\eqref{eq:char1-bound1}, \eqref{eq:secondcontraineq}, 
and since maximal backward characteristics cannot intersect in the interior of the domain, we find
\begin{equation}
\label{eq:xietar-est-7}
    - \tau_{\strut {\ms R}} \cdot f_r^{\prime}(\overline B)\leq \zeta_{0,+}(0) \leq \vartheta_{\xi_{{\ms R}}(t),+}(0)=\xi_{\strut {\ms R}}(t) -  t \cdot  f_r^{\prime}(u(\xi_{\strut {\ms R}}(t)+, t))\,.
\end{equation}
In turn, \eqref{eq:xietar-est-7} 
together with~\eqref{eq:tau-ineq-29}, yields
\begin{equation}
    \label{eq:xietar-est-9}
    t \cdot  f_r^{\prime}(u(\xi_{\strut {\ms R}}(t)+, t))\leq \xi_{\strut {\ms R}}(t)+
    \bs\tau[\ms R, B, f_r]
    \cdot f_r^{\prime}(\overline B)
    \qquad\forall~t\in \,]\tau_{\strut {\ms R}}, T],
\end{equation}
since $f_r^{\prime}(\overline B)<0$.
By the monotonicity of $f'_r$
we deduce from~\eqref{eq:xietar-est-9} that
\begin{equation}
\label{eq:xiR-ineq5}
    u(\xi_{\strut {\ms R}}(t)+, t)\leq 
    (f_r^{\prime})^{-1} \left( \frac{\xi_{\strut {\ms R}}(t)
   +\bs\tau[\ms R, B, f_r]
    \cdot f_r^{\prime}(\overline B)
    }{t}   \right).
\end{equation}
Therefore, recalling~\eqref{eq:hRH}, \eqref{eq:lambdar-def}, 
and because of the convexity of $f_r$,
we derive
from~\eqref{eq:Beq29}, \eqref{eq:xiR-ineq5}
that 
\begin{equation}
\label{eq:fullspeedineqh-2}
    \xi_{\strut {\ms R}}'(t)\leq 
    \lambda_r \!\left(\!     (f_r^{\prime})^{-1} \!\!\left( \frac{\xi_{\strut {\ms R}}(t)+\bs\tau[\ms R, B, f_r]
    \cdot f_r^{\prime}(\overline B)}{t}   \right)\!, B \!\right)
    \qquad\forall~t\in\,]\tau_{\strut {\ms R}}, T]\,.
\end{equation}
On the other hand, letting $\bs x[\ms L, \overline B, f_r](\cdot)$
be the function defined in \S~\ref{defi:vl},
with $\ms L$ as in~\eqref{eq:def_l_32},
$A = \overline B$,
and $f=f_r$, 
we have~\eqref{eq:fwsh-cond23},
\eqref{eq:fwsh-cond24}. 
Moreover, because of~\eqref{eq:tau-rbf-def}, \eqref{eq:def_l_32}, it holds true
\begin{equation}
\label{eq:Ltau-eq-2}
    \ms L= \bs\tau[\ms R, B, f_r]
    \cdot f_r^{\prime}(\overline B),\qquad
    \bs s[\ms L, \overline B, f_r]=\bs\tau[\ms R, B, f_r]
    \,.
\end{equation}
Then, by virtue of~\eqref{eq:bc-cond-5}, \eqref{eq:fullspeedineqh-2}, 
and because of~\eqref{eq:fwsh-cond23}, \eqref{eq:fwsh-cond24},
\eqref{eq:tau-ineq-29}, 
\eqref{eq:Ltau-eq-2}, 
with a comparison argument we deduce
\begin{equation}
\label{eq:comparison-2}
    \xi_{\strut {\ms R}}(t)\geq 
    \bs x[\ms L, \overline B, f_r](t)
    \qquad\forall~t\in [\tau_{\strut {\ms R}}, T]\,.
\end{equation}
But~\eqref{eq:comparison-2},
together with~\eqref{eq:bc-cond-5},
\eqref{eq:tau-ineq-29}, \eqref{eq:Ltau-eq-2},
and recalling~\eqref{eq:xL},
implies
\begin{equation}
    0=\xi_{\strut {\ms R}}(\tau_{\strut {\ms R}})
    \geq \bs x[\ms L, \overline B, f_r](\tau_{\strut {\ms R}})>
    x[\ms L, \overline B, f_r](\bs\tau[\ms R, B, f_r])=
    0\,,
\end{equation}
which gives a contradiction, 
proving the claim~\eqref{eq:timeclaim}. 
\smallskip

Relying on the implication~\eqref{eq:timeclaim}, we  show now that $\omega(x) = A$ in $]\widetilde {\ms L}, 0[\,$,
considering two cases:
\smallskip

\noindent
\textsc{Case 1:}
$\tau_{\strut {\ms R}}<\tau_{\strut {\ms L}}$.
Then, by definitions~\eqref{eq:hr}, \eqref{eq:hl},
    and by virtue of the 
$AB$-entropy condition~\eqref{ABtraces},
    it follows 
\begin{equation}\label{eq:tracesABtr-2}
    (u_l(t), u_r(t)) = (A, B)\qquad \forall \; t \in \,]\tau_{\strut {\ms L}}, T]\,.
\end{equation}
Observe that
the maximal backward characteristic 
$\vartheta_{\ms L,+}$ 
starting at $(\ms L, T)$
crosses the interface $x=0$
at time $T-\ms L/f'_l(\omega(\ms L+)$.
Since $ \xi_{\strut {\ms L}}$ is a
backward characteristic 
starting at the same point~$(\ms L, T)$ and
crossing the interface $x=0$
at time $\tau_{\strut {\ms L}}$,
one has $\tau_{\strut {\ms L}}\leq T-\ms L/f'_l(\omega(\ms L+)$.
This implies that the backward genuine characteristics from 
points $(x,T)$, $x\in\,]\ms L, 0[$, impact the interface $x=0$
at times $t_x \geq T-\ms L/f'_l(\omega(\ms L+)\geq \tau_{\strut {\ms L}}$. Since the value of the solution $u$  is constant along genuine characteristics, we deduce from~\eqref{eq:tracesABtr-2}
that  $\omega(x) = A$ for all $x \in \,]\ms L, 0[$.
Hence, by~\eqref{eq:assumpt-534} in particular 
it follows that 
$\omega(x) = A$
for all
$x\in\, ]\widetilde {\ms L}, 0[\,$. 
\medskip

\noindent
\textsc{Case 2:}
$\tau_{\strut {\ms L}}\leq \tau_{\strut {\ms R}}$.
Then, because of~\eqref{eq:timeclaim} we have
$\tau_{\strut {\ms R}} \leq \bs \tau[\ms R, B, f_r]$.
Observe that by Step 1 we have $\omega(x)\leq A$ for all $x \in \,]\ms L, 0[$. 
Relying on  the monotonicity of $f'_l$,
this implies that  the backward genuine 
characteristics starting 
from 
points $(x,T)$, $x\in\,]\ms L, 0[$, impacts the interface $x=0$
at times
\begin{equation}
\label{eq:tx-ineq-23}
    \tau(x)\doteq  T-\frac{x}{f_l^{\prime}(\omega(x))} \geq T-\frac{x}{f_l^{\prime}(A)} \,.
\end{equation}
On the other hand, 
recalling definitions~\eqref{eq:Ltildedef},
\eqref{eq:LR-def-23}, we have
\begin{equation}
\label{eq:tx-ineq-24}
   T-\frac{x}{f_l^{\prime}(A)}\geq 
   T-\frac{ (T-\bs \tau[\ms R, B, f_r]) \cdot f_l^{\prime}(A)}{f_l^{\prime}(A)}  = \bs \tau[\ms R, B, f_r]\geq \tau_{\strut {\ms R}}
   \,,
\end{equation}
for all
$x\in\, ]\widetilde {\ms L}, 0[\,$. 
Combining~\eqref{eq:tx-ineq-23}, \eqref{eq:tx-ineq-24}, we deduce that the backward genuine 
characteristics starting 
from 
points $(x,T)$, $x\in\,]\widetilde {\ms L}, 0[$, cross the interface $x=0$
at times $\tau(x)\geq \tau_{\strut {\ms R}}$.
Hence, relying again on the property that
the solution $u$  is constant along genuine characteristics, we 
infer from~\eqref{eq:tracesABtr} that $\omega(x) = A$
for all
$x\in\, ]\widetilde {\ms L}, 0[\,$
also in this case, thus completing the proof of Step 2.

\smallskip

\noindent
\textbf{Step 3.} \big(proof of:\, $\omega(\widetilde{\ms L}-) = A$\big).\\
We know by Step 1 and Step 2 that 
$\omega(\widetilde{\ms L}-)\leq A$
and 
$\omega(\widetilde{\ms L}+)= A$.
On the other hand the Lax entropy condition (see \S~\ref{sec:3b-lr2}) implies $\omega(\widetilde{\ms L}-)\geq \omega(\widetilde{\ms L}+) = A$. 
Therefore one has $A\geq \omega(\widetilde{\ms L}-)\geq \omega(\widetilde{\ms L}+) = A$
which yields $\omega(\tilde{\ms L}-) =A$.
This concludes the proof of \eqref{eq:2b-lr2-2}.

\subsubsection{\rm 
($\ms L<0$, $\ms R>0$, proof of \eqref{eq:2bl-r1} and completion of the proof of \eqref{eq:2a})}\label{sec:2bl-r1} We will prove only the 
second implication in~\eqref{eq:2bl-r1}, the proof of the first one being entirely symmetric. 
Assume that 
\begin{equation}
\label{eq:assumptio413}
    \Big[\ms R \in \,]0, T\cdot f'_r(B)[\,\;  \ \mathrm{and} \;  \  \widetilde{\ms L}\leq \ms L  \,
\Big]
\quad \mathrm{or} \quad\ \ms R \geq T\cdot f'_r(B),
\end{equation}
and let $\tau_{\strut {\ms L}}, \tau_{\strut {\ms R}}$
be the constants defined in~\eqref{eq:hr}, \eqref{eq:hl},
in connection with the characteristics $\xi_{\strut {\ms L}}, \xi_{\strut {\ms R}}$. 
As observed in Step 2 of \S~\ref{sec:2bl-r1r2}, 
the fact that 
$\xi_{\strut {\ms L}}$ is a
backward characteristic 
starting at~$(\ms L, T)$
and crossing the interface $x=0$ at time 
$\tau_{\strut {\ms L}}$
implies
\begin{equation}
\label{eq:taul-ineq23}
    \tau_{\strut {\ms L}}\leq\,
    \tau_+(\ms L)\doteq
    T-\frac{\ms L}{f'_l(\omega(\ms L+)}\,.
\end{equation}
We claim that~\eqref{eq:assumptio413} implies
\begin{equation}
\label{eq:taul-ineq24}
    \tau_{\strut {\ms R}}\leq\,
    \tau_+(\ms L)\,.
\end{equation}
Since~\eqref{eq:taul-ineq23} clearly implies~\eqref{eq:taul-ineq24}
when $\tau_{\strut {\ms R}}\leq \tau_{\strut {\ms L}}$, it will be sufficient to prove the claim under the assumption  $\tau_{\strut {\ms L}}<\tau_{\strut {\ms R}}$.
Let's consider first the case that
\begin{equation}
\label{eq:assumptio413-1}
    \ms R \in \,]0, T\cdot f'_r(B)[\,\;  \ \ \mathrm{and} \ \;  \  \widetilde{\ms L}\leq \ms L\,.
\end{equation}
Observe that, 
because of~\eqref{eq:timeclaim},
$\tau_{\strut {\ms L}}<\tau_{\strut {\ms R}}$ implies 
\begin{equation}
\label{eq:taul-ineq25}
    \tau_{\strut {\ms R}} \leq \bs \tau[\ms R, B, f_r].
\end{equation}
Moreover, by Step 1 of \S~\ref{sec:2bl-r1r2}, one has $\omega(\ms L+)\leq A$.
Therefore,
recalling the definition~\eqref{eq:Ltildedef},
and because of the monotonicity
of $f'_l$, we deduce from
$\widetilde{\ms L}\leq \ms L$
that
\begin{equation}
    (T-\bs \tau[\ms R, B, f_r])\cdot f'_l(\omega(\ms L+)
    \leq \ms L\,,
\end{equation}
which, together with~\eqref{eq:taul-ineq25}, yields~\eqref{eq:taul-ineq24}, under the assumption~\eqref{eq:assumptio413-1}.
Next, consider the case that
\begin{equation}
\label{eq:assumptio413-2}
    \ms R \geq T\cdot f'_r(B)\,.
\end{equation}
Observe that by the analogous argument of Step 1 of \S~\ref{sec:2bl-r1r2} for~\eqref{eq:2b-lr2}, one has $\omega(\ms R-)\geq B$. Moreover, if $\omega(\ms R-)=B$,
by definition~\eqref{eq:LR-def} of $\ms R$ it follows that
$f'_r(B)\geq \ms R/T$, which together with~\eqref{eq:assumptio413-2}, implies 
$f'_r(B)=\ms R/T$.
In turn, $f'_r(B)=\ms R/T$ implies that the minimal characteristic starting at $(\ms R,T)$ reaches the interface $x=0$ at time 
$t=0$, and by definition~\eqref{eq:hr},
it coincides with $\xi_{\strut {\ms R}}$.
Therefore, one has $\tau_{\strut {\ms R}}=0$,
which proves~\eqref{eq:taul-ineq24}.
Hence, it remains to consider the case~\eqref{eq:assumptio413-2}
when $\omega(\ms R-)> B$.
Notice that, if 
\begin{equation}
    \frac{\ms L}{f'_l(\omega(\ms L+))}>\frac{\ms R}{f'_r(\omega(\ms R-))}\,,
\end{equation}
it follows that the minimal backward characteristic $\vartheta_{\ms R,-}$
from $(\ms R,T)$ crosses the interface $x=0$ at a time 
\begin{equation}
\label{eq:tr-def-23}
\tau_-(\ms R)\doteq
 T-\frac{\ms R}{f'_r(\omega(\ms R-))}
\end{equation}
strictly greater than the time 
$\tau_+(\ms L)$
at which the maximal 
backward characteristic $\vartheta_{\ms L,+}$
from $(\ms L,T)$ crosses the interface $x=0$.
On the other hand, since $\vartheta_{\ms R,-}$ is a genuine characteristic, it follows that
$u_r(\tau_-(\ms R))
=\omega(\ms R-)>B$.
Because of  the $AB$-entropy condition~\eqref{ABtraces}
this implies that $u_l(\tau_-(\ms R))
>\theta_l$. 
Thus we can trace the minimal backward characteristic starting at $(0,\,\tau_-(\ms R))$
and lying in $\{x<0\}$, which 
has slope $f'_l(u_l(\tau_-(\ms R))
>0$, and hence it
will intersect  the characteristic $\vartheta_{\ms L,+}$ at a positive time $t^* \geq \tau_+(\ms L)$,
giving a contradiction.
Therefore, $\omega(\ms R-)> B$ implies
\begin{equation}
\label{eq:taul-ineq28}
    \frac{\ms L}{f'_l(\omega(\ms L+))}\leq\frac{\ms R}{f'_r(\omega(\ms R-))}\,.
\end{equation}
On the other hand,
since $\xi_{\strut {\ms R}}$ is a
backward characteristic 
starting at~$(\ms R, T)$
and crossing the interface $x=0$ at time 
$\tau_{\strut {\ms R}}$,
it holds true
\begin{equation}
\label{eq:taul-ineq27}
    \tau_{\strut {\ms R}}\leq
    \tau_-(\ms R)\,.
\end{equation}
Hence, \eqref{eq:tr-def-23}, \eqref{eq:taul-ineq28}, \eqref{eq:taul-ineq27} together yield~\eqref{eq:taul-ineq24}. This completes the proof of the Claim that \eqref{eq:assumptio413} implies~\eqref{eq:taul-ineq24}. 
Then, by definitions~\eqref{eq:hr}, \eqref{eq:hl},
relying on~\eqref{eq:taul-ineq23}, \eqref{eq:taul-ineq24},  
    and by virtue of the 
$AB$-entropy condition~\eqref{ABtraces},
    we find that  
\begin{equation}\label{eq:tracesABtr-4}
    (u_l(t), u_r(t)) = (A, B)\qquad \forall \; t \in \,]\tau_+(\ms L)
        ,\, T]\,.
\end{equation}
Since backward genuine characteristics starting 
from 
points $(x,T)$, $x\in\,]{\ms L}, 0[$, cross the interface $x=0$
at times $t_x\geq \tau_+(\ms L)$,
we 
infer from~\eqref{eq:tracesABtr-4} that $\omega(x) = A$
for all
$x\in\, ]{\ms L}, 0[\,$. This concludes the proof of the 
second implication in~\eqref{eq:2bl-r1}.

Concerning 
~\eqref{eq:2a}, 
we prove now the implication~\eqref{eq:2aproof-2}.
To this end observe that,  
because of~\eqref{eq:2bl-r1} and \eqref{eq:2b-lr2} (established in \S~\ref{sec:2bl-r1r2}), we have
\begin{equation*}
\ms R \in ]0, T\cdot f^{\prime}(B)[ \quad \Longrightarrow \quad
    \omega(x)\geq B\qquad\forall~x\in\,]0, \ms R[\,,
\end{equation*}
and hence 
\begin{equation}
\label{eq:2aproof-3}
\ms R \in ]0, T\cdot f^{\prime}(B)[ \quad \Longrightarrow \quad
    \omega(\ms R -)\geq B\,.
\end{equation}
Thus, relying on~\eqref{eq:uB-ineq}
with $f=f_r$,
we deduce~\eqref{eq:2aproof-2} from~\eqref{eq:2aproof-3}, which completes the proof of the 
second implication in
~\eqref{eq:2a}.

\subsubsection{\rm 
($\ms L<0$, $\ms R>0$, proof of \eqref{eq:1b1}-\eqref{eq:1b12})}\label{sec:1b12}
We will prove only~\eqref{eq:1b1}, the proof of~\eqref{eq:1b12} being entirely similar. 
Then, assume that~\eqref{eq:assumpt-534} holds as in \S~\ref{sec:2bl-r1r2}.
\smallskip

\noindent
\textbf{Step 1.} 
For every point $x\in \,]\ms L, \widetilde {\ms L}[$\, where $\omega$
 is continuous, consider the map
\begin{equation}
\label{eq:phi3def}
\vartheta_{x}(t) \doteq  \begin{cases}
            x- (T-t) \cdot f_l^{\prime}(\omega(x)), & \text{if \ $ \tau(x) \leq t \leq T$},\\
            \noalign{\medskip}
            (t-\tau(x)) \cdot f_r^{\prime}\circ \pi_{r,-}^l(\omega(x)),
            & \text{if\  $0 \leq t < \tau(x)$},
        \end{cases}
\end{equation}
with
\begin{equation}
\label{eq:tau3def}
        \tau(x)\doteq  T-\frac{x}{f_l^{\prime}(\omega(x))}\,,
\end{equation}
and set
\begin{equation}
\label{eq:phi4def}
    \phi(x)\doteq \vartheta_{x}(0)=-\tau(x)\cdot f'_r\circ  \pi_{r,-}^l(\omega(x))\,.
\end{equation}
Observe that 
\begin{equation}
\label{eq:char-rifracted-1}
    \begin{aligned}
        &\vartheta_{x}|_{\,]\tau(x), T]}\quad \text{is a genuine characteristic for u in the halfplane}\ \ \{x<0\}\,,
        \\
        \noalign{\smallskip}
        &\vartheta_{x}|_{\,]0,\tau(x)[\,}\quad \text{is a genuine characteristic for u in the halfplane}\ \ \{x>0\}\ \ \text{if}\ \ u_r(\tau(x))\leq \overline B\,,
    \end{aligned}
\end{equation}
and thus $\vartheta_x$ is a genuine characteristic for $u$ as $AB$-solution (see Remark~\ref{rem:char-ABsol}) only in the case where $u_r(\tau(x))\leq \overline B$.
Note also
that $\tau(x)$ is the  impact time of 
$\vartheta_{x}$ with the interface $x=0$,
and that the function $\tau$ has at most countably many discontinuity points as $\omega$.
Since genuine characteristics cannot intersect in the interior of the domain,
it follows that the right continuous extension of~$\tau$ is a nondecreasing map.
On the other hand,
because 
we are assuming that $\omega$ satisfies \eqref{eq:Hhyp} and that $(A,B)$ is a non critical connection, we know by Proposition~\ref{prop:norare} in Appendix~\ref{app:no-rarefaction} that
no pair of genuine characteristics can meet together on the interface $x=0$. Hence, we deduce that the right continuous extension of
the map $\tau$ is actually increasing
on
$]\ms L, \widetilde {\ms L}[$\,.

We will next show that the right continuous extension of the map $\phi$ 
is nondecreasing on~$]\ms L, \widetilde {\ms L}[$\,.
\smallskip

\noindent
\textbf{Step 2.} 
Consider two points 
$\ms L< x_1<x_2<\widetilde {\ms L}$ of continuity for $\omega$. By Step 1 we know that $\tau(x_1)<\tau(x_2)$. 
Moreover, by~\eqref{eq:2b-lr2-2} (established in \S~\ref{sec:2bl-r1r2}) we have $\omega(x)\leq A$ for all $x\in \,]\ms L, \widetilde {\ms L}[\,$.
Then, we shall provide a proof of
\begin{equation}
\label{eq:phi-decr}
    \phi(x_1)\leq \phi(x_2)
\end{equation}
considering different cases 
according to the fact that $\omega(x_i)=A$
or $\omega(x_i)<A$, $i=1,2$.

\textsc{Case 1:} $\omega(x_i)<A$, $i=1,2$.
Since $u$ is constant along genuine characteristics, and because of the $AB$-entropy condition~\eqref{ABtraces},
it follows that $u_r(\tau(x_i))=\pi_{r,-}^l(\omega(x_i))
    < \overline B$, $i=1,2$.
Therefore, by~\eqref{eq:char-rifracted-1} $\vartheta_{x_i}|_{\,]0,\tau(x_i)[\,}$, $i=1,2$,
are genuine characteristics
in the half plane $\{x>0\}$
starting at $(0, \tau(x_i))$,
which cannot intersect at positive times.
This implies $\phi(x_1)=\vartheta_{x_1}(0)\leq\vartheta_{x_2}(0)=\phi(x_2)$.

\textsc{Case 2:} $\omega(x_i)=A$, $i=1,2$.
By
definition~\eqref{eq:phi3def}
we know that $\vartheta_{x_i}|_{\,]0,\tau(x_i)[\,}$, $i=1,2$, are parallel lines 
(possibly not characteristics for $u$)
with slope $f'_r\big( \pi_{r,-}^l(A) \big)=f'_r(\overline B)$, 
starting at $(0, \tau(x_i))$.
Hence, $\tau(x_1)<\tau(x_2)$ implies 
$\phi(x_1)=\vartheta_{x_1}(0)<\vartheta_{x_2}(0)=\phi(x_2)$.
\smallskip

\textsc{Case 3:} $\omega(x_1) = A$, $\omega(x_2) < A$. 
Notice that,
by the monotonicity of $f'_l, f'_r$, the map
$$\big]-\infty, (f_l^{\prime})^{-1}(x_1/T)\big] \ni u \mapsto -\left(T-\frac{x_1}{f_l^{\prime}(u)}\right)\cdot f_r^{\prime}\circ \pi^l_{r,-}(u)$$
is decreasing. Then we have
\begin{equation}
    \begin{aligned}
\phi(x_1) & \leq -\left(T-\frac{x_1}{f_l^{\prime}(\omega(x_2))}\right) \cdot f_r^{\prime}\circ   \pi^l_{r,-}(\omega(x_2))\\
& \leq -\left(T-\frac{x_2}{f_l^{\prime}(\omega(x_2))}\right)\cdot f_r^{\prime}\circ  \pi^l_{r,-}(\omega(x_2)) = \phi(x_2)\,.
    \end{aligned}
\end{equation}
\smallskip

\textsc{Case 4:} $\omega(x_2) = A$, $\omega(x_1) <A$. Since $\omega(x_2) = A$,
it follows with the same arguments as above that $u_l(\tau(x_2)) = A$  and that either $u_r(\tau(x_2)) = \overline B$
or $u_r(\tau(x_2)) = B$. In the first case, 
because of~\eqref{eq:char-rifracted-1}
one can proceed as in Case 1 to deduce that
$\phi(x_1)\leq \phi(x_2)$. Then, assume $u_r(\tau(x_2)) = B$,
and set (see Figure \ref{x_1<x_2})
\begin{equation}
\label{eq:bar-t-def-22}
    \begin{aligned}
        &\overline t \doteq \inf \Big\{t\leq \tau(x_2) \; \big| \; (u_l(s),u_r(s)) = (A,B) \quad \forall s \in [t, \tau(x_2)]\Big\}.
    \end{aligned}
\end{equation}
Notice that since $\tau({x_1})< \tau({x_2})$
and because $u_l(\tau(x_1))<A$ implies $u_r(\tau(x_1))<\overline B$, it follows that $\overline  t \in \,]\tau({x_1}),\tau(x_2)]$. We claim that  it must hold 
\begin{equation}
\label{eq:urc4}
    u_r(\,\overline t\,) = \overline B\,.
\end{equation}
Towards a proof of~\eqref{eq:urc4}, notice first that, since $\omega(x)\leq A$ for all $x\in \,]\ms L, \widetilde {\ms L}[\,$,
it follows that $u_l(t)\leq A$ for all
$t\in [\tau(x_1), \tau(x_2)]$.
{ 
Because of  the $AB$-entropy condition~\eqref{ABtraces}
and by definition of $\overline t$,
this implies that there exists a sequence of times $t_n \uparrow \overline t$ such that $u_r( t_n) \leq \overline B$. 
Then, since $(A,B)$ is a non critical connection, we trace the 
backward characteristics from points $(0, t_n)$, with slope $f'_r(u_r(t_n))\leq f_r^\prime(\overline B)$. Using the stability of characteristics with respect to uniform convergence (see for example the proof of Lemma \ref{lemma:chara}), we thus find that
there is a backward characteristic with slope $\leq f_r^\prime(\overline B)$ starting from $(0, \overline t)$.  This immediately implies that 
}
\begin{equation}
\label{eq:urc5}
    u_r(\,\overline t\,) \leq  \overline B\,.
\end{equation}
Then, consider the blow ups 
\begin{equation}
\label{eq:bup-def}
    u_\rho(x,t) \doteq u(\rho x, \overline t+\rho(t-\overline t))
    \qquad\quad x\in\R, \ t\geq 0\,,
\end{equation} 
of $u$ at the point $(0, \overline t\,)$, as in the proof of Proposition~\ref{prop:norare}. When $\rho \downarrow 0$, the blow-ups $u_\rho(\cdot, t)$ converge in ${\bf L^1}_{\ms loc}$, up to a subsequence,  to a limiting $AB$-entropy solution $v(\cdot, t)$,
for all $t>0$.
Moreover, we have
\begin{equation}
\label{eq:indata-blup}
    v(x, \overline t\,)= 
\begin{cases}
    u_l(\,\overline t\,), & \text{if $x < 0$},\\
    u_r(\,\overline t\,), & \text{if $x > 0$}.
\end{cases} 
\end{equation}
By definitions~\eqref{eq:bar-t-def-22}, \eqref{eq:bup-def}, 
it holds true
\begin{equation}
\label{eq:tracesit-3}
    (u_{\rho,l}(t), u_{\rho,r}(t))=(A,B)\quad\qquad\forall~t\in\Big]\overline t, \, \overline t + \frac{\tau(x_2)-\overline t}{\rho}\Big[\,,
\end{equation}
where 
$u_{\rho,l}(t), u_{\rho,r}(t)$ denote the 
left and right traces of $u_\rho(\cdot, t)$ at $x=0$.
Taking the limit as $\rho \downarrow 0$
in~\eqref{eq:tracesit-3}, 
{  and invoking Corollary~\ref{cor:fluxtraces-stab}
(with $(A_n, B_n) = (A,B)$ for all $n$),
}
we deduce that 
{ 
\begin{equation}
\label{eq:tracesit}
 v(0-,t) \in \{A, \overline A\,\} \qquad\quad  v(0+, t) \in \{B, \overline B\,\}, \qquad \forall \; t >\overline t,
\end{equation}
}
while~\eqref{eq:urc5}, \eqref{eq:indata-blup} imply
\begin{equation}
\label{eq:tracesit-2}
 v(x,\overline t\,) 
 = u_r(\overline t\,) 
 \leq \overline B, \qquad \forall~x > 0\,.
\end{equation}
By a direct inspection
we find that, if 
an $AB$-entropy solution
of a Riemann problem for~\eqref{conslaw} with initial datum~\eqref{eq:indata-blup}
at time $\overline t$, enjoys the properties
~\eqref{eq:tracesit}-\eqref{eq:tracesit-2}, 
then the initial datum 
 on $\{(x,\overline t\,),\ x>0\}$ 
 must be 
$v(x,\overline t\,)=u_r(\overline t\,) 
 = \overline B$, thus proving~\eqref{eq:urc4}.

\begin{figure}[ht]
\centering
\begin{tikzpicture}[scale = 1.2]
\footnotesize{
\draw[->, thick] (-3.5,0)--(5,0)node[right]{$x$};
\draw[->, thick] (0,0)--(0,5)node[above]{$t$};
\draw[very thin] (-3.5,4)--(1.5,4)node[above]{$\omega$}--(5,4);

\draw[dotted] (-3.5,1.5)node[left]{$\overline  t$}--(0,1.5);
\draw[dotted] (-3.5,0.7)node[left]{$\tau(x_1)$}--(0,0.7);
\draw[dotted] (-3.5,2.5)node[left]{$\tau(x_2)$}--(0,2.5);
\draw[dashed]  (-3,4)node[above]{$x_1$}--(0,0.7);
\draw[dashed] (-1,4)node[above]{$x_2$}--(0,2.5);

\draw[smooth, tension = 0.8, thick] plot coordinates{(0,1.5)(0.1,2)(0.5,2.5)(0.6,3)};
\draw[dashed] (0,1.5)--(2.6,0)node[below]{$\textcolor{black}{-\overline t \cdot f_r^{\prime}(\overline B)}$};
\draw[dashed] (0,0.7)--(1.7,0);
\draw (1.4,0)node[below]{$\phi(x_1)$};

\draw [dashed] (0.1,3)--(0,2.9);
\draw  [dashed](0.2,3)--(0,2.8);
\draw [dashed](0.3,3)--(0,2.7);
\draw [dashed](0.4,3)--(0,2.6);
\draw  [dashed](0.5,3)--(0,2.5);
\draw [dashed](0.6,3)--(0,2.4);
\draw [dashed](0.55,2.85)--(0,2.3);
\draw [dashed](0.5,2.7)--(0,2.2);
\draw [dashed](0.45,2.55)--(0,2.1);
\draw [dashed](0.4,2.4)--(0,2);
\draw [dashed](0.1,2)--(0,1.9);

\draw[dashed, red] (0, 2.5)--(4,0)node[below]{\textcolor{black}{$\phi(x_2)$}};

}
\end{tikzpicture}
\caption{The situation described in  Case 4.}
\label{x_1<x_2}
\end{figure}

\noindent
Relying on~\eqref{eq:urc4} we can now complete
the proof of~\eqref{eq:phi-decr}. 
Since $u_r(\tau(x_1))<\overline B$, we know by~\eqref{eq:char-rifracted-1} that $\vartheta_{x_1}$ is a genuine characteristic 
in the halfplane $\{x>0\}$ starting at $(0, \tau(x_1))$.
On the other hand, because of~\eqref{eq:urc4} and since $(A,B)$ is a non critical connection, we can trace the  maximal backward characteristic from $(0, \overline t\,)$ in $\{x>0\}$, 
which has slope $f_r^{\prime}(\overline B)$
and reaches the $x$-axis at the point $-\overline t\cdot f'_r(\overline B)$.
Such a (genuine) characteristic cannot intersect at a positive time
the genuine characteristic $\vartheta_{x_1}$.
Therefore, one has
\begin{equation}
\label{eq:phi-est-45}
    \phi(x_1) = \vartheta_{x_1}(0)\leq -\overline t\cdot f_r^{\prime}(\overline B).
\end{equation}
Moreover, since $\overline t \leq  \tau(x_2)$,
and because $\pi_{r,-}^l(\omega(x_2))=\pi_{r,-}^l(A)=\overline B$,
we deduce 
$$-\overline t\cdot f_r^{\prime}(\overline B) \leq -\tau(x_2)\cdot f_r^{\prime}(\overline B) = \phi(x_2),$$
which together with~\eqref{eq:phi-est-45}, yields~\eqref{eq:phi-decr}.
This concludes the proof of the nondecreasing monotonicity of $\phi$ on
$]\ms L, \widetilde {\ms L}[$\,.
Invoking Lemma 4.4 in~\cite{anconachiri}, 
this is equivalent to the inequality  
$$
D^+\omega(x) \leq g[\omega, f_l, f_r](x), \qquad \forall \; x \in \,]\ms L, \tilde{\ms L}[\,,
$$
where $g$ is the function in~\eqref{eq:ghdef}.
This concludes the proof of~\eqref{eq:1b1}, and thus 
the proof that $\omega$ satisfies conditions (i)-(ii) of Theorem \ref{thm:attprofiles}
is completed.

\subsubsection{\rm 
($\ms L < 0$, $\ms R = 0$ or viceversa, proof of conditions (i)-(ii), or (i)$^\prime$-(ii)$^\prime$, of Theorem~\ref{thm:attprofiles2})}
\label{sec:othercases}
We consider only the case $\ms L < 0$, $\ms R = 0$, the other case $\ms L = 0$, $\ms R > 0$ being symmetrical. The proofs of~\eqref{eq:1a2},
\eqref{eq:1b2}, \eqref{eq:2b2}, \eqref{eq:2a2b}, \eqref{eq:3a2b} in this case, are   
entirely similar to the proofs of~\eqref{eq:1a}, \eqref{eq:1b1}, \eqref{eq:2b-lr2-2}, \eqref{eq:2a}, \eqref{eq:3b-lr2}, respectively,
in the case $\ms L<0$, $\ms R>0$.
We provide here only the proof 
of~\eqref{eq:2a2}, which is
the only new constraint arising 
in the case $\ms L < 0$, $\ms R = 0$,
that was not present
in the case $\ms L<0$, $\ms R>0$.
Notice first that
by~\eqref{eq:2b-lr2-2} (established in \S~\ref{sec:2bl-r1r2}) we know that $\omega(x)\leq A$ for all $x\in \,]\ms L, 0[\,$.
Hence, since the connection $(A,B)$
is non critical, tracing the backward characteristics (with negative slope) 
in the half plane $\{x<0\}$
from any sequence of points 
$(x_n,T)$, $x_n\in \,]\ms L, 0[\,$,
$x_n\uparrow 0$,
we deduce that there exists the one-sided limit $u_l(T-)$ 
and it holds true 
\begin{equation}
\label{eq:ul-omega-A}
    u_l(T-)=\omega(0-)\leq A\,.
\end{equation}
Then, we will distinguish two cases.
\smallskip

\textsc{Case 1:} Assume that $u_r(t)\geq B$ for all $t\in \,]\tau, T[$\,,
for some $\tau<T$. Then, by the $AB$-entropy condition~\eqref{ABtraces}, and because of~\eqref{eq:ul-omega-A},
we deduce that $\omega(0-)=u_l(T-)=A$. On the other hand,
since $(A,B)$ is a non critical connection,
by definition~\eqref{eq:LR-def} it follows that 
$\ms R=0$ implies $f'_r(\omega(0+))<0$. Therefore we have
$\omega(0+)\leq\overline B = \pi^l_{r,-}(A)=\pi^l_{r,-}(\omega(0-))$, proving~\eqref{eq:2a2}.
\smallskip

\textsc{Case 2:} 
Assume that there exists a sequence of times $t_n \uparrow T$ such that $u_r(t_n)\leq \overline B$ for all $n$, and such that 
$\lim_n u_r(t_n) = u^*$, for some $u^*\leq \overline B$.
By the $AB$-entropy condition~\eqref{ABtraces}
we may also assume
that $u_r(t_n)=
\pi^l_{r,-}(u_l(t_n))$ for all $n$.
Therefore, relying on~\eqref{eq:ul-omega-A}, we find
$u^*=\lim_n \pi^l_{r,-}(u_l(t_n))
=\pi^l_{r,-}(\omega(0-))$.
On the other hand we have 
$\omega(0+)\leq u^*$, since
otherwise 
backward genuine characteristics
issuing from points $(x_n,T)$,
$x_n \downarrow 0$, would eventually cross backward genuine characteristics 
in the half plane $\{x>0\}$
starting from
points $(0, t_n)$.
In fact, if $\omega(0+)> u^*$
then 
we can find points $(x_n,T)$,
$x_n>0$ ($x_n$ point of continuity for $\omega$), and
$(0,t_n)$, $t_n<T$ ($t_n$ point of continuity for $u_r$), such that $\omega(x_n)>u_r(t_n)$,
which would imply that the backward characteristic 
starting from
$(x_n,T)$ with negative slope
$f'_r(\omega(x_n,T))$
intersect the backward characteristic 
starting from $(0,t_n)$
with slope $f'_r(u_r(t_n))<f'_r(\omega(x_n,T))$.
Therefore it must be 
$\omega(0+)\leq u^*$, which
together with $u^*=\pi^l_{r,-}(\omega(0-))$, yields~\eqref{eq:2a2}.

This concludes the proof of~\eqref{eq:2a2}, and thus 
the proof that $\omega$ satisfies conditions (i)-(ii) 
\linebreak 
(or (i)$^\prime$-(ii)$^\prime$) of Theorem~\ref{thm:attprofiles2}
is completed.

\subsubsection{\rm 
($\ms L = 0$, $\ms R = 0$, proof of conditions (i)-(ii) of Theorem~\ref{thm:attprofiles3})}
\label{sec:othercases-2}
The proofs of~\eqref{eq:1a3},  
\eqref{eq:2a3-bis}, are entirely similar to the proofs of~\eqref{eq:1a}, 
\eqref{eq:2a}, 
in the case $\ms L<0$, $\ms R>0$,
and of
\eqref{eq:2a2}, 
in the case $\ms L=\ms R=0$,
respectively.
Further, \eqref{eq:2a3}
can be established with the same arguments of
the proof of~\eqref{eq:2a} in the case $\ms L<0$, $\ms R>0$,
recalling 
Remark~\ref{rem:constr-LR=0}.
This completes the proof that $\omega$ satisfies conditions (i)-(ii) of Theorem~\ref{thm:attprofiles3}.

 \subsection{Part 1.b - $(1) \Rightarrow (3)$ for non critical connections  without assuming~\eqref{eq:Hhyp}}
 \label{subsec:reductionBV}
In this Subsection,
given an element $\omega$
of the set $\mc A^{[AB]}(T)$
for a non critical connection
$(A, B)$, 
we will show that
$\omega$ satisfies \eqref{eq:Hhyp}.
In view of the analysis in \S~\ref{sec:(1)-(3)},
this will imply that
$\omega$ fulfills condition~(3) of Theorem~\ref{thm:backfordiscfluxcycle},
thus completing the proof of the implication $(1) \Rightarrow (3)$
of Theorem~\ref{thm:backfordiscfluxcycle}.

Then, given $\omega\in \mc A^{[AB]}(T)$
with 
\begin{equation}
\label{eq:att-prof-25}
    \omega = \mc S_T^{[AB]+}u_0, \qquad u_0 \in \mathbf L^{\infty}(\mathbb R)\,,
\end{equation}
set $u(x,t)\doteq S_t^{[AB]+}u_0(x)$, $x\in\R$, $t\geq 0$.
%
Next, let $\{u_{n,0}\}_n$ be a sequence of
functions in $BV(\R)$ 
such that
$$
u_{n,0} \ \ \to \ \ u_0 \quad \text{in}\quad  \mathbf L^1_{\ms loc}(\R)\,,
$$
and define $u_n(x,t) \doteq \mc S_t^{[AB]+} u_{n,0}(x)$.
Then, by Theorem~\ref{theoremsemigroup}-(iii) 
it follows
\begin{equation}
\label{eq:untconv}
    \begin{aligned}
    u_n(\cdot, t) \  &\rightarrow\ \ u(\cdot, t)\qquad\text{in}\quad
    \bf L^1_{\mr{loc}}(\R)\qquad \forall~t\geq 0\,.
    %
\end{aligned}
\end{equation}
Since $(A, B)$ is a non critical connection
and because the initial data $u_{n,0}$
are in $BV$, invoking the BV bounds on $AB$-entropy solutions 
provided in~\cite[Lemma 8]{Garavellodiscflux}
(see also~
\cite[Theorem 2.13-(iii)]{MR2743877}),
we deduce that $u_n(\cdot, t)\in BV(\R)$
for all $t>0$, and for all $n$.
Therefore, 
\begin{equation*}
    u_n(\cdot, t)\in \mc A^{[AB]}(t)\,,\quad \text{and \, satisfies~~\eqref{eq:Hhyp}}\quad\forall~t>0\,,\ \ \forall~n\,.
\end{equation*}
Hence, relying on the analysis in \S~\ref{sec:(1)-(3)}, 
and recalling~\eqref{eq:att-set-decomp},
we know that,
setting
\begin{equation}
\label{eq:LR-n-def-33}
    \ms L_n(t) \doteq \ms L[u_n(\cdot, t),f_l]\,,
      \quad\qquad \ms R_n(t) \doteq  \ms R[u_n(\cdot, t), f_r]\,,
\end{equation}
each 
$u_n(\cdot, t)$ satisfies the
conditions stated in:
\begin{itemize}
    \item[-] Theorem~\ref{thm:attprofiles} \ if \ $\ms L_n(t)<0, \ \ms R_n(t)>0$;
    \item[-] Theorem~\ref{thm:attprofiles2} \ if \ $\ms L_n(t)<0, \ \ms R_n(t)= 0$ \ or viceversa;
    \item[-] Theorem~\ref{thm:attprofiles3} \ if \ $\ms L_n(t)=0$, \ $R_n(t)= 0$.
    \end{itemize}
Thus, in particular, $u_n(\cdot, t)$
satisfies the Ole\v{\i}nik-type inequalities 
\begin{equation}
\label{eq:ol-est-un-1}
    \begin{aligned}
     D^+ u_n(x, t) &\leq \frac{1}{t\cdot f''_l(u_n(x,t))}  \qquad\quad
     \text{in}\quad  \,]\!-\!\infty, \ms L_n(t)[\,,
     \\
       \noalign{\smallskip}
        D^+ u_n(x, t) &\leq g[u_n(\cdot, t), f_l, f_r](x)  \quad \ \,
        \text{in} \quad \,]\ms L_n(t), 0[\,,
        \quad\text{if}\quad \ms L_n(t)<0\,,
        \\
       \noalign{\medskip}
D^+ u_n(x, t) &\leq h[u_n(\cdot, t), f_l, f_r](x)   \quad\ \, 
\text{in}\quad \,]0,\ms R_n(t)[\,,
\quad\text{if}\quad \ms R_n(t)>
0\,,
\\
   \noalign{\smallskip}
    D^+ u_n(x, t) &\leq \frac{1}{t\cdot f''_r(u_n(x,t))}  \qquad \quad  \,\text{in}\quad  \,]\ms R_n(t), +\infty[\,,
    \end{aligned}
\end{equation}
and the constraints
\begin{equation}
\label{eq:constr-n-2}
\begin{aligned}
      u_n(x, t)&\leq A\qquad \forall~x\in \,]\ms L_n(t), 0[\,,
      \\
      \noalign{\medskip}
      u_n(x, t)&\geq B\qquad \forall~x\in \,]0, \ms R_n(t)[\,,
\end{aligned}
\end{equation}
for all $t>0$.
Since~\eqref{eq:constr-n-2} implies $f'_r(u_n(x,t))\geq f'_r(B)$
for all $x\in\,]0,\ms R_n(t)[$\,,
by the monotonicity of $f'_r$,
we find
\begin{equation}
\label{eq:lwhn-1}
\begin{aligned}
     t\cdot f'_r(u_n(x,t))-x
\geq  \frac{t\cdot f'_r(B)}{2}\qquad\  \forall~x\in \left[0,\, \min\bigg\{\ms R_n(t),\, \frac{t\cdot f'_r(B)}{2}
    \bigg\}\right[\,.
\end{aligned}
\end{equation}
Therefore,
recalling definition~\eqref{eq:ghdef},
setting 
$\overline \Lambda\doteq \sup_{|z|\leq M}  \max \{|f'_l(z)|, |f'_r(z)|\}$, with
$M$ being a uniform ${\bf L^\infty}$ bound
for $u_n$,
and letting $a$ be the lower bound on $f''_l, f''_r$
given in~\eqref{eq:flux-assumption-1},
we deduce from~\eqref{eq:lwhn-1},
that, 
if 
\begin{equation*}
\ms R_n(t)\leq \frac{t\cdot f'_r(B)}{2}\,,
\end{equation*}
then for all $n$ it holds true
\begin{equation}
   \label{eq:ol-est-un-212}
    \qquad  h[u_n(\cdot, t), f_l, f_r](x)\leq 
     \dfrac{{\overline \Lambda}^{\,2}}{a\, f'_r(B)\big(t\cdot f'_r(u_n(x,t))-x\big)}
     \leq 
     \dfrac{2}{a\, t}\cdot \bigg(\dfrac{\overline \Lambda }{f'_r(B)}\bigg)^{\!2}\qquad \forall~x\in [0, \ms R_n(t)[\,,
\end{equation}
while if 
\begin{equation*}
\ms R_n(t)> \frac{t\cdot f'_r(B)}{2}\,,
\end{equation*}
then for all $n$ it holds true
\begin{equation}
    \label{eq:ol-est-un-21}
     h[u_n(\cdot, t), f_l, f_r](x)\leq 
       \begin{cases}
     \dfrac{{\overline \Lambda}^{\,2}}{a\, f'_r(B)\big(t\cdot f'_r(u_n(x,t))-x\big)}
     \leq 
     \dfrac{2}{a\, t}\cdot \bigg(\dfrac{\overline \Lambda}{f'_r(B)}\bigg)^{\!2}\qquad \forall~x\in \left[0,\, \frac{t\cdot f'_r(B)}{2}\right]\,,
    \\
    \noalign{\bigskip}
     \dfrac{\overline \Lambda}{x\, a}~\leq~  
     \dfrac{2\, \overline \Lambda}{a\, t\!\cdot\! f'_r(B)}
     \qquad \qquad 
    \forall~x\in \left[\frac{t\cdot f'_r(B)}{2},\, \ms R_n(t)\right[\,.
    \end{cases}
    \end{equation} 
Hence, 
we derive from~\eqref{eq:ol-est-un-1}, \eqref{eq:ol-est-un-212}, \eqref{eq:ol-est-un-21}, the uniform bounds
\begin{equation}
    \label{eq:ol-est-un-32}
    \begin{aligned}
D^+ u_n(x, t) &\leq 
\dfrac{2\, \overline \Lambda}{a\, t\!\cdot\! f'_r(B)}\cdot\max\bigg\{
1,\, \dfrac{\overline \Lambda}{f'_r(B)}
\bigg\}
\quad
\text{in}\quad \,]0,\ms R_n(t)[\,,
\qquad\text{if}\quad \ms R_n(t)>
0\,,
\\
   \noalign{\smallskip}
    D^+ u_n(x, t) &\leq \frac{1}{t\cdot a}  \quad   \text{in}\quad  \,]\ms R_n(t), +\infty[\,,
    \end{aligned}
\end{equation}
for all n. 
Since $(A,B)$ is a non critical connection, for every fixed $\delta>0$,
the one-sided uniform upper bounds 
provided by~\eqref{eq:ol-est-un-32} 
yield  uniform bounds 
on the total increasing variation (and hence on the total variation as well) of $u_n(t)$\,, $t\geq \delta$, on bounded subsets 
of $[0, +\infty[$. 
Thus, by the lower-semicontinuity of the total variation with respect to  the $\bf L^1_{\blu{\mr{loc}}}$ convergence,
and because of~\eqref{eq:untconv},
we find that
\begin{equation}
\label{eq:bound-tv-nc-2}
u(\cdot, t)\in BV_{\mr{loc}}([0, +\infty[\,),\qquad \blu{\forall~t\geq\delta}\,.
\end{equation}
With the same type of arguments, relying on~\eqref{eq:ol-est-un-1}, \eqref{eq:constr-n-2}, we can show that
\begin{equation}
\label{eq:bound-tv-nc-3}
u(\cdot, t)\in BV_{\mr{loc}}(\,]\!-\infty, 0]),\qquad \blu{\forall~t\geq\delta}\,.
\end{equation}
Therefore, we deduce from~
\eqref{eq:bound-tv-nc-2},
\eqref{eq:bound-tv-nc-3}, that
\begin{equation}
\label{eq:bound-tv-nc-4}
u(\cdot, t)\in BV_{\mr{loc}}(\R)\qquad\forall~t>0\,,
\end{equation}
which shows that the function $\omega$ in~\eqref{eq:att-prof-25}
satisfies condition~\eqref{eq:Hhyp}, thus completing the proof of 
the implication $(1) \Rightarrow (3)$  of Theorem~\ref{thm:backfordiscfluxcycle} in the case of a non critical connection.

\subsection{Part 2.a - $(3) \Rightarrow (1)$ for non critical connections.}\label{sec:(3)-(2)}
In this Subsection, given 
\begin{equation}
\label{eq:LR-def-24}
    \omega\in \msc A^{\ms L, \ms R},\qquad\quad 
    \ms L \doteq \ms L[\omega, f_l]< 0, \quad \ms R \doteq \ms R[\omega, f_r] > 0\,,
\end{equation}
($\msc A^{\ms L, \ms R}$ being the set in~\eqref{eq:ALR-def1}), 
assuming that 
\begin{equation}
\label{eq:hyp-3a2}
    \omega \quad  \text{satisfies conditions (i)-(ii) of Theorem \ref{thm:attprofiles}}, 
\end{equation}
we will
show that $\omega \in \mc A^{AB}(T)$ by explicitly constructing an $AB$-entropy solution attaining $\omega$ at time $T$. 
With entirely similar arguments one can show that
the same conclusion hold assuming that  $\omega\in \msc A^{\ms L, \ms R}$:
\begin{itemize}
    \item[-] satisfies the conditions of Theorem \ref{thm:attprofiles2}, if $\ms L = 0$, $\ms R > 0$ or viceversa;
    \item[-] satisfies the conditions of Theorem \ref{thm:attprofiles3}, if $\ms L = 0$, $\ms R = 0$.
\end{itemize}
%
\medskip

Then, consider $\omega$ satisfying~\eqref{eq:LR-def-24},
\eqref{eq:hyp-3a2}.
By Remark \ref{rem:threecases}
we can distinguish six cases 
of pointwise constraints on $\omega$, prescribed by condition (ii)
of Theorem~\ref{thm:attprofiles},
which depend on the reciprocal positions of the points
$\ms L$,  $\ms R$,
and
$\widetilde{\ms L}$, 
$\widetilde{\ms R}$, defined in~\eqref{eq:LR-def}-\eqref{eq:Ltildedef}. 
We shall consider here only the \textsc{Cases 1} and \textsc{2} discussed in Remark~\ref{rem:threecases}. The \textsc{Cases 1b, 2b} are symmetrical to \textsc{Cases 1, 2}, up to a change of variables $x \mapsto -x$,
while the \textsc{Cases 3, 4} are entirely similar or simpler.

Notice that, by Remark \ref{rem:threecases},
in \textsc{Case 1} it holds true~\eqref{eq:case-i-A}, \eqref{eq:case-i-B},
and in particular we shall assume that
\begin{equation}
\label{eq:ur-constr}
    \omega(\ms R+) < \bs u[\ms R, B, f_r]\,,
\end{equation}
while in \textsc{Case 2} it holds true~\eqref{eq:case-i-B}, \eqref{eq:case-ii-A},
and we shall assume that~\eqref{eq:ur-constr} is
verified together with
\begin{equation}
\label{eq:vl-constr}
    \omega(\ms L-) > \bs v[\ms L, A, f_l]\,.
\end{equation}
The cases in which $\omega(\ms R+) = \bs u[\ms R, B, f_r]$ or $\omega(\ms L-) = \bs v[\ms L, A, f_l]$ can be treated with entirely similar or simpler arguments. 
Moreover, in both \textsc{Cases 1} and \textsc{2} we have
\begin{equation}
\label{eq:hyp-cases-1-2}
    \widetilde {\ms L} > \ms L\,,
    \qquad\quad \omega(\,\widetilde{\ms L} -)=\omega(\,\widetilde{\ms L} +)\,.
\end{equation}
The construction of the initial datum $u_0$
so that the corresponding $AB$-entropy solution solution $u(x,t)\doteq \mc S_t^{[AB]+}u_0 (x)$ attains the value $\omega$
at time $T$ follows a  by now  standard procedure (see \cite{anconachiri}, \cite{anconamars}), that we describe in \S~\ref{subsec:const-ABsol}-\ref{subsec:const-inda} below. To this end we first introduce some technical notations
in~\S~\ref{subsec:charact}-\ref{subsec:partition}.

\subsubsection{Characteristics of compression waves}
\label{subsec:charact}
We introduce a class of
curves connecting two points
 $(z,0)$, $(y,T)$,
that will be treated as characteristics
of compression waves
generating a shock
at the point $(y,T)$.
In particular, in the case $y< 0 < z$,
such curves will be
characteristics of a compression wave
that starts at time $t=0$ on the half plane $\{z\geq 0\}$,
and generates a shock
at time $t=T$
after being refracted at
the discontinuity interface.
Given any  $y<0$,
consider the 
continuous function
$$
]-\infty, (f_l^{\prime})^{-1}(y/T)] \ni u \mapsto h_y(u) \doteq  -\Big(T-\frac{y}{f_l^{\prime}(u)}\Big)\cdot f_r^{\prime}\circ \pi^{l}_{r,-}(u).
$$
Notice that, by definition~\eqref{pimap-def} and since $f'_l, f'_r$ are increasing functions, 
it follows that \linebreak $u\mapsto -(T-y/f'_l(u))$, $u\mapsto f_r^{\prime}\circ \pi^{l}_{r,-}(u)$ are decreasing maps, and hence the map $h_y$ is decreasing as well. On the other hand we have
 $\displaystyle{\lim_{u\to-\infty}}h_y(u) = +\infty$, $h_y((f_l^{\prime})^{-1}(y/T)-) = 0$.
Therefore by a continuity and monotonicity argument, it follows that,
for every $z>0$, there exists a unique state $u_{y,z}\leq (f_l^{\prime})^{-1}(y/T)$, such that 
\begin{equation}
\label{eq:hyz-def}
h_y(u_{y,z}) = z\,.
\end{equation}
Moreover, the map $z\mapsto u_{y,z}$, $z>0$
is continuous.
Then, for every pair
$y< 0 < z$,
we denote by $\eta_{y,z}:[0,T] \mapsto \mathbb R$ the polygonal line given by 
\begin{equation}
\label{eq:poly-line-def1}
        \eta_{y,z}(t) \doteq \begin{cases}
            y-(T-t)\cdot f_l^{\prime}(u_{y,z}), & \  \ \text{if \ $\tau(y,z) < t\leq  T$},\\
            \noalign{\smallskip}
            (t-\tau({y,z}))\cdot f_r^{\prime}\circ \pi^l_{r,-}(u_{y,z}), & \ \ \text{if \ $0 \leq t\leq  \tau(y,z)$},
        \end{cases}
\end{equation}
where 
\begin{equation}
\label{eq:tau-def-23}
    \tau(y,z)\doteq T-\frac{y}{f_l^{\prime}(u_{y,z})}.
\end{equation}
\blu{Next}, for every pair
$y,z < 0$, or $y,z > 0$, we 
denote by $\eta_{y,z}:[0,T] \to \mathbb R$ the segment 
\begin{equation}
\label{eq:poly-line-def2}
    \eta_{y,z}(t) \doteq  y-(T-t)\cdot \dfrac {(y-z)}{T}\qquad \ \forall~0\leq t\leq T\,.
\end{equation}
Notice that, 
in the case $y< 0 < z$,
if we consider a function $u(x,t)$ that 
assumes the values 
\begin{equation*}
    \begin{aligned}
        u= u_{y,z}\qquad &\text{on the segment}\quad 
        \eta_{y,z}(t), \ \tau(y,z) < t\leq  T,
        \\
        u=\pi^l_{r,-}(u_{y,z})
        \qquad &\text{on the segment}\quad 
        \eta_{y,z}(t), \ 0\leq t\leq \tau(y,z),
    \end{aligned}
\end{equation*}
then the states $u_l=u_{y,z}$, $u_r=\pi^l_{r,-}(u_{y,z})$ satisfy the interface entropy condition~\eqref{ABtraces}
at time $t=\tau(y,z)$, and $\eta_{y,z}$
enjoys the properties of a (genuine) characteristic for $u$ as
an $AB$-entropy solution
(see Remark~\ref{rem:char-ABsol}).
Similar observations hold for $\eta_{y,z}$
in the case $y,z < 0$, considering a function
$u(x,t)$ that 
assumes the value $(f'_l)^{-1}((y-z)/T)
= u_{y,z}$
along the segment $\eta_{y,z}$,
and in the case $y,z > 0$, considering a function
$u(x,t)$ that 
assumes the value $(f'_r)^{-1}((y-z)/T)= u_{y,z}$
along the segment $\eta_{y,z}$.


\subsubsection{Maximal/minimal backward characteristics}
\label{subsec:minmaxbackcharact}
We introduce a class of
curves with end point~$(y,T)$
that will be treated as 
maximal and minimal 
backward characteristics
starting at $(y,T)$.
For every $y \in \,]-\infty, \widetilde{\ms L}\,]\,\cup \,]\ms R, +\infty[$\,, we denote by  $\vartheta_{y, \pm}:[0,T] \to \mathbb R$ the segments or polygonal
lines 
\begin{equation}
\label{eq:pollines}
     \vartheta_{y, \pm}(t) \doteq \!\begin{cases}
y-(T-t) \cdot f_l^{\prime}\big(\omega(y\pm)\big), & \text{ if \ $y < \ms L, \quad 0\leq t \leq T$},\\
\noalign{\smallskip}
  y-(T-t) \cdot f_l^{\prime}\big(\omega(y\pm)\big), & \text{ if \ $\ms L \leq y \leq \widetilde {\ms L}, \quad \tau_{\pm}(y)
  \leq t \leq  T$},\\
  \noalign{\smallskip}
 \big(t-\tau_{\pm}(y)\big)
 \cdot f_r^{\prime}\circ\pi^l_{r,-}(\omega(y\pm)), & \text{ if \ $\ms L \leq y \leq \widetilde {\ms L}, \quad 0 \leq t < \tau_{\pm}(y),
 $}\\
 \noalign{\smallskip}
    y-(T-t) \cdot f_r^{\prime}\big(\omega(y\pm)\big), & \text{ if \ $y> \ms R, \quad 0\leq t \leq T$},
        \end{cases}
\end{equation}\
where 
\begin{equation}
\label{eq:tau-def-24}
    \tau_{\pm}(y)\doteq T-\frac{y}{f_l^{\prime}(\omega(y\pm))}.
\end{equation}
We will write $\vartheta_{y}(t)\doteq \vartheta_{y, \pm}(t)$ for all $t\in [0,T]$, whenever $\omega(y-)=\omega(y+)$.
In particular, because o\blu{f}~\eqref{eq:hyp-cases-1-2}, we have $\vartheta_{\widetilde{\ms L}}(t)\doteq \vartheta_{\widetilde{\ms L}, \pm}(t)$.
\blu{Further}, for $y=\ms R$, we denote 
 by  $\vartheta_{\ms R, +}:[0,T] \to \mathbb R$ the segment
\begin{equation}
    \label{eq:pollines-R}
     \vartheta_{\ms R, +}(t) \doteq 
     \ms R -(T-t) \cdot f_r^{\prime}\big(\omega(\ms R+)\big)
     \qquad\forall~0\leq t \leq T\,.
\end{equation}
Notice that, because of definition~\eqref{eq:LR-def}, \eqref{eq:LR-def-24}, whenever $y\in \,]-\infty, {\ms L}\,[\,\cup \,]\ms R, +\infty[$, the curves $\vartheta_{y, \pm}$ are segments that never cross the interface
$\{x=0\}$, 
instead for all $y\in  \,]\ms L,\, \widetilde {\ms L}]$, $\vartheta_{y, \pm}$ 
are polygonal lines that are refracted 
at $\{x=0\}$\,.
Moreover, at every point of discontinity $y\in \,]-\infty, {\ms L}\,[\,\cup \,]\ms R, +\infty[$ of $\omega$, conditions~\eqref{eq:1a}, \eqref{eq:1b1}
imply the Lax condition $\omega(y-)>\omega(y+)$, which in turn,
by the monotonicity of $f'_l, f'_r$, implies
\begin{equation}
\label{eq:lax-start-char}
    \vartheta_{y, -}(0)< 
    \vartheta_{y, +}(0)\qquad\quad
    \forall~y\in \,]-\infty, {\ms L}\,[\,\cup \,]\ms R, +\infty[\,.
\end{equation}

As in \S~\ref{subsec:charact},
observe that in the case $\ms L < y \leq \widetilde {\ms L}$,
if we consider a function $u(x,t)$ that 
assumes the values 
\begin{equation*}
    \begin{aligned}
        u= \omega(y\pm)\qquad &\text{on the segment}\quad 
        \vartheta_{y, \pm}(t), \ \tau_\pm(y) < t\leq  T,
        \\
        u=\pi^l_{r,-}(\omega(y\pm))
        \qquad &\text{on the segment}\quad 
        \vartheta_{y, \pm}(t), \ 0\leq t\leq \tau_\pm(y),
    \end{aligned}
\end{equation*}
than $\vartheta_{y, \pm}$
enjoys the properties of a maximal/minimal backward characteristic for $u$ as
an $AB$-entropy solution that attains the value $\omega$ at time $T$.
Similar observations hold for $\vartheta_{y, \pm}$
in the case $y<\ms L$ or $y\geq \ms R$, 
considering a function $u(x,t)$ that 
assumes the value $\omega(y\pm)$ along $\vartheta_{y, \pm}$.


\subsubsection{Partition of $\R$}
\label{subsec:partition}
The initial datum will be defined in a different way on different intervals of
 the following partition of $\mathbb R$ (see Figure~\ref{partition}):
\begin{equation}
\label{eq:partition}
    \begin{aligned}
        \mc I_{{\ms L}} &\doteq  \Big\{ x \in \mathbb R \; \big| \;  \vartheta_{\ms L,-}(0) < x < \vartheta_{\ms L,+}(0)\Big\}, 
        \\
        \mc I_{{\ms R}} &\doteq  \Big\{x \in \mathbb R \; \big| \;  -\bs y[\ms R, B, f_r](T)< x < \ms R-T \cdot f_r^{\prime}(\omega(\ms R+))\Big\},
        \\
        \mc I_{\mc C} &\doteq  \Big\{x \in \mathbb R \setminus  (\mc I_{\ms L} \cup \mc I_{\ms R} )\; \big| \; \nexists \; y \in \mathbb R   \; : \; \vartheta_{y,+}(0) = x \; \text{or}\;\vartheta_{y,-}(0) = x \Big\},
        \\
        \mc I_{\mc Ra} &\doteq  \Big\{x \in \mathbb R \setminus  (\mc I_{\ms L} \cup \mc I_{\ms R} )\; \big| \; \exists \; y< z   \; : \; \vartheta_{y,+}(0) = \vartheta_{z,-} (0) = x\Big\},
        \\
        \mc I_{\mc W} &\doteq  \Big\{x \in \mathbb R \setminus  (\mc I_{\ms L} \cup \mc I_{\ms R} )\; \big| \; \exists ! \; y \in \mathbb R   \; : \; \vartheta_{y,+}(0) = x \; \text{or}\;\vartheta_{y,-}(0) = x \Big\},
    \end{aligned}
\end{equation}
where $\bs y[\ms R, B, f_r](T)$
is defined as in~\S~\ref{defi:ur}
with $f=f_r$. Notice that the set $\mc I_{\ms R}$ is non empty because 
the increasing monotonicity of $f'_r$,
together with~\eqref{eq:urbf-def}, \eqref{eq:ur-constr},
implies 
\begin{equation*}
    f'_r(\omega(\ms R+))<f'_r\big(\bs u[\ms R, B, f_r]\big)=\frac{\ms R+\bs y[\ms R, B, f_r](T)}{T}\,.
\end{equation*}

\begin{figure}[ht]
\centering
\footnotesize{
\begin{tikzpicture}[scale = 0.8]

\draw (0,1.5)--(1.5,0);
\draw[->] (-6.5,0)--(6.5,0)node[above]{$x$};
\draw[->] (0,0)--(0,4.5);
\draw[ultra thin] (-6.5,4)--(6.5,4);

\draw (-3.5,4)node[above]{$\ms L$}--(0,1.5)--(1.5,0);
\draw (-3.5,4)--(-4,0);
\draw  (2,4)--(0,3.5);
\draw (2,4)node[above]{$\ms R$}--(4,0);

\draw (-0.3,4)node[above]{$A$};
\draw (0.5,4)node[above]{$B$};
\draw (-2,0)node[below]{$\mc I_{\ms L}$};
\draw (3,0)node[below]{$\mc I_{\ms R}$};

\draw[dashed] (-3.5,4)--(0,0.25);
\draw[dashed] (-3.5,4)--(0,0);
\draw[dashed] (-3.5,4)--(-0.2,0);
\draw[dashed] (-3.5,4)--(-0.4,0);
\draw[dashed] (-3.5,4)--(-0.6,0);
\draw[dashed] (-3.5,4)--(-0.8,0);
\draw[dashed] (-3.5,4)--(-1,0);
\draw[dashed] (-3.5,4)--(-1.2,0);
\draw[dashed] (-3.5,4)--(-1.4,0);
\draw[dashed] (-3.5,4)--(-1.6,0);
\draw[dashed] (-3.5,4)--(-1.8,0);
\draw[dashed] (-3.5,4)--(-2,0);
\draw[dashed] (-3.5,4)--(-2.2,0);
\draw[dashed] (-3.5,4)--(-2.4,0);
\draw[dashed] (-3.5,4)--(-2.6,0);
\draw[dashed] (-3.5,4)--(-2.8,0);
\draw[dashed] (-3.5,4)--(-3,0);
\draw[dashed] (-3.5,4)--(-3.2,0);
\draw[dashed] (-3.5,4)--(-3.4,0);
\draw[dashed] (-3.5,4)--(-3.6,0);
\draw[dashed] (-3.5,4)--(-3.8,0);

\draw[dashed] (-3.5,4)--(0,0.5)--(0.2,0);
\draw[dashed] (-3.5,4)--(0,0.7)--(0.5,0);
\draw[dashed] (-3.5,4)--(0,0.9)--(0.8,0);
\draw[dashed] (-3.5,4)--(0,1.1)-- (1.1,0);
\draw[dashed] (-3.5,4)--(0,1.3)-- (1.3,0);

\draw[dashed] (2.2,4)--(4.2,0);
\draw[dashed] (2.4,4)--(4.4,0);
\draw (2.6,4)--(4.6,0);
\draw[dashed] (2.6,4)--(4.8,0);
\draw[dashed] (2.6,4)--(5,0);
\draw[dashed] (2.6,4)--(5.2,0);
\draw[dashed] (2.6,4)--(5.4,0);
\draw (2.6,4)--(5.6,0);
\draw (5.2,0) node[below]{$\mc I_{\mc C}$};

\draw[dashed] (3, 4)--(5.8, 0);
\draw[dashed] (3.4, 4)--(6, 0);
\draw[dashed] (3.8, 4)--(6.2, 0);

\draw[dashed] (-3.7, 4)--(-4.2, 0);
\draw[dashed] (-3.9, 4)--(-4.4, 0);
\draw[dashed] (-4.1, 4)--(-4.6, 0);
\draw[dashed] (-4.3, 4)--(-4.85, 0);
\draw[dashed] (-4.5, 4)--(-5.1, 0);

\draw (-4.6, 0) node[below]{$\mc I_{\mc W}$};

\draw (-1.5,4)node[above]{$\tilde{\ms L}$}--(0,2.5)--(2,0);
\draw[dashed] (-2,4)--(0,2.3)--(1.9,0);
\draw [dashed](-2.5,4)--(0,2.1)-- (1.8,0);
\draw[dashed] (-3,4)--(0,1.9)-- (1.7,0);
\draw[dashed] (-3.5,4)--(0,1.7)-- (1.6,0);

\draw[dashed] (-1.25,4)--(0,2.75);
\draw[dashed] (-1,4)--(0,3);
\draw[dashed] (-0.75,4)--(0,3.25);
\draw[dashed] (-0.5,4)--(0,3.5);
\draw[dashed] (-0.25,4)--(0,3.75);
\draw[dashed] (1,4)--(0,3.75);

\draw[smooth, thick] plot coordinates{(0,2.5)(0.2,3)(1,3.6)(2,4)};

\draw (2,4)--(2,0);

\draw[dashed] (0,2.75)--(0.1,2.85);
\draw[dashed] (0,3)--(0.4,3.15);
\draw[dashed] (0,3.25)--(0.8,3.55);

\draw[dashed] (0.2,3)--(2,0);
\draw[dashed] (1.5,3.8)--(2,0);
\draw[dashed] (0.5,3.3)--(2,0);

\draw[dashed] (1,3.6)--(2,0);

\draw[dashed] (2,4)--(2.25,0);
\draw[dashed] (2,4)--(2.5,0);
\draw[dashed] (2,4)--(2.75,0);
\draw[dashed](2,4)--(3,0);
\draw[dashed](2,4)--(3.25,0);
\draw[dashed] (2,4)--(3.5,0);
\draw[dashed] (2,4)--(3.75,0);

\draw (6,4) node[above]{$\omega$};
\end{tikzpicture}
}
\caption{Partition of $\R$ in  Case  1. \blu{The picture displays some connected components in ${\mc I}_{\ms L} \cup {\mc I}_{\ms R} \cup {\mc I}_{\mc C} \cup {\mc I}_{\mc W}$.}}
\label{partition1}
\end{figure}

\begin{figure}[ht]
\centering
\footnotesize{
\begin{tikzpicture}[scale = 0.8]

\draw[->] (-6.5,0)--(6.5,0)node[right]{$x$};
\draw[->] (0,0)--(0,4.5);
\draw[ultra thin] (-6.5,4)--(6.5,4);

\draw (-3.5,4)node[above]{$\ms L$}--(0,1.5)--(1.5,0);
\draw (-3.5,4)--(-4,0);
\draw  (2,4)--(0,3.5);
\draw (2,4)node[above]{$\ms R$}--(4,0);

\draw (-0.3,4)node[above]{$A$};
\draw (0.5,4)node[above]{$B$};
\draw (-1,0)node[below]{$\mc I_{\ms L}$};
\draw (3,0)node[below]{$\mc I_{\ms R}$};

\draw (-3.5,4)--(0,1.1)-- (1.1,0);
\draw[dashed] (-3.5,4)--(0,1.3)-- (1.3,0);

\draw[smooth,  thick]  plot coordinates{(0, 0.5) (-2, 2.6) (-3.5,4)};

\draw (-3.5, 4)--(-0.5, 0)--(0,0.5);
\draw[dashed] (-0.5, 0)--(-3, 3.5);
\draw[dashed] (-0.5, 0)--(-2, 2.6)--(0,0.8)--(0.75,0);
\draw[dashed] (0, 0.4)--(0. 4, 0);
\draw[dashed] (-0.5, 0)--(-1, 1.5);
\draw[dashed] (-0.5, 0)--(-0.5, 1);
\draw[dashed] (-0.5, 0)--(-0.2, 0.7);

\draw[dashed] (-0.2,0)--(0,0.2);

\draw[dashed] (-3.5,4)--(-0.8,0);
\draw[dashed] (-3.5,4)--(-1,0);
\draw[dashed] (-3.5,4)--(-1.2,0);
\draw[dashed] (-3.5,4)--(-1.4,0);
\draw[dashed] (-3.5,4)--(-1.6,0);
\draw[dashed] (-3.5,4)--(-1.8,0);
\draw[dashed] (-3.5,4)--(-2,0);
\draw[dashed] (-3.5,4)--(-2.2,0);
\draw[dashed] (-3.5,4)--(-2.4,0);
\draw[dashed] (-3.5,4)--(-2.6,0);
\draw[dashed] (-3.5,4)--(-2.8,0);
\draw[dashed] (-3.5,4)--(-3,0);
\draw[dashed] (-3.5,4)--(-3.2,0);
\draw[dashed] (-3.5,4)--(-3.4,0);
\draw[dashed] (-3.5,4)--(-3.6,0);
\draw[dashed] (-3.5,4)--(-3.8,0);

\draw (-1.5,4)node[above]{$\tilde{\ms L}$}--(0,2.5)--(2,0);
\draw[dashed] (-2,4)--(0,2.3)--(1.9,0);
\draw [dashed](-2.5,4)--(0,2.1)-- (1.8,0);
\draw[dashed] (-3,4)--(0,1.9)-- (1.7,0);
\draw[dashed] (-3.5,4)--(0,1.7)-- (1.6,0);

\draw[dashed] (-1.25,4)--(0,2.75);
\draw[dashed] (-1,4)--(0,3);
\draw[dashed] (-0.75,4)--(0,3.25);
\draw[dashed] (-0.5,4)--(0,3.5);
\draw[dashed] (-0.25,4)--(0,3.75);
\draw[dashed] (1,4)--(0,3.75);

\draw[smooth,  thick] plot coordinates{(0,2.5)(0.2,3)(1,3.6)(2,4)};

\draw (2,4)--(2,0);

\draw[dashed] (0,2.75)--(0.1,2.85);
\draw[dashed] (0,3)--(0.4,3.15);
\draw[dashed] (0,3.25)--(0.8,3.55);

\draw[dashed] (0.2,3)--(2,0);
\draw[dashed] (1.5,3.8)--(2,0);
\draw[dashed] (0.5,3.3)--(2,0);

\draw[dashed] (1,3.6)--(2,0);

\draw[dashed] (2,4)--(2.25,0);
\draw[dashed] (2,4)--(2.5,0);
\draw[dashed] (2,4)--(2.75,0);
\draw[dashed](2,4)--(3,0);
\draw[dashed](2,4)--(3.25,0);
\draw[dashed] (2,4)--(3.5,0);
\draw[dashed] (2,4)--(3.75,0);

\draw[dashed] (6,4) node[above]{$\omega$};

\draw[dashed] (-3.7, 4)--(-4.2, 0);
\draw[dashed] (-3.9, 4)--(-4.4, 0);
\draw[dashed] (-4.1, 4)--(-4.6, 0);
\draw[dashed] (-4.3, 4)--(-4.85, 0);
\draw[dashed] (-4.5, 4)--(-5.1, 0);

\draw (-4.6, 0) node[below]{$\mc I_{\mc W}$};

\draw[dashed] (2.2,4)--(4.2,0);
\draw[dashed] (2.4,4)--(4.4,0);
\draw (2.6,4)--(4.6,0);
\draw[dashed] (2.6,4)--(4.8,0);
\draw[dashed] (2.6,4)--(5,0);
\draw[dashed] (2.6,4)--(5.2,0);
\draw[dashed] (2.6,4)--(5.4,0);
\draw (2.6,4)--(5.6,0);
\draw (5.2,0) node[below]{$\mc I_{\mc C}$};

\end{tikzpicture}
}
\caption{Partition of $\R$ in  Case  2. \blu{The picture displays some connected components in ${\mc I}_{\ms L} \cup {\mc I}_{\ms R} \cup {\mc I}_{\mc C} \cup {\mc I}_{\mc W}$.}}
\label{partition}
\end{figure}

The elements of this partition 
enjoy the following properties.
\begin{itemize}
\item[-] In the \textsc{Case 1}, the set $\mc I_{\ms L}$ consists of the starting points of a compression wave 
that is partly refracted by the interface, and that generates a shock at the point $(\ms L, T)$.
In the \textsc{Case 2}, only the subsets of $\mc  I_{\ms L}$
given by $\,]\vartheta_{\ms L,-}(0) ,\, -\bs \sigma[\ms L, A, f_l]\cdot f_l^{\prime}(\overline A)[$, \, and\,  $](\ms L/f'_l(A)-T)\cdot f'_r(\overline B),\,\vartheta_{\ms L,+}(0)[$,
consist of the starting points of compression waves with center at the point $(\ms L, T)$.
In the complementary sets of $\mc  I_{\ms L}$:
$]-\bs \sigma[\ms L, A, f_l]\cdot f_l^{\prime}(\overline A),\, 0[$ \, and\, $]0,\, (\ms L/f'_l(A)-T)\cdot f'_r(\overline B)[$,
the initial datum will assume the constant values $\overline A$ and $\overline B$, respectively.
Here $\bs\sigma[\ms L, A, f_l]$ is the
constant defined as in \S~\ref{def:lrs-block},
with $f=f_l$.

\item[-] The set $\mc I_{\ms R}$ consists of the starting points of a compression wave that generates a shock at the point $(\ms R, T)$.

\item[-] The set $\mathcal{I}_{\mc C}$ consists of the 
starting points of compression
waves that generate a shock at points $(y,T)$,
$y\in \,]-\infty, \ms L[\, \cup\,]\ms L, \widetilde {\ms L}[\, \cup\,]\ms R, +\infty[\,$.
The set $\mathcal{I}_{\mc C}$ is a disjoint union of at most countably many open intervals 
of the form 
\vspace{-5pt}
\begin{equation}
\begin{aligned}
\mathcal{I}^n_{\ms L}&=\,]x_n^-,x_n^+[\,, \quad x_n^\pm=\vartheta_{y_n,\pm}(0),
\qquad y_n\in \,]-\infty, \ms L[\,,
\\
\mathcal{I}^n_{\ms R}&=\,]x_n^-,x_n^+[\,, \quad x_n^\pm=\vartheta_{y_n,\pm}(0),
\qquad y_n\in  \,]\ms R,+\infty[\,,
\\
\widetilde{\mathcal{I}}^n_{\ms L}
&=\,]x_n^-,x_n^+[\,, \quad x_n^\pm=\vartheta_{y_n,\pm}(0),
\qquad y_n\in \,]\ms L, \tilde{\ms L}[\,,
\end{aligned}
\end{equation}
which are non empty because of~\eqref{eq:lax-start-char}.

\item[-] The set $\mathcal{I}_{\mc Ra}$ consists of at most countably many points that are the
centers of rarefaction waves originated at time $t=0$. 

\item[-] The set $\mathcal{I}_{\mc W}$ 
consists of the 
starting points of all genuine characteristics reaching points $(y,T)$,
$y\in \,]-\infty, \ms L[\, \cup\,]\ms L, \widetilde {\ms L}[\, \cup\,]\ms R, +\infty[\,$.
\end{itemize}

\subsubsection{Construction of  $AB$-entropy solution on two regions with  vertexes
at $(\ms L, T)$ and at $({\ms R}, T)$}
\label{subsec:const-ABsol}

Consider the two polygonal regions
\begin{equation}
\label{eq:delta-L- gamma-R-def}
\begin{aligned}
    \Delta_{\ms L}&\doteq 
    \Big\{ (x, t) \in\R \times [0, T] \; : \; \vartheta_{\ms L,-}(t) < x < \vartheta_{\ms L,+}(t)\Big\},
    \\
    \noalign{\smallskip}
    \Gamma_{\ms R}&\doteq 
    \Big\{ (x, t) \in\R \times [0, T] \; : \; \vartheta_{\, \widetilde{\ms L}}(t) < x < \vartheta_{\ms R,+}(t)\Big\}.
    \end{aligned}
\end{equation}
In the \textsc{Case 2} 
(see Figure \ref{case2}), 
letting $\Delta[\ms L, A, f_l]$ be the region defined as in \S~\ref{def:lrs-block}, with  $f=f_l$, 
we can express $\Delta_{\ms L}$ as
\begin{equation}
\label{eq:delta-L-def}
\Delta_{\ms L} = 
\Delta[\ms L, A, f_l]\,\cup\, \bigcup_{i=1}^4 \Delta_{\ms L, i},
\end{equation}
where
\begin{equation}
\label{eq:delta-i-def}
    \begin{aligned}
        \Delta_{\ms L, 1}&\doteq 
        \Big\{ (x, t) \in\,]-\infty, 0[\,\times [0,T] \; : \; \vartheta_{\ms L,-}(t) < x \leq  \ms L -(T-t) \cdot f_l^{\prime}(\bs v[\ms L, A, f_l])\Big\},
        \\
        \noalign{\smallskip}
        \Delta_{\ms L, 2}&\doteq  
        \Big\{ (x, t) \in\,]-\infty, 0]\,\times \, [0,T] \; : \;  x\geq  (t-\bs \sigma[\ms L, A, f_l])\cdot f'_l(\,\overline{A}\,)\Big\},
        \\
        \noalign{\smallskip}
        \Delta_{\ms L, 3}&\doteq 
        \Big\{ (x, t) \in\,]0, +\infty[\,\times \,[0,T] \; : \; x< \eta_{\ms L,\,  x({A,\overline{B}})}(t)\Big\},
        \\
        \noalign{\smallskip}
        \Delta_{\ms L, 4}&\doteq
        \Big\{ (x, t) \in\R\, \times\, [0,T] \; : \; 
        \eta_{\ms L,\,  x({A,\overline{B}})}(t)\leq x < \vartheta_{\ms L,+}(t)\Big\},  
    \end{aligned}
\end{equation}
with $\bs v[\ms L, A, f_l]$ as in~\eqref{eq:vlaf-def} taking $f=f_l$, 
and
\begin{equation}
    \label{eq:def-}
    x({A,\overline{B}})\doteq \big(\ms L/f'_l(A)-T\big)\cdot f'_r(\overline{B})>0\,.
\end{equation}
Similarly, in both  \textsc{Cases 1, 2} 
(see Figures \ref{case1}-\ref{case2}), 
letting $\Gamma[\ms R, B, f_r]$,  be the region defined as in \S~\ref{def:rsr-block}, with $f=f_r$, 
we can express $\Gamma_{\ms R}$ as
\begin{equation}
\label{eq:gamma-r-def}
    \Gamma_{\ms R}=\Gamma[\ms R, B, f_r]\,\cup\, \bigcup_{i=1}^3 \Gamma_{\ms R, i},
\end{equation}
where
\begin{equation}
\label{eq:gamma-i-def}
    \begin{aligned}
        \Gamma_{\ms R, 1}&\doteq 
        \Big\{ (x, t) \in\,]-\infty, 0]\,\times\,[0,T] \; : \; \vartheta_{\, \widetilde{\ms L}}(t) < x\Big\},
        \\
        \noalign{\smallskip}
        \Gamma_{\ms R, 2}&\doteq  \Big\{ (x, t) \in\,]0, +\infty[\,\times\,[0,T] \; : \; x\leq \ms R - (T-t)\cdot f'_r(B)\Big\},
        \\
        \noalign{\smallskip}
        \Gamma_{\ms R, 3}&\doteq \Big\{ (x, t) \in\,]0, +\infty[\,\times\,[0,T]\, \; : \; 
        \ms R -(T-t) \cdot f_r^{\prime}(\bs u[\ms R, B, f_r])\leq x<\vartheta_{\ms R,+}(t)\Big\},
        \end{aligned}
        \end{equation}
with $\bs u[\ms R, B, f_l]$ as in~\eqref{eq:urbf-def}, taking $f=f_r$.
\smallskip

Now,  consider the function $ u_{\ms L} : \Delta_{\ms L} \to \R$ defined by
setting for every $(x,t)\in \Delta_{\ms L}$\,:\\
in  \textsc{Case 1}:
\begin{equation}
\label{eq:uL-sol-def-1}
     u_{\ms L}(x, t)  = \begin{cases}
            (f_l^{\prime})^{-1}\big(\frac{\ms L-x}{T-t}\big), & \text{if $x \leq  0$}, \\
            \noalign{\smallskip}
            \pi^l_{r,-}(u_{\ms L,z}),
            & \text{if $x = \eta_{\ms L, z}(t)$, \
             for some \ $z>0$},
        \end{cases}
\end{equation}
where $u_{\ms L,z}$ is defined as in \S~\ref{subsec:charact}, with $y =\ms L$;\\

\noindent
in  \textsc{Case 2}:
\begin{equation}
\label{eq:uL-sol-def-2}
        u_{\ms L}(x, t)  = \begin{cases}            (f_l^{\prime})^{-1}\big(\frac{\ms L-x}{T-t}\big), & \text{if 
        $(x,t)\in \Delta_{\ms L, 1}\cup \Delta_{\ms L, 4},\ \ x\leq 0,$}
            \\
            \noalign{\smallskip}
            \pi^l_{r,-}(u_{\ms L,z}), 
            & \text{if $(x,t)\in \Delta_{\ms L, 4}$, \ $x = \eta_{\ms L, z}(t)$, \
             for some \ $z>0$ }\\
             \noalign{\smallskip}
            \ms v[\ms L, A, f_l](x,t), & \text{if $(x,t) \in \Delta[\ms L, A, f_l],$}\\
            \overline A, & \text{if $(x,t)\in \Delta_{\ms L, 2}$,}\\
            \overline B, & \text{if $(x,t)\in \Delta_{\ms L, 3}$.}\\
        \end{cases}
\end{equation}
where $\ms v[\ms L, A, f_l]$ denotes the function defined in~\eqref{eq:vlaff-def}, with $f=f_l$.
\smallskip

\noindent
By construction, because of~\eqref{eq:flux-assumption-1}, and relying on the analysis in~\ref{def:lrs-block}, it follows that 
in both  \textsc{Cases~1,~2}, the function $u_{\ms L}(x, t)$:
%
\begin{itemize}
[leftmargin=20pt]
\item[-] is locally Lipschitz continuous on
$\big(\Delta_{\ms L} \setminus
\overline{\Delta[\ms L, A, f_l]}\,\big)\cap \big((\R\setminus\{0\})\,\times\,]0,T[\,\big)$, and it is continuous on the boundary $\partial\Delta[\ms L, A, f_l]\setminus \big(\{0\}\,\times\,]0,T[\big)$;
\item[-] is a classical solution of $u_t+f_l(u)_x=0$ on
$\big(\Delta_{\ms L} \setminus
\overline{\Delta[\ms L, A, f_l]}\,\big)\cap \big(\,]-\infty, 0[\,\times\,]0,T[\,\big)$, and of $u_t+f_r(u)_x=0$ on
$\Delta_{\ms L} \cap \big(\,]0, +\infty[\,\times\,]0,T[\,\big)$;
\item[-] is an entropy weak solution of $u_t+f_l(u)_x=0$ on
$\Delta[\ms L, A, f_l]$;
\item[-] satisfies 
the interface entropy condition~\eqref{ABtraces} 
at any point $(0,t)$, $t\leq \tau_+(\ms L)$.
\end{itemize}
Therefore, by Definition~\ref{defiAB}, 
we deduce that $u_{\ms L}$ is an $AB$-entropy solution of~\eqref{conslaw} on $\Delta_{\ms L}$. 

Next, consider (for both  \textsc{Cases~1,~2}) the function $ u_{\ms R} : \Gamma_{\ms R} \to \R$ defined by
setting for every $(x,t)\in \Gamma_{\ms R}$\,:\\
\begin{equation}
\label{eq:uR-sol-def}
        u_{\ms R}(x, t)  = \begin{cases}
            A, & \text{if $(x,t)\in \Gamma_{\ms R,1},$} \\
            \noalign{\smallskip}
            B, & \text{if $(x,t)\in \Gamma_{\ms R,2},$} \\
            \noalign{\smallskip}
            \ms u[\ms R, A, f_r](x, t), & \text{if $(x,t) \in \Gamma[\ms R, B, f_r]$},
            \\
            \noalign{\smallskip}
            (f_r^{\prime})^{-1}\big(\frac{\ms R-x}{T-t}\big), & \text{if $(x,t)\in \Gamma_{\ms R,3},$}
        \end{cases}
\end{equation}
where $\ms u[\ms R, A, f_r]$ denotes the function defined in~\eqref{eq:urbff-def}, with $f=f_r$.
By construction and relying on the analysis in
\S~\ref{def:rsr-block},
we deduce as above that $u_{\ms R}$
provides an $AB$-entropy solution of~\eqref{conslaw}
on~$\Gamma_{\ms R}$.
Moreover, because of~\eqref{eq:case-i-A}, \eqref{eq:case-i-B}, \eqref{eq:case-ii-A},
we have
\begin{equation}
\label{eq:term-cond-23}
    u_{\ms R}(x,T)=\omega(x)
    \qquad \forall~x\in \,]\widetilde {\ms L}, \ms R[\,.
\end{equation}

\subsubsection{Construction of 
$AB$-entropy solution on whole $\R\times [0,T]$}
\label{subsec:const-inda}

Observing that, because of~\eqref{eq:partition}, \eqref{eq:delta-L- gamma-R-def}, we have  $\Delta_{\ms L} \cap \{x=0\}= \mc I_{\ms L}$, $\Gamma_{\ms R} \cap \{x=0\}= \mc I_{\ms R}$,
we define the initial datum on $\mc I_{\ms L}\cup \mc I_{\ms R}$ as
\begin{equation}
    \label{eq:idatum-22}
    u_0(x)=\begin{cases}
        u_{\ms L}(x, 0)\ \ &\text{if}\quad x\in 
        \mc I_{\ms L},
        \\
        \noalign{\smallskip}
        u_{\ms R}(x, 0)\ \ &\text{if}\quad x\in 
        \mc I_{\ms R}.
    \end{cases}
\end{equation}
where, in  \textsc{Case 1},
\begin{equation}
\label{eq:init-datum-L-1}
    u_{\ms L}(x, 0)= \begin{cases}
            (f_l^{\prime})^{-1}\big(\frac{\ms L-x}{T}\big), & \text{if $x\in 
        \mc I_{\ms L},$ \ $x \leq  0$}, \\
            \noalign{\smallskip}
            \pi^l_{r,-}(u_{\ms L,z}),
            & \text{if $x\in 
        \mc I_{\ms L}$, \ $x = \eta_{\ms L, z}(0)$, \
             for some \ $z>0$},
        \end{cases}
\end{equation}
while, in  \textsc{Case 2},
\begin{equation}
        u_{\ms L}(x, 0)  = \begin{cases}            (f_l^{\prime})^{-1}\big(\frac{\ms L-x}{T}\big), & \text{if \
        $x\in \,]\vartheta_{\ms L,-}(0),\, \ms L -T \cdot f_l^{\prime}(\bs v[\ms L, A, f_l])[$,}
            \\
             \noalign{\smallskip}
            \overline A, & \text{if \ $x\in 
            \,]\ms L -T \cdot f_l^{\prime}(\bs v[\ms L, A, f_l]),\, 0[\,$,}\\
             \noalign{\smallskip}
            \overline B, & \text{if \ $x\in\,]0,\, \eta_{\ms L,\,  x({A,\overline{B}})}(0)[$,}
             \\
            \noalign{\smallskip}
            \pi^l_{r,-}(u_{\ms L,z}), 
            & \text{if \ $x\in [\eta_{\ms L,\,  x({A,\overline{B}})}(0),\, \vartheta_{\ms L,+}(0)[\,$, \ $x = \eta_{\ms L, z}(t)$, \
             for some \ $z>0$,}
        \end{cases}
\end{equation}
and, in both  \textsc{Cases 1, 2}, 
\begin{equation}
\label{eq:init-datum-R}
        u_{\ms R}(x, 0)  = 
            (f_r^{\prime})^{-1}\big(
            \textstyle{\frac{\ms R-x}{T}}\big), \quad \text{if \ $x\in \mc I_{\ms R}.$}
\end{equation}
In view of the observations in \S~\ref{subsec:charact}-\ref{subsec:minmaxbackcharact},
the construction of the \blu{$AB$-entropy solution} on 
$\big(\R\times [0,T]\big)\setminus \big(\Delta_{\ms L}\cup \Gamma_{\ms R}\big)$, and the corresponding definition of the initial datum on $\R\setminus\big(\mc I_{\ms L}\cup \mc I_{\ms R}\big)$, proceed as follows:
\begin{itemize}
 [leftmargin=20pt]
\item[-] For any $y\in \,]-\infty, \ms L[\, \cup\,]\ms L, \widetilde {\ms L}[\, \cup\,]\ms R, +\infty[\,$, we trace the lines $\vartheta_{y, \pm}$ starting at $(y,T)$ until they reach the $x$-axis at the point $\phi_{\pm}(y)\doteq  \vartheta_{y, \pm}(0)$. 
Since conditions~\eqref{eq:1a}, \eqref{eq:1b1} of Theorem~\ref{thm:attprofiles} is equivalent to the monotonicity of the map $\phi(y)\doteq \vartheta_{y}(0)$ (see~\cite[Lemma 4.4]{anconachiri}), it follows that $\vartheta_{y, \pm}$ never intersect each other in the region 
$\R\times \,]0, T[\,$.
Then, if $y\in \,]-\infty, \ms L[\,  \cup\,]\ms R, +\infty[\,$, we define a function $u(x,t)$ that is
equal to $\omega(y\pm)$ along the segment $\vartheta_{y, \pm}$. Instead if
$y\in \,]\ms L, \widetilde {\ms L}[\,$
we define $u$ to be equal to $\omega(y\pm)$
along the segment $\vartheta_{y, \pm}(t)$, $\tau_\pm(y)\leq t\leq T$, and to be equal
to $\pi^l_{r,-}(\omega(y\pm))$ along the segment $\vartheta_{y, \pm}(t)$, $0\leq t\leq \tau_\pm(y)$.
\item[-] For any $z\in \mathcal{I}^n_{\ms L}\,\cup\, \mathcal{I}^n_{\ms R}$, we trace the line $\eta_{y_n,z}$, $y_n\in \,]-\infty, \ms L[\,
\cup\,]\ms R, +\infty[\,$. 
By construction the lines $\eta_{y_n,z}$ never cross each other in the region 
$\R\times \,]0, T[\,$.
Then, if $y_n\in \,]-\infty, \ms L[$\,, we define $u(x,t)$ to be equal to
$(f'_l)^{-1}((y-z)/T)
= u_{y,z}$ along the segment $\eta_{y_n,z}$, instead  if $y_n\in \,]\ms R, +\infty[$\,, we define $u(x,t)$ to be equal to
$(f'_r)^{-1}((y-z)/T)
= u_{y,z}$ along the segment $\eta_{y_n,z}$. 
\item[-]  For any $z\in \widetilde{\mathcal{I}}^n_{\ms L}$, we trace the polygonal line $\eta_{y_n,z}$, $y_n\in\,]\ms L, \widetilde {\ms L}[\,$. By construction the lines $\eta_{y_n,z}$ never cross each other in the region 
$\R\times \,]0, T[\,$.
Then, we define $u(x,t)$ to be equal to
$(f'_l)^{-1}((y_n-x)/(T-t))
= u_{y,z}$ along the segment $\eta_{y_n,z}$,
$\tau(y,z)< t \leq T$, and to be equal to 
$\pi^l_{r,-}(u_{y,z})$ along the segment
$\eta_{y_n,z}$,
$0\leq t\leq \tau(y,z)$.
\end{itemize}

Therefore, we define the 
 the function 
\begin{equation}
\label{ab-entr-sol-def-21}
u(x, t)  \doteq \begin{cases}
\omega(y\pm), & \text{if \ $x=\vartheta_{y,\pm}(t)$ \ for some \ $y \in \,]-\infty, \ms L[\, \cup \,]\ms R, +\infty[\,$},\\
\noalign{\smallskip}
\omega(y\pm), & \text{if \ 
$x=\vartheta_{y,\pm}(t)<0$ \ for some \ $y \in \,]\ms L, \widetilde{\ms L}[ \,$},\\
\noalign{\smallskip}
\pi^l_{r,-}(\omega(y\pm)), & \text{if \ 
$x=\vartheta_{y,\pm}(t)>0$ \ for some \ $y \in \,]\ms L, \widetilde{\ms L}[ \,$},\\
\noalign{\smallskip}
(f_l^{\prime})^{-1}\big(\frac{y_n-z}{T}\big), & \text{if \ $ x = \eta_{y_n, z}(t)$ \ for some \ $z \in \mc I^n_{\ms L}$},\\
\noalign{\smallskip}
(f_r^{\prime})^{-1}\big(\frac{y_n-z}{T}\big), & \text{if \ $ x = \eta_{y_n, z}(t)$ \ for some \ $z \in \mc I^n_{\ms R}$},\\
\noalign{\smallskip}
(f_l^{\prime})^{-1}\big(\frac{\ms L-x}{T-t}\big), & \text{if \ $ x = \eta_{y_n, z}(t)<0$ \ for some \ $z \in \widetilde{\mathcal{I}}^n_{\ms L}$},\\
\noalign{\smallskip}
\pi^l_{r,-}(u_{y_n,z}), & \text{if \ $ x = \eta_{y_n, z}(t)>0$ \ for some \ $z \in \widetilde{\mathcal{I}}^n_{\ms L}$},\\
\noalign{\smallskip}
u_{\ms L}(x, t), & \text{if \ $(t, x) \in \Delta_{\ms L}$},\\
\noalign{\smallskip}
u_{\ms R}(x, t), & \text{if \ $(t, x) \in \Gamma_{\ms R}$},
\end{cases}
\end{equation}

\noindent
and the initial datum
\smallskip
\begin{equation}
\label{eq:ab-initial-datum}
u_0(x)  \doteq \begin{cases}
\omega(y\pm), & \text{if \ $x \in \mc I_{\mc W}$, $x = \theta_{y,\pm}(0)$ \ for some \ $y \in\,]-\infty,\ms L[\, \cup \,]\ms R, +\infty[\,$},\\
\noalign{\smallskip}
\pi^l_{r,-}(\omega(y\pm)), & \text{if \ $x \in \mc I_{\mc W}$, $x = \theta_{y,\pm}(0)$, $y \in \,]\ms L, \tilde{ \ms L}[\,$},\\
\noalign{\smallskip}
(f_l^{\prime})^{-1}\big(\frac{y_n-x}{T}\big), & \text{if \ $x \in \mc I^n_{\ms L}$},
\\
\noalign{\smallskip}
(f_r^{\prime})^{-1}\big(\frac{y_n-x}{T}\big), & \text{if \ $x \in \mc I^n_{\ms R}$},
\\
\noalign{\smallskip}
\pi^l_{r,-}(u_{y_n,x}), & \text{if \ $x \in \widetilde{\mathcal{I}}^n_{\ms L}$},
\\
\noalign{\smallskip}
u_{\ms L}(x,0), & \text{if \ $x \in \mc I_{{\ms L}}$},
\\
\noalign{\smallskip}
u_{\ms R}(x,0), & \text{if \ $x \in \mc I_{\ms R}$}.
\end{cases}
\end{equation}
Notice that $u_0$ is not defined on the countable set $\mathcal{I}_{\mc Ra}$ which is  of measure zero, and clearly $u_0\in {\bf L}^\infty(\R)$. 
By construction, the function $u(x,t)$: 
\begin{itemize}
[leftmargin=20pt]
\item[-] is locally Lipschitz continuous on
$\big(\R\times \,]0,T[\,\big)\setminus 
\big(\,\overline{\Delta_{\ms L}\cup \Gamma_{\ms R}}
\,\cup\, (\{0\}\,\times\,]0,T[\,)\big)$,
and it is continuous on the boundary $\partial \big(\Delta_{\ms L}\cup \Gamma_{\ms R}\big) \setminus \big(\{0\}\,\times\,]0,T[\big)$;
\item[-]
is a classical solution of $u_t+f_l(u)_x=0$ on
$\big(\,]-\infty, 0[\,\times\,]0,T[\,\big)\setminus \,\overline{\Delta_{\ms L}\cup \Gamma_{\ms R}}$\,;
\item[-]
is a classical solution of $u_t+f_r(u)_x=0$ on
$\big(\,]0, +\infty[\,\times\,]0,T[\,\big)\setminus \,\overline{\Delta_{\ms L}\cup \Gamma_{\ms R}}$\,;
\item[-] is an $AB$-entropy solution of~\eqref{conslaw} on $\Delta_{\ms L}\cup\Gamma_{\ms R}$;
\item[-]satisfies 
the interface entropy condition~\eqref{ABtraces} 
at any point $(0,t)$, $t\in \,]0,T[\,$.
\end{itemize}
Thus, by Definition~\ref{defiAB}, 
it follows that the function $u(x,t)$ in~\eqref{ab-entr-sol-def-21}
provides an
 $AB$-entropy solution to \eqref{conslaw}
on $\R\times [0,T]$. Moreover, 
because of~\eqref{eq:term-cond-23},  \eqref{ab-entr-sol-def-21}, \eqref{eq:ab-initial-datum},
we have 
\begin{equation}
\label{eq:indatum-finalprpfile}
    u(x,0) = u_0(x),\qquad\quad u(x,T)=\omega(x)\qquad\ \text{for a.e.}\quad x\in\R\,.
\end{equation}
This proves that 
\begin{equation}
\label{eq:att-prof-sol}
  \omega= \mc S_T^{[AB]+}u_0,  
\end{equation}
and thus
$\omega \in \mc A^{AB}(T)$, which completes
the proof of 
the implication $(3) \Rightarrow (1)$  of Theorem~\ref{thm:backfordiscfluxcycle} in the case of a non critical connection.

\subsection{Part 2.b - $(3) \Rightarrow (2)$ for non critical connections.}\label{subsec:part3b}
As a byproduct of the construction described in~\S~\ref{sec:(3)-(2)}, 
we show in this Subsection  that, 
if $\omega$ satisfies~\eqref{eq:LR-def-24}, \eqref{eq:hyp-3a2}, 
then $\omega$ verifies condition (2) of 
Theorem \ref{thm:backfordiscfluxcycle}, 
i.e. $\omega$ is a fixed point 
of the map $\omega\mapsto \mc \sabpT \circ \sabmT \omega$.
We shall assume that $\omega$ satisfies the pointwise constraints of~\textsc{Case 1} discussed in Remark~\ref{rem:threecases}, the other cases being simmetric, or entirely similar,
or simpler.

In order to verify that $\mc \sabpT \circ \sabmT \omega = \omega$, 
because of~\eqref{eq:att-prof-sol} it is sufficient to prove that,
letting $u_0$ be the function defined
by~\eqref{eq:init-datum-L-1}, \eqref{eq:init-datum-R}, \eqref{eq:ab-initial-datum}, \blu{it holds true}
\begin{equation}
\label{eq:indatum-back-1}
    u_0= \sabmT \omega\,.
\end{equation}
In turn, recalling the definition ~\eqref{eq:backw-conn-sol-def} of $AB$ backward solution operator, the equality~\eqref{eq:indatum-back-1}
is equivalent to the equality
\begin{equation}
  \label{eq:indatum-back-2}  
  u_0(-x)=\sobapT
    \big(\omega(-\ \cdot\,)\big)
     (x)\qquad x\in\R, 
\end{equation}
where 
\begin{equation}
(x,t)\mapsto \sobapt 
    \big(\omega(-\ \cdot\,)\big)
     (x)
\end{equation}
denotes
the unique $\overline B\,\overline A$-entropy solution
of
\begin{equation}
\label{eq:invertedproblem-2}
    \begin{cases}
        v_t+\overline{f}(x,v)_x = 0 & x \in \mathbb{R}, \quad t \geq 0, \\
            \noalign{\smallskip}
        v(x,0) = \omega(-x) &  x \in \mathbb R,
    \end{cases}
\end{equation}
$\overline{f}(x,v)$ being the symmetric flux in~\eqref{eq:symm-flux}.

Towards a proof of~\eqref{eq:indatum-back-2}, we will determine the solution of~\eqref{eq:invertedproblem-2} on $\R\times [0,T]$ relying on the construction in~\S~\ref{sec:(3)-(2)} and on the properties of the
left forward rarefaction-shock wave pattern
derived in \S~\ref{def:lrs-block}.
Observe that  the function $u(x,t)$ defined by~\eqref{ab-entr-sol-def-21}
for ~\textsc{Case 1},
with $u_{\ms L}$, $u_{\ms R}$ defined by~\eqref{eq:uL-sol-def-1}, \eqref{eq:uR-sol-def}, respectively, is:
\begin{itemize}
[leftmargin=20pt]
\item[-] locally Lipschitz continuous in the region
\begin{equation*}
    \msc L\doteq \big(\,\R\times \,]0,T[\,\big) \setminus \big( (\{0\}\,\times\,]0,T[\,) \cup \Gamma[\ms R, B, f_r] \big)
\end{equation*}
where $\Gamma[\ms R, B, f_r]$,  is defined as in \S~\ref{def:rsr-block}, with $f=f_r$
(the region $\msc L$ is the complement of the pink region and of the axis $\{x = 0\}$ in Figure \ref{case1});
\item[-]
a classical solution of $u_t+f_l(u)_x=0$ on
$\,]-\infty, 0[\,\times\,]0,T[\,$;
\item[-]
a classical solution of $u_t+f_r(u)_x=0$ on
$\big(\,]0, +\infty[\,\times\,]0,T[\,\big)\setminus \overline{\Gamma[\ms R, B, f_r]}$\,;
\item[-]satisfies 
the interface entropy condition~\eqref{ABtraces} 
at any point $(0,t)$, $t\in \,]0,T[\,$.
\end{itemize}
\smallskip

\noindent
Therefore, if we define the transformation $(x, t) \mapsto \alpha( x,t)\doteq  (-x, T-t)$,  the function
\begin{equation}
\label{eq:ualpha-def}
    v(x,t) \doteq u(-x, T-t),
    \qquad (x,t) \in \alpha(\,\overline{\msc L})\setminus (\{0\}\,\times\,]0,T[\,),
\end{equation}
is: 
\begin{itemize}
[leftmargin=20pt]
\item[-] an entropy weak solution of $v_t + f_r(v)_x = 0$ in the open set $\alpha(\msc L) \cap \{x < 0\}$;
\item[-] an entropy weak solution of
$v_t + f_l(v)_x = 0$ in the open set  $\alpha(\msc L) \cap \{x > 0\}$.
\end{itemize}
\smallskip

\noindent
On the other hand, 
letting $\Delta[\bs y[\ms R, B, f_r], \overline B, f_r]$ denote the region defined in~\eqref{eq:deltadef} with $\ms L =y[\ms R, B, f_r]$, $A=\overline B$, and $f=f_r$.
one can directly verify that
\begin{equation}
\label{eq:imgamma}
    \alpha\big(\Gamma[\ms R, B,f_r]\big)=\Delta[\bs y[\ms R, B, f_r], \overline B, f_r]
    \subset \,]-\inf, 0[\times\,]0,T[\,.
\end{equation} 
Notice that $(\R\times [0,T])\setminus (\{0\}\,\times\,]0,T[\,)$ is the disjoint union of
$\alpha(\,\overline{\msc L})\setminus (\{0\}\,\times\,]0,T[\,)$ and of $\alpha\big(\Gamma[\ms R, B,f_r]\big)$.
Then, letting $\ms v [\bs y[\ms R, B, f_r], \overline B, f_r](x,t)$ denote the function defined in~\eqref{eq:vlaff-def}, with $\ms L =y[\ms R, B, f_r]$, $A=\overline B$, and $f=f_r$,
consider the function $v:\R\times [0,T]\to\R$ defined by setting
\begin{equation}
    \label{eq:sol-invertedproblem-2}
    v(x,t)\doteq
    \begin{cases}
            u(-x, T-t), & \text{if \ \ $(x,t) \in \alpha(\,\overline{\msc L})\setminus (\{0\}\,\times\,]0,T[\,)$},\\
            \noalign{\smallskip}
            \ms v[\bs y[\ms R, B, f_r], \overline B, f_r](x,t), & \text{if \ \ $(x,t) \in \Delta[\bs y[\ms R, B, f_r], \overline B, f_r]$}\,.
        \end{cases}
\end{equation}
By construction and because of the analysis in
\S~\ref{def:lrs-block}, the function $v(x,t)$ :
\begin{itemize}
[leftmargin=20pt]
\item[-] is locally Lipschitz continuous on 
$\big(\R\times \,]0,T[\,\big)\setminus 
\big(\,\overline{\Delta[\bs y[\ms R, B, f_r], \overline B, f_r]}\,
\cup \, (\{0\}\,\times\,]0,T[\,)\big)$,
and it is continuous on the boundary $\partial \big(\Delta[\bs y[\ms R, B, f_r], \overline B, f_r]\big) \setminus \big(\{0\}\,\times\,]0,T[\big)$;
\smallskip
\item[-]
is a classical solution of $v_t+f_r(v)_x=0$ on
$\big(\,]-\infty, 0[\,\times\,]0,T[\,\big)\setminus \,\overline{\Delta[\bs y[\ms R, B, f_r], \overline B, f_r]}$\,;
\smallskip
\item[-]is an entropy weak solution of $v_t+f_r(v)_x=0$ on $\Delta[\bs y[\ms R, B, f_r], \overline B, f_r]$;
\smallskip
\item[-]
is a classical solution of $v_t+f_l(v)_x=0$ on
$\,]0, +\infty[\,\times\,]0,T[\,$\,;
\smallskip
\item[-] satisfies 
the $\overline{B} \overline{A}$ interface entropy condition,
namely, setting $v_l(t)\doteq v(0-,t)$, $v_r(t)\doteq v(0+,t)$, and considering the function
\begin{equation*}
\label{IBAdef}
I^{\overline B \overline A}(v_l,v_r)  \stackrel{\cdot}{=} \mathrm{sgn}(v_r-\overline A\,)\left(f_l(v_r)-f_l(\,\overline A\,)\right) \\- \mathrm{sgn}(v_l-\overline B\,)\left(f_r(v_l)-f_r(\,\overline B\,)\right),
\end{equation*}
\blu{it holds true}
\begin{equation}
    f_r(v_l(t))=f_l(v_r(t)),\qquad\quad 
    I^{\overline B \overline A}\big(v_l(t), v_r(t)\big)\leq 0,\qquad 
    \text{for a.e. $t\in\,]0,T[$\,.}
\end{equation}
\end{itemize}
Notice also that,
%
because of~\eqref{eq:indatum-finalprpfile}, \eqref{eq:imgamma}, \eqref{eq:sol-invertedproblem-2}, it follows
\begin{equation}
    v(x,0)=u(-x,T)=\omega(-x)
    \qquad\ \text{for a.e.}\quad x\in\R\,.
\end{equation}
Therefore, by Definition~\ref{defiAB}, 
we deduce that the function $v(x,t)$ in~\eqref{eq:sol-invertedproblem-2}
provides the $\overline B\,\overline A$-entropy solution to \eqref{eq:invertedproblem-2}
on $\R\times [0,T]$, and hence
we have
\begin{equation}
    v(x,t)=\sobapt 
    \big(\omega(-\ \cdot\,)\big)
     (x)\qquad x\in\R,\ t\in [0,T]\,.
\end{equation}
Moreover, by~\eqref{eq:indatum-finalprpfile}, \eqref{eq:sol-invertedproblem-2}, it holds true
\begin{equation*}
    \sobapT 
    \big(\omega(-\ \cdot\,)\big)
     (x)
         = u(-x,0)=u_0(-x)\qquad x\in\R\,,
\end{equation*}
which proves~\eqref{eq:indatum-back-2}, and thus concludes the proof of the implication $(3) \Rightarrow (2)$  of Theorem~\ref{thm:backfordiscfluxcycle} in the case of a non critical connection.

\medskip

 \subsection{Part 3.a - $(1) \Leftrightarrow  (2)$ for critical connections}\label{sec:criticalcases-a} 
In this Subsection we \blu{rely on the fact}
that 
the equivalence of conditions (1), (2) of Theorem~\ref{thm:backfordiscfluxcycle} holds for connections which are non critical
\blu{(by the proofs in \S~\ref{sec:(1)-(3)}, \ref{subsec:reductionBV}, \ref{sec:(3)-(2)},
\ref{subsec:part3b})},
and we will show that it remains true also for critical connections.
To fix the ideas, throughout this and the following subsections we shall assume that 
the connection $(A, B)$ is critical at the left, i.e. that 
\begin{equation}
\label{eq:critical-ip}
    A = \theta_l\,,
\end{equation}
the case where one assumes that
$B = \theta_r$ being symmetric.
Notice that the assumption  $A = \theta_l$ does not prevent the connection to be critical also at the right, i.e. $B = \theta_r$: it might or might not happen.
\blu{Notice that there exists } a
sequence 
$\{A_n,B_n\}_n$  of non critical connections that satisfy
\begin{equation}
\label{eq:AnBnconv}
    \lim_n (A_n,B_n) = (A,B)\,.
\end{equation}

%
%
%
We will show only that 
$(1)\Rightarrow (2)$, since
the reverse implication  is clear
(see~\S~\ref{roadmap}).
Then, given 
$\omega\in \mc A^{[AB]}(T)$
with
\begin{equation}
\label{eq:att-prof-21}
    \omega = \mc S_T^{[AB]+}u_0, \qquad u_0 \in \mathbf L^{\infty}(\mathbb R)\,,
\end{equation}
set 
\begin{equation}
\label{eq:att-prof--approx}
    \omega_n \doteq \mc S^{[A_nB_n]+}_T u_0\,.
\end{equation}
Hence, since $\omega_n\in  \mc A^{[A_nB_n]}(T)$,
by the validity of Theorem~\ref{thm:backfordiscfluxcycle} in the non critical case it holds
\begin{equation}\label{eq:noncrifixedn}
    \omega_n=  \mc S^{[A_nB_n]+}_T \circ \mc S^{[A_nB_n]-}_T \omega_n \qquad \forall \; n\,.
\end{equation}
Notice that by definition~\eqref{eq:backw-conn-sol-def} 
it follows that the 
$\mathbf L^1_{\blu{\mr{loc}}}$ stability property (iv) of Theorem~\ref{theoremsemigroup} holds also for the 
$AB$-backward solution operator $\mc S^{[AB]-}_T$, so that we have
\begin{equation}
\label{eq:backelleunostab}
    \mc S^{[A_nB_n]-}_T \omega_n
    \ \ \rightarrow\ \
    \mc S^{[AB]-}_T\omega
    \qquad\text{in}\quad
    \mathbf L^1_{\blu{\mr{loc}}}(\R)\,.
\end{equation}
Hence, we deduce that
$$
\omega \stackrel{[\text{Thm \ref{theoremsemigroup}-(iv)]}}{=} \lim_n \omega_n \stackrel{[\text{\eqref{eq:noncrifixedn}]}}{=} \lim_n  \mc S^{[A_nB_n]+}_T \circ \mc S^{[A_nB_n]-}_T \omega_n \stackrel{[\text{Thm \ref{theoremsemigroup}-(iv) and~\eqref{eq:backelleunostab}]}}{=}  \mc S^{[AB]+}_T \circ \mc S^{[AB]-}_T \omega\,,
$$
which proves $(1)\Rightarrow (2)$. 
\qed

 \subsection{Part 3.b - $(1) \Rightarrow (3)$ for critical connections}\label{sec:criticalcases-b} 
In this Subsection \blu{we rely on the fact} that 
the implication $(1)\Rightarrow (3)$ of Theorem~\ref{thm:backfordiscfluxcycle} holds for connections which are non critical, and in particular \blu{we know (by \S~\ref{sec:(1)-(3)}, \ref{subsec:reductionBV}, \ref{sec:(3)-(2)})} that
Theorems~\ref{thm:attprofiles}, \ref{thm:attprofiles2}, \ref{thm:attprofiles3}, 
are verified 
for non critical connections.
We will prove that, for a critical connection
$(A,B)$, 
any element $\omega\in \mc A^{AB}(T)$
satisfies the conditions of~\blu{Theorem~\ref{thm:attprofilescrit},} or of
Theorem~\ref{thm:attprofiles2}, or of Theorem~\ref{thm:attprofiles3}. 
We divide the proof 
in nine steps.
\smallskip

\noindent
\textbf{Step 1.}
Let $\{A_n,B_n\}_n$  be a sequence of non critical connections as in Part 3.a.
Given $\omega\in \mc A^{AB}(T)$
as in~\eqref{eq:att-prof-21}, and
$\omega_n$,
as in~\eqref{eq:att-prof--approx},
set
\begin{equation}
\label{eq:u-def-34}
    u(x,t) \doteq \mc S^{[AB]+}_t u_0(x), \qquad t \geq 0, \quad x \in \mathbb R\,,
\end{equation}
and consider the sequence $u_n$ of $A_nB_n$-entropy weak solutions  defined by 
\begin{equation}
\label{eq:un-def-35}
    u_n(x,t) \doteq \mc S^{[A_nB_n]+}_t u_0(x), \qquad t \geq 0, \quad x \in \mathbb R\,.
\end{equation}
Let $u_{n,l}, u_{n,r}$ denote, respectively, the left and right traces of $u_n$ at $x=0$ defined as in~\eqref{traces}, and let $u_l, u_r$ be the left and right traces of $u$ at $x=0$
\blu{(whose existence is derived in Steps 5, 8)}.
Then, by Theorem \ref{theoremsemigroup} 
and Corollary~\ref{cor:fluxtraces-stab},
and because of~\eqref{eq:AnBnconv},
it follows
\begin{align}
\label{eq:eq:un-41}
    &u_n(\cdot, t) \ \ \rightarrow\ \ u(\cdot, t)\qquad\text{in}\quad
    \mathbf L^1_{\blu{\ms loc}}(\R)\qquad \forall~t\in [0,T]\,,
    \\
    \noalign{\smallskip}
    \label{eq:eq:flun-41}
    &f_l(u_{n,l}) 
     \ \rightharpoonup\ \ f_l(u_l)\qquad\ \text{in}\quad
    \mathbf L^1([0,T])\,\,,
    \\
    \noalign{\smallskip}
    \label{eq:eq:flrun-41}
    &f_r(u_{n,r}) 
     \ \rightharpoonup\ \ f_r(u_r)\quad\ \ \, \text{in}\quad
    \mathbf L^1([0,T])\,\,,
\end{align}
and hence, in particular, we have
\begin{equation}
\label{eq:omegan-conv-35}
    \omega_n
\ \ \rightarrow\ \
\omega
\qquad\text{in}\quad
    \mathbf L^1_{\blu{\mr{loc}}}(\R)\,.
\end{equation}
In order to prove that $\omega$ satisfies condition (3) of Theorem~\ref{thm:backfordiscfluxcycle},
letting
\begin{equation}
\label{eq:LR-def-21}
\ms L \doteq \ms L[\omega,f_l]\,,
     \qquad\quad  
     \ms R \doteq  \ms R[\omega, f_r],
\end{equation}
be quantities defined as in~\eqref{eq:LR-def}, 
we need to show that:
\begin{itemize}
    \item[-] If $\ms L = 0$, $\ms R > 0$ or viceversa, then $\omega$ satisfies the conditions of Theorem \ref{thm:attprofiles2};
    \item[-] If $\ms L = 0$, $\ms R = 0$, then $\omega$ satisfies the conditions of Theorem \ref{thm:attprofiles3}.
    \blu{ \item[-] If $\ms L < 0$, $\ms R > 0$, then $\omega$ satisfies the conditions of Theorem \ref{thm:attprofilescrit}.}
\end{itemize}
\blu{We shall first address  the two cases $\ms L =0, \ms R > 0$, and $\ms L = 0$, $\ms R = 0$ (the analysis of the case $\ms L < \ms R=0$ being entirely similar to the one of $\ms L =0< \ms R$). 
Next, we shall analyze the third case $\ms L < 0 <\ms R$.
}
Throughout the subsection we let  $\ms L_n$,. $\ms R_n$, denote the objects defined as in~\eqref{eq:LR-def-21} for $\omega_n$:
\begin{equation}
\label{eq:LR-n-def-21}
    \ms L_n \doteq \ms L[\omega_n,f_l]\,,
      \quad\qquad \ms R_n \doteq  \ms R[\omega_n, f_r]\,.
\end{equation}
%
Observe that by Remark~\ref{rem:linf-bound-ABsol}, and because of~\eqref{eq:omegan-conv-35}, the functions $\omega_n$
have a uniform bound 
\begin{equation}
\label{linf-bound-omega-n}
    \|\omega_n\|_{{\bf L}^\infty}\leq \overline C
    \qquad\quad\forall~n,
\end{equation}
for constant~$\overline C>0$.
Hence, by definition ~\eqref{eq:LR-def}, 
the constant $|\ms L_n|$
are bounded by $T\cdot \sup_{|u|\leq \overline C} |f'_l(u)|$, and 
the constant~$\ms R_n$
are bounded by $T\cdot \sup_{|u|\leq \overline C} |f'_r(u)|$. 
Thus, up to a subsequence, we can 
define the limits
\begin{equation}
\label{eq:RLn-lim-24}
     \widehat {\ms L}\doteq 
    \lim_{n \to \infty} \ms L_n\,,
    \qquad\quad
     \widehat{\ms R}\doteq 
    \lim_{n \to \infty} \ms R_n\,.
\end{equation}
We claim that 
\begin{equation}\label{eq:Rsemic}
    \widehat {\ms L}\leq {\ms L},
    \qquad\quad
     \widehat {\ms R}\geq \ms R\,.
\end{equation}
 By definition~\eqref{eq:LR-def}, \eqref{eq:LR-def-21} of $\ms R$, 
 in order to prove the second inequality in~\eqref{eq:Rsemic},
 it is sufficient to show that 
 \begin{equation}
 \label{eq:Rsemic2}
     \widehat {\ms R}
     - T\cdot f'_r(\omega(\widehat {\ms R}+))\geq 0\,.
 \end{equation}
Observe that by Definition~\ref{defiAB}
$u_n$ and $u$ are entropy weak solutions of
\begin{equation}
  \label{eq:conlaw-n-qp}
u_t+f_r(u)_x=0 
\quad\ x >0, \quad t \in [0,T],
  \end{equation}
that, because of~\eqref{eq:eq:un-41},
\eqref{eq:eq:flrun-41}, satisfy the
assumptions~\eqref{eq:conlaw-qp-conv1}, \eqref{eq:conlaw-qp-conv2}
of Lemma~\ref{lemma:chara} in Appendix~\ref{app:uplwsmicsolns}. 
Thus, applying~\eqref{eq:liminf},
and recalling~\eqref{eq:att-prof-21}, \eqref{eq:att-prof--approx}, we find
\begin{equation}
\label{eq:liminf-omega-27}
     \omega(\widehat {\ms R}+) \leq \liminf_{\substack{n \to \infty \\ y \to \widehat {\ms R},\, \blu{y >0}}} \; \omega_n(y+)\,.
\end{equation}
Since~\eqref{eq:RLn-lim-24} and the liminf property imply
\begin{equation}
    \liminf_{\substack{n \to \infty \\ y \to \widehat {\ms R},\, \blu{y >0}}} \; \omega_n(y+)\leq
    \liminf_n \omega_n(\ms R_n+)\,,
\end{equation}
we derive from~\eqref{eq:liminf-omega-27} that
\begin{equation}
\label{lsemic-omega-2}
\omega(\widehat {\ms R}+) \leq \liminf_n \omega_n(\ms R_n+)\,.
\end{equation}
On the other hand, by definition~\eqref{eq:LR-def}, 
\eqref{eq:LR-n-def-21}
of $\ms R_n$, it holds
\begin{equation}
\label{eq:omegan-ineq-23}
    \omega_n(\ms R_n+) \leq (f_r^{\prime})^{-1}\left(\frac{\ms R_n}{T}\right)\,.
\end{equation}
Hence, from~\eqref{lsemic-omega-2}, \eqref{eq:omegan-ineq-23} and~\eqref{eq:RLn-lim-24}
we deduce 
\begin{equation}
    \omega(\widehat {\ms R}+)\leq 
    \lim_{n\to\infty}
    (f_r^{\prime})^{-1}\left(\frac{\ms R_n}{T}\right)
    =(f_r^{\prime})^{-1}\left(\frac{\widehat{\ms R}}{T}\right)\,,
\end{equation}
which yields~\eqref{eq:Rsemic2}.
This completes the proof of the second inequality in~\eqref{eq:Rsemic}, while the proof of
the first one is entirely similar.

\smallskip
Relying on~\eqref{eq:Rsemic}, we will  show in {Steps~2-7}
\blu{the existence of~$\omega(0\pm)$, and}
that 
$\omega$ satisfies 
the
conditions (i)', (ii)' of Theorem~\ref{thm:attprofiles2} 
in the case
$\ms L = 0$, $\ms R > 0$.
Namely, 
in { Step 2} we prove~\eqref{eq:2b2-prime},
 in { Step~3} we prove~\eqref{eq:2a2b-prime},
in { Step~4} we prove~\eqref{eq:3a2b-prime},
in { Step~5} we prove~\eqref{eq:1b2-prime}
\blu{and the existence of $\omega(0\pm)$},
 while in Step~6 we prove~\eqref{eq:2a2-prime}.
Finally, in { Step 7} we 
prove~\eqref{eq:1a2-prime},
concluding the proof of
conditions (i)', (ii)' of Theorem~\ref{thm:attprofiles2}.
The proof \blu{of the existence of~$\omega(0\pm)$, and} that 
$\omega$ satisfies  
conditions~(i),~(ii) of Theorem~\ref{thm:attprofiles2} in the case
$\ms L < 0$, $\ms R = 0$ is entirely similar
\blu{to the case $\ms L = 0$, $\ms R > 0$, although the symmetry is broken
(because of assumption~\eqref{eq:critical-ip}),
and it is briefly discussed in Step 8. }
Next, in {Step~9} we will show
that $\omega$ satisfies
conditions~(i),~(ii) of Theorem~\ref{thm:attprofiles3}
 in 
the case $\ms L = 0$, $\ms R = 0$.
\blu{Finally, in Steps~10-13 we will show
that $\omega$ satisfies
conditions~(i),~(ii) of Theorem~\ref{thm:attprofilescrit}}.

\medskip

\noindent
\textbf{Step 2.} \big($\ms L = 0$, $\ms R > 0$, proof of~\eqref{eq:2b2-prime}:\, $\omega(x) \geq B$ in $]0, \ms R[\,$\big). \\
Applying~\eqref{eq:2bl-r1}, \eqref{eq:2b-lr2} of 
Theorem~\ref{thm:attprofiles}-(ii) or~\eqref{eq:2b2-prime} of
Theorem~\ref{thm:attprofiles2}-(ii)' for $\omega_n$, 
in the case of the non critical connections $(A_n, B_n)$, we deduce
that 
\begin{equation}
\label{eq:eq:sl-max-bckw-3}
    \omega_n(x) \geq B_n\qquad\quad  \forall~
    x \in \,]0, \ms R^n[\,,\quad\forall~n\,.
\end{equation}
%
On the other hand, by virtue of~\eqref{eq:omegan-conv-35}, \eqref{eq:Rsemic},
we can extract a subsequence of $\{\omega_n\}$ that converges to $\omega$ for almost every $x\in \,]0,\ms R[$. Then, taking the limit in~\eqref{eq:eq:sl-max-bckw-3},  relying on~\eqref{eq:AnBnconv}, 
\blu{and because of the normalization of
$\omega$ as a right continuous function (see  Remark~\ref{rem:BVclass}),} we derive~\eqref{eq:2b2-prime}.
\medskip

\noindent
\textbf{Step 3.} \big($\ms L = 0$, $\ms R > 0$, 
proof of~\eqref{eq:2a2b-prime}:\, $\ms R \in \,]0, T \cdot f_r^{\prime}(B)[\ \ \Rightarrow \ \omega(\ms R+) \leq \bs u[\ms R, B, f_r]\leq \omega(\ms R-)$\big).\\
Observe first that, in the case
$\widehat {\ms R} = \ms R$, 
by the continuity of the function
$\bs u[\ms R, B, f_r]$ with respect to $\ms R$, $B$ (see \S~\ref{defi:ur}), and because of~\eqref{eq:AnBnconv}, we find
\begin{equation}
\label{eq:lim-u-23}
    \lim_{n\to \infty} \bs u[\ms R_n, B_n, f_r] 
    =\bs u[\ms R, B, f_r]\,. 
\end{equation}
On the other hand, 
if $\ms R=\widehat {\ms R} \in \,]0, T \cdot f_r^{\prime}(B)[\,$,
we may assume that $\ms R_n \in  \,]0, T \cdot f_r^{\prime}(B_n)[\,$, 
for $n$ sufficiently large.
Hence,  
applying 
either~\eqref{eq:2a} of
Theorem~\ref{thm:attprofiles}-(ii), or~\eqref{eq:2a2b-prime} 
of Theorem~\ref{thm:attprofiles2}-(ii)' for the corresponding $\omega_n$
in the case of the non critical connections $(A_n, B_n)$, we deduce
\begin{equation}
\label{eq:lim-omegan-23}
    \liminf_{n \to \infty} \omega_n(\ms R_n+) \leq \lim_{n\to \infty} \bs u[\ms R_n, B_n, f_r] \,.
\end{equation}
Then, combining~\eqref{eq:lim-u-23}, \eqref{eq:lim-omegan-23},
with~\eqref{lsemic-omega-2},
and recalling~\eqref{eq:uB-ineq}
with $f=f_r$,
we derive
\begin{equation}
\label{eq:eq:2a2b-prime-1}
    \omega(\ms R+) \leq 
    \bs u[\ms R, B, f_r]<B\,,
\end{equation}
which, together with~\eqref{eq:2b2-prime}
(established in Step 2),
proves~\eqref{eq:2a2b-prime} in the case 
$\widehat {\ms R} = \ms R$.

Thus, because of~\eqref{eq:Rsemic},
it remains to analyze the case
$\widehat {\ms R} > \ms R$.
Let
$\vartheta_n^-$
denote the
minimal 
backward characteristic
for $u_n$ starting from
$(\ms R_n,T)$
and lying in the domain $x>0$.
Recalling the definition
\eqref{eq:LR-def}, \eqref{eq:LR-n-def-21}
of $\ms R_n$
this is a map
$\vartheta_n^- :\ ]\tau_n, T]\to \ ]0,+\infty[\,,$ $\tau_n\geq 0$,
with the property that 
$\lim_{t\to\tau_n} \vartheta_n^-(t)=0$.
By possibly taking a subsequence, we may assume that $\{\tau_n\}_n$ converges to some $\overline\tau\geq 0$.
Observe that $\vartheta_n^-$
are genuine characteristics which, up to a subsequence, converge to a genuine characteristic $\vartheta^-:\ ]\overline \tau, T]\to \ ]0,+\infty[\,$
for $u$, starting from
$(\vartheta^-(T),T)=(\widehat {\ms R},T)$
(see proof of Lemma~\ref{lemma:chara} in Appendix~\ref{app:uplwsmicsolns}). 
The trajectory of $\vartheta^-$ is a segment
with slope $f'_r(u(\widehat {\ms R}-,T))=f'_r(\omega(\widehat {\ms R}-))$.
Therefore, if $\overline\tau > 0$,
it follows that
$f'_r(\omega(\widehat {\ms R}-))>\widehat {\ms R}/T$, which 
by definition of $\ms R$ implies $\ms R \geq \widehat{\ms R}$,
contradicting the assumption $\widehat {\ms R} > \ms R$.
Hence, it must be $\overline \tau=0$,
$\lim_{t\to 0} \vartheta^-(t)=0$,
and the trajectory of $\vartheta^-$ is a segment joining the point $(\widehat {\ms R},T)$
with the origin $(0,0)$.
Since $\ms R<\widehat {\ms R}$ and because backward characteristics
cannot intersect in the domain $x>0, t>0$, 
this in turn implies that
the slope $f'_r(\omega(\ms R+))$
of the maximal 
backward characteristic for $u$ starting at
$(\ms R,T)$ must be
greater or equal than $\ms R/T$. On the other hand,
by definition~\eqref{eq:LR-def}, \eqref{eq:LR-def-21}
of $\ms R$, we have
$f'_r(\omega(\ms R+))\leq \ms R/T$, and hence it follows
that 
\begin{equation}
\label{eq:sl-max-bckw-2}
    f'_r(\omega(\ms R+))={\ms R}/{T}.
\end{equation}
Observe now that, 
applying 
Theorem~\ref{thm:attprofiles}-(ii) or
Theorem~\ref{thm:attprofiles2}-(ii)' for $\omega_n$, 
in the case of the non critical connections $(A_n, B_n)$, we 
know that~\eqref{eq:eq:sl-max-bckw-3} is verified. 
Moreover, because of $\ms R<\widehat {\ms R}=\lim_n \ms R_n$
we may assume that $\ms R<\ms R_n$
for $n$ sufficiently large. Hence, 
by virtue of~\eqref{eq:omegan-conv-35}, 
we can extract a subsequnce of $\{\omega_n\}$ that converges to $\omega$ for almost every $x\in \,]0,\ms R[$,
and thus we derive from~\eqref{eq:AnBnconv}, \eqref{eq:eq:sl-max-bckw-3} that $\omega (\ms R+)\geq B$.
This inequality, together with~\eqref{eq:sl-max-bckw-2}, yields
\begin{equation}
\label{eq:rfprimer-ineq-2}
    \ms R \geq T\cdot f_r^{\prime}(B)\,,
\end{equation}
proving the implication~\eqref{eq:2a2b-prime} also in the case $\widehat {\ms R} > \ms R$.
%
%
\medskip

\noindent
\textbf{Step 4.} 
\big($\ms L = 0$, $\ms R > 0$, proof of~\eqref{eq:3a2b-prime}: $\ms R >  T \cdot f_r^{\prime}(B)\ \ \Rightarrow \ \omega(\ms R+) \leq \omega(\ms R-)$\big).\\
By virtue of~\eqref{eq:AnBnconv}, \eqref{eq:Rsemic}, we may assume that 
\begin{equation}
\label{eq:st3-b}
 \ms R_n >  T \cdot f_r^{\prime}(B_n)\qquad \forall~n\,.
\end{equation}
Then, applying~\eqref{eq:3b-lr2}
of Theorem~\ref{thm:attprofiles}-(ii) or~\eqref{eq:3a2b-prime} of Theorem~\ref{thm:attprofiles2}-(ii)' for $\omega_n$ and the non critical connections $(A_n, B_n)$, we derive
\begin{equation}
\label{eq:R-cond-34}
    \omega_n(\ms R_n-)\geq \omega_n(\ms R_n+)\qquad\forall~n\,.
\end{equation}
On the other hand, if $\widehat {\ms R}=\ms R$, 
invoking
~\eqref{eq:liminf},
~\eqref{eq:limsup}
of Lemma~\ref{lemma:chara} in Appendix~\ref{app:uplwsmicsolns},
we deduce as in Step 3  that
\begin{equation}
\label{lsemic-omega-23}
\omega({\ms R}-) \geq \limsup_n \omega_n(\ms R_n-)\,,
\qquad\quad
\omega({\ms R}+) \leq \liminf_n \omega_n(\ms R_n+)\,.
\end{equation}
Then, \eqref{eq:R-cond-34}-\eqref{lsemic-omega-23}
together
yield $\omega(\ms R-)\geq \omega(\ms R+)$, proving~\eqref{eq:3a2b-prime}
in the case $\widehat {\ms R}=\ms R$.
Instead, if $\widehat {\ms R}> \ms R$, we can assume that
${\ms R}_n> \ms R$ for all $n$ sufficiently large. 
Then, observe that 
applying~\eqref{eq:1b12}, \eqref{eq:2bl-r1}, \eqref{eq:2b-lr2}, 
of Theorem~\ref{thm:attprofiles}, or~\eqref{eq:1b2-prime} of Theorem~\ref{thm:attprofiles2}, for $\omega_n$ and the non critical connections $(A_n, B_n)$, we deduce
\begin{equation}
\label{eq:R-cond-37}
    \omega_n(\ms R-)\geq \omega_n(\ms R+)\qquad\forall~n\,.
\end{equation}
Hence, with the same arguments of above we find that 
\begin{equation}
\label{lsemic-omega-27}
\omega({\ms R}-) \geq \limsup_n \omega_n(\ms R-)\,,
\qquad\quad
\omega({\ms R}+) \leq \liminf_n \omega_n(\ms R+)\,,
\end{equation}
which, together with~\eqref{eq:R-cond-37}, yields $\omega(\ms R-)\geq \omega(\ms R+)$, completing the proof of~\eqref{eq:3a2b-prime}.
\medskip

\noindent
\textbf{Step 5.} \big($\ms L = 0$, $\ms R > 0$, proof of~\eqref{eq:1b2-prime}:\, $D^+\omega(x) \leq h[\omega, f_l, f_r](x)$ in $\,]0, \ms R[\,$, \blu{and of the existence of~$\omega(0\pm)$
}\big). \\
Applying 
Theorem~\ref{thm:attprofiles}-(i) or
Theorem~\ref{thm:attprofiles2}-(i)' for $\omega_n$ in the case of the non critical connections $(A_n, B_n)$,
we know that
\begin{equation}
\label{eq:ol-n-ineq-3}
    D^+\omega_n(x) \leq h[\omega_n, f_l, f_r](x)
    \qquad \forall~x\in \,]0, \ms R_n[\,.
\end{equation}
As shown in~\cite[Lemma 4.4]{anconachiri},
the inequality~\eqref{eq:ol-n-ineq-3} is equivalent to 
the fact that the maps
\begin{equation}
\label{eq:phi-psi-n-def}
\phi_n(x)\doteq -\tau_n(x) \cdot f'_l\circ
\pi_{l,+}^r(\omega_n(x)),
\qquad\quad \tau_n(x)\doteq T-\frac{x}{f_r^{\prime}(\omega_n(x))}\,,
\qquad x\in\,]0, \ms R_n[\,,
\end{equation}
are, respectively, nondecreasing 
and decreasing.
Since by~\eqref{eq:Rsemic} it holds $\lim_n \ms R_n \geq \ms R$, relying on~\eqref{eq:omegan-conv-35} we deduce that, up to a subsequence, $\{\omega_n\}_n$
converges to $\omega$
for almost every $x\in \,]0,\ms R[$\,. In turn, this implies that
the sequences $\{\phi_n\}_n$, $\{\tau_n\}_n$, converges
for almost every $x\in \,]0,\ms R[$\, to the maps
\begin{equation}
\label{eq;phi-psi-def}
\phi(x)\doteq -\tau(x) \cdot f'_l\circ
\pi_{l,+}^r(\omega(x)),
\qquad\quad \tau(x)\doteq T-\frac{x}{f_r^{\prime}(\omega(x))}\,,
\qquad x\in\,]0, \ms R[\,.
\end{equation}
Then, the monotonicity of each
map $\phi_n(x)$ and $\tau_n(x)$,
imply the same  monotonicity of the maps $\phi(x), \tau(x)$
defined in~\eqref{eq;phi-psi-def}. 
Namely, $\phi$ is a nondecreasing map and $\tau$
is a decreasing map.
But this is equivalent to the inequality~\eqref{eq:1b2-prime}, relying again on~\cite[Lemma 4.4]{anconachiri}.
\blu{
Next, we observe that the monotonicity of the maps $x \mapsto \phi(x)$,  $x \mapsto \tau(x)$, readily implies the existence of the one-sided limit $\omega(0+)$. In fact, since $\phi$ 
and $\tau$ are monotone, it follows that the limits $\phi(0+)$, $\tau(0+)$ do exist.
On the other hand, observing that  
the map $\omega \mapsto f'_l\circ
\pi_{l,+}^r(\omega)$, $\omega\geq B$, is invertible, by \eqref{eq;phi-psi-def} we can write
$$
\omega(x) = \big[f'_l\circ
\pi_{l,+}^r\big]^{-1} \bigg(-\frac{\phi(x)}{\tau(x)}\bigg)
\qquad \forall~x\in \,]0, \ms R[\,.
$$
Therefore, since the limit for $x \to 0+$ of the right hand side exists, 
it follows that the limit $\omega(0+)$ exists as well.
Finally, concerning the existence of $\omega(0-)$,
given any sequence $\{x_n\}_n\subset \,]-\infty, 0[\,$
of points of continuity for $\omega$ such that $x_n\to 0$, consider the backward genuine characteristics $\vartheta_n$ for $u$ starting at $(x_n,T)$. Because of the assumption $\ms L = 0$, and by definition~\eqref{eq:LR-def}, \eqref{eq:LR-def-21} of $\ms L$, it follows that 
$\vartheta_n$ never cross the interface
$x=0$.
Observe that $\{\vartheta_n\}_n$ is a sequence of
Lipschitz continuous functions with a uniform Lipschitz constant, 
defined on  $[0,T]$ and lying in the semiplane $\{x<0\}$.
Hence, by Ascoli-Arzel\`a Theorem,
 we can assume that, up to a subsequence,  $\{\vartheta_n\}_n$ converges uniformly to some Lipschitz continuous function $\vartheta:\ [0, T]\to \ \,]-\infty, 0[\,.$
Therefore, with the same arguments of 
the proof of Lemma~\ref{lemma:chara} in Appendix~\ref{app:uplwsmicsolns},
since uniform limit of genuine characteristics is a genuine characteristic,
and because genuine characteristics
cannot intersect in $\{x<0\}$,
we deduce that
$\vartheta$ is the minimal 
backward  characteristic for u
in $\{x\leq 0\}$
starting at~$(0,T)$.
Moreover, $\vartheta$ has slope $\vartheta'=\lim_n \vartheta'_n=\lim_n f'_l(\omega(x_n))$.
In turn, this implies that $\lim_n\omega(x_n)=(f'_l)^{-1}(\theta')$.
Since this limit is independent on the choice of $x_n$ we deduce that the one-sided limit $\omega(0-)$
exists and $\omega(0-)=(f'_l)^{-1}(\theta')$.
}
\medskip

\noindent
\textbf{Step 6.} \big($\ms L = 0$, $\ms R > 0$, proof of~\eqref{eq:2a2-prime}:\, $\omega(0-) \geq \pi^r_{l,+}(\omega(0+)$\big). \\
Let $x\in\,]0,\ms R[\,$ be a point of continuity for $\omega$,
and consider the backward genuine characteristics for~$u$
starting at $(x,T)$, 
defined by
\begin{equation}
    \label{eq:bcw-char-21}
    \vartheta_x(t)\doteq x-(T-t)\cdot f'_r((\omega(x))\qquad t\in\,]\tau (x), T]\,,
\end{equation}
with
\begin{equation}
\label{eq-def-tau-2}
    \tau (x)\doteq T-\frac{x}{f_r^{\prime}(\omega(x))}\,,
\end{equation}
so that one has $\lim_{t\to\tau (x)} \vartheta_x(t)=0$.
Observe that the inequality~\eqref{eq:1b2-prime}
(established at {\bf Step 3})
implies
that the function $\tau(x)$ is decreasing. On the other hand,
because of~\eqref{linf-bound-omega-n}, the slopes of $\vartheta_x$ are uniformly bounded by $\sup_{|u|\leq \overline C} |f'_r(u)|$.
Therefore,
letting $\{x_n\}_n\subset \,]0,\ms R[\,$ be a sequence of points of continuity for $\omega$, such that $x_n\to 0$,
it follows that 
\begin{equation}
\label{eq:lim-taun}
    \lim_n\tau(x_n)=T\,.
\end{equation}
Notice that, since
$\vartheta_{x_n}$ are genuine
characteristics, we have
\begin{equation}
\label{eq:omegaurn}
    \omega(x_n)=u_r(\tau(x_n))
    \qquad\forall~n\,.
\end{equation}
Since, by definition~\eqref{pimap-def}
one has $\pi^r_{l,+}(u)\geq \theta_l$ for any $u$, we may assume that, up to a subsequence,
either 
\begin{equation}
\label{eq:hyp1-limurn}
    \pi^r_{l,+}(\omega(x_n))=\theta_l\qquad\forall~n\,,
\end{equation}
or
\begin{equation}
\label{eq:hyp2-limurn}
    \pi^r_{l,+}(\omega(x_n))>\theta_l\qquad\forall~n\,.
\end{equation}
In the first case~\eqref{eq:hyp1-limurn} we deduce 
that
\begin{equation}
\label{eq:lim-ur-n-3}
    \pi^r_{l,+}(\omega(0+))=\lim_n \pi^r_{l,+}(\omega(x_n))=\theta_l\,,
\end{equation}
which yields~\eqref{eq:2a2-prime}
observing that, by definition~\eqref{eq:LR-def}, \eqref{eq:LR-def-21},
$\ms L=0$ implies $f'_l(\omega(0-))\geq 0$,
which is equivalent to
$\omega(0-)\geq\theta_l$.
In the second case~\eqref{eq:hyp2-limurn}
observe that, since the map
$\tau$ in~\eqref{eq-def-tau-2} is decreasing, and because
$x_n$ are points of continuity
for $\omega$, then 
$\tau(x_n)$ are points of continuity for $u_r$.
Hence, by the interface entropy condition~\eqref{ABtraces} it follows that $\tau(x_n)$ are points of continuity also for $u_l$.
Then,
we can trace the backward genuine characteristics for $u$ starting
at $(0,\tau(x_n))$,
that, because of~\eqref{eq:omegaurn},
are defined by
\begin{equation}
    \vartheta_n(t)\doteq (t-\tau(x_n))\cdot f'_l\circ \pi^r_{l,+}(\omega(x_n)),
    \qquad t\in [0,\tau(x_n)]\,.
\end{equation}
\blu{
Notice that $\{\vartheta_n\}_n$ is a sequence of
Lipschitz continuous functions with a uniform Lipschitz constant, 
defined on uniformly bounded intervals $[0,\tau(x_n)]$.
Hence, by Ascoli-Arzel\`a Theorem,
and because of~\eqref{eq:lim-taun}, we can assume that, up to a subsequence,  $\{\vartheta_n\}_n$ converges uniformly to some Lipschitz continuous function $\vartheta:\ [0, T]\to \ [0,+\infty[\,.$
Therefore, with the same arguments of 
the proof of Lemma~\ref{lemma:chara} in Appendix~\ref{app:uplwsmicsolns},
since uniform limit of genuine characteristics is a genuine characteristic
we deduce that
$\vartheta$ is a 
backward genuine characteristic 
starting at~$(0,T)$,
that has slope $f'_l\circ \pi^r_{l,+}(\omega(0+))$.
}
On the other hand
the minimal backward characteristic starting at $(0,T)$ has slope $f'_l(\omega(0-))$. 
Since the slope of the minimal backward characteristic is larger than the slope of any other backward characteristic passing through the same point, it follows that $f'_l(\omega(0-))\geq f'_l\circ \pi^r_{l,+}(\omega(0+))$,
which implies~\eqref{eq:2a2-prime}. 
This concludes the proof of
this step.
%
%
\smallskip

\noindent
\textbf{Step 7.} \big($\ms L = 0$, $\ms R > 0$, proof of~\eqref{eq:1a2-prime}:\, $D^+\omega(x) \leq \frac{1}{T\cdot f''_l(\omega(x))}$
in $\,]\!-\!\infty, 0[$, and $D^+\omega(x) \leq \frac{1}{T \cdot f''_r(\omega(x))}$
in $\,]\ms R, +\infty[\,$\big). \\
Observe that, by 
definition~\eqref{eq:LR-def}, \eqref{eq:LR-def-21}
of $\ms L, \ms R$,
and since $\ms L=0$,
backward characteristics starting at points $(x,T)$,
with $x  \in ]-\infty, 0[\, \cup\, ]\ms R, +\infty[$ do no intersect the interface $x = 0$.
Hence, we recover the Ole\v{\i}nik estimates~\eqref{eq:1a2-prime} 
as a classical property
of solutions 
to conservation laws with strictly convex flux, which 
follows from the fact that genuine characteristics never intersect at positive times. 
This completes the proof of the existence of~$\omega(0\pm)$ and that $\omega$ satisfies conditions  (i)'-(ii)'
of Theorem \ref{thm:attprofiles2}. 

\smallskip

\noindent
\blu{\textbf{Step 8.} 
\big($\ms L < 0$, $\ms R = 0$, proof of the existence of~$\omega(0\pm)$, and of conditions (i)-(ii)
of Theorem~\ref{thm:attprofiles2}\big).\\
The Ole\v{\i}nik-type inequalities~\eqref{eq:1a2}, \eqref{eq:1b2}, and the existence of~$\omega(0\pm)$ can be established with the same arguments of Steps 5, 7.
The proofs of~\eqref{eq:2b2}, \eqref{eq:2a2}, 
are entirely similar to the proofs of~\eqref{eq:2b2-prime}, 
~\eqref{eq:2a2-prime}, in Steps 2 and 6, respectively. Since $A=\theta_l$,
and hence $f'_l(A)=0$, the implication~\eqref{eq:2a2b} is trivially verified. Finally, with the same arguments of the proof of~\eqref{eq:3a2b-prime} in Step~4
one can recover the inequality $\omega(\ms L-) \geq \omega(\ms L+)$, thus proving
the implication
\eqref{eq:3a2b}.
Therefore the proof of the existence of $\omega(0\pm)$ and that $\omega$ satisfies the conditions (i)-(ii) of
Theorem~\ref{thm:attprofiles2} is completed.
}
\smallskip

\noindent
\textbf{Step 9.} \big($\ms L = 0$, $\ms R = 0$, proof of~\eqref{eq:1a3}: $D^+\omega(x) \leq \frac{1}{T\cdot  f''_l(\omega(x))}$
in $\,]\!-\!\infty, 0[$, and $D^+\omega(x) \leq \frac{1}{T \cdot f''_r(\omega(x))}$
in $\,]0, +\infty[$\,,
\blu{of the existence of $\omega(0\pm)$}: $\omega(0-) \geq \pi^r_{l,+}(\omega(0+)$,
and of~\eqref{eq:2a3}: $\omega(0-) \geq \overline A$, $\omega(0+) \leq \overline B$\big).\\ 
Since $\ms L = 0$, $\ms R = 0$,
by 
definition~\eqref{eq:LR-def} it follows that 
backward characteristics starting at $(x,T)$, $x\in\,]-\infty, 0[\,\cup \,]0, +\infty[$,
never intersect the interface $x=0$. 
Thus, as observed in Step 7, the Ole\v{\i}nik estimates in~\eqref{eq:1a3}
are a classical property
of solutions. 
\blu{Moreover, with the same arguments of Step 5 one deduces the existence of $\omega(0\pm)$.}
\blu{Furhter}, the inequality 
$\omega(0-) \geq \pi^r_{l,+}(\omega(0+)$ can be established with the same proof of~\eqref{eq:2a2-prime}
in Step 6.
Finally, the inequality  
$\omega(0+) \leq \overline B$
is obtained with the same arguments of the proof of~\eqref{eq:2a2b-prime}
in Step 3, observing that
by Remark~\ref{rem:monotonicityxy} we have
$\bs u[0, B, f_r] = \bs u[0+, B, f_r] = \overline B$.
The other inequality
$\omega(0-) \geq \overline A$
can derived in entirely similar way.
Therefore the proof of the existence of $\omega(0\pm)$ and
that $\omega$ satisfies the
conditions (i)-(ii)
of Theorem~\ref{thm:attprofiles3} is completed.
\smallskip

\noindent
\blu{\textbf{Step 10.} \big($\ms L< 0 < \ms R$, proof of  condition (i) of  Theorem~\ref{thm:attprofilescrit}
and of the existence of $\omega(0-)$\big). \\
The Ole\v{\i}nik inequalities~\eqref{eq:1acrit} 
are a classical property
of solutions 
to conservation laws with strictly convex flux
as observed in Step 7.
The proof of the Ole\v{\i}nik type inequality~\eqref{eq:1b1crit} can be recovered with the same limiting procedure of Step 5, passing to the limit the monotonicity of the maps 
\begin{equation}
\label{eq:phi-psi-n-def-L}
\phi_n(x)\doteq -\tau_n(x) \cdot f'_r\circ
\pi_{r,-}^l(\omega_n(x)),
\qquad\quad \tau_n(x)\doteq T-\frac{x}{f'_l(\omega_n(x))}\,,
\qquad x\in\,]\ms L_n, 0[\,,
\end{equation}
ensured by the Ole\v{\i}nik type inequalities satisfied by $\omega_n\in  \mc A^{[A_nB_n]}(T)$,
and
relying on~\eqref{eq:omegan-conv-35}, \eqref{eq:Rsemic}.
This also shows that the one-sided limit $\omega(0-)$ exists, using  the monotonicity of the limiting maps $\phi(x)=\lim_n \phi_n(x)$, $\tau(x)=\lim_n \tau_n(x)$, $x\in\,]\ms L, 0[\,$, as in Step 5.
}
\smallskip

\noindent
\blu{\textbf{Step 11.} \big($\ms L< 0 < \ms R$, proof  
of~\eqref{eq:critcond}: $ \omega(\ms L-)\geq \omega(\ms L+)$, \ $\omega(0-)=\theta_l$\big). \\
The proof of the first constraint
in~\eqref{eq:critcond} can be obtained with the same procedure of Step 4
(with $\ms L$ in place of $\ms R$).
Concerning the second constraint in~\eqref{eq:critcond},
notice first that we have $\omega(0-) \leq \theta_l$, since otherwise we may consider 
a sequence $\{x_n\}_n$  of continuity points for $\omega$, such that $x_n \uparrow 0$, and for $n$ sufficiently large 
the backward characteristics for $u$ from $(x_n,T)$
would intersect in $\{x < 0\}$ the maximal backward  characteristic for $u$ from the point $(\ms L,T)$, which gives a contradiction. Next, assume 
that the strict inequality $\omega(0-) < \theta_l$ holds. Then, let $x < 0$ be a continuity point of $\omega$ 
sufficiently close to $0$ 
so that $\omega(x) < \theta_l$,
and consider the time 
$$
\tau(x) = T-\frac{x}{f_l^\prime(\omega(x))}
$$
at which the backward (genuine) characteristic for $u$ from $(x,T)$ impacts the interface $x=0$. 
Since $x$ is a continuity point for $\omega$, by the strict monotonicity of the map $\tau$ on $\,]\ms L, 0[\,$ (derived in Step~9
as in Step 5)
it follows
that $u_l(\tau(x)) = \omega(x) < \theta_l$.
Using also the $AB$-entropy conditions \eqref{ABtraces} we then deduce
 that $u_r(\tau(x)) = \pi_{r, -}^l (\omega(x)) < \theta_r$. But this implies that the maximal backward characteristic for $u$ from $(0, \tau(x))$ intersects in $\{x > 0\}$ the minimal backward characteristic for $u$ from $(\ms R, T)$, which again gives a contradiction, and thus completes the proof of~\eqref{eq:critcond}.
}
\smallskip

\noindent
\blu{\textbf{Step 12.} \big($\ms L< 0 < \ms R$, proof  
of~\eqref{eq:2bl-r1crit}: $\omega(x) = B$ in $]0, \ms R[\,$, \  $\ms R \in ]0, T\cdot f_r^\prime(B)[$\,, 
of $B\neq \theta_r$,
and of~\eqref{eq:2acrit}:
$\omega(\ms R+) \leq \bs u[\ms R, B, f_r]\leq \omega(\ms R-)$\big). \\
Towards a proof of~\eqref{eq:2bl-r1crit},  first notice that the maximal backward characteristic 
for $u$ from $(\ms L,T)$ must intersect the
interface $x=0$ at a time 
\begin{equation}
    \label{eq:tauL-def-est}
    \tau_{_{\ms L}}
\doteq T-\frac{\ms L}{f'_l(\ms L+)} < T-\frac{\ms R}{f'_r(\ms R-)}.
\end{equation} 
In fact otherwise,  we could consider a point 
$x>\ms L$ of continuity for $\omega$
sufficiently close to $\ms L$, so that $\tau(x)>
T-\ms R/f'_r(\ms R-)$.
But then, by the analysis in Step 11,
the maximal backward characteristic 
for~$u$ from $(0,\tau(x))$ would intersect in $\{x > 0\}$ the minimal backward characteristic for $u$ from $(\ms R, T)$, thus giving a contradiction.
Next, observe that at any point $x\in\,]0, \ms R[\,$
of continuity for $\omega$ we have 
$\omega(x) \geq \theta_r$, since otherwise
the backward characteristic for $u$ from $(x,T)$
would intersect in $\{x > 0\}$ the minimal backward  characteristic for $u$ from the point $(\ms R,T)$, which gives a contradiction.
By the  $AB$-entropy conditions, and because of the strict monotonicity of the map 
$$
\tau(x) = T-\frac{x}{f'_r(\omega(x))},
\qquad x\in \,]0, \ms R[\,,
$$
(that can be established with the same limiting procedure of Steps~5, 10), it then follows that $\omega(x) \geq B$ for all $x\in\,]0, \ms R[\,$. Assume now that $\omega(x) > B$ for some $x\in\,]0, \ms R[\,$ continuity point for~$\omega$. 
Observe that,
because of the non crossing property of characteristics in $\{x>0\}$,
and by~\eqref{eq:tauL-def-est},
the backward characteristic for $u$ from $(x,T)$ impacts the interface $x=0$
at time 
\begin{equation}
\label{eq:tau-def-11}
    \tau(x)\geq T - \frac{\ms R}{f'_r(\ms R-)}
    > \tau_{_{\ms L}}.
    %
\end{equation}
Then, relying on the strict monotonicity of the map $\tau$, 
and using the $AB$-entropy conditions \eqref{ABtraces}, we deduce that 
$u_r(\tau(x))=\omega(x)>B$, $u_l(\tau(x))=\pi^r_{l,+}(\omega(x))>\theta_l$.
But, because of~\eqref{eq:tau-def-11}, this implies that the minimal backward characteristic for $u$ from $(0, \tau(x))$ intersects in $\{x < 0\}$ the maximal backward characteristic for $u$ from $(\ms L, T)$
which again gives a contradiction. Hence we have shown that 
$\omega(x) = B$ in $]0, \ms R[\,$.
By definition~\eqref{eq:LR-def} of $\ms R$, 
and because of~\eqref{eq:tau-def-11}, 
this implies
that 
$\ms R \in\,]0, T\cdot  f_r^\prime(B)[\,,$
and thus completes the proof of~\eqref{eq:2bl-r1crit}.
This also shows that 
we must have $B \neq \theta_r$.
Furthermore, we can derive \eqref{eq:2acrit}
with exactly the same arguments contained in the proof of~\eqref{eq:2a2b-prime} 
in Step 3.
}

\smallskip

\noindent
\blu{\textbf{Step 13.} \big($\ms L< 0 < \ms R$, proof  of~\eqref{eq:critcond-2}: 
$(f'_l)^{-1}(\frac{x}{T-\bs\tau[\ms R, B, f_r]})
\leq \omega(x)<\theta_l$ in $\,]\ms L, 0[\,$
        \big). \\
Let $\ms L_n, \ms R_n$ be the constants defined at~\eqref{eq:LR-n-def-21} as in~\eqref{eq:LR-def} and, according with~\eqref{eq:Ltildedef}, define
\begin{equation}
\label{eq:tau-n-def-11}
    {\widetilde {\ms L}}_n\doteq 
    \big(T-\bs\tau[\ms R_n, B_n, f_r]\big)\cdot f'_l(A_n).
\end{equation}
Since \eqref{eq:critical-ip}, \eqref{eq:AnBnconv} imply $\lim_n f'_l(A_n)=0$, 
and recalling that $\ms R_n$ are bounded
(see Step 1),
it follows that $\lim_n
{\widetilde {\ms L}}_n=0$. 
Thus, recalling also that the 
limit $\widehat{\ms R}$ of a subsequence of $\{\ms R_n\}_n$ satisfies~\eqref{eq:Rsemic},
we may assume that $\ms L_n < {\widetilde {\ms L}}_n<0<\ms R_n$ for $n$ sufficiently large.
Then, applying~\eqref{eq:2bl-r1} or \eqref{eq:2b-lr2-2} of Theorem~\ref{thm:attprofiles} for $\omega_n$ in the case of the non critical connections $(A_n, B_n)$, we deduce
that 
\begin{equation}
    \omega_n(x)\leq A_n\qquad \forall~x\in\,]{\widetilde {\ms L}}_n, \ms L_n[\,,
    \qquad\quad 
    \omega_n({\widetilde {\ms L}}_n\pm)=A_n\,.
\end{equation}
Therefore, by~\eqref{eq:tau-n-def-11} the backward characteristic for $u_n$
starting at $({\widetilde {\ms L}}_n,T)$ reaches
the interface $x=0$ at time $\bs\tau_n\doteq \bs\tau[\ms R_n, B_n, f_r]$. In turn, this implies that,
for every  
$x\in\, ]\ms L_n, \widetilde L_n[$
point of continuity for~$\omega_n$, 
the backward characteristic starting at $(x,T)$ must cross the interface $x=0$ at a time smaller or equal than $\bs\tau_n\doteq \bs\tau[\ms R_n, B_n, f_r]$, since otherwise it would intersect the
backward characteristic for $u_n$
starting at $({\widetilde {\ms L}}_n,T)$
in the domain $\{x<0\}$, which gives a contradiction. Thus we have
\begin{equation}
\label{eq:critcond-2-lim1}
    T-\frac{x}
{f'_l(\omega_n(x))}\leq \bs\tau_n\,,
\end{equation}
for every  
$x\in\, ]\ms L_n, \widetilde L_n[$
point of continuity for $\omega_n$.
On the other hand, recall that by Lemma~\ref{lemma:dualshocks}
the map $\ms R \mapsto \bs y[\ms R, B, f_r](T)<0$ is strictly increasing, and hence by~\eqref{eq:tau-rbf-def}
the map $\ms R \to \bs \tau[\ms R, B, f_r]$ is strictly decreasing.
Therefore, 
since $\bs\tau[\ms R, B, f_r]$
depends continuously on the parameters $\ms R, B$
(see~\S~\ref{def:rsr-block}), and because of~\eqref{eq:Rsemic},
we deduce that
\begin{equation}
    \lim_n \bs \tau_n=\bs \tau[\,\widehat{\ms R}, B, f_r]\leq \bs \tau[\ms R, B, f_r]\,.
\end{equation}
Hence, taking the limit in~\eqref{eq:critcond-2-lim1} as $n\to\infty$, and relying
again on~\eqref{eq:omegan-conv-35}, \eqref{eq:Rsemic}, 
we derive
\begin{equation}
\label{eq:critcond-2-lim22}
     T-\frac{x}
{f'_l(\omega(x))}\leq \bs\tau[\ms R, B, f_r]\qquad\text{for \ a.e.}~x\in \, ]{\ms L}, 0[\,.
\end{equation}
In turn, the inequality~\eqref{eq:critcond-2-lim22}
yields the first inequality in~\eqref{eq:critcond-2}.
On the other hand, if $\omega(x)\geq \theta_l$
for some $x\in\,]\ms L, 0[\,$,
with the same arguments of Step 11
one deduces that the backward characteristic
for $u$ starting from $(x,T)$
must intersect in $\{x<0\}$
the maximal backward  characteristic for $u$ from the point $(\ms L,T)$, which gives a contradiction. This shows that also the
second inequality in~\eqref{eq:critcond-2}
is satisfied.
Therefore, the proof of the existence of $\omega(0\pm)$ and
that $\omega$ satisfies the
conditions (i)-(ii)
of Theorem~\ref{thm:attprofilescrit} is completed.
This concludes the proof
of the implication $(1) \Rightarrow (3)$ for critical connections.
}
\subsection{Part 3.c - $(3) \Rightarrow (1)$ for critical connections} 
\label{sec:criticalcases-c}
In this Subsection we \blu{rely on the fact}
that 
Theorem~\ref{thm:backfordiscfluxcycle} holds for connections which are non critical, and in particular  \blu{we know (by \S~\ref{sec:(1)-(3)}, \ref{subsec:reductionBV}, \ref{sec:(3)-(2)})} that
Theorems~\ref{thm:attprofiles}, \ref{thm:attprofiles2}, \ref{thm:attprofiles3}, 
are verified 
for non critical connections. 
We will prove that 
if $\omega\in \msc A^{\ms L, \ms R}$
satisfies the conditions of
\blu{Theorem~\ref{thm:attprofilescrit}}, \ref{thm:attprofiles2}, or of Theorem~\ref{thm:attprofiles3}, 
then $\omega \in \mc A^{[AB]}(T)$
also for a critical connection
$(A,B)$ satisfying~\eqref{eq:critical-ip}.

\smallskip

\noindent
\textbf{Step 1.}
Given an element $\omega$
of the set $\msc A^{\ms L, \ms R}$   
    in~\eqref{eq:ALR-def1}, assuming that:
\begin{itemize}
\blu{\item[-] if $\ms L < 0$, $\ms R > 0$ or viceversa, $\omega$ satisfies the conditions of Theorem \ref{thm:attprofilescrit};}
    \item[-] if $\ms L = 0$, $\ms R > 0$ or viceversa, $\omega$ satisfies the conditions of Theorem \ref{thm:attprofiles2};
    \item[-] if $\ms L = 0$, $\ms R = 0$, $\omega$ satisfies the conditions of Theorem \ref{thm:attprofiles3};
\end{itemize}
we will construct a sequence $\{\omega_n\}_{n}$
of suitable 
perturbations
of $\omega$
with the property that:
\begin{equation}\label{eq:familyomegan}
\omega_n \in \mc A^{[A_nB_n]}(T)\quad\forall~n, \qquad\quad \omega_n \to \omega \quad\ \text{in \ $\bf L^1_{\mr{loc}}(\R)$},
\end{equation}
for a sequence of non critical connections $\{(A_n,B_n)\}_n$ 
satisfying~\eqref{eq:AnBnconv}
and
\begin{equation}
    \label{eq:AnBncond2}
    A_n<A,\qquad\qquad B_n>B\qquad\quad\forall~n\,.
\end{equation}
The conditions in~\eqref{eq:familyomegan} in turn will imply that $\omega \in \mc A^{[AB]}(T)$.
In fact, by the validity of Theorem~\ref{thm:backfordiscfluxcycle} in the \blu{non critical} case,
and because of~\eqref{eq:familyomegan}, it holds
\begin{equation}\label{eq:noncrifixedn2}
    \omega_n=  \mc S^{[A_nB_n]+}_T \circ \mc S^{[A_nB_n]-}_T \omega_n \qquad \forall \; n\,.
\end{equation}
On the other hand, relying on the stability property (iv) of Theorem \ref{theoremsemigroup},
and thanks to ~\eqref{eq:familyomegan}, 
one finds as in \S~\ref{sec:criticalcases-a} that
\begin{equation}
\label{eq:backelleunostab-3}
    S^{[A_nB_n]+}_T \circ
    \mc S^{[A_nB_n]-}_T \omega_n
    \ \ \rightarrow\ \
    \mc S^{[AB]+}_T \circ
    \mc S^{[AB]-}_T\omega
    \qquad\text{in}\quad
    \mathbf L^1_{\mr{loc}}(\R)\,.
\end{equation}
Hence, combining together~\eqref{eq:familyomegan}, \eqref{eq:noncrifixedn2}, \eqref{eq:backelleunostab-3},
we derive
\begin{equation*}
    \omega=\mc S^{[AB]+}_T \circ \mc S^{[AB]-}_T \omega\,,
\end{equation*}
which clearly yields $\omega \in \mc A^{[AB]}(T)$.
Therefore, to establish the implication $(3) \Rightarrow (1)$ of Theorem~\ref{thm:backfordiscfluxcycle} for non critical connections, it remains to produce a family $\{\omega_n\}_{n}$ that satisfies \eqref{eq:familyomegan}.
We shall construct such perturbations of $\omega \in \msc A^{\ms L, \ms R}$ as suitable 
\textit{``$(A_n, B_n)$ admissible envelopes}" 
of~$\omega$. 
\smallskip

We will first consider in Steps 2-8 the case 
$\ms L = 0$, $\ms R \geq 0$, while the \blu{symmetric} case $\ms L <0$, $\ms R = 0$ is entirely similar. \blu{Next, we will consider separately the case $\ms L < 0, \ms R > 0$, in Step 9.}

\noindent
\textbf{Step 2.}
We shall assume throughout Steps 2-8 that
\begin{equation}
\label{eq:LR-cond45}
    \ms L=\ms L[\omega, f_l] = 0, \quad\qquad \ms R=\ms R[\omega, f_r]\geq 0\,,
\end{equation}
and that: $\omega$ satisfies the conditions of Theorem~\ref{thm:attprofiles2} if $\ms R>0$;
$\omega$ satisfies the conditions of Theorem~\ref{thm:attprofiles3} if $\ms R=0$.

\noindent
We will perturb $\omega$
to obtain an attainable profile $\omega_n$ for the
$(A_n,B_n)$ connection by:
\begin{itemize}
    \item[-]shifting $\omega$
    on the right of $x=0$ by a size $\delta_{1,n}$, and
    on the left of $x=0$ by a size $\delta_{2,n}$;
    \item[-] choosing $\delta_{1,n}$ so to satisfy the admissibility condition~\eqref{eq:2a2b-prime} at $x=\ms R+\delta_{1,n}$ (if $\ms R+\delta_{1,n}>0$); 
    \item[-]lifting $\omega$ to the value $B_n$ of the connection when it is below, in the interval
    $]0, \ms R+\delta_{1,n}[$,
    so to satisfy the admissibility condition~\eqref{eq:2b2-prime}
    (if $\ms R+\delta_{1,n}>0$);
    \item[-]inserting a profile of a rarefaction in the interval $]-\delta_{2,n}, 0[$, so to satisfy the Lax-type admissibility condition~\eqref{eq:2a2-prime} 
    at $x=0$.
\end{itemize}
Namely,
consider the function
\begin{equation}
\label{eq:omega-n-def}
    \omega_n(x)\doteq  
\begin{cases}
\omega(x-\delta_{1,n}), & \text{if \ \ $x \geq \ms R+ \delta_{1,n}$},
\\
\noalign{\smallskip}
\max\{\omega(\ms R+)
, B_n\}, & \text{if \ \ $x \in \,] \ms R, \ms R+\delta_{1,n}[$},\\
     \noalign{\smallskip}
 \max \{\omega(x), B_n\}, & \text{if \ \ $ x \in \,]0, \ms R[$},\\
     \noalign{\smallskip}
     (f_l^{\prime})^{-1} \bigg(\dfrac{x+T \cdot
     f_l^{\prime}\big(\max\{\omega(0-), \; \overline A_n\}\big)}
       {T}\bigg), \quad & \text{if \ \ $x \in \,]-\delta_{2,n},0[$},\\
    \omega(x+\delta_{2,n}), & \text{if \ \ $x \leq  -\delta_{2,n}$}.
\end{cases}
\end{equation}
with
\begin{equation}
    \label{eq:delta12n-def}
    \begin{aligned}
       \delta_{1,n} &\doteq
    \inf \Big\{ \delta \geq  0 \ \ : \ \   \text{either} \quad  \ms R +\delta \geq T\cdot f_r^{\prime}(B_n), \\ 
    &\hspace{1,2in} \text{or}\quad
    \ms R +\delta < T\cdot f_r^{\prime}(B_n) \quad \text{and}  \ \ 
    \omega(\ms R+)\leq 
    \bs u[\ms R+\delta, B_n, f_r]  \Big\}\,,
    %
    \\
    \noalign{\smallskip}
    \delta_{2,n}&\doteq T \cdot f_l^{\prime}\big(\max\{\omega(0-), \; \overline A_n\}\big)-
    T \cdot f_l^{\prime}(\omega(0-)),
    \end{aligned}
\end{equation}
where $\overline A_n$ is defined as in~\eqref{eq:bar-AB-def}.
Recalling the definitions~\eqref{eq:LR-def},
and because of~\eqref{eq:LR-cond45},
we deduce
that
\begin{equation}
\label{eq:LR-end-def}
    \ms L_n\doteq 
    \ms L[\omega_n, f_l] = 0, \quad\qquad \ms R_n\doteq \ms R[\omega_n, f_r] = \ms R + \delta_{1,n}\qquad\quad \forall~n\,.
\end{equation}

\begin{figure}
    \centering

\tikzset{every picture/.style={line width=0.75pt}} 

\begin{tikzpicture}[x=0.75pt,y=0.75pt,yscale=-1,xscale=1]

\draw    (220.33,80.33) -- (449.67,80.33) ;
\draw    (221.67,170.33) -- (470,169.67) ;
\draw    (330.33,60.33) -- (330.67,170.33) ;
\draw    (367.67,80.33) -- (397,171) ;
\draw    (367.67,80.33) -- (330.5,115.33) ;
\draw    (330,79.67) -- (262.33,170.33) ;
\draw  [dash pattern={on 3.75pt off 3.75pt}]  (414.67,80.67) -- (446.33,169.67) ;
\draw    (319.33,73.33) -- (305.33,73.61) ;
\draw [shift={(302.33,73.67)}, rotate = 358.88] [fill={rgb, 255:red, 0; green, 0; blue, 0 }  ][line width=0.08]  [draw opacity=0] (5.36,-2.57) -- (0,0) -- (5.36,2.57) -- cycle    ;
\draw    (380,73.33) -- (395.33,73.61) ;
\draw [shift={(398.33,73.67)}, rotate = 181.04] [fill={rgb, 255:red, 0; green, 0; blue, 0 }  ][line width=0.08]  [draw opacity=0] (5.36,-2.57) -- (0,0) -- (5.36,2.57) -- cycle    ;
\draw  [dash pattern={on 4.5pt off 4.5pt}]  (289,81.67) -- (240.33,171) ;
\draw  [dash pattern={on 4.5pt off 4.5pt}]  (330,79.67) -- (240.33,171) ;
\draw  [dash pattern={on 4.5pt off 4.5pt}]  (414.67,80.67) -- (331.67,156.33) ;

\draw (361.33,66.4) node [anchor=north west][inner sep=0.75pt]  [font=\scriptsize]  {$\mathsf{R}$};
\draw (404,66.73) node [anchor=north west][inner sep=0.75pt]  [font=\scriptsize]  {$\mathsf{R} +\delta _{1}^{n}$};
\draw (278.67,64.07) node [anchor=north west][inner sep=0.75pt]  [font=\scriptsize]  {$-\delta _{2}^{n}$};
\draw (232,61.07) node [anchor=north west][inner sep=0.75pt]  [font=\scriptsize]  {$\omega ,\ \omega ^{n}$};

\end{tikzpicture}

    \caption{The ``candidate" characteristics of $\omega$ and of its admissible $A^nB^n$ envelopes $\omega^n$ (dashed).}
    \label{fig:envelopes}
\end{figure}

\noindent
Notice that the assumption that
$\omega$ satisfies conditions (ii)' of 
of Theorem \ref{thm:attprofiles2} or
conditions (ii) of Theorem \ref{thm:attprofiles3}, together with~\eqref{eq:AnBnconv},
imply that
\begin{equation}
\label{eq:lim-delta-n}
    \lim_{n\to\infty} \delta_{1,n}=
      \lim_{n\to\infty} \delta_{2,n}=0\,.
\end{equation}
In fact, if $\ms R \leq  T\cdot f_r^{\prime}(B)$, 
relying on conditions~\eqref{eq:2a2b-prime}, \eqref{eq:est-tfprimaea-3} of Theorem~\ref{thm:attprofiles2} or on condition~\eqref{eq:2a3-bis} of Theorem~\ref{thm:attprofiles3}
we deduce that $ \omega(\ms R+)\leq \bs u[\ms R, B , f_r]$.
Moreover, we know by Remark \ref{rem:monotonicityxy}
that $\bs u[\cdot, \cdot , f_r]$ is continuous in the first two entries.
Therefore, because of~\eqref{eq:AnBnconv},  we derive from definition~\eqref{eq:delta12n-def}
that, if $\ms R \leq T\cdot f_r^{\prime}(B)$,
then $\lim_n \delta_{1,n}=0$.
On the other hand, if $\ms R > T\cdot f_r^{\prime}(B)$, then it follows 
from definition~\eqref{eq:delta12n-def}
and~\eqref{eq:AnBnconv},
that $\delta_{1,n}=0$ for sufficiently large $n$.
Next, observe that, since $\ms L=0$,
by definition~\eqref{eq:LR-def}
one has $\omega(0-) \geq \theta_l$. On the other hand
by assumptions~\eqref{eq:critical-ip}, \eqref{eq:AnBnconv}
it follows that $\lim_n\overline A_n=\theta_l$.
Therefore, by definition~\eqref{eq:delta12n-def} we deduce that
$\lim_n \delta_{2,n}=0$.


\smallskip

 Because of~\eqref{eq:AnBnconv},
 and relying on conditions (ii) of  Theorem~\ref{thm:attprofiles2} or of Theorem~\ref{thm:attprofiles3},
 we deduce that the limit 
 \eqref{eq:lim-delta-n} implies
 the $\mathbf L^1_{\mr{loc}}$ convergence
 of $\omega_n$ to $\omega$
     as $n\to \infty$. Hence, in order to show that $\omega_n$ satisfy~\eqref{eq:familyomegan}
  it remains to prove that $\omega_n\in \mc A^{[A_nB_n]}(T)$
  for all $n$.
Since we are assuming in particular the validity of the implication $(2) \Rightarrow (1)$ of Theorem~\ref{thm:backfordiscfluxcycle} for non critical connections, 
in order to establish
$\omega_n\in \mc A^{[A_nB_n]}(T)$ it will be sufficient to show that:
if~${\ms R}_n=0$ then 
$\omega_n$ satisfies conditions (i)-(ii) of Theorem ~\ref{thm:attprofiles3};
if ${\ms R}_n>0$ then $\omega_n$ satisfies conditions (i)'-(ii)' of Theorem \ref{thm:attprofiles2}.
This is established in the steps below distinguishing the cases where ${\ms R}=0$ or
${\ms R}>0$ and ${\ms R_n}=0$ or ${\ms R_n}>0$.
Notice that, by definition~\eqref{eq:omega-n-def},
we always have
\begin{equation}
\label{eq:R-Rn-lax-ineq}
    \omega_n({\ms R}_n-)\geq \omega_n({\ms R}_n+)\,.
\end{equation}
\medskip

\noindent
\textbf{Step 3.}
\big($\ms R_n>0$, $\ms R=0$,
proof that $\omega_n$ satisfies condition (ii)' of Theorem \ref{thm:attprofiles2}\big).\\
 By definition~\eqref{eq:omega-n-def}
one has 
\begin{equation}\label{eq:omegan-cond33}
    \omega_n(0-)=\max\{\omega(0-), \overline A_n\}\,,
    \qquad\
    \omega_n(0+)=\max\{\omega(0+),  B_n\},
    \qquad\
    \omega_n(\ms R_n+)=\omega(\ms 0+)\,,
\end{equation}
and
\begin{equation}
\label{eq:omegan-cond34}
\omega_n(x)\geq B_n\qquad \
\ \forall x\in\,]0, \ms R_n[\,,
\end{equation}
while definition~\eqref{eq:delta12n-def}, together with~\eqref{eq:LR-end-def}, \eqref{eq:omegan-cond33}, 
\eqref{eq:omegan-cond34},
and~\eqref{eq:uB-ineq} with $f=f_r$, yield
\begin{equation}
\label{eq:omegan-cond35}
    {\ms R}_n\in\,]0, T\cdot f'_r(B_n)[\, \qquad \Longrightarrow \qquad \omega_n(\ms R_n+)\leq \bs u[{\ms R}_n, B_n, f_r]\leq \omega_n(\ms R_n-) \,.
\end{equation}
Since $\pi^r_{l,+}(B_n)=\overline A_n$,
from~\eqref{eq:omegan-cond33} 
we deduce 
\begin{equation}
\label{eq:omegan-cond36}
    \pi^r_{l,+}(\omega_n(0+))=\max\{\pi^r_{l,+}(\omega(0+)), \overline A_n\}.
    \end{equation}
    Hence~\eqref{eq:omegan-cond33}, \eqref{eq:omegan-cond36}, 
imply
\begin{equation}
\label{eq:omegan-cond39}
    \omega_n(0-)\geq \pi^r_{l,+}(\omega_n(0+))\,.
\end{equation}
Therefore, if $\ms R_n>0$ and $\ms R=0$, then
conditions~\eqref{eq:R-Rn-lax-ineq}, \eqref{eq:omegan-cond34}, \eqref{eq:omegan-cond35}, \eqref{eq:omegan-cond39}
show that $\omega_n$ satisfies condition (ii)' of Theorem \ref{thm:attprofiles2}.

\noindent
\textbf{Step 4.}
\big($\ms R_n>0$, $\ms R>0$,
proof that $\omega_n$ satisfies condition (ii)' of Theorem \ref{thm:attprofiles2}\big).\\
By definition~\eqref{eq:omega-n-def}
one has 
\begin{equation}
\label{eq:omegan-cond40}
    \omega_n(0-)=\max\{\omega(0-), \overline A_n\}\,,
    \qquad\
    \omega_n(0+)=\max\{\omega(0+),  B_n\},
    \qquad\
    \omega_n(\ms R_n+)=\omega(\ms R+)\,,
\end{equation}
and
\begin{equation}
\label{eq:omegan-cond41}
\omega_n(x)\geq B_n\qquad \
\ \forall x\in\,]0, \ms R_n[\,,
\end{equation}
while definition~\eqref{eq:delta12n-def}, together with~\eqref{eq:LR-end-def}, \eqref{eq:omegan-cond40}, \eqref{eq:omegan-cond41},
and~\eqref{eq:uB-ineq} with $f=f_r$,
yield the implication~\eqref{eq:omegan-cond35}.
Since we are assuming that $\omega$ satisfies condition~\eqref{eq:2a2-prime} of Theorem~\ref{thm:attprofiles2},
relying on~\eqref{eq:omegan-cond40}
we deduce as in Step 4
that~\eqref{eq:omegan-cond36},
\eqref{eq:omegan-cond39}
hold.
Therefore, if $\ms R_n>0$ and $\ms R>0$, then
\eqref{eq:R-Rn-lax-ineq}, \eqref{eq:omegan-cond35}, \eqref{eq:omegan-cond39},
\eqref{eq:omegan-cond41}
show that $\omega_n$ satisfies condition (ii)' of Theorem \ref{thm:attprofiles2}.
\smallskip

\noindent
\textbf{Step 5.}
\big($\ms R_n=\ms R=0$, proof that $\omega_n$ satisfies condition (ii)
of Theorem \ref{thm:attprofiles3}\big).\\
By definition~\eqref{eq:omega-n-def}, 
we have
\begin{equation}
\label{eq:omegan-cond32}
\omega_n(0-)=\max\{\omega(0-), \; \overline A_n\}\geq \overline A_n,
\qquad\quad 
\omega_n(0+)=\omega(0+)\,,
\end{equation}
while
definition~\eqref{eq:delta12n-def}
yields
$\omega(0+)\leq \bs u[0, B_n, f_r]$.
Since  by Remark~\ref{rem:monotonicityxy} 
we have
$\bs u[0, B_n, f_r]  = \overline B_n$
($\overline B_n$ as in~\eqref{eq:bar-AB-def}),
it follows that 
\begin{equation}
\label{eq:omegan-cond32b}
\omega_n(0+)\leq \overline B_n.
\end{equation}
Moreover, 
by virtue of~\eqref{eq:omegan-cond32} 
we deduce~\eqref{eq:omegan-cond39}.
Hence, if $\ms R_n=0$ and $\ms R=0$, then conditions~\eqref{eq:omegan-cond39}, \eqref{eq:omegan-cond32}, \eqref{eq:omegan-cond32b} show that  
$\omega_n$ satisfies condition (ii)
of Theorem \ref{thm:attprofiles3}.
\smallskip

\noindent
\textbf{Step 6.}
\big($\ms R_n>0$, $\ms R\geq 0$,
proof that $\omega_n$ satisfies~\eqref{eq:1a2-prime}
of Theorem~\ref{thm:attprofiles2}\big).\\
Since we are assuming that $\omega$ satisfies either
the Ole\v{\i}nik estimates~\eqref{eq:1a3}
of Theorem~\ref{thm:attprofiles3} (in case $\ms R=0$), or
the Ole\v{\i}nik estimates
~\eqref{eq:1a2-prime}
of Theorem~\ref{thm:attprofiles2} (in case $\ms R>0$),
computing the Dini derivative 
of $\omega_n$ in~\eqref{eq:omega-n-def}, we find

\begin{equation}
\label{eq:oleinik-23}
    \begin{aligned}
    D^+\omega_n(x) &= D^+\omega(x-\delta_{1,n})\leq 
    \frac{1}{T \cdot f_r^{\second}(\omega(x-\delta_{1,n}))}
    =\frac{1}{T \cdot f_r^{\second}(\omega_n(x))}
    \qquad\forall~x> {\ms R}_n\,,
    \\
    \noalign{\smallskip}
    D^+\omega_n(x) &=
    D^+\omega(x+\delta_{2,n})\leq
    \frac{1}{T \cdot f_l^{\second}(\omega(x+\delta_{2,n}))}
    =
    \frac{1}{T \cdot f_l^{\second}(\omega_n(x))}
    \qquad\forall~x<-\delta_{2,n}\,,
    \\
    \noalign{\smallskip}
    D^+\omega_n(x) &=
    \frac{1}{T \cdot f_l^{\second}(\omega_n(x))}
    \qquad\quad \forall~x\in [-\delta_{2,n}, 0[\,.
    \end{aligned}
\end{equation}
Observing that $\omega_n$ is continuous at $x=-\delta_{2,n}$,
we deduce from~\eqref{eq:oleinik-23} 
that $\omega_n$
satisfies the Ole\v{\i}nik estimates~\eqref{eq:1a2-prime}
of Theorem~\ref{thm:attprofiles2}.

\smallskip

\noindent
\textbf{Step 7.}
\big($\ms R_n>0$, $\ms R\geq 0$, proof that $\omega_n$ satisfies~\eqref{eq:1b2-prime}
of Theorem~\ref{thm:attprofiles2}\big).\\
Observe that by definition~\eqref{eq:omega-n-def} $\omega_n$ is constant in $\,]\ms R, \ms R_n[$.
Therefore, since it holds~\eqref{eq:R-Rn-lax-ineq},
in order to show that $\omega_n$ satisfies the estimate~\eqref{eq:1b2-prime} on $]0, \ms R_n[$ it will be sufficient to show that ~\eqref{eq:1b2-prime} 
is verified on $]0, \ms R[$\,,
assuming that $\ms R>0$.

As observed in Step 5 of \S~\ref{sec:criticalcases-b},
the assumption
that $\omega$ satisfies condition~\eqref{eq:1b2-prime}
of Theorem~\ref{thm:attprofiles2} is equivalent to the 
fact that the maps
\begin{equation}
\label{eq;phi-psi-def-2}
\phi(x)\doteq -\tau(x) \cdot f'_l\circ
\pi_{l,+}^r(\omega(x)),
\qquad\quad \tau(x)\doteq T-\frac{x}{f_r^{\prime}(\omega(x))}\,,
\qquad x\in\,]0, \ms R[\,.
\end{equation}
are, respectively, nondecreasing 
and decreasing.
Then consider the corresponding maps for $\omega_n$
\begin{equation}
\phi_n(x)\doteq -\tau_n(x) \cdot f'_l\circ
\pi_{l,+}^r(\omega_n(x)),
\qquad\quad \tau_n(x)\doteq T-\frac{x}{f_r^{\prime}(\omega_n(x))}\,,
\qquad x\in\,]0, \ms R[\,,
\end{equation}
and compare their values 
in two points $0<x_1<x_2<\ms R$, 
of continuity for $\omega$ and $\omega_n$: \\
- if $\omega_n(x_i)=\omega(x_i)$ for $i=1,2$, then one clearly has that
$\phi_n(x_1)=\phi (x_1)\leq \phi (x_2)=\phi_n(x_2)$, 
$\tau_n(x_1)=\tau(x_1)>\tau(x_2)=\tau_n(x_2)$;\\
- if $\omega_n(x_i)\neq \omega(x_i)$ for $i=1,2$, then by definition~\eqref{eq:omega-n-def} we have $\omega_n(x_i)=B_n$ for $i=1,2$ and therefore 
one has $\phi_n(x_1)<\phi_n(x_2)$, $\tau_n(x_1)>
\tau_n(x_2)$;\\
- if $\omega_n(x_1)=\omega(x_1)$
and $\omega_n(x_2)\neq \omega(x_2)$, then by definition~\eqref{eq:omega-n-def} we have  
$\omega_n(x_1)\geq B_n$, 
\linebreak $\omega_n(x_2)=B_n$, which implies
$f'_r(\omega_n(x_2))\leq f'_r(\omega_n(x_1))$.
Moreover, since $\omega(x)\geq \theta_r$, $\omega_n(x)\geq \theta_r$,
it follows that 
$f'_l\circ \pi_{l,+}^r(\omega_n(x_1))
\geq f'_l\circ \pi_{l,+}^r(\omega_n(x_2))$.
Hence,  we derive that 
$\phi_n(x_1)\leq \phi_n(x_2)$
$\tau_n(x_1)>\tau_n(x_2)$;\\
- if $\omega_n(x_1)\neq \omega(x_1)$
and $\omega_n(x_2)=\omega(x_2)$, then by definition~\eqref{eq:omega-n-def} we have 
$\omega_n(x_1)=B_n > \omega(x_1)$, which implies
$f'_r(\omega_n(x_1))> f'_r(\omega(x_1))$.
Notice that by definition~\eqref{eq:LR-def}
of $\ms R$ it follows that
$\omega(x_1)\geq \theta_r$.
Since also $\omega_n(x_1)\geq \theta_r$,
we deduce that 
$f'_l\circ \pi_{l,+}^r(\omega_n(x_1))>
f'_l\circ \pi_{l,+}^r(\omega(x_1))$.
Thus, it follows that
$\phi_n(x_1)<\phi (x_1)\leq \phi (x_2)=\phi_n(x_2)$, 
$\tau_n(x_1)>\tau(x_1)>\tau(x_2)=\tau_n(x_2)$.\\
Hence, extending the above 
estimates to the right
limits of $\omega_n$ in its points of discontinuity,
we have shown that
 it holds true
\begin{equation}
\label{eq:phi-tau-n-monoton-2}
    \phi_n(x_1)\leq \phi_n(x_2),\qquad\quad
    \tau_n(x_1)>\tau_n(x_2)
    \qquad\ \forall~0<x_1<x_2<\ms R\,.
\end{equation}
In turn, the monotonicity~\eqref{eq:phi-tau-n-monoton-2} of
$\phi_n$, $\tau_n$
is equivalent to the fact that $\omega_n$ satisfies~\eqref{eq:1b2-prime}, by the same arguments of  Step 5 of \S~\ref{sec:criticalcases-b}.
\smallskip

\noindent
\textbf{Step 8.}
\big($\ms R_n=\ms R= 0$,
proof that $\omega_n$ satisfies~\eqref{eq:1a3}
of Theorem~\ref{thm:attprofiles3}\big).\\
The proof is entirely similar to the one of Step 6,
under the assumption that
$\omega$ satisfies 
the Ole\v{\i}nik estimates~\eqref{eq:1a3}
of Theorem~\ref{thm:attprofiles3}.

{ 
\vspace{0.3cm}
\noindent \textbf{Step 9.}
Finally, let us assume
\begin{equation}
\label{eq:LR-cond45'}
    \ms L=\ms L[\omega, f_l] < 0, \quad\qquad \ms R=\ms R[\omega, f_r]> 0\,,
\end{equation}
and (because of~\eqref{eq:critical-ip})
that $\omega$ satisfies the conditions (i), (ii) of Theorem~\ref{thm:attprofilescrit}.
Hence, by virtue of~\eqref{eq:AnBnconv} we may assume also that, for $n$ sufficiently large there hold
\begin{equation}
\label{eq:LR-cond45n}
   \ms L< T\cdot f'_l(A_n),\qquad\qquad  
   \ms R< T\cdot f'_r(B_n)\,.
\end{equation}

\noindent
In a similar way to what is done in Step 2, we will perturb $\omega$
to obtain an attainable profile $\omega_n$ for the
$(A_n,B_n)$ connection by:
\begin{itemize}
    \item[-]shifting $\omega$
    on the right of $x=0$ by a size $\delta_{1,n}$
    \item[-] choosing $\delta_{1,n}$ so to satisfy the admissibility condition~\eqref{eq:2a} at $x=\ms R+\delta_{1,n}$; 
        \item[-]dropping $\omega$ to the value $A_n$ of the connection when it is above, in the interval
    $]\ms L,0[$, so to satisfy the admissibility condition~\eqref{eq:2b-lr2-2}.
\end{itemize}
Namely,
consider the function
\begin{equation}
\label{eq:omega-n-def-2}
    \omega_n(x)\doteq  
\begin{cases}
\omega(x-\delta_{1,n}), & \text{if \ \ $x \geq \ms R+ \delta_{1,n}$},
\\
\noalign{\smallskip}
B_n & \text{if \ \ $ x \in \,]0, \ms R+\delta_{1,n}[$},\\
     \min\{A_n, \omega(x)\} & \text{if \ \ $x \in ]\ms L, 0$[}, \\
     \omega(x) & \text{if \ \ $x \leq \ms L$},
\end{cases}
\end{equation}
with 
\begin{equation}
     \label{eq:delta12n-def-2}
     \delta_{1,n} \doteq
    \inf \Big\{ \delta \in\R \ \ : \ \  
    \bs\tau[\ms R + \delta, B_n, f_r]=
    \bs\tau[\ms R , B, f_r]\Big\}\,.
     \end{equation}
    %
Notice that the definition~\eqref{eq:delta12n-def-2} is meaningful since 
 the map $\ms R \mapsto \bs \tau[\ms R, B_n, f_r]$ is strictly monotone and continuous, and because the image 
 of the maps 
 $$
 \begin{aligned}
     \bs\tau[\,\cdot\,, B, f_r]\, &: \, 
  \,]0, T\cdot f'(B)[\, \ \to \ \,]0,+\infty[\,,
  \\
  \bs\tau[\,\cdot\,, B_n, f_r]\, &: \, 
  \,]0, T\cdot f'(B_n)[\, \ \to \ \,]0,+\infty[\,,
 \end{aligned}
 $$
 is the set $]0, T[$
(see~\S~\ref{def:rsr-block}).
Then, recalling the definitions~\eqref{eq:LR-def},
\eqref{eq:Ltildedef}, and because of~
\eqref{eq:LR-cond45n},
we deduce
that
\begin{equation}
\label{eq:LR-end-def-2}
    \ms L_n\doteq 
    \ms L[\omega_n, f_l] = \ms L, \quad\qquad \ms R_n\doteq \ms R[\omega_n, f_r] = \ms R + \delta_{1,n},
\end{equation}
and
\begin{equation}
\label{eq:tildeLR-end-def-2}
    \widetilde{\ms L}_n\doteq 
    \widetilde{\ms L}[\omega_n, f_l] = (T-\bs \tau_n) \cdot f_l^\prime(A_n) = (T-\bs \tau) \cdot f_l^\prime(A_n),
\end{equation} 
where
\begin{equation}
    \label{eq:tau-taun-def}
    \bs \tau_n\doteq 
    \bs\tau[\ms R + \delta_{1,n}, B_n, f_r],
    \qquad\quad
    \bs \tau\doteq 
    \bs\tau[\ms R, B, f_r].
\end{equation}
Relying on~\eqref{eq:AnBnconv} and 
since $\bs \tau[\ms R, B, f_r]$
depends continuously on the parameters $\ms R, B$
(see~\S~\ref{def:rsr-block}),
one deduces that~$\lim_n\delta_{1,n}=0$, that $\lim_n \widetilde {\ms L}_n = 0$,
and that
$\omega_n$ converges to $\omega$
  in $\mathbf L^1_{\mr{loc}}$   as $n\to \infty$. Hence, as in Step 2 above we conclude that, in order to show the validity of~\eqref{eq:familyomegan}
  it remains to prove that 
$\omega_n$ satisfies conditions (i)-(ii) of Theorem~\ref{thm:attprofiles}.
 
 Assuming that $\widetilde{\ms L}_n\in\,]\ms L, 0[$ for $n$ sufficiently large,
 in order to show that $\omega_n$ satisfies~\eqref{eq:2b-lr2-2} of Theorem~\ref{thm:attprofiles} it will be sufficient to prove that
 \begin{equation}
 \label{eq:2b-lr2-2-n}
     A_n\leq \omega(x)\qquad\forall~x\in\,]\widetilde{\ms L}_n,0[\,,
     \qquad\quad A_n\leq \omega(\,\widetilde{\ms L}_n-)\,.
 \end{equation}
 To this end observe that, by definition~\eqref{eq:tildeLR-end-def-2},
 we have $(f'_l)^{-1}(\frac{x}{T-\bs\tau[\ms R, B, f_r]})>A_n$ for all $x \in \,]\widetilde  {\ms L}_n, 0[\,$.
 Thus, the first inequality in~\eqref{eq:critcond-2} satisfied by $\omega$
 implies that
 $\omega(x) > A_n$ for all $x \in \,]\widetilde  {\ms L}_n, 0[\,$,
 which proves 
 the first condition in~\eqref{eq:2b-lr2-2-n}.
Next observe that, since
$\widetilde{\ms L}_n\in\,]\ms L, 0[$,
from the first inequality in~\eqref{eq:critcond-2} 
and by definition~\eqref{eq:tildeLR-end-def-2} it follows 
\begin{equation*}
    f'_l(\omega(\,\widetilde {\ms L}_n-))\geq\frac{\widetilde {\ms L}_n}{T-\bs\tau}\geq f'_l(A_n),
\end{equation*}
which 
 implies $\omega(\,\widetilde {\ms L}_n-) \geq A_n$. This completes the proof of~\eqref{eq:2b-lr2-2-n} and thus that $\omega_n$ satisfies~\eqref{eq:2b-lr2-2}.
The verification that $\omega_n$ satisfies the remaining conditions 
 in (i)-(ii) of Theorem~\ref{thm:attprofiles}
is entirely similar to the one performed in Steps 4, 6, 7 above, and is accordingly omitted.}
This concludes the proof
of the implication $(3) \Rightarrow (1)$ for critical connections.
\blu{\begin{remark}
Whenever $\mr{Tot.Var.}(\omega)<+\infty$,
the  perturbed profiles $\omega_n$
approximating  $\omega$
constructed in Step 2 and in Step 9 of \S~\ref{sec:criticalcases-c}
may possibly have larger total variation than the one of~$\omega$.
However, $\omega_n$
have always local bounded variation,
even in the case where $\mr{Tot.Var.}(\omega) = +\infty$.
In fact, assuming~\eqref{eq:LR-cond45}
and that $\omega$ satisfies the conditions of Theorem \ref{thm:attprofiles3}, suppose that $\omega$ has unbounded total variation on a right neighborhood of $x=0$.
Then, letting $\{(A_n,B_n)\}_n$ 
be a sequence of non critical connections satisfying~\eqref{eq:AnBnconv}, \eqref{eq:AnBncond2}, 
there should exist a sequence of positive values $\rho_n \downarrow 0$,
so that 
\begin{equation}
\label{eq:bound-approx-profile}
    \omega(x)\leq B_n,\qquad \forall~x\in\,]0, \rho_n]\,,
\end{equation}
for all $n$ sufficiently large. If this is not the case, then there should exist $\overline\rho>0$ and $\overline n$ so that $\omega(x)\geq B_{\overline n}>\theta_r$ for all $x\in \,]0, \overline \rho]$.
But this in turn would yield 
uniform upper bounds on $D^+\omega$
(and hence on the total variation of $\omega$ as well) 
on bounded subsets $K$ of $[0, +\infty[$,
with the same type of analysis of~\S~\ref{subsec:reductionBV}. 
Therefore, because of~\eqref{eq:bound-approx-profile}, by definition~\eqref{eq:omega-n-def} we have
\begin{equation}
\label{eq:bound-approx-profile-2}
    \omega_n(x)= B_n,\qquad \forall~x\in\,]0, \rho_n]\,,
\end{equation}
for all $n$ sufficiently large.
The property~\eqref{eq:bound-approx-profile-2}
has precisely  the effect to cut the 
possible large oscillation of $\omega$
occurring in a right neighborhood of $x=0$,
and hence to ensure that $\mr{Tot.Var.}(\omega_n,\, K)<+\infty$
for all $n$ large.
Clearly, we will have that $\lim_n \mr{Tot.Var.}(\omega_n,\, K)=+\infty$.
With entirely similar 
arguments one can show that,
if $\omega$ satisfies the conditions (i), (ii) of Theorem~\ref{thm:attprofilescrit}, then the profile $\omega_n$ defined by~\eqref{eq:omega-n-def-2} has always local bounded variation.
\end{remark}}

\section{$BV$ bounds for $AB$-entropy solutions}
\label{sec:BVboundsABsol}
We collect in this section the $BV$ bounds for solutions, and for the flux of the solutions, that arise as a corollary of our analysis. 

\begin{prop}\label{BVbound}
In the same setting of Theorem~\ref{theoremsemigroup}, 
for every $u_0 \in {\bf L}^{\infty}(\R)$,
and for any bounded set $K\subset\R$, 
the following properties are verified.
\begin{enumerate}
[leftmargin=25pt]
\item[(i)] 
For any non critical connection $(A,B)$, there exists a constant $C_1=C_1(A,B,\|u_0\|_{{\bf L^\infty}}, K)>0$ such that
it holds true
\begin{equation}
\label{eq:bv-bound-2}
    \mr{Tot.Var.}\big(\sabpt u_0,\, K\big)\leq \frac{C_1}{t}
    \qquad\forall~t>0\,.
\end{equation}
{  In particular, 
any attainable profile
$\omega\doteq \sabpT u_0$, $u_0 \in {\bf L}^{\infty}(\R)$, $T>0$,
enjoy the
 property~\eqref{eq:Hhyp} stated in \S~\ref{roadmap}-{\bf Part 1}.
}
\item[(ii)] 
There exists a constant $C_2=C_2(\|u_0\|_{{\bf L^\infty}}, K)>0$ such that,
for any connection  $(A,B)$, 
it holds true
\begin{equation}
\label{eq:bv-bound-3}
\begin{aligned}
    \mr{Tot.Var.}\big(f_l\circ \sabpt u_0,\, K\cap\,]-\infty, 0]\big)&\leq \frac{C_2}{t},
    \\
    \mr{Tot.Var.}\big(f_r\circ \sabpt u_0,\, K\cap [0, +\infty[\big)&\leq \frac{C_2}{t},
\end{aligned}
    \qquad\forall~t>0\,,
\end{equation}
where the inequalities are understood to be verified  whenever $K\,\cap\,]-\infty, 0]\neq \emptyset$, or \linebreak $K\,\cap [0, +\infty[\, \neq \emptyset$, respectively. 
\end{enumerate}
\end{prop}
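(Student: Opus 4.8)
The plan is to derive both parts of Proposition~\ref{BVbound} from the Ole\v{\i}nik-type characterization of the attainable set together with the constraints $u(x,t)\le A$ on $(\ms L,0)$ and $u(x,t)\ge B$ on $(0,\ms R)$, reproducing --- and mildly refining --- the argument already carried out in \S~\ref{subsec:reductionBV}. For part~(i), I would fix a bounded set $K$ and a time $t>0$, set $u(\cdot,t)=\sabpt u_0$, and invoke the implication $(1)\Rightarrow(3)$ of Theorem~\ref{thm:backfordiscfluxcycle} (valid for non critical connections) to conclude that $u(\cdot,t)$ satisfies the Ole\v{\i}nik inequalities and state constraints of Theorems~\ref{thm:attprofiles}, \ref{thm:attprofiles2}, \ref{thm:attprofiles3} with $T$ replaced by $t$. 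Then the one-sided upper bounds on $D^+u(\cdot,t)$ coming from $\tfrac{1}{t f_l''},\ \tfrac{1}{t f_r''},\ g,\ h$ --- estimated exactly as in \eqref{eq:ol-est-un-212}--\eqref{eq:ol-est-un-32} using $f_l'',f_r''\ge a$, the constraints $u\ge B$ on $(0,\ms R)$ and $u\le A$ on $(\ms L,0)$, and a uniform $\mathbf L^\infty$ bound $C_M$ on $u$ (Remark~\ref{rem:linf-bound-ABsol}) --- yield a bound of the form $D^+u(x,t)\le \tfrac{C(A,B,\|u_0\|_{\mathbf L^\infty})}{t}$ for all $x$ in a bounded set. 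Since a uniform one-sided Lipschitz bound controls the total increasing variation, and hence the total variation, of a bounded function on a bounded interval, this gives \eqref{eq:bv-bound-2}; the ``in particular'' clause is immediate because $t\mapsto\mr{Tot.Var.}(\sabpt u_0,K)<\infty$ for every $t>0$ exactly means $\sabpt u_0\in BV_{\mr{loc}}(\R)$, i.e.\ condition~\eqref{eq:Hhyp}.

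For part~(ii) the point is that the bound must survive in the critical case, where the total variation of $u(\cdot,t)$ itself may blow up near $x=0$; one works instead with $f_l\circ u$ on $(-\infty,0]$ and $f_r\circ u$ on $[0,+\infty)$. The plan is: away from a fixed neighbourhood of $0$, say on $K\cap\{|x|\ge\delta\}$ with $\delta>0$ fixed, the chain rule and part~(i)-type reasoning (the Ole\v{\i}nik estimates on $]-\infty,\ms L[$ and $]\ms R,+\infty[$ hold for all connections, critical or not, and $|f_l'|,|f_r'|$ are bounded on $[-C_M,C_M]$) give a bound on $\mr{Tot.Var.}(f_l\circ u, K\cap]-\infty,-\delta])$ and $\mr{Tot.Var.}(f_r\circ u, K\cap[\delta,+\infty[)$ of order $C/t$. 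Near $x=0$, on $(0,\ms R)$ one has $u\ge B\ge\theta_r$, so $f_r$ is monotone increasing there; then $\mr{Tot.Var.}(f_r\circ u,(0,\ms R))=|f_r(u(\ms R-))-f_r(u(0+))|$, which is controlled by $2\sup_{|z|\le C_M}|f_r(z)|$ independently of the connection; symmetrically on $(\ms L,0)$ with $u\le A\le\theta_l$ and $f_l$ monotone decreasing. If instead $\ms R\ge\delta$ (resp.\ $|\ms L|\ge\delta$) one is in the previous ``away from zero'' regime on the rest of the interval; combining, and choosing $\delta$ to depend only on $\|u_0\|_{\mathbf L^\infty}$ (e.g.\ via the a priori bound on $|\ms L|,\ms R$ in terms of $C_M$ and $\sup|f_l'|,\sup|f_r'|$), one obtains \eqref{eq:bv-bound-3} with a constant $C_2$ independent of $(A,B)$.

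The main subtlety --- and the step I would treat most carefully --- is making the constants genuinely uniform in the connection in part~(ii), and dealing with the case $A=\theta_l$ or $B=\theta_r$ where $f_l'(A)=0$ or $f_r'(B)=0$ makes the ``active zone'' $]T f_l'(A),0[$ or $]0,Tf_r'(B)[$ degenerate to a point. In that degenerate situation the intervals $(\ms L,0)$ and $(0,\ms R)$ need not be empty, but the monotonicity of $f_l$ on $\{u\le\theta_l\}$ and of $f_r$ on $\{u\ge\theta_r\}$ still holds, so the total variation of the flux on these intervals is still a single-jump quantity bounded by $2\sup_{|z|\le C_M}|f_l(z)|$ or $2\sup_{|z|\le C_M}|f_r(z)|$; I would isolate this monotonicity observation as the key lemma and then the rest is bookkeeping. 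A secondary technical point is that Theorem~\ref{thm:backfordiscfluxcycle} is invoked here with time horizon $t$ rather than $T$, so I would state at the outset that all the characterization theorems hold verbatim with $T$ replaced by any positive time, which is clear from their proofs. I do not expect any genuinely new difficulty beyond what \S~\ref{subsec:reductionBV} already handles; the proposition is essentially a corollary.
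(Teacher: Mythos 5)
Your part (i) is essentially the paper's argument (invoke $(1)\Rightarrow(3)$, turn the Ole\v{\i}nik-type one-sided bounds into a bound on the total increasing variation as in \eqref{eq:ol-est-un-212}--\eqref{eq:ol-est-un-32}), and the ``in particular'' clause is indeed immediate; that half is fine.

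Part (ii), however, has a genuine gap in the treatment of the interval $]0,\ms R[$ (and symmetrically $]\ms L,0[$). You claim that since $u\ge B\ge\theta_r$ there and $f_r$ is increasing on $[\theta_r,+\infty[$, one has $\mr{Tot.Var.}(f_r\circ u,\,]0,\ms R[)=|f_r(u(\ms R-))-f_r(u(0+))|$, i.e.\ a single-jump quantity. This is false: monotonicity of $f_r$ on the range of $u$ does not make $f_r\circ u$ monotone in $x$ unless $u$ itself is monotone on $]0,\ms R[$, and nothing in the characterization forces that --- the only constraints there are $u\ge B$ and the one-sided bound $D^+u\le h$, with $h>0$, so $u$ may oscillate. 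Indeed, in the critical case $B=\theta_r$ the total variation of $u$ itself can blow up near $x=0$ (this is exactly the phenomenon of \cite{MR2743877} that part (ii) is designed to circumvent), and composing with the strictly increasing $f_r$ does not collapse that oscillation to a single jump. The correct mechanism is different: one must compose the Ole\v{\i}nik bound with the flux, $D^+(f_r\circ u)(x)=f_r'(u(x))\,D^+u(x)\le f_r'(u(x))\,h[u,f_l,f_r](x)$, and observe that the extra factor $f_r'(u)$ in the numerator makes the resulting expression, maximized over the admissible values of $u$ (the function $\Phi(x,t,u)$ in the paper's proof, with maximizer $u_{x,t}=(f_r')^{-1}((2x/t)^{1/3})$), of order $x^{-1/3}t^{-2/3}$ for $x<\sqrt2\,t$ and of order $t^{-1}$ for $x\ge\sqrt2\,t$. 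This one-sided bound is not uniform in $x$ but is integrable near $x=0$ with integral $O(1/t)$, which is what controls the total increasing variation of $f_r\circ u$ with a constant independent of $(A,B)$.

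A secondary problem is your splitting at a fixed $\delta>0$: on $K\cap[\delta,\ms R[$ the governing estimate is still $D^+u\le h$ (not the classical Ole\v{\i}nik bound, which only holds on $]\ms R,+\infty[$), and the crude bound $h\le \overline\Lambda/(a\,x)\le\overline\Lambda/(a\,\delta)$ you would get there is a constant in $t$, so it cannot produce the required $1/t$ decay; the paper avoids this by splitting at $x=\sqrt2\,t$ rather than at a fixed $\delta$. In short, part (ii) needs the $\Phi$-function computation (or an equivalent integrable one-sided estimate on $D^+(f_r\circ u)$ uniform in the connection); the monotonicity observation you isolate as your ``key lemma'' does not substitute for it.
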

\begin{proof}
Since $\mc S^{[AB]+}_t u_0\in \mc A^{[AB]}(t)$ and
thanks to the implication $(1) \Rightarrow (3)$ of Theorem~\ref{thm:backfordiscfluxcycle}, we know that 
$\mc S^{[AB]+}_t u_0$
satisfies the conditions stated in 
Theorem \ref{thm:attprofiles}, \ref{thm:attprofilescrit}, \ref{thm:attprofiles2}, or \ref{thm:attprofiles3}, {  that cover all possible cases.}
We divide the proof in four steps.\\

\noindent
\noindent
\textbf{Step 1.} {\it (proof of (i)).}\\
    In the case of a non critical connection $(A,B)$, 
    it is well known that for initial data $u_0\in BV(\R)$, one has $\mc S^{[AB]+}_t u_0\in BV(\R)$ for all $t>0$
(see~\cite[Lemma 8]{Garavellodiscflux}
and
\cite[Theorem 2.13-(iii)]{MR2743877}).
\blu{On the other hand}, for initial data $u _0 \in {\bf L}^{\infty}(\R)$,
we know that 
$\mc S^{[AB]+}_t u_0$
satisfies the Ole\v{\i}nik-type inequalities stated in 
Theorem \ref{thm:attprofiles}, \ref{thm:attprofiles2}, or \ref{thm:attprofiles3}.
Thus, since $(A,B)$ is a non critical connection, 
by the analysis in~\S~\ref{subsec:reductionBV}
we deduce that $D^+(\mc S^{[AB]+}_t u_0)$
satisfies
one-sided uniform upper bounds 
as the ones provided by~\eqref{eq:ol-est-un-32}. 
In turn, such bounds
yield the existence of  uniform bounds on the total increasing variation (and hence on the total variation as well) of $S^{[AB]+}_t u_0$ on bounded subsets $K$ of $[0, +\infty[$,
which depend on 
the connection $(A,B)$, on the set $K$, and on $\|u_0\|_{{\bf L^\infty}}$.
By similar arguments we derive bounds on the 
total variation  of $S^{[AB]+}_t u_0$ on bounded subsets of $]-\infty, 0]$, which yields~\eqref{eq:bv-bound-2},
completing the proof of~(i).
%
\noindent

\smallskip
\textbf{Step 2.} 
{\it (proof of (ii) when $S^{[AB]+}_t u_0$ satisfies the conditions of
Theorem~\ref{thm:attprofiles3}).}\\
Since $S^{[AB]+}_t u_0$ satisfies the Ole\v{\i}nik-type inequalities~\eqref{eq:1a3} of
Theorem~\ref{thm:attprofiles3},
we immediately deduce 
a uniform bound 
on the total increasing variation
of $S^{[AB]+}_t u_0$
on bounded sets, 
which does not depend on the
values $f'_l(A), f'_r(B)$.
In turn, such bounds
yield the existence of  uniform bounds on the total increasing variation (and hence on the total variation as well) of $S^{[AB]+}_t u_0$ on bounded subsets $K$ of $[0, +\infty[$,
which depend on 
the set $K$ and on~$\|u_0\|_{{\bf L^\infty}}$.
By similar arguments we derive bounds on the 
total variation  of $S^{[AB]+}_t u_0$ on bounded subsets of $]-\infty, 0]$, which yields~\eqref{eq:bv-bound-2},
with a constant $C_1$ that
depends only on 
the set $K$ and on~$\|u_0\|_{{\bf L^\infty}}$.
In turn, ~\eqref{eq:bv-bound-2} yields~\eqref{eq:bv-bound-3}
relying on the Lipschitzianity of $f_l, f_r$ on the set $[-M, M]$,
with $M\doteq \|u_0\|_{{\bf L^\infty}}$. 
This completes the proof of (ii) in 
the case where $S^{[AB]+}_t u_0$ satisfies
the conditions
 stated in Theorem~\ref{thm:attprofiles3}.
 \smallskip
 
\noindent
\textbf{Step 3.} 
{\it (proof of (ii) when $S^{[AB]+}_t u_0$ satisfies the conditions of
Theorem~\ref{thm:attprofiles2}).}\\
To fix the ideas, we assume that 
 $\omega\doteq S^{[AB]+}_t u_0$ satisfies
 the inequalities (i)'
 and the pointwise constraints-(ii)'
 stated in Theorem~\ref{thm:attprofiles2}.
 Notice that, by the same arguments of above, ~\eqref{eq:1a2-prime} yields  
 the estimate~\eqref{eq:bv-bound-2}
 (and hence also~\eqref{eq:bv-bound-3}) 
 for bounded set $K\subset \,]-\infty, 0]$ or $K\subset [\ms R,+\infty[$\,.
 Then consider a  set $K\subset [0,\ms R]$, with $\ms R= \ms R[\omega, f_r]$
 defined as in~\eqref{eq:LR-def},
 and assume that the inequalities~\eqref{eq:1b2-prime}, \eqref{eq:2b2-prime}, are satisfied.
Observe that, 
by the uniform convexity~\eqref{eq:flux-assumption-1}  of $f_l, f_r$,
we have
\begin{equation}
\label{eq:esth-2}
\frac{f''_l\circ 
     \pi_{l,+}^{r}
     (u)}{
    \big[f'_l\circ 
     \pi_{l,+}^{r}
     (u)\big]^{2}}\geq c_1,
     \qquad\
     f''_r(u)\geq c_1,
     \qquad\forall~|u|\leq \|\omega\|_{{\bf L}^\infty}\,,
\end{equation}
for some constant $c_1>0$
depending on $ \|\omega\|_{{\bf L}^\infty}$.
Moreover,
 by definition~\eqref{eq:LR-def}
 of $\ms R$
it holds true
 \begin{equation}
 \label{eq:esth-3}
     t\cdot f'_r(\omega(x)) > x
     \qquad \forall~x\in[0, \ms R[\,,
     \quad t>0\,.
 \end{equation}
 Hence, recalling the definition~\eqref{eq:ghdef} of the function $h$,
 and relying on~\eqref{eq:1b2-prime}, 
 \eqref{eq:2b2-prime},
 \eqref{eq:esth-2}, \eqref{eq:esth-3}, 
 we derive
 \begin{equation}
 \label{eq:esth-4}
     \begin{aligned}
     D^+(f_r\circ\omega)(x)&=
     f'_r(\omega(x)) \,D^+\omega(x)
     \\
     &\leq f'_r(\omega(x))\, 
     h[\omega,f_l,f_r](x)
     \\
     &\leq \frac{[f'_r(\omega(x))]^2}{c_1\,[f'_r(\omega(x))]^2\big(t\cdot f'_r(\omega(x)) - x\big)+c_1 \,x}
     \qquad \forall~x\in[0, \ms R[\,.
     \quad t>0\,.
     \end{aligned}
 \end{equation}
 Towards an estimate of~\eqref{eq:esth-4}, consider the map
 \begin{equation}
 \label{eq:phi-def}
     \Phi(x,t,u)\doteq 
     \begin{cases}
         \dfrac{[f'_r(u)]^2}{[f'_r(u)]^2\big(t\cdot f'_r(u) - x\big)+x},\quad&\text{if}\quad u>\theta_r,
         \\
         \noalign{\medskip}
         \  0,\quad&\text{if}\quad u=\theta_r,
     \end{cases}
     \qquad\quad x\in[0, \ms R[\,,\quad t>0\,.
 \end{equation}
 By direct computations one finds
  \begin{equation*}
      \Phi_u(x,t,u)=\frac{f'_r(u)\,f''_r(u)\big(2x-t\,[f'_r(u)]^3\big)}{\Big([f'_r(u)]^2\big(t\cdot f'_r(u) - x\big)+x\Big)^2\ }.
  \end{equation*}
  Hence, since $f'_r(u)\geq 0$ for all $u\geq \theta_r$, and because $f''_r(u)>0$ for 
  all $u$, we deduce that,
  setting
\begin{equation}
\label{eq:min-uxt}
     u_{x,t}\doteq (f'_r)^{-1}
     \bigg(\sqrt[3]{\frac{2x}{t}\,}\, 
     \bigg),
\end{equation}
  for all $x, t >0$ it holds true
  \begin{equation}
  \label{eq:der-phi}
  u_{x,t}>\theta_r,\qquad\quad 
      \Phi_u(x,t,u)
      \begin{cases}
      \, \geq 0\ \ &\text{if}\quad u\in [\theta_r, u_{x,t}],
       \\
       \noalign{\smallskip}
       \, \leq 0 
       \ \  &\text{if}\quad u\geq 
       u_{x,t}\,.
       \end{cases}
  \end{equation}
  In turn, \eqref{eq:min-uxt}, \eqref{eq:der-phi} imply
  that $u_{x,t}$ is a point of global maximum for the map $u\mapsto  \Phi(x,t,u)$, $u\geq \theta_r$. On the other hand, 
  because of~
  \eqref{eq:der-phi}
  we have
  \begin{equation}
  \label{eq:esth-6}
      t\cdot f'_r(u_{x,t}) > x
      \quad \Longrightarrow\quad x< \sqrt{2} \, t\,.
  \end{equation}
  Thus we find
  \begin{equation}
  \label{eq:max-phi-1}
      \Phi(x,t,u)\leq \Phi(x,t,u_{x,t})
      =\frac{1}{\sqrt[3]{x}\Big(3\big(\frac{t}{2}\big)^{\frac{2}{3}}-x^{\frac{2}{3}}\Big)}
      <\frac{1}{\sqrt[3]{x}}\bigg(\frac{2}{t}\bigg)^{\!\!\frac{2}{3}}\,,
      \qquad \forall~
      x< \sqrt{2} \, t,\ \ u\geq \theta_r\,,
  \end{equation}
  and
  \begin{equation}
      \label{eq:max-phi-2}
      \Phi(x,t,u)\leq \frac{[f'_r(u)]^2}{x}\leq 
      \frac{c_2}{t}\,,
      \qquad \forall~
      x\geq  \sqrt{2} \, t,\ \ \theta_r\leq u\leq 
       \|\omega\|_{{\bf L}^\infty}\,,
  \end{equation}
  for some constant $c_2$ depending on $ \|\omega\|_{{\bf L}^\infty}$.
Then, relying on~\eqref{eq:esth-3}, \eqref{eq:esth-4}, \eqref{eq:phi-def}, \eqref{eq:esth-6}, \eqref{eq:max-phi-1}, \eqref{eq:max-phi-2},
we derive
\begin{equation}
\label{eq:max-phi-4}
D^+(f_r\circ\omega)(x)\leq 
\begin{cases}
        \dfrac{c_3}{\sqrt[3]{x}\ t^\frac{2}{3}}
        \ \ &\text{if} \quad x< \sqrt{2}\, t, \quad  x\in[0, \ms R[\,,
        \\
        \noalign{\bigskip}
        \dfrac{c_3}{t}
        \ \  &\text{if} \quad x\geq \sqrt{2}\, t,\quad  x\in[0, \ms R[\,,
    \end{cases}
\end{equation}
for some other constant $c_3$ depending on $ \|\omega\|_{{\bf L}^\infty}$.
%
Hence, recalling Remark~\ref{rem:linf-bound-ABsol}, we deduce
that, given 
a bounded set $K\subset [0,\ms R]$,
we have
\begin{equation*}
    \int_K D^+\big(f_r\circ \sabpt u_0\big)(x)~dx 
    \leq \frac{C}{t},
\end{equation*}
for some constant $C$ depending only on 
$\|u_0\|_{{\bf L^\infty}}, K$, which
yields~\eqref{eq:bv-bound-3}.
This completes the proof of (ii) in 
the case where $S^{[AB]+}_t u_0$ satisfies
the conditions
 stated in Theorem~\ref{thm:attprofiles2}.
\smallskip

 \smallskip
 
\noindent
\textbf{Step 4.} 
{\it (proof of (ii) when $S^{[AB]+}_t u_0$ satisfies the conditions of
Theorem~\ref{thm:attprofiles}
\blu{or of Theorem~\ref{thm:attprofilescrit}}).}\\
Since $S^{[AB]+}_t u_0$ satisfies the Ole\v{\i}nik-type inequalities~\eqref{eq:1a} of
Theorem~\ref{thm:attprofiles}
\blu{(or~\eqref{eq:1acrit}, \eqref{eq:1acrit1} of Theorem~\ref{thm:attprofilescrit})},
with the same analysis of Step~2
we deduce the uniform bound in~\eqref{eq:bv-bound-3} for bounded subset $K$ of  
$]-\infty, \ms L]$ or of $[\ms R,+\infty[$.
\blu{Next}, for sets $K\subset [0,\ms R]$
or $K\subset [\ms L, 0]$, relying on the 
Ole\v{\i}nik-type inequalities~\eqref{eq:1b1}, \eqref{eq:1b12}, of
Theorem~\ref{thm:attprofiles}
\blu{(or \eqref{eq:1b1crit}, \eqref{eq:1b1crit1} of Theorem~\ref{thm:attprofilescrit})},
we recover the  bound in~\eqref{eq:bv-bound-3} performing \blu{the} same analysis of Step~3. 
This completes the proof of (ii) in 
the case where $S^{[AB]+}_t u_0$ satisfies
the conditions
 stated in Theorem~\ref{thm:attprofiles}
 \blu{or in Theorem~\ref{thm:attprofilescrit}},
 and concludes the proof of the proposition.
\end{proof}

\medskip
\medskip

\appendix
\section{Stability of solutions with respect to connections and BV bounds}\label{app:stabconn}

We provide here a proof of Properties (iv)-(v) of Theorem~\ref{theoremsemigroup},
which seems to be absent in the literature. To this end
we first recall a by now classical technical lemma, useful for the analysis
of stability of discontinuous conservation laws  (e.g. see~\cite{audusse2005uniqueness}, \cite[Proposition 1]{Garavellodiscflux}).  
For sake of completeness we provide  a proof below.

\begin{lemma}\label{alphalemma}
Fix a connection $(A,B)$
and let $I^{AB}$ be the map in~\eqref{IABdef}. Then,
for any couple of  pairs  $(u_l,u_r)$, $(v_l,v_r)\in \R^2$  that verify
\begin{equation}\label{ineqABentropylemma}
    I^{AB}(u_l,u_r) \leq 0, \qquad I^{AB}(v_l,v_r) \leq 0\,,
\end{equation}
and 
\begin{equation}\label{lemmaRH}
    f_l(u_l) = f_r(u_r), \qquad f_l(v_l)=f_r(v_r)\,,
\end{equation}
setting
\begin{equation}\label{alpha}
    \alpha(u_l,u_r,v_l,v_r)  \doteq \mathrm{sgn}(u_r-v_r) \cdot (f_r(u_r) - f_r(v_r)) - \mathrm{sgn}(u_l-v_l) \cdot (f_l(u_l) - f_l(v_l))\,,
\end{equation}
it holds true
\begin{equation}\label{alphaneg}
    \alpha(u_l,u_r,v_l,v_r) \leq 0\,.
\end{equation}
\end{lemma}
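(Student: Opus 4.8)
The plan is to verify the inequality $\alpha(u_l,u_r,v_l,v_r)\le 0$ by a careful case analysis on the relative positions of the four states with respect to the critical points $\theta_l,\theta_r$ and to the connection values $A,B$. First I would recall the structural consequences of the hypotheses. By Lemma~\ref{lem:traceseq2}, the conditions \eqref{ineqABentropylemma}--\eqref{lemmaRH} are equivalent to: $f_l(u_l)=f_r(u_r)\ge f_l(A)=f_r(B)$ and $f_l(v_l)=f_r(v_r)\ge f_l(A)=f_r(B)$, together with the implication that, if $u_l\le\theta_l$ \emph{and} $u_r\ge\theta_r$ then $(u_l,u_r)=(A,B)$, and similarly for $(v_l,v_r)$. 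Since $f_l$ is decreasing on $]-\infty,\theta_l]$ and increasing on $[\theta_l,+\infty[$, and likewise for $f_r$ about $\theta_r$, the inequality $f_l(u_l)\ge f_l(A)$ with $A\le\theta_l$ forces either $u_l\le A$ or $u_l\ge\overline A$, where $\overline A=({f_l}_{|[\theta_l,+\infty[})^{-1}\circ f_l(A)$; symmetrically $u_r\le\overline B$ or $u_r\ge B$, with $\overline B=({f_r}_{|\,]-\infty,\theta_r]})^{-1}\circ f_r(B)$. The ``forbidden'' configuration is precisely $u_l\le\theta_l$ and $u_r\ge\theta_r$ with $(u_l,u_r)\ne(A,B)$, which is excluded.

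Next I would write $\alpha$ using the common value $c\doteq f_l(A)=f_r(B)$ as a reference level. The key computational observation is that one can rewrite
\[
\mathrm{sgn}(u_r-v_r)\,(f_r(u_r)-f_r(v_r))
= \bigl|f_r(u_r)-c\bigr|+\bigl|f_r(v_r)-c\bigr|-2\min\{\ldots\}
\]
type identities only in special sub-cases; more robust is to split according to whether each of $u_l,v_l$ lies in the ``left branch'' $(\le A)$ or ``right branch'' $(\ge\overline A)$ of $f_l$, and whether each of $u_r,v_r$ lies in the ``left branch'' $(\le\overline B)$ or ``right branch'' $(\ge B)$ of $f_r$. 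In each such sub-case, $f_l$ restricted to the relevant branch is monotone, so $\mathrm{sgn}(u_l-v_l)$ is determined by $\mathrm{sgn}(f_l(u_l)-f_l(v_l))$ up to a fixed sign depending on the branch; the same for $f_r$. Writing $p\doteq f_l(u_l)=f_r(u_r)$, $q\doteq f_l(v_l)=f_r(v_r)$ (both $\ge c$), in each sub-case $\alpha$ becomes $\pm|p-q|\mp|p-q|$ or $\pm(p-q)\mp(q-p)$, and one checks the sign is $\le 0$. The cases where the forbidden configuration would arise (e.g. $u_l\le\theta_l$ and $u_r\ge\theta_r$) are ruled out by the second part of \eqref{ABtraces}, and the remaining genuinely mixed cases — where, say, $u_l$ is on the left branch but $v_l$ is on the right branch — are handled by noting that then $u_l\le A\le\overline A\le v_l$, so $\mathrm{sgn}(u_l-v_l)=-1$ (or $p=q$), and symmetric bookkeeping on the right.

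A cleaner packaging, which I would actually pursue, is to introduce the Kružkov-type entropy flux associated with the connection evaluated at the reference pair $(A,B)$: since $c^{AB}$ is a stationary solution, the pair $(A,B)$ satisfies \eqref{lemmaRH} and $I^{AB}(A,B)=0$, so $(A,B)$ is an admissible reference. One shows the ``triangle-type'' estimate $\alpha(u_l,u_r,v_l,v_r)\le I^{AB}(u_l,u_r)+I^{AB}(v_l,v_r)$ — this is the content of the Lemma in disguise — by observing that for each side the quantity $\mathrm{sgn}(u-v)(f(u)-f(v))$ is bounded above by $\mathrm{sgn}(u-A)(f(u)-f(A))+\mathrm{sgn}(v-A)(f(v)-f(A))$ plus a term that telescopes, using convexity of $f_l$ (resp.\ $f_r$) and the fact that $A\le\theta_l$ (resp.\ $B\ge\theta_r$). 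Then \eqref{alphaneg} follows immediately from \eqref{ineqABentropylemma}. The main obstacle, and where the argument needs genuine care rather than symbol-pushing, is the mixed case in which $u$ and $v$ lie on opposite branches of the same flux: there the naive inequality $\mathrm{sgn}(u-v)(f(u)-f(v))\le |f(u)-c|+|f(v)-c|$ can fail to be tight enough, and one must use the \emph{exclusion} of the undercompressive configuration (second line of \eqref{ABtraces}) together with the ordering $u\le A\le\overline A\le v$ to pin down the sign of $u-v$. Once that branch-crossing bookkeeping is set up correctly for both sides simultaneously, the desired inequality \eqref{alphaneg} drops out.
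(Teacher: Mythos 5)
Your fallback route---the exhaustive case analysis over the branches $u_l\leq A$ versus $u_l\geq \overline A$ and $u_r\leq \overline B$ versus $u_r\geq B$, with the undercompressive configuration excluded except for $(A,B)$ itself---is sound and, if carried out, proves the lemma: writing $p\doteq f_l(u_l)=f_r(u_r)$ and $q\doteq f_l(v_l)=f_r(v_r)$, each admissible pairing of branch configurations reduces $\alpha$ to one of $0$, $-|p-q|\pm(p-q)$, or $-2|p-q|$, all of which are $\leq 0$. The problem is the ``cleaner packaging'' that you say you would actually pursue: the claimed estimate $\alpha(u_l,u_r,v_l,v_r)\leq I^{AB}(u_l,u_r)+I^{AB}(v_l,v_r)$ is \emph{false}. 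Take a non critical connection, set $c\doteq f_l(A)=f_r(B)$, choose $p>q>c$, and let $u_l\doteq f_{l,-}^{-1}(p)<A$, $u_r\doteq f_{r,-}^{-1}(p)<\overline B$, $v_l\doteq f_{l,+}^{-1}(q)>\overline A$, $v_r\doteq f_{r,-}^{-1}(q)<\overline B$. Both pairs satisfy \eqref{ineqABentropylemma}--\eqref{lemmaRH}, with $I^{AB}(u_l,u_r)=0$ and $I^{AB}(v_l,v_r)=-2(q-c)<0$; on the other hand $u_r<v_r$ (both lie on the decreasing branch of $f_r$) and $u_l<v_l$, so $\alpha=-(p-q)+(p-q)=0$, which strictly exceeds $I^{AB}(u_l,u_r)+I^{AB}(v_l,v_r)$. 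Thus the lemma is not ``the triangle inequality in disguise'', and any derivation of \eqref{alphaneg} that passes through that sum bound collapses (note that $\alpha\leq 0$ does hold in this example; only your intermediate inequality fails).

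For comparison, the paper's proof avoids all branch bookkeeping with a short reduction. By \eqref{lemmaRH}, $\alpha$ vanishes identically unless $\mathrm{sgn}(u_r-v_r)=-\mathrm{sgn}(u_l-v_l)$, so one may assume $u_r>v_r$ and $u_l<v_l$, in which case $\alpha=2\,(f_r(u_r)-f_r(v_r))$. If this were positive, then $f_r(u_r)>f_r(v_r)\geq f_r(B)$ together with $u_r>v_r$ forces $u_r\geq B$, and symmetrically $f_l(u_l)>f_l(v_l)\geq f_l(A)$ with $u_l<v_l$ forces $u_l\leq A$; Lemma~\ref{lem:traceseq2} then pins down $(u_l,u_r)=(A,B)$, whence $\alpha=I^{AB}(v_l,v_r)\leq 0$, a contradiction. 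If you retain your case analysis, I would still recommend absorbing this reduction first: it replaces sixteen configurations by a single one.
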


\begin{proof}
Observe that, if $u_r = v_r$ or $u_l = v_l$, then the left hand side of \eqref{alphaneg} is zero and~\eqref{alphaneg}
is verified. 
Hence, 
without loss of generality, we may assume that $u_r > v_r$
and $u_l\neq v_l$. If $u_l >v_l$, the left hand side of \eqref{alphaneg} is again zero, because of~\eqref{lemmaRH}. Thus, assuming that $u_l<v_l$, we have
\begin{equation}\label{ulleqvlcase}
\alpha(u_l, u_r, v_l, v_r) = 2(f_r(u_r)-f_r(v_r))\,.
\end{equation}
If we suppose, by contradiction, 
that \eqref{alphaneg} is not verified,
it would follow by \eqref{ulleqvlcase}, that 
$f_r(u_r)>f_r(v_r)$. 
Moreover, because of assumptions~\eqref{ineqABentropylemma}-\eqref{lemmaRH}, and applying Lemma \ref{lem:traceseq2},
we know that $f_r(u_r), f_r(v_r)\geq f_r(B)$.
Since $u_r >v_r$,  these inequalities together imply that $u_r \geq B$. 
On the other hand, by \eqref{lemmaRH}, it also holds  $f_l(u_l) >f_l(v_l)$. 
Relying again on
~\eqref{ineqABentropylemma}-\eqref{lemmaRH} and Lemma-\ref{lem:traceseq2},
we deduce that
$f_l(u_l), f_l(v_l)\geq f_l(A)$, which,
coupled with $u_l<v_l$,
$f_l(u_l) >f_l(v_l)$, implies
 $u_l \leq A$. Hence, by Lemma~\ref{lem:traceseq2} it follows that $u_r = B$ and $u_l = A$, and then we would have\begin{equation}
\begin{aligned}
    \alpha(u_l, u_r, v_l, v_r) &= \alpha(A, B, v_l, v_r) \\
   & =  \mathrm{sgn}(B-v_r) \cdot (f_r(B) - f_r(v_r)) - \mathrm{sgn}(A-v_l) \cdot (f_l(A) - f_l(v_l)) \\ 
    & = I^{AB}(v_l, v_r) \leq 0
    \end{aligned}
\end{equation}
\blu{which} is a contradiction. Therefore \eqref{alphaneg} is satisfied, and the proof is concluded.
\end{proof}

In order to obtain stability with respect to perturbations of the connection, \blu{the following quantitative version of Lemma~\ref{alphalemma} will be useful.  A general version of this Lemma can be found in \cite[Proposition 3.21]{MR2807133} (see also \cite[Proposition 2.10]{AGS10} for the case $f_l=f_r$).}

\begin{lemma}\label{Elemmadiffconnect}
Let $(A,B)$, $(A',B')$
be two connections.
Then, for any couple of  pairs  $(u_l,u_r)$, $(v_l,v_r)\in \R^2$  that verify
\begin{equation}
    I^{AB}(u_l,u_r) \leq 0, \qquad I^{A^{\prime}B^{\prime}}(v_l,v_r) \leq 0\,,
\end{equation}
and~\eqref{lemmaRH},
it holds true
\begin{equation}
\label{eq:alpha-est2}
    \alpha(u_l,u_r,v_l, v_r) \leq  2\left| f_r(B^{\prime})-f_r(B)\right|.
\end{equation}
\end{lemma}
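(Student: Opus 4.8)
\textbf{Proof plan for Lemma~\ref{Elemmadiffconnect}.}
The plan is to reduce the estimate~\eqref{eq:alpha-est2} to the already-established Lemma~\ref{alphalemma} by comparing $(v_l,v_r)$ against the pair $(A,B)$ associated to the \emph{first} connection. First I would observe that, by the triangle-inequality structure of $\alpha$ and the Rankine--Hugoniot identities~\eqref{lemmaRH}, it is natural to split the perturbation error off by introducing an intermediate pair. Concretely, since $(A',B')$ is a connection one has $f_l(A')=f_r(B')$ and $f_l(A)=f_r(B)$, so the ``defect'' we must pay for is controlled by $|f_r(B')-f_r(B)|$ alone (equivalently $|f_l(A')-f_l(A)|$, which is the same quantity). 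The key point is that $I^{AB}$ and $I^{A'B'}$ differ by terms that can be bounded by this flux difference: for any pair $(w_l,w_r)$ with $f_l(w_l)=f_r(w_r)$,
\begin{equation}
\big| I^{AB}(w_l,w_r) - I^{A'B'}(w_l,w_r) \big| \leq 2\,|f_r(B')-f_r(B)|,
\end{equation}
since the only $(A,B)$-dependence in $I^{AB}$ is through $f_l(A)=f_r(B)$ inside a Lipschitz-in-that-argument expression (the $\mathrm{sgn}$ factors are bounded by $1$ and $\mathrm{sgn}(w_r-B)f_r(B)-\mathrm{sgn}(w_l-A)f_l(A)$ is $1$-Lipschitz in $f_r(B)=f_l(A)$, so the difference is at most $2|f_r(B)-f_r(B')|$). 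Hence from $I^{A'B'}(v_l,v_r)\le 0$ we get $I^{AB}(v_l,v_r)\le 2|f_r(B')-f_r(B)|$.

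Next I would run the same case analysis as in the proof of Lemma~\ref{alphalemma}, but now with this relaxed inequality for $(v_l,v_r)$. As there, $\alpha(u_l,u_r,v_l,v_r)$ is zero unless (up to swapping the roles, which only changes signs) $u_r>v_r$ and $u_l<v_l$, in which case $\alpha=2(f_r(u_r)-f_r(v_r))$. If $f_r(u_r)\le f_r(v_r)$ the bound is immediate (the left side is $\le 0$), so assume $f_r(u_r)>f_r(v_r)$. Using $I^{AB}(u_l,u_r)\le 0$ together with Lemma~\ref{lem:traceseq2} gives $f_r(u_r)\ge f_r(B)$, $f_l(u_l)\ge f_l(A)$, and combined with $u_r>v_r$, $u_l<v_l$ and~\eqref{lemmaRH} this forces $u_r\ge B$, $u_l\le A$, hence $u_l=A$, $u_r=B$ exactly as in Lemma~\ref{alphalemma}. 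Then
\begin{equation}
\alpha(u_l,u_r,v_l,v_r) = \alpha(A,B,v_l,v_r) = I^{AB}(v_l,v_r) \leq 2\,|f_r(B')-f_r(B)|,
\end{equation}
which is precisely~\eqref{eq:alpha-est2}. The symmetric subcase $u_r<v_r$, $u_l>v_l$ is handled identically, bounding $I^{A'B'}(v_l,v_r)$ by the analogous relaxed inequality after comparing with $(A',B')$ instead (or, equivalently, by relabeling $(u,v)$ and noting $\alpha$ is antisymmetric under that swap while the right-hand side is symmetric).

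The main obstacle I anticipate is purely bookkeeping: making the Lipschitz comparison $|I^{AB}-I^{A'B'}|\le 2|f_r(B')-f_r(B)|$ airtight when the $\mathrm{sgn}$ factors in the two expressions are evaluated at different reference states $A$ versus $A'$ (and $B$ versus $B'$). One must be a little careful that $\mathrm{sgn}(w_r-B)$ and $\mathrm{sgn}(w_r-B')$ may disagree; however, the combined map $b\mapsto \mathrm{sgn}(w_r-f_{r,-}^{-1}(b)\text{ or }f_{r,+}^{-1}(b))\cdot(\dots)$ appearing in $I$ is still $1$-Lipschitz in the flux value $b=f_r(B)$ on each monotone branch, because a sign flip only occurs where the affected term vanishes. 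This is exactly the content already packaged in~\cite[Proposition 3.21]{MR2807133}, so I would either cite it directly or reproduce the short branch-by-branch estimate. Once that comparison is in hand, the rest is a verbatim repetition of the proof of Lemma~\ref{alphalemma} with $0$ replaced by $2|f_r(B')-f_r(B)|$ at the single place where the hypothesis on $(v_l,v_r)$ enters.
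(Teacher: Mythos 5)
Your argument is correct, and it reaches the paper's estimate by a route that is essentially the mirror image of the one in the text. The paper first applies Lemma~\ref{lem:traceseq2} to the pair $(v_l,v_r)$ to get a dichotomy --- either $I^{AB}(v_l,v_r)\le 0$ (so Lemma~\ref{alphalemma} gives $\alpha\le 0$ outright) or $(v_l,v_r)=(A',B')$ --- and in the second case estimates $\alpha(u_l,u_r,A',B')$ by adding and subtracting $I^{AB}(u_l,u_r)\le 0$ and bounding the two resulting connection-perturbation terms $\alpha_r(u_r)$, $\alpha_l(u_l)$ by $|f_r(B')-f_r(B)|$ each, via a three-case analysis on the position of $u_r$ relative to $B,B'$ (and of $u_l$ relative to $A,A'$). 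You instead apply the perturbation estimate first, transferring the admissibility of $(v_l,v_r)$ from $(A',B')$ to $(A,B)$ at a cost of $2|f_r(B')-f_r(B)|$, and then pin the \emph{other} pair, forcing $(u_l,u_r)=(A,B)$ in the nontrivial case so that $\alpha=I^{AB}(v_l,v_r)$. Both ingredients are the same --- your Lipschitz comparison $|I^{AB}(w)-I^{A'B'}(w)|\le 2|f_r(B')-f_r(B)|$ is, term by term, exactly the paper's bound on $\alpha_r-\alpha_l$, and is valid for the reason you give (on each monotone branch of $f_r$, resp.\ $f_l$, the quantity $\mathrm{sgn}(w_r-B)(f_r(w_r)-f_r(B))$ equals $\pm|f_r(w_r)-f_r(B)|$, hence is $1$-Lipschitz in $f_r(B)$) --- so neither route buys much over the other; yours has the small advantage of isolating the perturbation estimate as a standalone inequality that could be reused, at the price of having to re-justify that the forcing $u_r\ge B$, $u_l\le A$ in Lemma~\ref{alphalemma} survives when $(v_l,v_r)$ only satisfies the relaxed inequality. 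It does survive: that deduction uses only $I^{AB}(u_l,u_r)\le 0$, the Rankine--Hugoniot relations~\eqref{lemmaRH}, and the orderings $u_r>v_r$, $u_l<v_l$, $f_r(u_r)>f_r(v_r)$, not the admissibility of $(v_l,v_r)$ with respect to $(A,B)$, so your ``verbatim repetition'' claim is accurate. One small correction: $\alpha$ is \emph{symmetric}, not antisymmetric, under swapping $(u_l,u_r)\leftrightarrow(v_l,v_r)$; this is in fact what makes your relabeling of the second crossed subcase legitimate (were it antisymmetric, the swap would only give a lower bound on $\alpha$).
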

\begin{proof}
With no loss of generality assume that $B^{\prime} > B$.
Then, applying Lemma~\ref{lem:traceseq2}, one deduces that $B^{\prime} > B$, together with~\eqref{lemmaRH}
and 
$I^{A^{\prime}B^{\prime}}(v_l,v_r) \leq 0$,
implies that
one of the following two holds:
\begin{enumerate}
    \item $I^{AB}(v_l,v_r) \leq 0$,
    \smallskip
    \item $(v_l, v_r) = (A^{\prime},B^{\prime}) $.
\end{enumerate}
If (1) holds, then by Lemma \ref{alphalemma} we have $\alpha(u_l,u_r,v_l,v_r) \leq 0$,
and therefore~\eqref{eq:alpha-est2}
is verified. Otherwise, (2) holds. In this case, we can add and subtract the non positive quantity $I^{AB}(u_l,u_r) $,
and rewrite $\alpha$ as 
\begin{equation}
    \alpha(u_l,u_r,A^{\prime}, B^{\prime}) =
    \alpha_r(u_r) -\alpha_l(u_l) + I^{AB}(u_l,u_r) \leq  \alpha_r(u_r) -\alpha_l(u_l),
\end{equation}
where 
\begin{equation*}
\begin{aligned}
    \alpha_r(u_r) &\doteq  \mathrm{sgn}(u_r-B^{\prime})\cdot(f_r(u_r)- f_r(B^{\prime})) - \mathrm{sgn}(u_r-B) \cdot (f_r(u_r)- f_r(B)),
    \\
    \noalign{\smallskip}
    \alpha_l(u_l) &\doteq  \mathrm{sgn}(u_l-A^{\prime}) \cdot (f_l(u_l)- f_l(A^{\prime})) - \mathrm{sgn}(u_l-A)\cdot (f_l(u_l)- f_l(A)).
\end{aligned}
    \end{equation*}
We provide  separately an estimate on $\alpha_r(u_r)$ and on $\alpha_l(u_l)$. We consider first the term  $\alpha_r$, and we distinguish three cases.
\begin{enumerate}
    \item $u_r > B^{\prime}$. Then one has
    $$
        \alpha_r(u_r) = f_r(u_r) -f_r(B^{\prime})-f_r(u_r)+f_r(B) = f_r(B)-f_r(B^{\prime}).
    $$
    \item $u_r \in [B,B^{\prime}]$. Observe that, applying Lemma~\ref{lem:traceseq2}
    and relying on~\eqref{lemmaRH} and 
$I^{AB}(u_l,u_r) \leq 0$,
    we deduce
    $f_r(u_r)\geq f_r(B)$.
    Then one has
    $$
    \begin{aligned}
        \alpha_r(u_r) &= -f_r(u_r) +f_r(B^{\prime})-f_r(u_r)+f_r(B) 
        \\
        \noalign{\smallskip}
        &= (f_r(B)+f_r(B^{\prime})-2f_r(u_r)) \leq  (f_r(B^{\prime})-f_r(B)).
    \end{aligned}
        $$
    \item $u_r < B$. Then one has
    $$
    \alpha_r(u_r) = -f_r(u_r) +f_r(B^{\prime})+f_r(u_r)-f_r(B) \leq f_r(B^{\prime})-f_r(B).
    $$
\end{enumerate}
In every case, we obtain 
\begin{equation}
\label{eq:est-alpha-3}
    \alpha_r(u_r) \leq \left| f_r(B^{\prime})-f_r(B)\right|.
\end{equation}
Analogously, and thanks to~\eqref{lemmaRH}, we can prove that
\begin{equation}
\label{eq:est-alpha-4}
    \alpha_l(u_l) \geq  -\left| f_l(A^{\prime})-f_l(A)\right| = -\left| f_r(B^{\prime})-f_r(B)\right|
\end{equation}
which in turn, together with~\eqref{eq:est-alpha-3}, implies 
\begin{equation}
    \alpha(u_l,u_r,A^{\prime},B^{\prime}) \leq 2 \left| f_r(B^{\prime})-f_r(B)\right|,
\end{equation}
and this concludes the proof of the lemma. 
\end{proof}
\medskip

\begin{proof}[Proof of Theorem \ref{theoremsemigroup}-$(iv)$-$(v)$]
Set 
\begin{equation}
    u(x,t) \doteq \sabpt u_0 (x), \qquad v(x,t) \doteq  \mc S_t^{[A^{\prime}B^{\prime}]+} 
    u_0(x).
\end{equation}
Relying on property (2)
of Definition~\ref{defiAB},
with standard doubling of variable arguments (e.g. see~\cite[\S 6.3]{bressan2000hyperbolic})
one obtains that, for every
non-negative test function $\phi\in\mathcal{C}^1_c$ with compact support 
contained in $]-\infty,0[\,\times\,]0,+\infty[$, it holds true
\begin{equation}
\label{kruz+1}
\int_{-\infty}^0\int_0^{\infty}  \big\{|u-v| \phi_t + \mathrm{sgn}(u-v)\left(f_l(u) - f_l(v)\right) \phi_x \big\}
\dif x \dif t \geq 0, 
\end{equation}
and, for for every
non-negative test function $\phi\in\mathcal{C}^1_c$ with compact support 
contained in
$]0,+\infty[\,\times$ $]0,+\infty[$, it holds true
\begin{equation}
\label{kruz-1}
\int_0^{\infty}\int_0^{\infty} \big\{
|u-v| \phi_t + \mathrm{sgn}(u-v)\left(f_r(u) - f_r(v)\right) \phi_x
\big\}\dif x \dif t \geq 0.
\end{equation}
Hence, with the same arguments, one deduces that,  for every
non-negative test function $\phi\in\mathcal{C}^1_c$ with compact support 
contained in
$\R \times$ $]0,+\infty[$, it holds true
%
\begin{equation}\label{leqE}
\int_0^{+\infty} \int_{-\infty}^{+\infty} \big\{\left|u-v\right|\phi_t+\mathrm{sgn}(u-v)(f(x,u)-f(x,v))\phi_x\big\} \dif x \dif t  \geq -E\,, 
\end{equation}
where $E$ is the extra boundary term
at $x=0$ (due to the fact that,
differently from~\eqref{kruz+1}-\eqref{kruz-1}, $\phi$ will not vanish in general at $x=0$ )
given by
$$E =  \int_0^{+\infty}\big[\mathrm{sgn}(u(x,t)-v(x,t))(f(x,u(x,t))-f(x,v(x,t)))\big]_{x=0-}^{x=0+}\,\phi(0,t) \dif t\,,
$$
with $[\cdot]_{x=0-}^{x=0+}$ denoting the limit from the right minus the limit from the left at $x = 0$. 
Observe that, 
letting $u_l,u_r$ denote the one-sided limit of $u$ in $x=0$
as in~\eqref{traces},
and denoting
$v_l,v_r$, the corresponding ones 
for $v$,
recalling~\eqref{alpha} we can rewrite the quantity $E$ as 
\begin{equation}
\label{eq:E-2}
    E = \int_0^{+\infty} \alpha(u_l(t),u_r(t),v_l(t),v_r(t)) \phi(0,t) \dif t\,.
\end{equation}
On the other hand, 
since $u_l, u_r$, and
$v_l, v_r$
satisfy the Rankine-Hugoniot condition \eqref{RHtraces},
together with the inequality
\eqref{interfaceentropy}
related to the $(A,B)$, and 
$(A',B')$ connection, respectively,
applying Lemma~\ref{Elemmadiffconnect}
we deduce that it holds true
\begin{equation}
\label{eq:alpha-est3}
    \alpha\big(u_l(t(,u_r(t),v_l(t), v_r(t)\big) \leq  2\left| f_r(B^{\prime})-f_r(B)\right|
    \qquad  \text{for a.e. $t>0$\,.}
\end{equation}
Thus, combining~\eqref{leqE} with~\eqref{eq:E-2}, \eqref{eq:alpha-est2}, we find
\begin{equation}
\label{eq:kruz-3}
         \int_0^{+\infty} \int_{-\infty}^{+\infty} \big\{\left|u-v\right|\phi_t+\mathrm{sgn}(u-v)(f(x,u)-f(x,v))\phi_x\big\} \dif x \dif t  \geq -2 |f_r(B)-f_r(B^{\prime})| \int_0^{+\infty} \phi(0,t) \dif t\,.
\end{equation}
Now fix $\tau>\tau_0>0$,  $R>0$,
and consider the trapezoid
$\Omega\doteq \{(x,t)\, : \ \tau_0\leq t\leq \tau,\, |x|\leq R+L (\tau-t)\}$,
where $L\doteq \sup_{|z|\leq M}  \max \{|f'_l(z)|, |f'_r(z)|\}$, with
$M$ being a uniform ${\bf L^\infty}$ bound
for $u$ and $v$.
Then, by a standard technique
(e.g. see~\cite[\S 6.3]{bressan2000hyperbolic}), one can construct a sequence of test functions $\phi_n\in\mathcal{C}^1_c$,
with compact support 
contained in
$\R \times$ $]0,+\infty[$, that approximate the characteristic function of $\Omega$ when $n\to\infty$. 
Employing~\eqref{eq:kruz-3} with
$\phi_n$, and letting $n\to \infty$ we obtain
\begin{equation}
\label{eq:kruz-4}
    \int_{|x|\leq R}\big|u(x,\tau)-v(x,\tau)\big| \dif x \leq
    \int_{|x|\leq R+L (\tau-\tau_0)}\big|u(x,\tau_0)-v(x,\tau_0)\big| 
    \dif x+2 (\tau-\tau_0)|\big|f_r(B)-f_r(B^{\prime})\big|\,.
\end{equation}
Relying on the ${\bf L^1}$-continuity 
of $u$ and $v$ at $\tau_0=0$
(property (2) of Definition~\ref{defiAB}),
and letting $R\to \infty$
in~\eqref{eq:kruz-4},
we obtain the estimate of Theorem~\ref{theoremsemigroup}-$(iv)$ for $t=\tau$.

To establish property (v) of Theorem~\ref{theoremsemigroup} 
observe that, if $(A,B)$ is a non critical connection, then by Lemma~\ref{BVbound}-(i)
one has 
$\mc S^{AB}_t u_0\in BV_{\mr{loc}}(\R)$ for all $t>0$, and for any $u_0\in {\bf L}^\infty(\R)$. Therefore, in this case, relying on this property  
we immediately recover the ${\bf L}^1_{\mr{loc}}$-Lipschitz 
continuity of \linebreak  $t\mapsto \mc S^{AB}_t u_0$
by standard arguments (e.g. see~\cite[proof of Theorem 9.4]{bressan2000hyperbolic}).
\blu{On the other hand}, in the case of 
a critical connection $(A,B)$,
we derive the ${\bf L}^1_{loc}$-Lipschitz 
continuity of  $t\mapsto \mc S^{AB}_t u_0$
applying  Lemma~\ref{BVbound}-(ii)
and following 
the same arguments in~\cite[proof of Theorem~4.3.1] {Dafermoscontphysics}.
\end{proof}
\medskip

\begin{proof}[Proof of Corollary~\ref{cor:fluxtraces-stab}]
Relying on Theorem \ref{theoremsemigroup}-$(iv)$ we deduce that
\begin{equation}
\label{eq:app-conv1}
    u_n(\cdot, t) 
    \quad\ \rightarrow \quad u(\cdot, t)
    \qquad\text{in}\quad
    \mathbf L^1_{\blu{\mr{loc}}}(\R)\qquad \forall~t\geq 0\,,
\end{equation}
which in turn implies that there exists $\overline x>0$ such that
\begin{equation}
\label{eq:app-conv2}
    f_r(u_n(\overline x,\cdot)) 
    \quad\ \rightarrow \quad f_r(u(\overline x,\cdot))
    \qquad\text{in}\quad
    \mathbf L^1_{\mr{loc}}([0,+\infty[)\,.
\end{equation}
Then, observe that by Definition~\ref{defiAB}
$u_n$ and $u$ are entropy weak solutions 
of $u_t + f_r(u)_x=0$ on
$]0,+\infty[\,\times \,[0,+\infty[\,$.
Hence, by a general property of weak solutions 
(e.g. see~\cite[Remark~4.2]{bressan2000hyperbolic}),
for every fixed $s>0$
one has
\begin{equation}
\label{eq:app-conv3}
    \begin{aligned}
    \int_0^s f_r(u_r(s))ds&=\int_0^s f_r(u(\overline x,\cdot))ds+\int_0^{\overline x}
    u(z,T)dz - \int_0^{\overline x} 
    u(z,0)(z)dz\,,
    %
    \\
    \noalign{\smallskip}
    \int_0^s f_r(u_{n,r}(s))ds&=\int_0^s f_r(u_n(\overline x,\cdot))ds+\int_0^{\overline x}
    u_n(z,T)dz - \int_0^{\overline x} 
    u_n(z,0)
    dz\qquad \forall~n\,.
      \end{aligned}
\end{equation}
Since we are assuming that 
$\{u_{n}(\cdot ,0)\}_n$ converges to $u(\cdot,0)$ in ${\bf L}^1_{\blu{ \mr{ loc}}}$,
taking the limit as $n\to \infty$ in~\eqref{eq:app-conv3}
we deduce from~\eqref{eq:app-conv1}, \eqref{eq:app-conv2}, \eqref{eq:app-conv3} 
{ by standard arguments} that
\begin{equation}
    f_r(u_{n,r})
    \ \ \rightharpoonup \ \
     f_r(u_{r})
    \quad\text{\blu{weakly} in}\quad
    \mathbf L^1(\mathbb R^+).
\end{equation}
With entirely similar arguments one derives also the other convergence in~\eqref{eq:flux-trace-conv1}.
\end{proof}

\medskip
\section{Preclusion of 
rarefactions emanating from the interface}\label{app:no-rarefaction}

A distinctive feature of the structure of $AB$-entropy solutions is the fact that
no rarefaction wave can emerge at positive times from the interface $x=0$.
This property was established in~\cite{adimurthi2020exact}
exploiting an
explicit representation formula for $AB$-entropy solutions
a la Lax-Ole\v{\i}nik.
A different, rather technical proof, based on a detailed analysis of the structure of $AB$-entropy solutions was derived in~\cite{anconachiri}, under the additional assumption that the traces 
of the solution at $x=0$
admit one sided limits. 
Here, 
we provide 
a much simpler proof  that establishes this fact 
in the case
of a non critical connection
$(A,B)$, 
and for a $BV_{\ms{loc}}$ $AB$-entropy solution.
The proof relies on the properties of solutions of Riemann problems and on a blow-up argument. \blu{Namely, the key point is to show that Riemann-type initial data from which rarefaction waves emerge are not attainable by an $AB$-entropy solution at any positive time $t > 0$. 
Next, 
 by contradiction and performing a blow-up analysis, we prove that if 
 a rarefaction emerges from an $AB$-entropy solution at some time $\overline t>0$, then there exists a Riemann-type 
 datum $\overline u$ that generates a rarefaction 
 and which is attainable by
 an $AB$-entropy solution at time $\overline t$.
 
One can recover this property of preclusion of rarefactions 
emanating from the interface 
(for any $AB$-entropy solution and general connections)
as a byproduct of the characterization 
 of attainable profiles
$\omega\in \mc A^{[AB]}(T)$ provided by Theorems~\ref{thm:attprofiles}, \ref{thm:attprofilescrit}, \ref{thm:attprofiles2}, \ref{thm:attprofiles3}
(see Remark~\ref{rem:norar}).
}

\begin{defi}
We say that 
an $AB$-entropy solution $u(x,t)$ to \eqref{conslaw} has a \textit{rarefaction fan  emerging at the right (at the left)
from the interface} $x=0$ at time $\bar t$, if 
{ 
there exists
$\delta > 0$ and two continuity points $0 < x_1 < x_2$ for $u(\cdot, \overline t+ \delta)$ such that 
$$
x_1 - \delta f_r^\prime(u(x_1, \overline t + \delta)) = x_2 - \delta f_r^\prime(u(x_2, \overline t + \delta)) = 0.
$$
}
\end{defi}

\noindent
\blu{Notice that Definition B.1 does no require to know that the solution $u$ admits one-sided limits at $x=0$, and it
is invariant with respect to the scaling
$(x,t) \to (\rho x, \overline t+ \rho(t-\overline t))$, $\rho>0$. This definition is equivalent to say  that there exists an outgoing rarefaction fan emerging at time $\overline t$, at the right, if there exist two distinct genuine characteristics located in $\{x>0\}$ for times $t\in\,]\overline t, \overline t+\delta]$, $\delta>0$,
that emerge from the point $(0, \overline t)$.}
\begin{prop}\label{prop:K}
Let $(A, B)$ be a connection,
consider a Riemann data
\begin{equation}
    \label{eq:riemm-data}
    \overline u = \begin{cases}
u^-, & x <0, \\
u^+, & x > 0\,,
\end{cases}
\end{equation}
and assume that
the solution $\mc S^{[AB]+}_t \overline u(x)$ contains a rarefaction wave
located in the left  halfplane $\{x\leq 0\}$, 
or in the right one
$\{x\geq 0\}$.
Then for every $T > 0$ it holds $\overline u \notin \mc A^{[AB]}(T)$.  
\end{prop}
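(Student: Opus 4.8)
The plan is to show that a Riemann datum that produces a rarefaction adjacent to the interface is never attainable. I would split the argument into two symmetric cases according to whether the rarefaction in $\mc S^{[AB]+}_t \overline u$ sits in $\{x\le 0\}$ or in $\{x\ge 0\}$; I describe the right-halfplane case, the left one being analogous after the change of variables $x\mapsto -x$ (which, as recalled in \S\ref{sec:backopdisc}, exchanges the role of $f_l,f_r$ and of the connection). First I would analyze the structure of $\mc S^{[AB]+}_t \overline u$: since the solution of a Riemann problem for a conservation law with spatially discontinuous flux is self-similar and consists of finitely many elementary waves (rarefactions, shocks, and the stationary undercompressive wave $c^{AB}$ at $x=0$), the only way a rarefaction can be located in the open halfplane $\{x>0\}$ and touch the interface is to have its left edge along the line $x=0$. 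This forces the right trace $u_r$ of the Riemann solution at $x=0$ to satisfy $u_r=\theta_r$ (the rarefaction spreads from the sonic value), and by the Rankine--Hugoniot condition \eqref{RHtraces} and the $AB$-interface entropy condition (Lemma~\ref{lem:traceseq}, Lemma~\ref{lem:traceseq2}) the corresponding left trace and the value $u^+$ to the right of the rarefaction are pinned down; in particular $f_r(u_r)=f_r(\theta_r)$.

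Next I would argue that such a profile cannot be attained at any time $T>0$. The key point is that an attainable profile $\omega=\mc S^{[AB]+}_T u_0$ is, by Remark~\ref{rem:abentr-sol-prop1}, in $BV_{\mathrm{loc}}(\R\setminus\{0\})$ and, more importantly, satisfies the Ole\v{\i}nik-type one-sided bound coming from the fact that $u$ is an entropy solution of $u_t+f_r(u)_x=0$ on $\{x>0\}\times\{t>0\}$: by the non-crossing property of genuine characteristics one has $D^+\omega(x)\le \frac{1}{T f_r''(\omega(x))}$ at every $x>0$ such that the backward characteristic from $(x,T)$ does not reach the interface, and more generally a refracted-characteristic version of this estimate near $x=0$ (the functions $g,h$ of \eqref{eq:ghdef}, or simply the monotonicity of the map $\phi$ of \S\ref{defi:ur}--\S\ref{def:rsr-block}). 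A rarefaction fan emanating from $(0,0)$ in the Riemann solution, when transported to time $T$, would produce on a right neighbourhood of a point $\mathsf R>0$ a profile whose slope violates this bound — equivalently, the would-be backward characteristics from the rarefaction region all emanate from the single point $(0,0)$, which is incompatible with the strict monotonicity of the characteristic foot-point map forced by the preclusion of characteristics meeting on the interface (Proposition~\ref{prop:norare}, valid for non-critical connections; for critical connections the pinned trace value $u_r=\theta_r$ directly contradicts the constraint $\omega(0+)\le\overline B<\theta_r$ of Theorem~\ref{thm:attprofilescrit}/\ref{thm:attprofiles3}, or is handled through the limiting procedure of \S\ref{sec:criticalcases-c}). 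Concretely I would show that the value $u_r=\theta_r$ together with $f_r(u_r)=f_r(\theta_r)$ is inconsistent with the pointwise state constraints \eqref{eq:2b2-prime}, \eqref{eq:2a3} / \eqref{eq:2acrit} established in \S\ref{sec:statement-main} for every element of $\mc A^{[AB]}(T)$, since those constraints force $\omega(0+)\ge B$ (when $B>\theta_r$) or $\omega(0+)>\theta_r$ (when $B=\theta_r$), whereas a rarefaction at the interface requires the right trace to be exactly $\theta_r$ with the solution immediately increasing away from $\theta_r$ — contradicting either the $\ge B$ constraint or, in the critical case, the requirement that $\omega$ be continuous from the sonic value.

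The main obstacle I expect is disentangling the interface wave structure cleanly: one must rule out that the "rarefaction" in question is actually a composite where the stationary undercompressive wave $c^{AB}$ sits between the interface and a rarefaction whose left edge has strictly positive speed — in that configuration there is genuinely a rarefaction in $\{x\ge 0\}$ but it does \emph{not} touch $x=0$, and the Riemann solution may well be attainable. So the delicate step is to make precise the phrase "the rarefaction touches the interface" and to verify that this is exactly the situation forced by the hypothesis of Definition~\ref{prop:K} (as opposed to a rarefaction separated from $x=0$). I would handle this by reading off, from the explicit solution of the $AB$-Riemann problem (the elementary-wave classification of~\cite{Adimurthi2005,BKTengquist,MR2807133}), all the cases in which a rarefaction is adjacent to $x=0$, checking in each that the interface trace equals the sonic value $\theta_r$ (respectively $\theta_l$ on the left), and then invoking the attainability constraints of \S\ref{sec:statement-main} to conclude $\overline u\notin\mc A^{[AB]}(T)$ for all $T>0$. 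The remaining case, where a rarefaction lies strictly in $\{x>0\}$ but still formally "emerges from the interface" only at $t=0$, does not arise under the stated hypothesis once one notes that at $t=0$ the Riemann datum is piecewise constant, so any rarefaction present in $\mc S^{[AB]+}_t\overline u$ with an edge along $x=0$ must have sonic trace, and the argument above applies.
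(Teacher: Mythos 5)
There is a genuine gap, and it is structural: your argument is circular. You propose to conclude by invoking the pointwise state constraints of \S\ref{sec:statement-main} (e.g.\ \eqref{eq:2b2-prime}, \eqref{eq:2a3}, \eqref{eq:2acrit}) and, explicitly, Proposition~\ref{prop:norare}. But Proposition~\ref{prop:norare} is proved \emph{from} Proposition~\ref{prop:K} (its proof ends by deriving a contradiction with Proposition~\ref{prop:K}), and the characterization theorems of \S\ref{sec:statement-main} are in turn proved in \S\ref{sec:proof-main-thm} using Proposition~\ref{prop:norare} (see \S\ref{sec:1b12}, where the non-crossing of characteristics on the interface is exactly what is borrowed from Appendix~\ref{app:no-rarefaction}). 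Proposition~\ref{prop:K} sits at the bottom of this chain precisely so that the whole construction is non-circular; any proof of it must use only the more primitive ingredients. The paper's own argument does exactly that: assuming $\overline u=\sabpT u_0$, uniqueness gives $u(\cdot,T+t)=\mc S^{[AB]+}_t\overline u$, the presence of a rarefaction in $\{x\ge 0\}$ forces (via \eqref{ABtraces}) $B\le u_r(t)<u^+$ for $t>T$ and, by tracing genuine backward characteristics from $(x,T)$, $x>0$, forces $u_r(t)=u^+>B$ and $u_l(t)=\pi^r_{l,+}(u^+)$ for $t\in\,]0,T[$; comparing the minimal and maximal backward characteristics at $(0,T)$ then yields $u^->\pi^r_{l,+}(u^+)$, which forces the Riemann solution to be a single shock in $\{x\ge 0\}$ — contradicting the assumed rarefaction. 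Nothing beyond uniqueness, \eqref{ABtraces}, and classical one-sided characteristics is used.

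A second, independent problem is your structural claim that a rarefaction located in $\{x\ge 0\}$ forces the sonic trace $u_r=\theta_r$. This is false: the hypothesis of the proposition is only that the rarefaction is \emph{located in the halfplane}, and the generic admissible configuration is a rarefaction whose left state equals the interface trace $u_r\ge B$ (so $f_r(u_r)\ge f_r(B)$ and $u_r\ge\theta_r$ give $u_r\ge B$), with left-edge speed $f'_r(u_r)$ possibly strictly positive — e.g.\ $\overline u=(A,u^+)$ with $u^+>B>\theta_r$ produces exactly such a fan with left state $B$. Your proposed contradiction (``sonic trace versus $\omega(0+)\le\overline B$'') therefore does not apply to the cases the proposition must cover, and your fallback for the non-sonic case again leans on the constraint $\omega\ge B$ on $]0,\ms R[$ from \S\ref{sec:statement-main}, which is unavailable for the reason above. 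To repair the proof you would need to replace both steps by the elementary backward-characteristics comparison at $(0,T)$ that the paper uses.
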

\begin{proof}
By contradiction, suppose that $\mc S^{[AB]+}_t \overline u(x)$ contains a rarefaction wave
located in $\{x\geq 0\}$,
and assume that
$\overline u \in \mc A^{[AB]}(T)$, i.e. that there exists an
$AB$-entropy solution solution $u(x,t)$ of~\eqref{conslaw},\eqref{initdat}, such that $u(\cdot\,, T) = \overline u$.
Then, by uniqueness, one has
$u(x,T+t)= \mc S^{[AB]+}_{t} \overline u(x)$ for all $x\in\R$, $t\geq 0$.
Since $\mc S^{[AB]+}_t \overline u(x)$ 
is a solution of a Riemann problem
containing a rarefaction
with nonnegative characteristic speeds, and because of the admissibility conditions~\eqref{ABtraces}, 
it then follows that the right trace $u_r$ of $u$ at $x=0$ satisfies
 $B \leq u_r(t) < u(T, x) = u^+$ for every $t > T$, $x > 0$. Tracing the backward characteristics from points $(T, x)$, $x > 0$, we find that $u_r(t) = u^+> B$ for every $t \in \,]0,T[$\,. Therefore,
 because of the admissibility conditions~\eqref{ABtraces},
 one has $u_l(t) = \pi^{r}_{l,+}(u^+)$ 
 (with $\pi^r_{l,+}$ defined as in~\eqref{pimap-def}),
 for every $t \in \,]0,T[$\,. 
 Then, letting $\xi_-, \xi_+$ denote the minimal
 and maximal backward characteristics starting
 at $(0,T)$, we deduce that $f'_l(u^-)=\dot\xi_-(T)>
 \dot\xi_+(T)=f'_l(\pi^{l}_{r,+}(u^+))$,
 which in turn implies $u^- > \pi^{r}_{l,+}(u^+)$, $\pi^{l}_{r,+}(u^-)>u^+$. Observe now that the $AB$-entropy solution of a Riemann problem with
 initial data~\eqref{eq:riemm-data} satisfying $u^- > \pi^{l}_{r,+}(u^+)$, and $u^+>B$, consists of a single shock
 located in the halfplane $\{x\geq 0\}$, 
 and connecting the left state $\pi^{l}_{r,+}(u^-)$ with the right state $u^+$. This is in contrast with the assumption made on $\mc S^{[AB]+}_t \overline u(x)$, thus completing the proof.
 
\end{proof}

\begin{prop}\label{prop:norare}
Let $(A, B)$ be a \blu{non critical} connection,
and let $u$
be an $AB$-entropy solution to~\eqref{conslaw}
that satisfies $u(\cdot, t)\in BV_{\mr{loc}}(\R)$, for all $t>0$.
Then $u$ does not contain rarefaction waves emerging from the interface $x=0$ at times $\overline t>0$.
\end{prop}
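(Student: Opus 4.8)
\textbf{Proof plan for Proposition~\ref{prop:norare}.} The plan is to argue by contradiction using a blow-up argument and Proposition~\ref{prop:K}. Suppose that $u$ contains, say, a rarefaction fan emerging at the right from the interface at some time $\overline t>0$; the case of a rarefaction emerging at the left is symmetric. We consider the sequence of rescaled functions $u_\rho(x,t) \doteq u(\rho x, \overline t + \rho(t-\overline t))$, for $\rho \downarrow 0$, as in~\eqref{eq:bup-def}. Each $u_\rho$ is again an $AB$-entropy solution of~\eqref{conslaw} (the equation and the interface conditions are invariant under this parabolic-type scaling with equal weights on $x$ and $t-\overline t$, since the flux depends on $x$ only through the sign of $x$). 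Moreover, since $u(\cdot,t)\in BV_{\mr{loc}}(\R)$ for every $t>0$, and $u$ is $\mathbf L^1_{\mr{loc}}$-Lipschitz continuous in time (Theorem~\ref{theoremsemigroup}-(v)), one obtains uniform $BV_{\mr{loc}}$ and $\mathbf L^\infty$ bounds on $u_\rho(\cdot,t)$ for $t$ in compact subsets of $]0,+\infty[$, so that, up to a subsequence, $u_\rho(\cdot,t)$ converges in $\mathbf L^1_{\mr{loc}}$ to a limiting function $v(\cdot,t)$ for all $t>0$, and $v$ is itself an $AB$-entropy solution of~\eqref{conslaw}.

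The first key point is to identify $v$ at time $t=\overline t$. Because $u(\cdot,\overline t\,)\in BV_{\mr{loc}}(\R)$, it admits one-sided limits $u(0\pm,\overline t\,)$, and the rescaling forces
\[
    v(x,\overline t\,) = \begin{cases} u(0-,\overline t\,), & x<0,\\[2pt] u(0+,\overline t\,), & x>0, \end{cases}
\]
so $v$ solves the Riemann problem with this datum. Set $\overline u \doteq v(\cdot,\overline t\,)$, a Riemann datum of the form~\eqref{eq:riemm-data} with $u^- = u(0-,\overline t\,)$, $u^+ = u(0+,\overline t\,)$. The second key point is to show that the solution $\mc S^{[AB]+}_t \overline u$ contains a rarefaction wave located in $\{x\geq 0\}$. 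This follows from the definition of a rarefaction fan emerging at the right: the scaling-invariance of that notion (it only involves the slopes of genuine characteristics located in $\{x>0\}$ emanating from $(0,\overline t\,)$) guarantees that each $u_\rho$, and hence the limit $v = \mc S^{[AB]+}_{(\cdot)}\overline u$, still exhibits two distinct genuine characteristics in $\{x>0\}$ issuing from the origin; by the structure of $AB$-entropy solutions of Riemann problems (and the preclusion of non-rarefaction wave patterns between such characteristics), this forces an outgoing rarefaction fan in $\{x\geq 0\}$ in the self-similar solution $\mc S^{[AB]+}_t \overline u$. Here one uses that $(A,B)$ is non critical: this is needed both to control the possible traces $A,\overline A$ and $B,\overline B$ at the interface in the blow-up limit (via Corollary~\ref{cor:fluxtraces-stab}, as in the proof of~\eqref{eq:urc4}) and because the preclusion argument in Proposition~\ref{prop:K} does not cover critical connections.

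Once this is established, Proposition~\ref{prop:K} applied with $T=\overline t>0$ yields $\overline u \notin \mc A^{[AB]}(\overline t\,)$. On the other hand, $\overline u = v(\cdot,\overline t\,) = \mc S^{[AB]+}_{\overline t}\, v(\cdot,0)$, where $v(\cdot,0)\in \mathbf L^\infty(\R)$ is the value at $t=0$ of the $AB$-entropy solution $v$; hence $\overline u \in \mc A^{[AB]}(\overline t\,)$, a contradiction. This contradiction shows that no rarefaction wave can emerge from the interface $x=0$ at any positive time, which is the assertion.

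\textbf{Main obstacle.} The delicate step is the blow-up compactness together with the identification of the limit $v$ as a bona fide $AB$-entropy solution and the verification that the rarefaction-emergence property passes to the limit. The compactness requires the uniform $BV_{\mr{loc}}$ bounds on the rescaled solutions, which in turn rest on the local bounded variation of $u(\cdot,t)$ (an assumption here, but established in general for non critical connections in Proposition~\ref{BVbound}) and on the time-Lipschitz regularity; and passing the interface traces to the limit needs the weak stability of the flux traces from Corollary~\ref{cor:fluxtraces-stab}, exactly as in the argument producing~\eqref{eq:urc4} in \S~\ref{sec:1b12}. The remaining steps—scaling invariance of the equation, of the interface conditions, and of the notion of emerging rarefaction, and the final invocation of Proposition~\ref{prop:K}—are routine.
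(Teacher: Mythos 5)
Your overall strategy is the same as the paper's: blow up at $(0,\overline t\,)$, extract a limit $v$ by compactness, identify $v(\cdot,\overline t\,)$ as the Riemann datum $\overline u$ built from the traces $u(0\pm,\overline t\,)$, observe that the rarefaction is scaling-invariant and therefore survives in $v$, and contradict Proposition~\ref{prop:K}. All of those steps match the paper, and your final contradiction is the correct one.

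There is, however, a genuine gap in how you justify the compactness step. You claim that $u(\cdot,t)\in BV_{\mr{loc}}$ for each $t$ together with the $\mathbf L^1_{\mr{loc}}$-Lipschitz continuity in time of Theorem~\ref{theoremsemigroup}-(v) yields \emph{uniform} $BV_{\mr{loc}}$ bounds on the rescaled family. This does not follow: total variation is only lower semicontinuous under $\mathbf L^1$ convergence, so $\mathbf L^1$-closeness of $u(\cdot,s)$ to $u(\cdot,t)$ gives no upper bound on $\mr{Tot.Var.}(u(\cdot,s))$ in terms of $\mr{Tot.Var.}(u(\cdot,t))$; a priori the variation could blow up along a sequence of times approaching $\overline t$, and then $\mr{Tot.Var.}(u_\rho(\cdot,t), I_R)=\mr{Tot.Var.}(u(\cdot,\overline t+\rho(t-\overline t)), I_{\rho R})$ need not stay bounded as $\rho\downarrow 0$. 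Moreover, invoking Theorem~\ref{theoremsemigroup}-(v) here is circular: its proof rests on Proposition~\ref{BVbound}, which rests on the implication $(1)\Rightarrow(3)$ of Theorem~\ref{thm:backfordiscfluxcycle}, whose proof (in \S~\ref{sec:1b12}) uses precisely Proposition~\ref{prop:norare}. The paper closes this gap differently: it fixes a time $\overline t-2\overline\rho$ at which $u$ is $BV_{\mr{loc}}$ by hypothesis, and then applies the classical \emph{propagation} of $BV$ bounds for $AB$-entropy solutions with non critical connection and $BV$ data (\cite[Lemma 8]{Garavellodiscflux}, \cite[Theorem 2.13-(iii)]{MR2743877}) on the domain of dependence, which gives a bound on $\mr{Tot.Var.}(u(\cdot,t), I_{\overline\rho})$ \emph{uniform} over $t\in[\overline t-\overline\rho,\overline t+\overline\rho]$ in terms of the variation at the single earlier time; Helly's theorem then applies. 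You should replace your appeal to Theorem~\ref{theoremsemigroup}-(v) with this propagation estimate (this is also where the non criticality of $(A,B)$ enters the compactness argument). The remaining ingredients you list (Corollary~\ref{cor:fluxtraces-stab} for the traces) are not actually needed for this proposition, since the $AB$-entropy inequalities of Definition~\ref{defiAB} pass directly to the $\mathbf L^1_{\mr{loc}}$ limit.
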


\begin{proof}
 Assume by contradiction that the solution $u$ has a rarefaction wave,
 say located in $\{x\geq 0\}$, which emerges from the interface at some time $\overline t>0$.
Let $0<\overline \rho<\overline t/3$, and
for any $\rho>0$, set
\begin{equation}
\label{eq:Ir-def}
    I_\rho\doteq \{x\in\R\, : \, |x|\leq \rho \}\,.
\end{equation}
Observe that the domain of dependence 
of $u(x,t)$, for $(x,t)\in I_{\overline \rho}\times [\overline t -\overline \rho, \overline t+\overline \rho]$, is the trapezoid $\Omega\doteq 
\{(x,t)\, : \, |x|\leq \overline \rho + \Lambda\cdot (\overline t+\overline \rho-t),\, t\in [\overline t-2\overline \rho, \overline t+ \overline \rho]\}$, where $\Lambda \doteq \sup_{|z|\leq M}  \max \{|f'_l(z)|, |f'_r(z)|\}$, with
$M$ being a uniform ${\bf L^\infty}$ bound
for $u$. Therefore, since 
the total variation of $u(t,\cdot)$  on $I_{l_t}$,
$l_t\doteq |x|\leq \overline \rho + \Lambda\cdot (\overline t+\overline \rho-t)$, is bounded, and because $(A,B)$
is a \blu{non critical} connection, we can invoke the
uniform BV bounds on $AB$-entropy solutions
established in~\cite[Lemma 8]{Garavellodiscflux}
(see also~
\cite[Theorem 2.13-(iii)]{MR2743877})
to deduce that
\begin{equation}
\label{eq:blup-est1}
    \mr{Tot.Var.}(u(\cdot, t),\, I_{\overline \rho})\leq \overline C \big(M+  \mr{Tot.Var.}(u(\cdot, \overline t-2 \overline \rho), I_{(1+2\Lambda)\overline \rho})\big)
    \qquad \forall~t\in 
    \overline t +
    I_{\overline{\rho}}\,,
\end{equation}
for some constant $\overline C>0$.
Next, consider the 
blow-up of $u$ at the point $(0,\overline t\,)$:
\begin{equation}
\label{eq:blowup-def}
u_\rho(x,t) \doteq u(\rho x, \overline  t +  \rho(t-\overline t)) \qquad x\in\R, \ t\geq 0\,,
\end{equation}
with $0<\rho<\overline \rho/\overline t$,
and observe that
it holds true
\begin{equation}
\label{eq:blup-est2}
    \mr{Tot.Var.} (u_\rho(\cdot,t), I_{\overline \rho/\rho}) \leq \sup_{\tau\, \in\, \overline t + I_{\overline \rho}}
      \mr{Tot.Var.} (u(\cdot,\tau), I_{\overline \rho})
      \qquad\forall~0\leq t<\overline t+\frac{\overline \rho}{\rho}\,.
\end{equation}
Combining~\eqref{eq:blup-est1}, \eqref{eq:blup-est2}, we find
a uniform bound on the total variation of 
$u_r(\cdot, t)$ on the 
interval $I_{\overline \rho/\rho}$, for all $t<\overline t+{\overline \rho}/{\rho}$,
and $0<\rho<\overline \rho/\overline t$.
Moreover,
observe that 
because of the finite
speed of propagation $\Lambda$,
by standard arguments
(e.g. see~\cite[\S 7.4]{bressan2000hyperbolic})
one deduces that
\begin{equation}
    \|u_\rho(\cdot,t)-u_\rho(\cdot,s)\|_{{\bf L^1}(I_{\overline \rho/\rho})}\leq \overline \Lambda\cdot  (t-s)\qquad\forall~0\leq s<t<\overline t+\frac{\overline \rho}{\rho}\,,
\end{equation}
for all $0<\rho<\overline \rho/\overline t$, and for some constant $\overline \Lambda$.
Notice that the sets
$I_{\overline \rho/\rho}\times [0, \overline t+ \overline \rho/\rho[$\,
invade $\R\times [0,+\infty[$\, as $\rho\to 0$.
Therefore we can apply 
Helly's compactness theorem~\cite[Theorem 2.4]{bressan2000hyperbolic} to the sequence $\{u_\rho\}_{0<\rho<\overline \rho/\overline t}$, 
and deduce the existence
of a function 
$v \in {\bf L^\infty}
( \mathbb R\times [0,+\infty[)$,
so that, up to a subsequence,
$u_\rho(\cdot, t)$
converges to $v(\cdot, t)$ in ${\bf L^1}_{\mr{loc}}$, as $\rho\to 0$, for all $t>0$.
By Definition~\ref{defiAB}
it follows that
also $v$ is an $AB$-entropy solution
of~\eqref{conslaw}-\eqref{initdat}, with $u_0\doteq v(\cdot, 0)$.
Notice that
\begin{equation}
    \lim_{\rho\to 0}u_\rho(x,\overline t\,)=
    \overline u(x)\doteq \begin{cases}
       u(0+,\overline t\,)\ \ &\text{if}\quad x>0\,,
        \\
        \noalign{\smallskip}
        u(0-,\overline t\,)\ \ &\text{if}\quad x<0\,,
    \end{cases}
\end{equation}
and thus we find 
$$
v(\cdot ,\overline t\,) = \overline u\,,
$$
which implies 
\begin{equation}\label{eq:contradiction}
\overline u \in \mc A^{[AB]}(\,\overline t)\,.
\end{equation}
On the other hand, observe that the rarefaction wave which emerges in the solution $u$ at \linebreak time~$\overline t$ is preserved by the blow-ups
$u_\rho$ in~\eqref{eq:blowup-def}, because 
it is self similar for the scaling \linebreak $(x,t) \mapsto (\rho x, \bar t +\rho(t-\bar t))$. Therefore there will be a rarefaction wave emerging at time $\overline t$, and located in $\{x\geq 0\}$, also in the solution~$v$. This in turn implies
that the solution $\mc S_t^{AB} \overline u(x)$ to the Riemann problem with initial datum $\overline u$
contains a rarefaction emerging at 
$t = 0$ and located in $\{x\geq 0\}$, since $\mc S_t^{AB} \overline u(x) = v(\bar t+t, x)$ for all $x\in\R$, $t\geq 0$.
This, together with~\eqref{eq:contradiction},
is in contradiction with Proposition \ref{prop:K}, thus completing the proof.
\end{proof}

\blu{\begin{remark}
    \label{rem:norar}
    In the case of a general connection $(A,B)$,
    relying on the characterization of 
    $\mc A^{[AB]}(t)$,  $t>0$,
    provided by Theorems~\ref{thm:attprofiles}, \ref{thm:attprofilescrit}, \ref{thm:attprofiles2}, \ref{thm:attprofiles3},
    we can show that 
    no rarefaction can emerge from the interface $x=0$ at any time $\overline t>0$ 
    for any  $AB$-entropy solution $u$ to~\eqref{conslaw}, 
    as follows.
    Suppose, by contradiciton, that a rarefaction is generated in
    a time interval $[\,\overline t, \overline t+\delta]$, for some $\delta>0$,
    and that lies in the semiplane $\{x\geq 0\}$.
    In particular this means that there exist 
    two genuine characteristics $\xi_1, \xi_2: [\,\bar t, \bar t+\delta\,] \to \ [0,+\infty[\,$,  such that 
$\xi_1(\,\bar t\,) = \xi_2(\,\bar t\,) = 0$,
    $\overline x_1\doteq \xi_1(\,\bar t+\delta) < \overline x_2\doteq\xi_2(\,\bar t+\delta)$.
    We may also assume that $\xi'_i=f'_r(\omega(\overline x_i))$, $i=1,2$.
    Let $\omega\doteq u(\cdot, \overline t+\delta)$, $\ms R\doteq \ms R[\omega, f_r]$ (see def~\eqref{eq:LR-def}), and consider the time $\tau(x)=(\overline t+\delta)-x/f'_r(\omega(x))$, $x\in\,]0, \ms R[\,$, at which the characteristic starting at $(x,\overline t+\delta)$, with slope $f'_r(\omega(x))$
    impacts the interface $x=0$.
    Notice that $\tau(\overline x_1)=\tau(\overline x_2)=\overline t$.
     Moreover, thanks to  Lemma 4.4 in~\cite{anconachiri}, 
     the Ole\v{\i}nik estimates satisfied by $\omega$ (because of condition~(i) or (i)' of Theorems~\ref{thm:attprofiles}, \ref{thm:attprofilescrit}, \ref{thm:attprofiles2}, \ref{thm:attprofiles3}) imply the strict monotonicity of the map $x\to \tau(x)$, $x\in\,]0, \ms R[\,$. In turn, the strict monotonicity of $\tau$ implies 
     $\tau(\overline x_1)\neq \tau(\overline x_2)$, thus contradicting the assumption 
     $\tau(\overline x_1)=\tau(\overline x_2)=\overline t$.
\end{remark}
}
\medskip

\section{Semicontinuity properties of solutions to 
convex conservation laws
}\label{app:uplwsmicsolns}

Solutions to conservation laws with 
convex flux enjoy a lower and upper semicontinuity
property  
 with respect to the ${\bf L}^1$
convergence
as stated in the following

\begin{lemma}\label{lemma:chara}
Given a uniformly convex map $f$, 
and $T>0$, let 
  $\{u_n\}_n$ be 
  a sequence of 
  entropy weak solutions 
  of 
  \begin{equation}
  \label{eq:conlaw-qp}
u_t+f(u)_x=0 
\quad\ x >0, \quad t \in [0,T],
  \end{equation}
  that admit a
  strong trace $u_n(0+,t)=\lim_{x\to 0+}u_n(x,t)$
  at $x=0$,
  for all $t\in [0,T]$,
  and let $u$ be 
  an
  entropy weak solution 
  of~\eqref{eq:conlaw-qp} 
that admits a
  strong trace 
  $u(0+,t)=\lim_{x\to 0+}u(x,t)$
  at $x=0$,
  for all $t\in [0,T]$.
Assume that $\{u_n\}_n$ are uniformly bounded
in ${\bf L}^\infty$, that
\begin{equation}
\label{eq:conlaw-qp-conv1}
    u_n(\cdot,t)\ \ \rightarrow\ \
    u(\cdot, t)
\qquad\text{in}\quad
 \mathbf L^1_{\blu{\ms loc}}(\,]0,+\infty[),\qquad \forall~t\in[0,T]\,,
\end{equation}
and that
\begin{equation}
\label{eq:conlaw-qp-conv2}
    f(u_n(0+,\cdot))\ \ \rightharpoonup\ \
    f(u(0+, \cdot))
\qquad\text{\blu{weakly} \ in}\quad
    \mathbf L^1([0, T])\,.
\end{equation}
  Then, 
  for every $x\geq 0$, 
  it holds true
    \begin{align}\label{eq:liminf}
         u(x+, T) &\leq \liminf_{\substack{n \to \infty \\ y \to x, \, \blu{y>0}}} \; u_n(y+, T)\,.
    \end{align}
    If we assume that $u_n, u$, are entropy weak solutions of $u_t+ f'(u)_x=0$ on  $x<0,\, t\in [0,T]$, and that
    the convergences~\eqref{eq:conlaw-qp-conv1}, \eqref{eq:conlaw-qp-conv2},
    hold in  $\mathbf L^1_{\blu{\ms loc}}(\,]-\infty,0[)$,
    and for the left traces in $x=0$, respectively, then
    for every $x\leq 0$, it holds true
    \begin{align}\label{eq:limsup}
         u(x-,T) &\geq  \limsup_{\substack{n \to \infty \\ y\to x,\, \blu{y<0}}} \;  u_n(y-, T)\,.
    \end{align}

\end{lemma}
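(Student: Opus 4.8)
\textbf{Proof plan for Lemma~\ref{lemma:chara}.}
The plan is to prove~\eqref{eq:liminf}, since~\eqref{eq:limsup} follows by the symmetric argument (reflecting $x\mapsto -x$ and replacing $f$ by $-f$, so that the upper-semicontinuity in~\eqref{eq:limsup} becomes the lower-semicontinuity in~\eqref{eq:liminf}). The key tool is the stability of genuine backward characteristics under the assumed convergences: for each fixed $x>0$, the maximal backward characteristic $\vartheta_n$ for $u_n$ issuing from $(x,T)$ is a segment of slope $f'(u_n(x+,T))$, and since the $u_n$ are uniformly $\mathbf L^\infty$-bounded, these segments have uniformly bounded slope. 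By Ascoli--Arzel\`a, along a subsequence $\vartheta_n\to\vartheta$ uniformly, where $\vartheta$ is a Lipschitz curve issuing from $(x,T)$. First I would argue (as in the proof sketch referenced from Step~3 of \S~\ref{sec:criticalcases-b}) that the uniform limit of genuine characteristics is a genuine characteristic for the limit solution $u$: the value of $u_n$ is constant equal to $u_n(x+,T)$ along $\vartheta_n$ except at the endpoint, and passing to the limit using~\eqref{eq:conlaw-qp-conv1} together with the fact that genuine characteristics cannot cross in $\{x>0,\,t\in\,]0,T[\,\}$, one identifies $\vartheta$ with a backward characteristic for $u$ through $(x,T)$ whose slope is $\lim_n f'(u_n(x+,T))$; since the maximal backward characteristic has the largest slope among all backward characteristics through a point, $\vartheta$ lies to the right of the minimal backward characteristic $\vartheta_{x,-}$ of $u$ and below it in slope, giving $f'(u(x+,T))\le \lim_n f'(u_n(x+,T))$, hence $u(x+,T)\le \lim_n u_n(x+,T)$ whenever the latter limit exists along the subsequence.

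Next I would handle the two endpoint regimes for $\vartheta$. If $\vartheta$ stays in $\{x>0\}$ down to $t=0$, then $\vartheta$ is entirely a classical genuine characteristic and the identification above is immediate, using only~\eqref{eq:conlaw-qp-conv1}. If instead $\vartheta$ reaches the interface $x=0$ at some positive time $\tau$, one uses~\eqref{eq:conlaw-qp-conv2}: the slope of $\vartheta_n$ near the interface is controlled by $f'(u_n(0+,\cdot))$, and the weak $\mathbf L^1$-convergence of $f(u_n(0+,\cdot))$ together with the uniform bound and the monotone relation $u\mapsto f'(u)$ on the relevant branch lets one pass to the limit on the trace value; this is precisely the mechanism used in Step~3 and Step~6 of \S~\ref{sec:criticalcases-b} and in \S~\ref{sec:2bl-r1r2} (proof of~\eqref{eq:urc4}). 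To pass from the pointwise statement at a fixed $x$ to the $\liminf$ over $y\to x$, $y>0$, I would combine a diagonal extraction with the monotonicity of the map $x\mapsto$ (slope of the minimal backward characteristic of $u$ at $(x,T)$), which is the Ole\v{\i}nik one-sided Lipschitz property for the limit solution $u$ (valid since $u$ is an entropy solution of a uniformly convex conservation law); this monotonicity guarantees that one-sided limits of $u(\cdot+,T)$ exist at every point and that taking the $\liminf$ jointly in $n$ and $y$ does not decrease the bound.

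The main obstacle I anticipate is the careful treatment of the case where the limiting backward characteristic $\vartheta$ touches the interface $x=0$: there the identification of $\vartheta$ as a backward characteristic for $u$ uses a refracted-characteristic argument (in the spirit of Remark~\ref{rem:char-ABsol}) and relies on the traces behaving well under the \emph{weak}-$\mathbf L^1$ convergence~\eqref{eq:conlaw-qp-conv2} rather than strong convergence, so one cannot simply pass to the limit in $u_n(0+,\cdot)$ pointwise. The resolution is to pass to the limit in the flux $f(u_n(0+,\cdot))$ --- which \emph{does} converge weakly --- and to recover the trace of $u$ at the impact point by invoking the strong-trace property of $u$ (already assumed in the hypotheses) together with the entropy admissibility that pins down which branch of $f^{-1}$ to use; this is exactly the type of argument carried out for~\eqref{eq:urc4} in \S~\ref{sec:2bl-r1r2}. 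A secondary, more routine difficulty is ensuring the extracted subsequences are compatible so that the final $\liminf$ (over both $n$ and $y$) is legitimate; this is handled by a standard diagonal argument exploiting the uniform Lipschitz bound on the characteristics.
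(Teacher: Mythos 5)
Your overall strategy coincides with the paper's: extract a uniform limit of the maximal backward characteristics of $u_n$ via Ascoli--Arzel\`a, argue that the limit curve is a genuine backward characteristic for $u$ through the limit point, and conclude from the monotonicity of $f'$ that its slope dominates $f'(u(x+,T))$. Two points, however, deserve attention.

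First, the step you leave vaguest is exactly the one the paper resolves by a different mechanism. When the limiting curve reaches the interface at a positive time, you propose to ``pass to the limit on the trace value'' at the impact time using the weak $\mathbf L^1$ convergence of $f(u_n(0+,\cdot))$ together with the strong-trace property of $u$. As stated this does not work: weak $\mathbf L^1$ convergence controls integrals of the boundary flux against test functions and gives no pointwise information at the single impact time $\tau$, and the strong-trace hypothesis on $u$ concerns the limit in $x$, not the limit in $n$. The paper never needs the trace at the impact point: it identifies the limit curve as a genuine characteristic by observing that a genuine characteristic through $(y,t)$ is the segment joining $(y,t)$ to the minimizer of the Lax--Ole\v{\i}nik functional for the initial-boundary value problem, and that this functional is $\mathbf L^1$-continuous in the initial datum and \emph{weakly} $\mathbf L^1$-continuous in the boundary flux trace --- precisely the two convergences \eqref{eq:conlaw-qp-conv1} and \eqref{eq:conlaw-qp-conv2} assumed in the lemma. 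You should either adopt this variational route or give a genuinely different argument for the interface case; the trace-recovery argument you sketch has a real gap.

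Second, two smaller issues. (a) You fix $x$, prove the inequality for the fixed point, and then invoke a diagonal extraction plus Ole\v{\i}nik monotonicity of $u$ to handle the joint $\liminf$ over $(n,y)$. The paper instead chooses at the outset a sequence $y_n\to x$, $y_n>0$, realizing the joint $\liminf$ and runs the entire compactness argument on the characteristics issuing from the moving points $(y_n,T)$; this is both shorter and safer, since a uniform-in-$n$ one-sided Lipschitz bound near the boundary of the quadrant is not available for free. (b) Your sentence ``the maximal backward characteristic has the largest slope among all backward characteristics through a point'' has the direction reversed: the maximal (rightmost) backward characteristic has the \emph{smallest} speed, namely $f'(u(x+,T))$, which is why every backward characteristic through the point has speed at least $f'(u(x+,T))$ and the final inequality you write is nonetheless the correct one.
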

\begin{proof}
We will establish only the inequality~\eqref{eq:liminf}, the proof of \eqref{eq:limsup} being entirely similar.
Given $x\geq 0$, $T>0$, consider a sequence $\{y_n\}_n$,
$y_n>0$, converging to $x$, and such that 
\begin{equation}
\label{eq:liminf-sequence}
    \lim_n u_n(y_n+, T)=\liminf_{\substack{n \to \infty \\ y \to x}} \; u_n(y+, T)\,.
\end{equation}
Let $\vartheta_n^+ :\ ]\tau_n, T]\to \ ]0,+\infty[\,$, $\tau_n\geq 0$,
denote the maximal backward characteristic 
for $u_n$ starting from~$(y_n,T)$,
with the property that either $\tau_n=0$, or 
$\lim_{t\to\tau_n} \vartheta_n^+(t)=0$.
By possibly taking a subsequence, we can assume that either $\tau_n=0$ for all $n$, or that
$\lim_{t\to\tau_n} \vartheta_n^+(t)=0$ for all $n$.
We recall that a maximal backward characteristic
for $u_n$
passing through  $(y_n,T)$, $y>0$,
is a genuine (shock free)
characteristics
whose trajectory 
is a segment with constant slope $f'(u_n(y_n+,T))$
(e.g. see~\cite{Dafermoscontphysics}).
Notice that $\{\vartheta_n^+\}_n$ is a sequence of
Lipschitz continuous functions with a uniform Lipschitz constant
$\sup_{|u|\leq M} f'(u)$
($M$ being a uniform ${\bf L}^\infty$ bound on $u_n$), defined on uniformly bounded intervals $]\tau_n, T]$.
Hence, by Ascoli-Arzel\`a Theorem we can assume that, up to a subsequence,  $\{\vartheta_n^+\}_n$ converges uniformly to some Lipschitz continuous function $\vartheta:\ ]\tau, T]\to \ ]0,+\infty[\,$, such that
\begin{equation}
    \begin{aligned}
        &\tau=0,
        \qquad
        \text{if}\qquad 
        \tau_n=0\quad \forall~n\,,
        \\
        \noalign{\smallskip}
        &\tau=\lim_{n\to\infty} \tau_n\,,\quad 
        \lim_{t\to\tau} \vartheta(t)=0,
        \qquad \text{if}\quad
        \lim_{t\to\tau_n} \vartheta_n^+(t)=0\quad \forall~n\,,
    \end{aligned}
\end{equation}
and such that $\vartheta(T)=x$.
 By a general property of
characteristics, the uniform limit of genuine characteristics is also a genuine characteristic. 
This can be easily verified 
in this context 
observing that the trajectory of
    a genuine characteristic passing through a point $(y,t)$, $y>0$, is 
    a segment connecting $(y,t)$ with the point 
    $(0,\tau(y,t))$ or 
    with the point $(z(y,t),0)$, where $\tau (y,t)$ and $z(y,t)$
    denotes the points of minimum for the functionals involved in 
    the Lax-Ole\v{\i}nik
    representation formula of solutions
    for the boundary value problem (see~\cite{LeFlochboundary}), and that such functionals are ${\bf L}^1$ continuous with respect to the initial datum and 
    { and weakly continuous in ${\bf L}^1$} with respect to the flux-trace  of the solution at $x=0$.
    Therefore it follows that 
    $\vartheta$ is a genuine characteristic with constant slope $\vartheta'$
    satisfying
    \begin{equation}
    \label{eq:lim-charact-slope}
        \vartheta'=\lim_{n\to\infty} (\vartheta^+_n)'
        =\lim_{n\to\infty} 
        f'(u_n(y_n+,T)),
        \qquad\qquad
        \vartheta'\geq f'(u(x+,T))\,.
    \end{equation}
    Since $f'$ is increasing,
    we deduce from~\eqref{eq:lim-charact-slope} that
\begin{equation}
    \lim_n u_n(y_n+, T)\geq 
    u(x+,T)\,,
\end{equation}
which, together with~\eqref{eq:liminf-sequence}, yields~\eqref{eq:liminf}.
    

\end{proof}

\bibliography{references}
\bibliographystyle{plain}

\end{document}